\newtheorem{theorem}{Theorem}[section]
\newtheorem{lemma}[theorem]{Lemma}
\newtheorem{prop}[theorem]{Proposition}
\newtheorem{cor}[theorem]{Corollary}
\theoremstyle{definition}
\newtheorem{definition}[theorem]{Definition}
\theoremstyle{remark}
\newtheorem{remark}[theorem]{Remark}
\newtheorem{problem}[theorem]{Problem}
\numberwithin{equation}{section}
\newcommand\B{\mathbb{B}}
\newcommand\C{\mathbb{C}}
\newcommand\D{\mathbb{D}}
\newcommand\R{\mathbb{R}}
\newcommand\N{\mathbb{N}}
\newcommand\T{\mathbb{T}}
\newcommand\cA{\mathcal{A}}
\newcommand\U{\mathcal{U}}
\newcommand\cD{\mathcal{D}}
\newcommand\cH{\mathcal{H}}
\newcommand\cK{\mathcal{K}}
\newcommand\cI{\mathcal{I}}
\newcommand\cL{\mathcal{L}}
\newcommand\cS{\mathcal{S}}
\newcommand\cT{\mathcal{T}}
\newcommand\RE{\mathrm{Re}\,}
\newcommand\IM{\mathrm{Im}\,}
\newcommand\EXP{\mathrm{Exp}\,}
\newcommand\Ran{\mathrm{Ran}\,}
\newcommand\Ker{\mathrm{Ker}\,}
\newcommand\spa{\mathrm{span}\,}
\newcommand\RAU{R\cA\U}
\newcommand\IAU{I\cA\U}
\newcommand\HD{\cH\cD}
\newcommand\rH{\mathbb{H}_r}
\newcommand\Hi{L^2(0,\infty)}
\newcommand\HiR{\Hi_\R} 
\newcommand\LR[1]{L^2(0,#1)_\R}
\newcommand\Ha{H^2(\rH)}
\newcommand\inpr[2]{\langle{#1,#2}\rangle}
\newcommand\Lt[1]{\cL{[#1]}}
\begin{document}
\title{Generalized CCR flows}

\author{MASAKI IZUMI}
\address{Department of Mathematics,\\ 
Kyoto University, Kyoto, Japan.}
\email{izumi@math.kyoto-u.ac.jp}

\author{R. SRINIVASAN}
\address{Chennai Mathematical Institute\\ 
Siruseri 603103, India.}
\email{vasanth@cmi.ac.in}

\subjclass[2000]{46L55, 30D55, 81S05}

\keywords{$E_0$-semigroups, CCR Flows, Type III, Unilateral Shift, Hilbert-Schmidt}

\thanks{Work supported by JSPS}
\begin{abstract} 
We introduce a new construction of $E_0$-semigroups, called generalized CCR flows, with two kinds of 
descriptions: those arising from sum systems and those arising from pairs of $C_0$-semigroups. 
We get a new necessary and sufficient condition for them to be of type III, when the associated sum system 
is of finite index. 
Using this criterion, we construct examples of type III $E_0$-semigroups, which can not be distinguished 
from $E_0$-semigroups of type I by the invariants introduced by Boris Tsirelson. 
Finally, by considering the local von Neumann algebras, and by associating a type III 
factor to a given type III $E_0$-semigroup, we show that there exist uncountably many type III 
$E_0$-semigroups in this family, 
which are mutually non-cocycle conjugate.
\end{abstract}

\maketitle

%







%



%
%




\section{Introduction}\label{intro} 
An $E_0$-semigroup is a weakly continuous semigroup of unital $*$-endomorphisms on $\B(H)$, 
the algebra of all bounded operators on a separable Hilbert space $H$. 
$E_0$-semigroups are classified into three broad categories, namely type I, II, and III, 
depending upon the existence of intertwining semigroups called units.  
William Arveson completely classified the $E_0$-semigroups of type I, by showing that the CCR flows exhaust 
all the type I $E_0$-semigroups, up to the identification of cocycle conjugacy (\cite{Arv}). 
But the theory of $E_0$-semigroups belonging to type II and type III remained mysterious for quite some time. 
There is no hope of completely classifying the whole class of $E_0$-semigroups even now, basically due to 
the presence of type II and type III examples. 

For quite sometime there were essentially only one example for each type II and type III $E_0$-semigroups, 
due to R. T. Powers (\cite{Po1}, \cite{Po2}). 
In this context Boris Tsirelson produced uncountable families of both type II and type III $E_0$-semigroups by using measure type 
spaces arising from several models in probability theory (\cite{T1}). 
It is equivalent to study the product systems of Hilbert spaces, a complete invariant introduced by Arveson, 
in order to understand the associated $E_0$-semigroups. 
Tsirelson basically produced uncountable families of both type II and type III product systems of Hilbert spaces.

Tsirelson's construction of type III product systems uses off white noises, which are Gaussian generalized 
(i.e. distribution valued) processes with a slight correlation between ``past and future". 
After discussing Tsirelson's results, Arveson concludes his book (\cite{Arv}) by saying, 
`It is clear that we have not achieved a satisfactory understanding of the existence of `logarithms' in the category of 
product systems.' 
This was clarified for Tsirelson's type III examples in \cite{pdct} from the viewpoint of operator algebras. 
Inspired by the results of Tsirelson, a purely operator algebraic construction of Tsirelson's type III examples was provided 
in \cite{pdct}. 
They were called as `product systems arising from sum systems'. 
In particular, a dichotomy result about types was proved in \cite{pdct}, namely, it was proved that the product system arising from a 
divisible sum system is either of type I or of type III. 
A sum system is said to be divisible if it has sufficiently many real and imaginary addits (called additive 
cocycles in \cite{pdct}).

On the other hand, motivated by Tsirelson's construction of type III examples, a class of $C_0$-semigroups acting on $L^2(0,\infty)$, 
which are Hilbert-Schmidt perturbations of the unilateral shift semigroup, was investigated in \cite{I}. 
Description of such semigroups in terms of analytic functions on the right-half plane was given and several examples were constructed. 

In this paper we discuss the consequences of these developments. 
First we describe the $E_0$-semigroups associated with the above mentioned product systems arising from 
sum systems. This would generalize the simplest kind of $E_0$-semigroups called CCR flows. 
These generalized CCR flows are given by a pair of $C_0$-semigroups. 
By studying the product system we get a new necessary and sufficient condition for the $E_0$-semigroup 
to be of type III, when the associated sum system is of finite index. 
This criterion is much more powerful than the sufficient condition already proved in \cite{pdct}. 

Using the results proved in \cite{I}, we compute the additive cocycles for the pairs of the shift semigroup and its 
perturbations, and show that the sum systems associated with them are always divisible. 
This class of sum systems include those coming from off white noises of Tsirelson. 
Then we concentrate on a special subclass of $C_0$-semigroups, which give rise to new type III $E_0$-semigroups. 
These new examples cannot be distinguished from type I $E_0$-semigroups by the 
invariants introduced by Tsirelson, and later discussed in \cite{pdct}. 
Let us give an intuitive explanation of this phenomenon in terms of Tsirelson's off white noise picture here.   
Although our examples also come from Tsirelson's off white noises, 
spectral density functions for them tend to 1 at infinity.  
This means that our off white noises are so close to white noise that Tsirelson's invariant can not work 
for them. 

Finally, we associate a type III factor as an invariant to each of these type III $E_0$-semigroups, 
and using that we prove that there are uncountably many examples in this family which are not cocycle 
conjugate to each other. 
Toeplitz operator plays an essential role throughout these discussions.  

We end this section by reviewing some of the very basic definitions about $E_0$-semigroups. 
For the definitions of notions related to $E_0$-semigroups, such as cocycle conjugacy, index etc.,  
we refer to \cite{Arv}. 
A unit for an $E_0$-semigroup $\{\alpha_t\}$  acting on $\B(H)$ is a strongly continuous semigroup of 
bounded operators $\{T_t\}$, which intertwines $\alpha$ and the identity, that is 
$$\alpha_t(X)T_t = T_tX, ~~~~~\forall ~A \in \B(H), ~t\geq 0.$$

Product systems, a complete invariant for cocycle conjugacy introduced by Arveson, are demonstrated to be a very useful tool in 
studying $E_0$-semigroups.  
For an $E_0$-semigroup $\{\alpha_t; t \geq 0\}$ these are Hilbert spaces of the intertwining operators 
$\{H_t; t\geq 0 \}$, defined by  $$H_t =\{T \in \B(H); \alpha_t(X) T = TX, ~ \forall ~X \in
\B(H)\}$$ with inner product $\langle T, S \rangle 1_H= S^*T$ (see
\cite{Arv}). 
Every notion related to $E_0$-semigroups can be translated into the framework of the associated product systems. 
We provide here a  slightly different (but equivalent) definition  than originally defined in \cite{Ar} and \cite{Arv}. 
The difference is  in the measurability axiom (and we do not need a separate non-triviality axiom), which is due to Volkmar Liebcher. 
He has also proved in \cite{Vol} that any two measurable structures on a given algebraic product system give rise to isomorphic product 
systems, and as a consequence we get that two product systems are isomorphic if they are
algebraically isomorphic.  So we need not consider measurable structures while dealing with isomorphism of product systems. 

\begin{definition}\label{productsystem}
A product system of Hilbert spaces is an one parameter family
of separable complex Hilbert spaces $\{H_t;t \geq 0 \}$, together with unitary operators 
$$U_{s,t} : H_s \otimes H_t ~\mapsto H_{s+t}~ \mbox{for}~ s, t \in (0,\infty),$$
satisfying the following two axioms of associativity and measurability.

\medskip
\noindent(i) (Associativity) For any $s_1, s_2, s_3 \in (0,\infty)$
$$U_{s_1, s_2 + s_3}( 1_{H_{s_1}} \otimes U_{s_2 ,
s_3})=  U_{s_1+ s_2 , s_3}( U_{s_1 ,
s_2} \otimes 1_{H_{s_3}}).$$

\noindent (ii) (Measurability) There exists a countable set $H^0$ of
sections $ R\ni t \rightarrow h_t \in H_t$ such that $ t  \mapsto 
\langle h_t, h_t^\prime\rangle$ is measurable for any two $h, h^\prime \in H^0$, and the set $\{h_t; h \in H^0\}$ is total 
in $H_t$, for each $ t \in (0,\infty)$. 
Further it is also assumed that the map $(s,t) \mapsto \langle U_{s,t}(h_s \otimes h_t), h^\prime_{s+t} \rangle$ is measurable 
for any two $h , h^\prime \in H^0$.
\end{definition}

\medskip

\begin{definition}
Two product systems $(\{H_t\}, \{U_{s,t}\})$ and $(\{H^\prime_t\}, \{U^\prime_{s,t}\})$
are said to be isomorphic if there exists a unitary operator $V_t:H_t \mapsto
H_t^\prime$, for each
$t \in (0,\infty)$ satisfying $$V_{s+t}U_{s,t}= U_{s,t}^\prime (V_s \otimes V_t).$$
\end{definition}

\medskip

\begin{definition}\label{unit}
A unit for product system is a non-zero section $\{u_t;t \geq 0 \}$, such that the map $t \mapsto \langle u_t, h_t\rangle$ is
measurable for any $h \in H^0$ and 
$$U_{s,t}(u_s \otimes u_t)= u_{s+t}, ~  \forall s,t \in (0,\infty).$$
\end{definition}

\medskip

A product system ($E_0$-semigroup) is said to be of type I, if units exists for the product system and 
they generate the product system, i.e. for any fixed $t \in (0,\infty)$, the set $$\{u^1_{t_1}u^2_{t_2}  
\cdots u^n_{t_n}; \sum_{i=1}^n t_i = t, u^i \in \U\},$$ 
is a total set in $H_t$, where $\U$ is the set of all units and the product is defined as the image of
$u^1_{t_1}\otimes u^2_{t_2}  \cdots \otimes u^n_{t_n}$ in $H_t$, under the canonical unitary given by 
the associativity axiom. 
It is of type II if units exist but they do not generate the product system. 
We say a product system to be of type III or unitless if there does not exist any unit
for the product system.

\bigskip

\section{Preliminaries and notation}\label{pre}

In this section we fix the notation used in this paper, recall some of the results proved earlier and make a few definitions. 
For an operator $A$, we denote the range of $A$ by $\Ran(A)$ and the kernel of $A$ by $\Ker(A)$.  
We denote the identity operator on a Hilbert space $H$ by $1_H$ (or simply by 1 if no confusion arises).

For a complex Hilbert space $K$, we denote by $\Gamma(K)$ the symmetric Fock space associated with $K$, 
and by $\Phi$ the vacuum vector in $\Gamma(K)$ (see \cite{KRP}). 
For any $x \in K$, the exponential vector of $x$ is defined by $$e(x)= \oplus_{n=0}^\infty  
\frac{x^{\otimes^n}}{\sqrt{n!}},$$ where $x^0=\Phi$.  Then the set of all exponential vectors 
$\{e(x): x \in K\}$ is a linearly independent total set in $\Gamma(K)$. 
For a unitary $U\in \B(K)$, we denote by $\EXP(U)$ the unitary in $\B(\Gamma(K))$ 
given by $\EXP(U)e(x)=e(Ux)$. 
The Weyl operator, corresponding to an element $x \in K$, is defined by, 
$$W(x)(e(y))= e^{-\frac{\|x\|^2}{2}- \langle y, x
\rangle} e(y+x),$$ 
and $W(x)$ extends to a unitary operator on $\Gamma(K)$. 
The $*$-algebra generated by $\{W(x)\}_{x\in K}$ is called the Weyl algebra for $K$. 

For a real Hilbert space $G$, we denote  the complexification of $G$ by $G^{\C}$. 
For two Hilbert spaces $G_1$, $G_2$, define 
$$\cS(G_1,G_2)=\{A\in \B(G_1, G_2);  A~\mbox{invertible and} ~I-(A^*A)^{\frac{1}{2}}~\mbox{Hilbert-Schmidt}\}.$$ 
In the above definition and elsewhere, by invertibility we mean that the inverse is also bounded. 
The set $\cS(., .)$ is well behaved with respect to taking inverses,
adjoints, products, and restrictions (see \cite{pdct}). 
For two real Hilbert spaces $G_1, G_2$ and
$A \in \cS(G_1,G_2 )$, define
a real liner operator $S_A:G_1^{\C} \rightarrow
G_2^{\C}$ by
$S_A(u+iv)=Au+i(A^{-1})^*v$ for $u,v \in
G_1$. Then $S_A$ is a symplectic
isomorphism between $G_1^\C$ and
$G_2^\C$
(i.e. $S_A$ is a real linear, bounded,  invertible map satisfying $\IM(\langle S_Ax,S_Ay\rangle
)=\IM\langle x,y\rangle $ for all $x,y
\in G_1^\C$, see
\cite[page 162]{KRP}). In general $S_A$ is not complex linear, unless $A$ is unitary.

The following theorem, a generalization of Shales theorem, is used to 
construct the product system from a given sum system in \cite{pdct}.

\begin{theorem}\label{shales} {\rm (i)} Let $G_1,
G_2$ be real Hilbert spaces and $A \in \cS(G_1,G_2)$ ,
then there exists a unique unitary operator
$\Gamma(A):\Gamma(G_1^\C)
\rightarrow
\Gamma(G_2^\C)$ such that

\begin{eqnarray}\label{SHL}
\Gamma(A)W(u){\Gamma(A)}^* & = & W(S_Au) ~~~ \forall ~u \in G_1^{\C}\label{SHL1}\\
\langle \Gamma(A) \Phi_1, \Phi_2\rangle & \in & \R^{+}\label{SHL2}
\end{eqnarray}
where $\Phi_1$ and $\Phi_2$ are the vacuum
vectors in $\Gamma(G_1^\C)$ and
$\Gamma(G_2^\C)$
respectively.

Conversely, for a given bounded operator $A:G_1 
\mapsto G_2$, if
there exists a unitary operator 
$\Gamma(A):\Gamma(G_1^\C)
\rightarrow
\Gamma(G_2^\C)$, 
satisfying, $\Gamma(A)W(u){\Gamma(A)}^* =
W(S_Au),~\forall  ~u \in G_1^{\C}$ then $A \in \cS(G_1, G_2)$.

\medskip \noindent
{\rm (ii)} Suppose $G_1, G_2, G_3$ be three real Hilbert
spaces, and
$A \in \cS(G_1, G_2),~B \in \cS(G_2, G_3)$,  then
\begin{eqnarray}\label{shlrep}
\Gamma(A^{-1}) & = & {\Gamma(A)}^*\\\label{shlrep1}
\Gamma(BA) & = & \Gamma(B)\Gamma(A)
\end{eqnarray}
\end{theorem}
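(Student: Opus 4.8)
The plan is to treat the two parts separately, with part (i) being the substantive one. For the direct implication in part (i), I would first invoke the classical Shale theorem as stated in \cite{KRP}: for a symplectic isomorphism $T$ of a complexified real Hilbert space, the Bogoliubov automorphism $W(x)\mapsto W(Tx)$ of the Weyl algebra is unitarily implemented on Fock space if and only if a certain operator attached to $T$ is Hilbert–Schmidt. So the first step is the purely algebraic observation that $S_A$ is indeed a symplectic isomorphism of $G_1^\C$ onto $G_2^\C$ (already recorded in the text), together with the computation translating the condition ``$I-(A^*A)^{1/2}$ Hilbert–Schmidt'' into the Hilbert–Schmidt condition on the relevant piece of $S_A$ (roughly, the ``antilinear part'' of $S_A$, or equivalently $S_A^*S_A - I$). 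This is a routine but slightly fiddly calculation with the decomposition $S_A(u+iv)=Au+i(A^{-1})^*v$; writing $A=U|A|$ in polar form and noting $|A|=(A^*A)^{1/2}$ reduces everything to the hypothesis $I-|A|$ Hilbert–Schmidt, which implies $I-|A|^2$ and $I-|A|^{-1}$ are Hilbert–Schmidt as well. Once implementability is known, uniqueness of $\Gamma(A)$ is forced by irreducibility of the Weyl representation up to a scalar of modulus one, and the normalization \eqref{SHL2} (that $\langle\Gamma(A)\Phi_1,\Phi_2\rangle$ be a nonnegative real) pins down that scalar; here I would need to check that $\langle\Gamma(A)\Phi_1,\Phi_2\rangle\neq 0$, which follows because the vacuum of $\Gamma(G_2^\C)$ is never orthogonal to the image of a quasi-free state vector in the Hilbert–Schmidt (type I) regime. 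The converse in part (i) is the easy direction: given that $W(u)\mapsto W(S_Au)$ is unitarily implemented, Shale's theorem runs backwards to give the Hilbert–Schmidt condition, and boundedness plus invertibility of $A$ is part of the hypothesis, so $A\in\cS(G_1,G_2)$.

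For part (ii), equation \eqref{shlrep} is the special case $B=A^{-1}$ of \eqref{shlrep1} combined with $\Gamma(1)=1$, so the real content is the multiplicativity $\Gamma(BA)=\Gamma(B)\Gamma(A)$. The strategy is: both $\Gamma(B)\Gamma(A)$ and $\Gamma(BA)$ are unitaries from $\Gamma(G_1^\C)$ to $\Gamma(G_3^\C)$ implementing the Bogoliubov transformation $W(u)\mapsto W(S_{BA}u)$, using that $S$ is multiplicative, i.e. $S_{BA}=S_BS_A$ — this last identity is itself a short computation from the formula for $S_A$ (it uses $(\,(BA)^{-1})^*=(B^{-1})^*(A^{-1})^*$ on the imaginary part). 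By the uniqueness already established in part (i), $\Gamma(BA)$ and $\Gamma(B)\Gamma(A)$ differ by a phase; evaluating on vacua and using that each individual $\Gamma$ sends vacuum to a vector with nonnegative inner product against the target vacuum, one checks the phase is $1$. The one point needing care is that $\langle\Gamma(B)\Gamma(A)\Phi_1,\Phi_3\rangle$ is not obviously real a priori, so I would compute $\langle\Gamma(A)\Phi_1,\Gamma(B)^*\Phi_3\rangle$ and use $\Gamma(B)^*=\Gamma(B^{-1})$ together with an explicit Gaussian-type formula for the vacuum-to-vacuum kernel, which is a standard positive expression.

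The main obstacle I anticipate is the bookkeeping in part (i) converting the Hilbert–Schmidt hypothesis on $I-(A^*A)^{1/2}$ into the precise form of Shale's criterion and then nailing down the phase in the normalization \eqref{SHL2}. Concretely, one must identify the creation/annihilation decomposition of $S_A$ (the ``$\cosh$/$\sinh$'' or ``$a,b$'' picture of a Bogoliubov transformation), verify the off-diagonal block is Hilbert–Schmidt, and then show the implementer can be chosen with $\langle\Gamma(A)\Phi_1,\Phi_2\rangle>0$ — the nonvanishing of this inner product is exactly where the Hilbert–Schmidt (as opposed to merely Hilbert–Schmidt-modulo-trace-class, or worse) hypothesis is used, since it guarantees the two quasi-free vacua are not orthogonal. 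Everything else — symplecticity of $S_A$, multiplicativity $S_{BA}=S_BS_A$, and deducing \eqref{shlrep}–\eqref{shlrep1} from uniqueness — is straightforward once the setup is in place. Since this theorem is quoted from \cite{pdct} and ultimately rests on \cite[p.~162]{KRP}, I would in fact present only the normalization and multiplicativity arguments in detail and cite Shale's theorem for the implementability equivalence.
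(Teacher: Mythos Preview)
Your emphasis is inverted relative to the paper's. The paper cites \cite{pdct} for the existence/uniqueness direction of (i) and for all of (ii), and proves \emph{only} the converse of (i) --- precisely the part you call ``the easy direction'' and dismiss in one sentence.

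Your one-line treatment of that converse has a gap. You write that ``Shale's theorem runs backwards to give the Hilbert--Schmidt condition,'' but the classical Shale theorem in \cite[p.~169]{KRP} is stated for a symplectic \emph{automorphism} of a single complexified real Hilbert space, whereas here $S_A:G_1^\C\to G_2^\C$ maps between two different spaces, so the theorem does not apply directly. The paper's device is to take the polar decomposition $A=UA_0$ with $U:G_1\to G_2$ unitary and $A_0=|A|\in\B(G_1)$, and then observe that $\Gamma(A_0):=\EXP(U^*)\Gamma(A)$ is a unitary on $\Gamma(G_1^\C)$ implementing $W(u)\mapsto W(S_{A_0}u)$. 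Now the single-space Shale theorem gives $A_0\in\cS(G_1,G_1)$, hence $A=UA_0\in\cS(G_1,G_2)$. This reduction is short, but it is the actual content of the paper's proof and is missing from your sketch. (Note also that invertibility of $A$ is not listed as a hypothesis in the converse statement; it is recovered from $A_0\in\cS(G_1,G_1)$, not assumed.)

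Your plans for the forward direction of (i) and for (ii) are reasonable --- and you do acknowledge at the end that you would cite rather than reprove these --- but you should be aware that the phase argument you outline for $\Gamma(BA)=\Gamma(B)\Gamma(A)$ (checking $\langle\Gamma(B)\Gamma(A)\Phi_1,\Phi_3\rangle>0$ via a Gaussian kernel computation) is exactly the nontrivial step that \cite{pdct} carries out; it is not bypassed here.
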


\begin{proof}
We only have to prove the converse part of (i). Remaining parts are 
already proved in \cite{pdct}. For $A \in \B(G_1, G_2)$, consider the 
polar decomposition of 
$A= UA_0$, where $A_0 \in \B(G_1)$, and $U \in \B(G_1, G_2)$ a 
unitary operator. Suppose there exists a unitary operator
$\Gamma(A):\Gamma(G_1^{\C})
\rightarrow
\Gamma(G_2^{\C})$,
satisfying $\Gamma(A)W(u){\Gamma(A)}^* =
W(S_Au),$ then 
$\Gamma(A_0)=\EXP(U^*)\Gamma(A)$ will satisfy, 
$\Gamma(A_0)W(u){\Gamma(A_0)}^* =
W(S_{A_0}u).$ The original version of Shales theorem will imply  
that 
$A_0 \in \cS(G_1, G_1)$ (see \cite[page 169]{KRP}). 
So we can conclude that $A \in \cS(G_1, G_2)$.
\end{proof}

\medskip

Next we define the notion of a sum system. 

\begin{definition}\label{sumsystem}
A sum system is a two parameter family of real Hilbert spaces $\{G_{s,t}\}$ for $0 <s < t \leq \infty$, satisfying 
$G_{s,t} \subset G_{s^{\prime}, t^{\prime}}$ if the interval $(s,t)$ is contained in the interval $(s^{\prime}, t^{\prime})$, 
together with a one parameter semigroup $\{S_t\}$, of bounded linear operators on $G_{(0, \infty)}$ for $t \in (0,\infty)$ such that
\begin{itemize}
\item [(i)] $S_s|_{G_{0,t}} \in \cS(G_{0,t} , G_{s, s+t})$ $\forall~t \in (0,\infty],~s \in [0,\infty)$. \medskip
\item [(ii)] If $A_{s,t}:G_{0,s}\oplus G_{s,s+t}\mapsto G_{0,s+t}$, is  the map
$A_{s,t}(x\oplus y)=x+y$, for $x \in G_{0,s}, y \in G_{s,s+t}$, then 
$A_{s,t} \in \cS(G_{0,s}\oplus G_{s,s+t}, G_{0,s+t})$, $\forall ~s,t \in
(0,\infty)$. \medskip
\item [(iii)] The semigroup $\{S_t\}$ is strongly continuous. 
\end{itemize}

We say that two sum systems $(\{G_{a,b}\},\{S_t\})$ and $(\{G'_{a,b}\},\{S'_t\})$ are isomorphic 
if there exists a family of operator $U_t\in \cS(G_{0,t},G'_{0,t})$ preserving 
every structure of the sum systems, (i.e) $\{U_t\}$ satisfies $$ A^\prime_{s,t}(U_s \oplus S^\prime_s|_{G_{0,t}^\prime} U_t) = U_{s+t}A_{s,t}(1_{G_{0,s}}\oplus S_s|_{G_{0,t}}),$$ where $A_{s,t}^\prime$ is defined  for $(\{G'_{a,b}\},\{S'_t\})$ similar to $A_{s,t}$ above.
\end{definition}

The above definition is slightly stronger than the one given in \cite{pdct}. 
Namely, we require here that $\{S_t\}$ is a $C_0$-semigroup acting on the global Hilbert space $G_{0,\infty}$  and axiom (ii) holds for $t=\infty$ also,
though they were not assumed in \cite{pdct}. 

Given a sum system $(\{G_{s,t}\},\{S_t\})$, we define Hilbert spaces $H_t=\Gamma(G_{0,t}^\C)$, and  unitary operators
$U_{s,t}:H_s\otimes H_t \mapsto H_{s+t},$ by
$U_{s,t}=\Gamma(A_{s,t})(1_{H_s}\otimes \Gamma(S_s|_{G_{0,t}})).$ It is
proved in \cite{pdct} that $(\{H_t\}, \{U_{s,t}\})$ forms a product system. 
Isomorphic sum systems give rise to isomorphic product systems. 

Let $K$ be a complex Hilbert space and let $\{S_t\}$ be the shift semigroup of $L^2((0,\infty),K)$ defined by 
\begin{eqnarray*}(S_tf)(s) & = & 0, \quad s<t,\\
& = & f(s-t), \quad s \geq t,
\end{eqnarray*} 
for $f \in L^2((0,\infty),K).$
The CCR flow of index $\dim K$ is the $E_0$-semigroup $\alpha$ acting on $\B(\Gamma(L^2((0,\infty),K)))$ 
defined by $\alpha_t(W(f))=W(S_tf)$. 

To generalize the CCR flows, we ask the following question. 
Let $G$ be a real
Hilbert space and $H = \Gamma(G^\C)$.
Suppose we have two semigroups of linear operators, $S_t, ~T_t:G \mapsto G$ for
$t\geq 0$. 
Consider the association
$$\alpha_t(W(x))
\mapsto W(S_tx),~ \alpha_t(W(iy)) \mapsto W(iT_ty),  ~ x, y \in G.$$ 
When can we extend  this map to an $E_0$-semigroup on $\B(H)$? 
The continuity and the semigroup property of $\{\alpha_t\}$ will
immediately imply that both $\{S_t\}$ and $\{T_t\}$ have to be 
$C_0$-semigroups. 
Also, $\alpha_t$ being an endomorphism satisfies $$\alpha_t(W(u)W(v))=\alpha_t(W(u))\alpha_t(W(v)), 
u,v \in G^\C.$$ Comparing both sides, 
using the 
canonical commutation relation, we get $$\langle S_tx, T_t 
y\rangle=\langle x, y\rangle ~~\forall~ x,y \in G$$ which is same as saying 
$T_t^*S_t=1.$ 
Assume that these conditions are satisfied. 
Then for $\alpha_t$ to extend as an endomorphism of $\B(H)$, it is necessary and sufficient that 
$\alpha_t$, as a representation of the Weyl algebra, is quasi-equivalent to the defining (vacuum) representation.  

The following lemma, probably well-known among specialists, gives a complete answer to the above question.  
For convenience of the reader, we include a proof here. 

\begin{lemma} \label{quasi-equivalence} Let $G$ be a real Hilbert space and let $S$ and $T$ be 
operators in $\B(G)$ satisfying $T^*S=1$. 
Then the representation $\pi$ of the Weyl algebra for $G^\C$ given by 
$$\pi(W(x+iy))=W(Sx+iTy),\quad x,y\in G$$
is quasi-equivalent to the defining representation if and only if $S-T$ is a 
Hilbert-Schmidt class operator. 
\end{lemma}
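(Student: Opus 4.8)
The plan is to recognise this as Shale's criterion and reduce it to Theorem~\ref{shales}. Write $G^\C=G\oplus iG$ and let $V\colon G^\C\to G^\C$ be the real-linear map $V(x+iy)=Sx+iTy$. The relation $T^*S=1$ (equivalently $S^*T=1$) says exactly that $V$ preserves $\IM\inpr{\cdot}{\cdot}$, i.e. $\pi$ is the composition of the defining representation with the Bogoliubov endomorphism $W(u)\mapsto W(Vu)$; moreover $T^*S=1$ forces $S$ and $T$ to be bounded below, so $V$ is injective with closed range $\Ran V=\Ran S\oplus i\Ran T$. Decompose $V=V_++V_-$ into its complex-linear part $V_+=\tfrac12(S+T)$ and its complex-antilinear part $V_-(u)=\tfrac12(S-T)\bar u$; then $V_-$ is Hilbert-Schmidt iff $S-T$ is, so the assertion is precisely: $\pi$ is quasi-equivalent to the defining representation iff the antilinear part of $V$ is Hilbert-Schmidt.

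I would first settle the case in which $S$, and hence $T=(S^*)^{-1}$, is invertible. Then $V=S_S$ in the notation of Theorem~\ref{shales}, and $S-T=S-(S^*)^{-1}=(SS^*-1)(S^*)^{-1}$ is Hilbert-Schmidt iff $SS^*-1$, equivalently $S^*S-1$, equivalently $1-(S^*S)^{1/2}$, is Hilbert-Schmidt, i.e. iff $S\in\cS(G,G)$. If $S\in\cS(G,G)$ then $\Gamma(S)$ of Theorem~\ref{shales}(i) conjugates $\pi$ onto the defining representation, so they are even unitarily equivalent; conversely, if $\pi$ is quasi-equivalent to the defining representation then, the latter being irreducible with weak closure $\B(H)$, $\pi$ is unitarily equivalent to an amplification of it, so the endomorphism $W(u)\mapsto W(S_Su)$ is unitarily implementable and the converse half of Theorem~\ref{shales}(i) gives $S\in\cS(G,G)$.

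For the general case I would remove the failure of invertibility by passing to a larger Hilbert space. Put $M=\Ran S$, $N=\Ran T$ (both closed) and $G_0=M^\perp$. Using $T^*S=1$ one checks that $M\cap N^\perp=N\cap M^\perp=0$ and $M+N^\perp=G$, so the restriction of $\IM\inpr{\cdot}{\cdot}$ to the real subspace $M^\perp\oplus iN^\perp\subset G^\C$ is non-degenerate and the projection $P_{M^\perp}|_{N^\perp}\colon N^\perp\to M^\perp$ is invertible; choosing the corresponding compatible complex structure identifies $M^\perp\oplus iN^\perp$ with $G_0^\C$ via a symplectic isomorphism $\iota$, and $\widetilde V:=V\oplus\iota\colon (G\oplus G_0)^\C\to G^\C$, $(z,w)\mapsto Vz+\iota w$, is then a symplectic \emph{isomorphism} (its range is $\Ran V\oplus(M^\perp\oplus iN^\perp)=G^\C$ and it is bounded below). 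A computation shows that its antilinear part $\widetilde V_-=V_-\oplus\iota_-$ is Hilbert-Schmidt iff $S-T$ is: the correction $\iota_-$ is governed by $P_M|_{N^\perp}=(S^*|_M)^{-1}(S-T)^*|_{N^\perp}$, which is Hilbert-Schmidt as soon as $S-T$ is. Now the representation $\sigma$ of the Weyl algebra of $(G\oplus G_0)^\C$ attached to $\widetilde V$ restricts on the Weyl algebra of $G^\C$ to $\pi$, and by the invertible case (applied to the symplectic isomorphism $\widetilde V$, after splitting off a complex-linear unitary by an $\EXP$ so that Theorem~\ref{shales} applies) $\sigma$ is unitarily equivalent to the defining representation of the larger Weyl algebra iff $\widetilde V_-$, equivalently $S-T$, is Hilbert-Schmidt. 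Restricting a unitary equivalence, this yields $\pi\cong(\text{defining representation})\otimes 1$ whenever $S-T$ is Hilbert-Schmidt, hence $\pi$ is quasi-equivalent to the defining representation; for the converse I would instead extract, from a unitary equivalence $\pi\cong(\text{defining})\otimes 1$, a unit vector $\xi\in H$ with $\inpr{W(Vz)\xi}{\xi}=e^{-\|z\|^2/2}$ for all $z$, and read off from the two-particle component of $\xi$ (which lies in the symmetric tensor square of $G^\C$, i.e. in the Hilbert-Schmidt operators) that $V_-$, hence $S-T$, is Hilbert-Schmidt --- this is the usual Shale computation.

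The main obstacle is exactly the non-invertibility of $S$: since $T^*S=1$ makes $S$ only left-invertible, its Fredholm index is a genuine obstruction, so one cannot reduce to the invertible situation by a Hilbert-Schmidt perturbation of $S$ but must enlarge the Hilbert space, and the delicate points are to verify that $\Ran V$ and its complement are symplectic subspaces carrying compatible complex structures (which rests on $T^*S=1$), that the resulting identifications differ from complex-linear maps by Hilbert-Schmidt operators (which rests on $S-T$ being Hilbert-Schmidt), and that the gap between a general symplectic isomorphism and the special maps $S_A$ covered by Theorem~\ref{shales} can be bridged by conjugating with $\EXP$ of a complex-linear unitary.
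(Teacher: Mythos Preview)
Your strategy---reduce to Shale's theorem by enlarging $G$ so that $V$ becomes a symplectic isomorphism---is genuinely different from the paper's. The paper never enlarges the space: it first shows, via Araki's lattice criterion, that $\pi$ is always a factor representation (using $M\cap N^\perp=N\cap M^\perp=\{0\}$), then identifies $\pi$ up to quasi-equivalence with the GNS representation of the quasi-free state $\omega'(W(z))=\inpr{W(Vz)\Phi}{\Phi}=e^{-(\inpr{|S|^2x}{x}+\inpr{|T|^2y}{y})/2}$, and finally applies van Daele's explicit criterion for quasi-equivalence of quasi-free states. This reduces the question to whether $Q_A^{-1}Q_B-1$ is Hilbert--Schmidt, which the paper computes directly and shows is equivalent to $S-T$ being Hilbert--Schmidt. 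Your invertible case is fine and agrees with the paper's Theorem~\ref{shales}.

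There is, however, a concrete error in your enlargement. The symplectic complement of $\Ran V=M\oplus iN$ is $N^\perp\oplus iM^\perp$, not $M^\perp\oplus iN^\perp$: for $m\in M$, $n\in N$ and $a,b\in G$ one has $\IM\inpr{m+in}{a+ib}=\inpr{n}{a}-\inpr{m}{b}$, so vanishing for all $m,n$ forces $a\in N^\perp$ and $b\in M^\perp$. Your choice is the \emph{real-orthogonal} complement, and with it the cross terms $\IM\inpr{Vz}{\iota w}$ do not vanish, so $\widetilde V$ as you define it is not symplectic. The repair is to use $N^\perp\oplus iM^\perp$; the sum $(M\oplus iN)+(N^\perp\oplus iM^\perp)=G^\C$ is then a topological direct sum via the bounded idempotents $ST^*$ and $TS^*$, and the forward direction can proceed along your lines. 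Your computation $P_M|_{N^\perp}=(S^*|_M)^{-1}(S-T)^*|_{N^\perp}$ is correct and is indeed the right control on the antilinear part of $\iota$.

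Your converse is only a sketch, and it is not clear it closes. You correctly obtain a unit vector $\xi$ with $\inpr{W(Vz)\xi}{\xi}=e^{-\|z\|^2/2}$, but this constrains $\inpr{W(w)\xi}{\xi}$ only for $w\in\Ran V$, and there is no reason a priori that $\xi$ is a Gaussian (squeezed) vector, so ``reading off'' the Hilbert--Schmidt property of $V_-$ from its two-particle component requires an argument you have not supplied. This is precisely where the paper's route via van Daele's criterion pays off: it handles the non-surjective $V$ uniformly, without any enlargement and without assuming anything about the structure of $\xi$.
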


\begin{proof} Thanks to the relation $T^*S=1$, the above $\pi$ actually gives a representation of the Weyl algebra. 
Since $T^*$ (resp. $S^*$) is a left inverse of $S$ (resp. $T$), the range of $S$ (resp. $T$) is closed 
and $|S|$ (resp. $|T|$) is invertible.

We first claim that $\pi$ is a factor representation. 
Let $K_1=\Ran(S)$ and $K_2=\Ran(T)$, which are closed subspaces of $G$. 
Then the relation $T^*S=1$ implies that we have $K_1\cap K_2^\perp=K_2\cap K_1^\perp=\{0\}$, and so 
$$M:=\{\pi(W(z));\; z\in G^\C\}''=\{W(x+iy);\; x\in K_1,\; y\in K_2\}''$$  
is a factor thanks to \cite[Theorem 1]{Ara1}. 
The claim implies that $\pi$ is quasi-equivalent to the restriction of $\pi$ to $\overline{M\Phi}$, 
which is unitarily  equivalent to the GNS representation of the quasi-free state 
$\omega'(x)=\inpr{\pi(x)\Phi}{\Phi}$. 
Therefore to prove the statement, it suffices to show that the GNS representations of the vacuum state 
$\omega$ and the quasi-free state $\omega'$ are quasi-equivalent if and only if $S-T$ is a Hilbert-Schmidt operator. 
For that we can apply well-known criteria in \cite{Ara3}, \cite{vD}. 
Note that $\omega$ and $\omega'$ are given by 
$$\omega(W(x+iy))=e^{-(\|x\|^2+\|y\|^2)/2},\quad, x,y\in G,$$
$$\omega'(W(x+iy))=e^{-(\inpr{|S|^2x}{x}+\inpr{|T|^2y}{y})/2},\quad x,y\in G. $$

Since we deal with Weyl operators rather than creation and annihilation operators, we use the criterion 
in \cite{vD}, and we first recall the notation there. 
Let $\sigma(z_1,z_2)=-\IM\inpr{z_1}{z_2}$, which is a symplectic form of $G^\C$ as a real vector space. 
Then we have the Weyl relation $W(z_1)W(z_2)=e^{-i\sigma(z_1,z_2)}W(z_1+z_2)$. 
Let $A(x+iy)=y-ix$ and $B(x+iy)=|T|^2y-i|S|^2x$ and let 
$s_A(z_1,z_2)=\sigma(Az_1,z_2)$, $s_B(z_1,z_2)=\sigma(Bz_1,z_2)$. 
Then the two states are given by $\omega(W(z))=e^{-s_A(z,z)/2}$ and $\omega'(W(z))=e^{-s_B(z,z)/2}$. 
To see that $\omega'$ is a quasi-free state, we have to verify that $B^*B-1$ is positive as an operator  
acting on the real Hilbert space $G^\C$ equipped with the inner product $s_B$. 
Since $B^*=-B$, all we have to show is $|S|^2|T|^2|S|^2\geq |S|^2$ and $|T^2||S|^2|T^2|\geq |T|^2$, or equivalently 
just $|S||T|^2|S|\geq 1$.  
Indeed, let $S=U|S|$ and $T=V|T|$ be the polar decomposition of $S$ and $T$ respectively. 
Then thanks to $T^*S=1$, we have $V^*U=|T|^{-1}|S|^{-1}$, and so we get $1\geq V^*UU^*V=|T|^{-1}|S|^{-2}|T|^{-1}$, 
which shows  $|S||T|^2|S|\geq 1$. 

Let $Q_A=A+A\sqrt{1+A^{-2}}=A$ and $Q_B=B+B\sqrt{1+B^{-2}}$, where $\sqrt{1+B^{-2}}$ is a positive operator 
acting on $(G^\C,s_B)$ satisfying $\sqrt{1+B^{-2}}^2=1+B^{-2}$. 
Then the two GNS representations are quasi-equivalent if and only if $Q_A^{-1}Q_B-1$ is a Hilbert-Schmidt 
operator \cite[page 190]{vD}. 
In the rest of the proof, the symbol $\equiv$ means equality up to a Hilbert-Schmidt class operator. 
Straightforward computation yields 
\begin{eqnarray*}
Q_A^{-1}Q_B(x+iy)&=&
|S|(1+\sqrt{1-|S|^{-1}|T|^{-2}|S|^{-1}})|S|x\\
&+&i|T|(1+\sqrt{1-|T|^{-1}|S|^{-2}|T|^{-1}})|T|y,
\end{eqnarray*}
and so  $Q_A^{-1}Q_B-1$ is a Hilbert-Schmidt operator if and only if the 
following two relations hold: 
\begin{equation}\label{q.e.1}
\sqrt{1-|S|^{-1}|T|^{-2}|S|^{-1}}\equiv |S|^{-2}-1,
\end{equation}
\begin{equation}\label{q.e.2}
\sqrt{1-|T|^{-1}|S|^{-2}|T|^{-1}}\equiv |T|^{-2}-1.
\end{equation}

Assume first that the equations (\ref{q.e.1}) and (\ref{q.e.2}) hold. 
Then (\ref{q.e.1}) implies 
$$1-|S|^{-1}|T|^{-2}|S|^{-1}\equiv 1-2|S|^{-2}+|S|^{-4},$$
which is equivalent to $|S|^{-2}+|T|^{-2}\equiv 2$. 
Therefore (\ref{q.e.1}), (\ref{q.e.2}), and this imply 
$$\sqrt{1-|S|^{-1}|T|^{-2}|S|^{-1}}+\sqrt{1-|T|^{-1}|S|^{-2}|T|^{-1}}\equiv 0.$$
Since the left-hand side is a positive operator, each term must be a Hilbert-Schmidt operator. 
This means that the both sides of (\ref{q.e.1}) and (\ref{q.e.2}) are Hilbert-Schmidt operators. 
In particular, we have $|S|^{-2}\equiv |T|^{-2}\equiv 1$ and the two operators $1-|S|^{-1}|T|^{-2}|S|^{-1}$ 
and $1-|T|^{-1}|S|^{-2}|T|^{-1}$ are trace class operators. 
Since this shows that $|S|^2-|T|^{-2}$ is a trace class operator, the operator   
$$(S-T)^*(S-T)=|S^2|+|T|^2-2=|S|^2-|T|^{-2}+(|T|^{-2}-1)^2|T|^2 $$ 
is a trace class operator. 
This shows that $S-T$ is a Hilbert-Schmidt operator. 

Assume conversely that $S-T$ is a Hilbert-Schmidt operator now. 
Then $0\equiv S^*(S-T)=|S|^2-1$, $0\equiv T^*(T-S)=|T|^2-1$ and 
$$(S-T)^*(S-T)=|S|^2+|T|^2-2$$ is a trace class operator. 
Therefore a similar computation as above shows that (\ref{q.e.1}) and (\ref{q.e.2}) hold. 
\end{proof}

\medskip

The above lemma allows us to introduce the notion of a generalized CCR flow. 

\begin{definition}\label{perturb} Let $\{S_t\}$ and $\{T_t\}$ be $C_0$-semigroups acting on a real Hilbert space $G$. 
We say that $\{T_t\}$ is a perturbation of $\{S_t\}$, if they satisfy,
\begin{enumerate}
\item[(i)] ${T_t}^*S_t =1.$
\item[(ii)] $S_t -T_t$ is a Hilbert-Schmidt operator.
\end{enumerate}
\medskip
Given a perturbation $\{T_t\}$ of $\{S_t\}$, we say that the $E_0$-semigroup $\{\alpha_t\}$ acting on 
$\B(\Gamma(G^\C))$ given by 
$$\alpha_t(W(x+iy))=W(S_tx+iT_ty),\quad x,y\in G$$
is a generalized CCR flow associated with the pair $\{S_t\}$ and $\{T_t\}$. 
\end{definition}
\medskip
In fact we can also show from Proposition \ref{StTt} (see Section \ref{genccr}) that a pair of $(\{S_t\}, \{T_t\})$ 
gives rise to a generalized CCR flow if $T_t$ is a perturbation of $S_t$. 

\medskip

From Section \ref{cocycles} onwards we will follow the notations used in \cite{I}. 
We will denote by $\{S_t\}$ the shift semigroup of $L^2(0,\infty)$. 
For $f, g  \in L^2(0,\infty)$, we denote $$(f, g)=\int_0^\infty f(x)g(x)dx,$$
while $\inpr{f}{g}$ denotes the usual complex inner product.  
Let $\rH$ be the right-half plane $\{z \in \C: \RE{x} >0 \}$. 
For $z \in \rH$ we set $e_z(x)=e^{-zx}$. 
We denote by $L^1_{\mathrm{loc}}[0,\infty)$ the set of all measurable functions on $[0,\infty)$ which are integrable on 
every compact subsets of $[0,\infty)$. 
For $f \in L^1_{\mathrm{loc}}[0,\infty)$ and $a >1$, such that the function $e^{-ax}f(x) \in L^1(0,\infty)$, 
we denote by $\Lt f(z)$ the Laplace transform $$\Lt f(z)=\int_0^\infty f(x) e^{-zx}dx, ~~ \RE{z} >a.$$ 

Let $\HD$ be the set of holomorphic functions $M(z)$ on the right-half plane $\rH$ such that $M(z)/(1+z)$ belongs to 
the Hardy space $\Ha$ and $M(z)$ does not belong to $\Ha$. 
Then we can associate a differential operator $A_M$ through the procedure described in \cite[Sections 2 and 3]{I}. 
Namely, let $q\in \Hi$ such that $\Lt q(z)=M(z)/(1+z)$. 
Then $A_M$ is the differential operator $A_Mf(x)=-f'(x)$ whose domain $D(A_M)$ is the set of locally absolutely 
continuous functions $f\in \Hi$ such that $f'\in \Hi$ and 
$(f-f',q)=0$. 

We denote by $\HD_b$ the set of $M\in \HD$ such that $A_M$ generates a $C_0$-semigroup, and   
by $\HD_2$ the set of $M\in \HD_b$ such that $e^{tA_M}-S_t$ is a Hilbert-Schmidt operator 
for all $t\geq 0$, where $\{S_t\}$ is the shift semigroup. 
The semigroup $\{T_t=e^{tA_M}\}$ will satisfy the relation $T_t^*S_t=1$ for all $t \geq 0$. 
Conversely for any given $C_0$-semigroup $T_t$ satisfying $T_t^*S_t=1$, its generator $A$ can be described by a $M(z) \in \HD_b$ as above and 
$M(z)$ is unique up to a non-zero scalar multiple. 

On the other hand suppose $\{T_t=S_t + K_t\}$ be a $C_0$-semigroup on $L^2(0,\infty)$ such that $\{T_t\}$ 
is a perturbation of the shift semigroup $\{S_t\}$. 
Then it is proved in \cite[Section 4]{I} that there exists a measurable function $k(x,y)$ defined on $(0,\infty)^2$ and $a>0$, satisfying  \begin{equation}\label{k}k(x+t,y)  =  k(x, y+t) +\int_0^tk(x,s)k(t-s,y)ds,~~\forall ~t\geq 0,~\mbox{a.e}~(x,y)\in (0,\infty)^2\end{equation}
\begin{equation}\label{k1}
\int_0^\infty\int_0^\infty |e^{-ax}k(x,y)|^2dxdy  <  \infty 
\end{equation} such that
$$K_tf(x)=1_{(0,t)}(x)\int_0^t k(t-x,y)f(y)dy,\quad f\in \Hi.$$  
Conversely if $K_t$ is given by a measurable function $k(x,y)$ as above, satisfying (\ref{k}) and (\ref{k1}), 
then the $C_0$-semigroup $\{T_t=S_t+K_t\}$ satisfies the above conditions {\rm (i)} and {\rm (ii)}. 

As described in \cite[Section 6]{I}, Tsirelson's type III $E_0$-semigroups arising from off white noises (see \cite{T1}, \cite{T2}) are 
isomorphic to generalized CCR flows associated with $\{S_t\}$ and $\{T_t\}$ as above with the spectral density functions 
$|M(iy)|^2$. 

Now we describe a particular subclass of $C_0$-semigroups which are perturbations of the shift. 
We denote $x\wedge y=\min\{x,y\}$. 
It is proved in \cite[Lemma 5.1]{I} that for any $\varphi \in L^1_{\mathrm{loc}}[0,\infty)\cap 
L^2((0,\infty),x\wedge 1dx)$, there exists a unique $\psi$ and a positive real number $a$ satisfying 
$e_a \psi \in L^1(0,\infty)$ and 
$$\Lt{\psi}=\frac{\Lt{\varphi}}{1-\Lt{\varphi}}.$$ The following theorem is also proved in \cite[Theorem 5.2]{I}. 

\begin{theorem} Let $\varphi \in L^1_{\mathrm{loc}}[0,\infty)\cap 
L^2((0,\infty),x\wedge 1dx)$ and $\psi$ be defined as above.  
Then $k(x,y)$ defined by $$k(x,y)=\varphi(x+y) + \int_0^x \varphi(x+y-s)\psi(s)ds,$$ 
satisfies the conditions (\ref{k}), (\ref{k1}), and consequently gives rise to a $C_0$-semigroup 
$\{T_t\}$ satisfying conditions {\rm (i)} and {\rm (ii)} of Definition \ref{perturb}.
\end{theorem}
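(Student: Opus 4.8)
The plan is to check that the kernel $k(x,y)$ displayed in the statement satisfies the two conditions (\ref{k}) and (\ref{k1}); once that is done, the assertion that $\{T_t=S_t+K_t\}$ is a $C_0$-semigroup satisfying (i) and (ii) of Definition \ref{perturb} is exactly the converse direction of the result recalled above from \cite[Section 4]{I}, so there is nothing further to prove. The only property of $\psi$ I would use is the resolvent identity
$$\psi=\varphi+\varphi*\psi\qquad(\text{equivalently }\psi=\varphi+\psi*\varphi),$$
obtained by inverting the Laplace transform in $\Lt{\psi}=\Lt{\varphi}/(1-\Lt{\varphi})$; rewritten, this says $1+\Lt{\psi}=(1-\Lt{\varphi})^{-1}$, i.e. $\Lt{\varphi}=\Lt{\psi}/(1+\Lt{\psi})$, and it also yields $\varphi*\psi=\psi-\varphi$. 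I would also note at the outset that $\varphi\in L^2((0,\infty),x\wedge 1\,dx)$ forces $\varphi\in L^2_{\mathrm{loc}}(0,\infty)\cap L^2((1,\infty))$, which is what makes all the integrals below absolutely convergent.

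For (\ref{k1}) I would fix $a>0$ with $e_a\psi\in L^1(0,\infty)$ (the $a$ coming with $\psi$), split $k$ into its two summands, and estimate each. For the first, Tonelli gives
$$\int_0^\infty\!\!\int_0^\infty e^{-2ax}|\varphi(x+y)|^2\,dy\,dx=\int_0^\infty|\varphi(u)|^2\,\frac{1-e^{-2au}}{2a}\,du\le C_a\int_0^\infty|\varphi(u)|^2(u\wedge 1)\,du<\infty,$$
which is precisely where the weight $x\wedge 1$ in the hypothesis enters. For the second summand, writing $\varphi_y(s):=\varphi(s+y)$, one has $e^{-ax}\int_0^x\varphi(x+y-s)\psi(s)\,ds=\big((e_a\varphi_y)*(e_a\psi)\big)(x)$, so Young's inequality bounds its $L^2(dx)$-norm by $\|e_a\psi\|_1\,\|e_a\varphi_y\|_2$, and $\int_0^\infty\|e_a\varphi_y\|_2^2\,dy$ equals the same finite quantity $\int_0^\infty|\varphi(u)|^2(1-e^{-2au})/(2a)\,du$ as above. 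Adding the two estimates proves (\ref{k1}).

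For (\ref{k}) I would first simplify the left-hand side: the two terms $\varphi(x+y+t)$ cancel and the two $\psi$-integrals combine to $\int_x^{x+t}\varphi(x+y+t-s)\psi(s)\,ds$, which after $s=x+r$ becomes $\int_0^t\varphi(y+t-r)\psi(x+r)\,dr=(\varphi_y*\psi_x)(t)$ with $\psi_x(r):=\psi(x+r)$ and the convolution taken in the variable $t$. The right-hand side $\int_0^t k(x,s)k(t-s,y)\,ds$ is the convolution in $t$ of $s\mapsto k(x,s)$ with $s\mapsto k(s,y)$, so it is enough to match Laplace transforms in $t$. Using the resolvent identity and a short Fubini/telescoping computation (this is where the truncated integrals $\int_0^x e^{-\tau u}\varphi$, $\int_0^x e^{-\tau u}\psi$ have to be shepherded), the transforms of the two slices of $k$ collapse to
$$\int_0^\infty e^{-\tau s}k(x,s)\,ds=\frac{\widehat{\psi_x}(\tau)}{1+\Lt{\psi}(\tau)},\qquad\int_0^\infty e^{-\tau s}k(s,y)\,ds=\big(1+\Lt{\psi}(\tau)\big)\,\widehat{\varphi_y}(\tau),$$
where $\widehat h(\tau)=\int_0^\infty e^{-\tau s}h(s)\,ds$. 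Multiplying these, the factor $1+\Lt{\psi}(\tau)$ cancels and one is left with $\widehat{\psi_x}(\tau)\widehat{\varphi_y}(\tau)$, which is exactly the transform of $(\varphi_y*\psi_x)(t)$; by injectivity of the Laplace transform this is the assertion of (\ref{k}).

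I expect the verification of (\ref{k}) --- and inside it, the two slice-transform formulas above --- to be the main obstacle: the computation only closes because $\varphi*\psi=\psi-\varphi$ and $\Lt{\varphi}=\Lt{\psi}/(1+\Lt{\psi})$ force all the truncated-convolution terms to telescope, after which the $1+\Lt{\psi}$ factors cancel, and that cancellation is essentially the entire content of (\ref{k}). By contrast (\ref{k1}) is routine once one sees that the Tonelli rearrangement turns the $x$-integral near $0$ into exactly $\int|\varphi(u)|^2(u\wedge 1)\,du$, the quantity controlled by the hypothesis; both the weight $x\wedge 1$ and the $L^1$-smallness of $e_a\psi$ are genuinely needed here. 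With (\ref{k}) and (\ref{k1}) established, the proof is finished by quoting \cite[Section 4]{I}.
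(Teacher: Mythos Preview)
The paper does not actually prove this theorem; it is quoted verbatim from \cite[Theorem 5.2]{I} in the preliminaries section, so there is no in-paper argument to compare against. Your direct verification is correct: the Tonelli/Young estimate for (\ref{k1}) is exactly the right way to see why the weight $x\wedge 1$ appears, and the Laplace-transform computation for (\ref{k}) goes through as you describe---the key identity $\widehat{\psi_x}(\tau)(1-\Lt{\varphi}(\tau))=\widehat{\varphi_x}(\tau)+\int_0^x\widehat{\varphi_{x-u}}(\tau)\psi(u)\,du$ follows from splitting the resolvent identity $\psi(x+s)=\varphi(x+s)+\int_0^{x+s}\varphi(x+s-u)\psi(u)\,du$ at $u=x$, and then the factors $(1-\Lt{\varphi})(1+\Lt{\psi})=1$ cancel as you say.

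One minor remark on rigor: to justify the Laplace transform of $s\mapsto k(x,s)$ you are implicitly using that $k(x,\cdot)\in L^2(0,\infty)$ for almost every $x$, which follows from (\ref{k1}) once that is established; so it is worth making explicit that (\ref{k1}) is proved first and then used in the verification of (\ref{k}).
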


\medskip

We will often be using the following theorem \cite[Theorem 5.3]{I}.

\begin{theorem} Let $\varphi \in L^1_{\mathrm{loc}}[0,\infty)\cap 
L^2((0,\infty),x\wedge 1dx)$ and $\{T_t\}$ be the $C_0$-semigroup constructed from $\varphi$ as in the above theorem, 
which is a perturbation of the shift. 
Let $A_M$, with $M \in \HD_2$, be the generator of $\{T_t\}$ and let $q \in L^2(0,\infty)\setminus D(A)$ be the 
function satisfying $(1+z)\Lt{q}(z)=M(z)$. Then $q$ is continuous at $0$ and $M(z)=q(0)(1-\Lt{\varphi}(z)).$
\end{theorem}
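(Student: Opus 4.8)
The plan is to compute the resolvent of $\{T_t\}$ explicitly, use it to describe $A_M$ as a boundary value problem, and thereby pin down $q$ in closed form.

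\emph{Step 1 (resolvent of $\{T_t\}$).} For $\RE\lambda$ large, $(\lambda-A_M)^{-1}=\int_0^\infty e^{-\lambda t}T_t\,dt$. The shift part gives the resolvent of $\{S_t\}$, the convolution operator $g\mapsto e_\lambda*g$, $(e_\lambda*g)(x)=\int_0^x e^{-\lambda(x-s)}g(s)\,ds$. For the remainder $K_t=T_t-S_t$, the integral kernel representation recalled above, Fubini (legitimate for $\RE\lambda$ large thanks to (\ref{k1})) and the substitution $t\mapsto t-x$ show that $\int_0^\infty e^{-\lambda t}K_t\,dt$ is the rank one operator $g\mapsto(\hat k_\lambda,g)\,e_\lambda$, where $\hat k_\lambda(y)=\Lt{k(\cdot,y)}(\lambda)$ is the Laplace transform of $k$ in its first variable. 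Inserting $k(x,y)=\varphi(x+y)+\int_0^x\varphi(x+y-s)\psi(s)\,ds$ and using $\Lt{\psi}=\Lt{\varphi}/(1-\Lt{\varphi})$, one gets the closed form
\begin{equation*}
\hat k_\lambda(y)=\frac{e^{\lambda y}}{1-\Lt{\varphi}(\lambda)}\int_y^\infty e^{-\lambda x}\varphi(x)\,dx,
\end{equation*}
equivalently $(\hat k_\lambda,g)=(e_\lambda*g,\varphi)/(1-\Lt{\varphi}(\lambda))$ for $g\in\Hi$, by another application of Fubini.

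\emph{Step 2 (generator as a boundary value problem).} Let $\cW$ be the Banach space of locally absolutely continuous $f\in\Hi$ with $f'\in\Hi$, normed by $\|f\|+\|f'\|$; the pairing $(f,\varphi)=\int_0^\infty f\varphi$ is a well-defined bounded functional on $\cW$ (as $\varphi\in L^1_{\mathrm{loc}}[0,\infty)\cap L^2((1,\infty))$). By Step 1, $D(A_M)=\Ran\big((\lambda-A_M)^{-1}\big)$ consists of the functions $f=e_\lambda*g+(\hat k_\lambda,g)e_\lambda$, $g\in\Hi$; differentiating the formula gives $f\in\cW$ with $f'=g-\lambda f$ (so $A_Mf=-f'$) and $f(0)=(\hat k_\lambda,g)$. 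Rewriting $e_\lambda*g=f-f(0)e_\lambda$ and using $(\hat k_\lambda,g)=(e_\lambda*g,\varphi)/(1-\Lt{\varphi}(\lambda))$ from Step 1, a short computation yields $(f,\varphi)=f(0)$; conversely, each $f\in\cW$ with $f(0)=(f,\varphi)$ is of this form. Hence
\begin{equation*}
D(A_M)=\{f\in\cW:\ f(0)=(f,\varphi)\},\qquad A_Mf=-f'.
\end{equation*}

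\emph{Step 3 (identification of $q$ and conclusion).} On $\cW$ the bounded functionals $\ell_1(f)=(f-f',q)$ and $\ell_2(f)=f(0)-(f,\varphi)$ have the same kernel $D(A_M)$, by the definition of $A_M$ and by Step 2 respectively; this kernel is a proper hyperplane since $\ell_2\neq0$. Because $q\neq0$ (as $M=(1+z)\Lt{q}$ is non-zero), $\ell_1\neq0$ too, so $\ell_1=\gamma\ell_2$ for some non-zero scalar $\gamma$. Testing against $f=e_\mu$ with $\RE\mu$ large, and using $(e_\mu-e_\mu',q)=(1+\mu)\Lt{q}(\mu)=M(\mu)$ and $e_\mu(0)-(e_\mu,\varphi)=1-\Lt{\varphi}(\mu)$, we obtain $M(\mu)=\gamma(1-\Lt{\varphi}(\mu))$, hence $M(z)=\gamma(1-\Lt{\varphi}(z))$ on $\rH$ by analytic continuation. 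Therefore
\begin{equation*}
\Lt{q}(z)=\frac{M(z)}{1+z}=\gamma\,\frac{1-\Lt{\varphi}(z)}{1+z}=\gamma\,\Lt{e^{-x}\Big(1-\int_0^x e^s\varphi(s)\,ds\Big)}(z),
\end{equation*}
so by injectivity of the Laplace transform $q(x)=\gamma\,e^{-x}\big(1-\int_0^x e^s\varphi(s)\,ds\big)$. Since $\varphi\in L^1_{\mathrm{loc}}[0,\infty)$, the map $x\mapsto\int_0^x e^s\varphi(s)\,ds$ is continuous on $[0,\infty)$ and vanishes at $0$; hence $q$ is continuous at $0$ with $q(0)=\gamma$, and so $M(z)=q(0)(1-\Lt{\varphi}(z))$.

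\emph{Main obstacle.} The technical core is Step 2 — verifying that the explicit rank one perturbation of the shift resolvent from Step 1 cuts out exactly the boundary condition $f(0)=(f,\varphi)$, in particular the identity $(f,\varphi)=f(0)$ on the range of the resolvent; this rests on the closed form of $\hat k_\lambda$ and ultimately on the relation $\Lt{\psi}=\Lt{\varphi}/(1-\Lt{\varphi})$. Granting Step 2, the continuity of $q$ at $0$ and the formula for $M$ are routine Laplace transform manipulations.
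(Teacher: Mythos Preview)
Your argument is correct. The paper itself does not prove this statement---it is merely quoted as \cite[Theorem 5.3]{I}---so there is no in-paper proof to compare against. As a standalone proof your three steps are sound: the resolvent computation in Step~1 is a straightforward Fubini/Laplace calculation using $\Lt\psi=\Lt\varphi/(1-\Lt\varphi)$; the identification of $D(A_M)$ with the boundary condition $f(0)=(f,\varphi)$ in Step~2 is checked in both directions (for the converse, the difference $\tilde f-f$ solves $h'=-\lambda h$ and satisfies $h(0)=(h,\varphi)$, forcing $h=0$ when $\Lt\varphi(\lambda)\neq1$); and the proportionality argument in Step~3 is legitimate since both $\ell_1(f)=(f-f',q)$ and $\ell_2(f)=f(0)-(f,\varphi)$ are bounded nonzero functionals on $\cW$ with common kernel $D(A_M)$. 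The explicit formula $q(x)=\gamma e^{-x}\bigl(1-\int_0^x e^s\varphi(s)\,ds\bigr)$ then makes the continuity of $q$ at $0$ and the value $q(0)=\gamma$ immediate.
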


\medskip
 
When $\varphi$ is a real function 
in $L^1_{\mathrm{loc}}[0,\infty)\cap L^2((0,\infty),x\wedge 1dx)$ and 
let $M=1-\Lt \varphi$, then we get a $C_0$-semigroup $\{T_t\}$ as described above, which preserves the real functions 
in $\Hi$. 
We will show in Section \ref{examples} that the generalized CCR flows $\alpha^\varphi$ associated with such pairs 
$(\{S_t\},\{T_t\})$ is type III if and only if $\varphi \notin L^2(0,\infty)$. 

In the last section we associate von Neumann algebras to bounded open sets in $(0, \infty)$, for each $E_0$-semigroup 
in this family. 
These local von Neumann algebras form an invariant for the $E_0$-semigroup. 
For a subfamily of the above constructed $E_0$-semigroups, we provide two different open sets for given two different 
$E_0$-semigroups, whose associated von Neumann algebra is a type III factor for the first $E_0$-semigroup and type I 
factor for the second $E_0$-semigroup. 
This way we show that there exists an uncountable family of type III $E_0$-semigroups which are mutually non-cocycle conjugate. 
\bigskip


\section{Sum systems and Generalized CCR flows}\label{genccr} 

In this section we study the $E_0$-semigroup associated with the product
system constructed out of a given sum system. 
Fix a sum system $(\{G_{a,b}\}, \{S_t\})$ and let $(\{H_t\}, \{U_{s,t}\})$ be the product system constructed out of it. 
Denote $G =G_{0,\infty}$, $A_t = A_{t, \infty}$. 
For $x \in G$, define $x_s \in G_{0,s}$, $x_{s,t} \in G_{s,t}$, and $x_{t, \infty}\in G_{t, \infty}$ by the unique decomposition,
$$x = x_s + x_{s,t} + x_{t,\infty}.$$ 
 
We may consider $S_t$ as a bounded linear invertible map from $G$ onto $G_{t, \infty}$. 
Hence $(S_t^*)^{-1}$ is a well-defined bounded operator from $G$ onto $G_{t, \infty}$. 
When there is no confusion, by misusing the notation, we consider $(S_t^*)^{-1}$ as an element of $\B(G)$ itself. 
Define $T_t \in \B(G)$, by 
$$T_t =(A_t^*)^{-1} A_t^{-1}(S_t^*)^{-1}~~ \forall~t \in [0,\infty).$$

\begin{lemma}\label{st-1}
For any $y \in G_{0,\infty}$, the family $\{S_t^{-1}y_{t, \infty}\}_{t>0}$ converges to $y$, as $t \rightarrow 0$.
\end{lemma}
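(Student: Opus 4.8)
The plan is to reduce the lemma to two uniform estimates near $t=0$ together with a density statement, and then close with a routine $\varepsilon/\delta$ argument. For the bookkeeping: since $A_t=A_{t,\infty}\in\cS(G_{0,t}\oplus G_{t,\infty},G)$, the space $G$ is the topological direct sum of $G_{0,t}$ and $G_{t,\infty}$; let $P_t\in\B(G)$ be the associated oblique projection onto $G_{t,\infty}$, so that $y_{t,\infty}=P_ty$, $P_t$ restricts to the identity on $G_{t,\infty}$, and $\|P_t\|\le\|A_t^{-1}\|$. Also $G_{t,\infty}=S_t(G)$, and since $S_t\in\cS(G,G_{t,\infty})$ the inverse $S_t^{-1}\colon G_{t,\infty}\to G$ is bounded.

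\emph{Step 1: $\sup_{0<t\le1}\|S_t^{-1}\|<\infty$.} I would argue by contradiction. If this supremum is infinite there are $t_n\downarrow0$ and unit vectors $z_n\in G$ with $\|S_{t_n}z_n\|\to0$. Since $\{S_s\}$ is a $C_0$-semigroup it is bounded on $[0,1]$, say $\|S_s\|\le M$ there; then $S_1z_n=S_{1-t_n}S_{t_n}z_n$ forces $\|S_1z_n\|\le M\|S_{t_n}z_n\|\to0$, which contradicts the fact that $S_1\in\cS(G,G_{1,\infty})$ is bounded below. Call the (finite) supremum $C$.

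\emph{Step 2: $\sup_{0<t\le1}\|A_t^{-1}\|<\infty$ — the main obstacle.} Equivalently, the angle between $G_{0,t}$ and $G_{t,\infty}$ must stay bounded away from $0$ as $t\to0$; this then gives $\sup_{0<t\le1}\|P_t\|=:C'<\infty$. My plan for this is again a compactness argument: if it fails there are $t_n\downarrow0$, $u_n\in G_{0,t_n}$, $v_n\in G_{t_n,\infty}$ with $\|u_n\|^2+\|v_n\|^2=1$ and $\|u_n+v_n\|\to0$; writing $v_n=S_{t_n}\tilde v_n$ with $\|\tilde v_n\|\le C$ by Step 1, passing to weak limits $u_n\rightharpoonup u$ and $\tilde v_n\rightharpoonup\tilde v$, and using $S_{t_n}\to1$ strongly (so $v_n=S_{t_n}\tilde v_n\rightharpoonup\tilde v$), one obtains $u+\tilde v=0$ with $u\in\bigcap_{\varepsilon>0}G_{0,\varepsilon}$; a contradiction should then follow from the $\cS$-structure (the Hilbert--Schmidt overlap of $G_{0,t}$ with $G_{t,\infty}$) together with the density in Step 3. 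I expect this to be the genuinely delicate point, and it may be cleanest to record $\sup_{0<t\le1}\|A_t^{-1}\|<\infty$ — and, along with it, $\bigcap_{\varepsilon>0}G_{0,\varepsilon}=\{0\}$ — as a separate preliminary lemma on sum systems satisfying the strengthened axioms.

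\emph{Step 3 (density) and conclusion.} By strong continuity of $\{S_t\}$, for every $y\in G$ one has $S_\delta y\to y$ as $\delta\to0$ and $S_\delta y\in G_{\delta,\infty}$, so $\bigcup_{\delta>0}G_{\delta,\infty}$ is dense in $G$. Now fix $y\in G$ and $\varepsilon>0$, choose $\delta>0$ and $\eta\in G_{\delta,\infty}$ with $\|y-\eta\|<\varepsilon$, and write $\eta=S_\delta w$ with $w=S_\delta^{-1}\eta$. For $0<t<\delta$ we have $\eta\in G_{\delta,\infty}\subseteq G_{t,\infty}$, hence $P_t\eta=\eta$ and
\[ S_t^{-1}y_{t,\infty}=S_t^{-1}P_ty=S_t^{-1}\eta+S_t^{-1}P_t(y-\eta). \]
For the first term, $\eta=S_\delta w=S_t\big(S_{\delta-t}w\big)$, so $S_t^{-1}\eta=S_{\delta-t}w\to S_\delta w=\eta$ as $t\to0$ by strong continuity, whence $\|S_t^{-1}\eta-y\|\le\|S_{\delta-t}w-\eta\|+\|\eta-y\|<2\varepsilon$ for all small $t$. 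For the second term, $\|S_t^{-1}P_t(y-\eta)\|\le\|S_t^{-1}\|\,\|P_t\|\,\|y-\eta\|\le CC'\varepsilon$ by Steps 1 and 2. Hence $\limsup_{t\to0}\|S_t^{-1}y_{t,\infty}-y\|\le(2+CC')\varepsilon$, and letting $\varepsilon\to0$ finishes the proof.
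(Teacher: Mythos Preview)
Your Step~1 matches the paper's argument for $\sup_{0<t\le 1}\|S_t^{-1}\|<\infty$. But from there the paper takes a much shorter route that bypasses your Step~2 entirely: from the identity
\[
S_t^{-1}y_{t,\infty}-y=S_t^{-1}\bigl(y_{t,\infty}-S_ty\bigr)
\]
one gets $\|S_t^{-1}y_{t,\infty}-y\|\le\|S_t^{-1}\|\bigl(\|y_{t,\infty}-y\|+\|S_ty-y\|\bigr)$, and the two terms on the right tend to~$0$ by \cite[Proposition~24]{pdct} (which gives $y_{t,\infty}\to y$) and strong continuity of $\{S_t\}$. No uniform control of $\|A_t^{-1}\|$ or $\|P_t\|$ is used.

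Your Step~2 is a genuine gap, and the sketch you give does not close it. From $\|u_n\|^2+\|v_n\|^2=1$ and $\|u_n+v_n\|\to 0$ you extract weak subsequential limits $u_n\rightharpoonup u\in\bigcap_{\varepsilon}G_{0,\varepsilon}$ and $v_n\rightharpoonup\tilde v$ with $u+\tilde v=0$; even granting $\bigcap_{\varepsilon}G_{0,\varepsilon}=\{0\}$, this only yields $u=\tilde v=0$, which is no contradiction --- a unit-norm sequence may perfectly well converge weakly to~$0$. The hypothesis $A_{t_n}\in\cS$ is a Hilbert--Schmidt condition on each $A_{t_n}$ separately, with no uniformity in~$n$, so it provides no extra leverage. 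Worse, the two facts you propose to record as a preliminary lemma --- $\sup_{0<t\le 1}\|P_t\|<\infty$ and $\bigcap_{\varepsilon}G_{0,\varepsilon}=\{0\}$ --- are, via the uniform boundedness principle together with your own Step~3 density argument, \emph{equivalent} to the strong convergence $P_ty\to y$, i.e.\ $y_{t,\infty}\to y$. That is precisely the content of \cite[Proposition~24]{pdct} which the paper invokes; so your Step~2 is asking for an independent proof of exactly the external input the paper uses, and you have not supplied one. Your Step~3 is correct but rests on Step~2 and therefore does not stand on its own.
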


\begin{proof}
We have $$\|S_t^{-1}y_{t, \infty}-y\| =\|S_t^{-1}(y_{t,\infty}-S_ty)\|\leq 
\|S_t^{-1}\|(\|y_{t, \infty}-y\| + \|S_ty -y\|).$$ 
We claim that there exist positive numbers $c$ and $\varepsilon$ such that 
for all $x\in G$ and $t\in (0,\varepsilon)$ we have $||S_tx||\geq c\|x\|$. 
Indeed, if it were not the case, we would have a decreasing sequence $\{t_n\}_{n=1}^\infty$ of positive numbers 
converging to zero and a sequence $\{x_n\}_{n=1}^\infty$ in $G$ satisfying $\|x_n\|=1$ such that 
$\{\|S_{t_n}x_n \|\}_{n=1}^\infty$ converges to zero. 
Note that since $\{S_t\}$ is a $C_0$-semigroup, there exist positive constants $a$ and $b$ such that 
$\|S_t\|\leq ae^{bt}$ holds for all $t>0$ (see \cite[page 232]{Y}). 
We choose $s>t_1$. 
Then 
$$\|S_sx_n\|=\|S_{s-t_n}S_{t_n}x_n\|\leq \|S_{s-t_n}\|\|S_{t_n}x_n\|\leq ae^{bs}\|S_{t_n}x_n\|\to 0,\; (n\to\infty),$$
which is a contradiction because $S_s$ is invertible as a map from $G$ onto $G_{s,\infty}$. 
Therefore the claim is proved, and so $\|S_t^{-1}\|$ is bounded when $t$ tends to 0. 
On the other hand $S_ty$ converges to $y$ by the strong continuity of the semigroup $\{S_t\}$ and 
$y_{t , \infty}$ converges to $y$ (see \cite[Proposition 24]{pdct}), which finishes the proof. 
\end{proof}

\medskip

\begin{lemma}\label{Ttsemigroup}
$\{T_t\}$ forms a $C_0$-semigroup on $G$.
\end{lemma}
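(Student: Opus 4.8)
The plan is to rewrite $T_t$ in terms of an oblique projection, deduce the semigroup law from the internal direct sum structure of the sum system, and then obtain strong continuity by passing to the adjoint semigroup. First I would record a transparent description of $T_t$. For $t\in[0,\infty)$ let $Q_t\in\B(G)$ be the idempotent with range $G_{t,\infty}$ and kernel $G_{0,t}$ (bounded since $A_t$ is invertible), so that $Q_tx=x_{t,\infty}$ in the notation $x=x_t+x_{s,t}+x_{t,\infty}$ fixed above, and regard $S_t$ as the invertible operator $G\to G_{t,\infty}$. Using $A_t^*z=E_{0,t}z\oplus E_{t,\infty}z$, with $E_{0,t}$ and $E_{t,\infty}$ the orthogonal projections of $G$ onto $G_{0,t}$ and $G_{t,\infty}$, a direct computation gives that $A_t^{-1}(S_t^*)^{-1}$ takes values in $\{0\}\oplus G_{t,\infty}$ and that $(A_t^*)^{-1}(0\oplus w)=Q_t^*w$, whence $T_t=Q_t^*(S_t^*)^{-1}$, or equivalently $T_t^*=S_t^{-1}Q_t$. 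As a sanity check $T_t^*S_t=S_t^{-1}(Q_t|_{G_{t,\infty}})S_t=1$, which is condition (i) of Definition \ref{perturb}; and $T_0=1$ since $S_0$ and $Q_0$ are the identity.

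Next I would establish the semigroup law. Taking adjoints, it suffices to prove $T_{s+t}^*=T_t^*T_s^*$, i.e. $S_{s+t}^{-1}Q_{s+t}=S_t^{-1}Q_tS_s^{-1}Q_s$. The geometric input is that $G_{t,\infty}=S_t(G)$ (axiom (i) of Definition \ref{sumsystem} with second index $\infty$), so $S_s(G_{t,\infty})=S_{s+t}(G)=G_{s+t,\infty}$ and $S_{s+t}$ factors as $G\xrightarrow{\,S_t\,}G_{t,\infty}\xrightarrow{\,S_s\,}G_{s+t,\infty}$. Cancelling the injective $S_t^{-1}$ and then applying $S_s$, the identity reduces to $Q_{s+t}=(S_sQ_tS_s^{-1})Q_s$ on $G$. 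Now $S_s$ is a linear isomorphism of $G$ onto $G_{s,\infty}$ carrying $G_{0,t}$ onto $G_{s,s+t}$ and $G_{t,\infty}$ onto $G_{s+t,\infty}$, hence carrying the decomposition $G=G_{0,t}\dotplus G_{t,\infty}$ onto $G_{s,\infty}=G_{s,s+t}\dotplus G_{s+t,\infty}$; consequently $S_sQ_tS_s^{-1}$ is exactly the projection of $G_{s,\infty}$ onto $G_{s+t,\infty}$ along $G_{s,s+t}$. Axiom (ii), valid here also for the second index $\infty$, together with this decomposition of $G_{s,\infty}$, exhibits the nested internal direct sum $G=G_{0,s}\dotplus G_{s,s+t}\dotplus G_{s+t,\infty}$ in which $G_{0,s+t}=G_{0,s}\dotplus G_{s,s+t}$ and $G_{s,\infty}=G_{s,s+t}\dotplus G_{s+t,\infty}$; for such a three-term decomposition the equality $Q_{s+t}=(S_sQ_tS_s^{-1})Q_s$ is the elementary observation that $G$ may be projected onto the last summand in two stages, first modulo $G_{0,s}$ and then modulo $G_{s,s+t}$.

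For strong continuity I would show that $\{T_t^*\}$ is a $C_0$-semigroup, which is enough because the adjoint of a $C_0$-semigroup on a Hilbert space is again a $C_0$-semigroup. Since $\{T_t^*\}$ is already a semigroup with $T_0^*=1$, this reduces to checking $\lim_{t\to0^+}T_t^*x=x$ for every $x\in G$. Setting $z=T_t^*x=S_t^{-1}Q_tx$, we have $S_tz=Q_tx$, hence $S_t(z-x)=(Q_tx-x)+(x-S_tx)$ and therefore $\|z-x\|\le\|S_t^{-1}\|\,(\|Q_tx-x\|+\|x-S_tx\|)$. Here $\|S_t^{-1}\|$ stays bounded as $t\to0$ by the claim established inside the proof of Lemma \ref{st-1}, $\|x-S_tx\|\to0$ by the strong continuity of $\{S_t\}$, and $\|Q_tx-x\|=\|x_{t,\infty}-x\|\to0$ by \cite[Proposition 24]{pdct} (already used in the proof of Lemma \ref{st-1}); so $T_t^*x\to x$.

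The step I expect to be the main obstacle is the projection identity $Q_{s+t}=(S_sQ_tS_s^{-1})Q_s$: the whole content lies in correctly identifying the ranges and kernels of the various oblique projections and in verifying that $S_s$ does carry $G_{t,\infty}$ onto $G_{s+t,\infty}$, so that the three relevant decompositions are the nested ones; once this is in place the identity is a single line of linear algebra, and everything else is bookkeeping about which Hilbert space each operator acts on.
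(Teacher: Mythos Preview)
Your proof is correct and is essentially the paper's argument in a different packaging: your operator identity $T_t^{*}=S_t^{-1}Q_t$ is precisely the paper's equation $\langle T_tx,y\rangle=\langle x,S_t^{-1}y_{t,\infty}\rangle$, your projection identity $Q_{s+t}=(S_sQ_tS_s^{-1})Q_s$ is exactly the computation $S_s(S_s^{-1}y_{s,\infty})_{t,\infty}=y_{s+t,\infty}$ that the paper performs inside the inner product, and your strong continuity of $\{T_t^{*}\}$ is literally Lemma~\ref{st-1}. The only cosmetic difference is that you pass to $\{T_t\}$ via ``the adjoint of a $C_0$-semigroup on a Hilbert space is $C_0$,'' whereas the paper passes from weak to strong continuity directly; both rely on the same standard fact.
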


\begin{proof}
Note 
that $T_t x = (A_t^*)^{-1}(0 \oplus (S_t^*)^{-1}x)$, and therefore 
\begin{eqnarray}\label{ip}
\langle T_t x, y\rangle =\langle (0 \oplus (S_t^*)^{-1}x), A_t^{-1}y\rangle  = \langle x, S_t^{-1}y_{t,\infty} \rangle, & \forall & x,y \in 
G.\end{eqnarray} 
Hence for any 
$x, y \in G$, 
\begin{eqnarray*}
\langle T_s T_t x, y\rangle & = & \langle T_t x, 
S_s^{-1}(y_{s,\infty})\rangle\\
& =& \langle (S_t^*)^{-1} x, (S_s^{-1}y_{s, \infty})_{t, \infty} 
\rangle\\
& = & \langle (S_{s+t}^*)^{-1}x, S_s(S_s^{-1}y_{s, 
\infty})_{t, \infty} \rangle
\end{eqnarray*} 
Suppose $y_{s,\infty}=S_s y^\prime$ for some $y^\prime \in G$, then 
$S_s(S_s^{-1}y_{s,
\infty})_{t, \infty} = S_s y^\prime_{t, \infty}=y_{s+t, \infty}$. 
Hence 
we get 
\begin{eqnarray*}
\langle T_s T_t x, y\rangle & = & \langle (S_{s+t}^*)^{-1}x, y_{s+t, 
\infty}\rangle\\
&=& \langle T_{s+t}x, y\rangle
\end{eqnarray*} So $\{T_t\}$ forms a semigroup. Now equation (\ref{ip}), 
thanks to Lemma \ref{st-1}, will imply that the semigroup 
$\{T_t\}$ is weak and hence strongly continuous (see \cite[page 233]{Y}). 
So $\{T_t\}$ forms a $C_0$-semigroup.
\end{proof}

We say that the pair $(\{S_t\},\{T_t\})$ is associated with the sum system $(G_{a,b},S_t)$. 
\medskip

The $E_0$-semigroup associated with the product system $(H_t, U_{s,t})$ can 
be described in terms of these two semigroups, $\{S_t\}$ and $\{T_t\}$ as follows. 
Let  $H=\Gamma(G^\C)$. 

\begin{prop}\label{e0} Let the notation be as above. 
Then there is a unique $E_0$-semigroup $\alpha_t$ on $\B(H)$ satisfying
$$\alpha_t(W(x))=W(S_tx),~~\alpha_t(W(iy))=W(iT_ty),~x, y \in G.$$
Moreover the product system associated with this $E_0$-semigroup is the one constructed out of the sum system.
\end{prop}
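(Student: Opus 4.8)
The plan is to realize $\alpha_t$ directly in spatial form, as conjugation by the product-system unitary $U_{t,\infty}$, and to read both the $E_0$-semigroup axioms and the product system off that form; the quasi-equivalence Lemma \ref{quasi-equivalence} re-enters only at the end. Write $H=\Gamma(G^\C)$ and
$$V_t=U_{t,\infty}=\Gamma(A_t)(1_{H_t}\otimes\Gamma(S_t)).$$
By axiom (i) of a sum system $S_t\in\cS(G,G_{t,\infty})$, and by axiom (ii) (now imposed at $t=\infty$ too) $A_t=A_{t,\infty}\in\cS$, so Theorem \ref{shales} makes $V_t$ a well-defined unitary from $H_t\otimes H$ onto $H$, under the canonical identification $\Gamma((G_{0,t}\oplus G_{t,\infty})^\C)\cong H_t\otimes\Gamma(G_{t,\infty}^\C)$.

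The heart of the matter is the identity $V_t(1_{H_t}\otimes W(x+iy))V_t^*=W(S_tx+iT_ty)$ for $x,y\in G$. I would obtain it by applying Theorem \ref{shales} twice: first $\Gamma(S_t)W(x+iy)\Gamma(S_t)^*=W(S_tx+i(S_t^*)^{-1}y)$ on $\Gamma(G_{t,\infty}^\C)$, so that amplifying by $1_{H_t}$ gives $W((0\oplus S_tx)+i(0\oplus(S_t^*)^{-1}y))$ on $\Gamma((G_{0,t}\oplus G_{t,\infty})^\C)$; then conjugating by $\Gamma(A_t)$ turns this into $W(S_{A_t}((0\oplus S_tx)+i(0\oplus(S_t^*)^{-1}y)))$, where $S_{A_t}$ carries the real component to $A_t(0\oplus S_tx)=S_tx$ and the imaginary component to $(A_t^*)^{-1}(0\oplus(S_t^*)^{-1}y)$, which is precisely $T_ty$ by the formula $T_tw=(A_t^*)^{-1}(0\oplus(S_t^*)^{-1}w)$ from the proof of Lemma \ref{Ttsemigroup}. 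Granting this, $\alpha_t(X):=V_t(1_{H_t}\otimes X)V_t^*$ is automatically a normal unital $*$-endomorphism of $\B(H)$ with $\alpha_t(W(x))=W(S_tx)$ and $\alpha_t(W(iy))=W(iT_ty)$, and it is the unique endomorphism with these values since the Weyl operators span a $\sigma$-weakly dense $*$-subalgebra of $\B(H)$ and $\alpha_t$ is normal.

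That $\{\alpha_t\}$ is an $E_0$-semigroup then comes quickly. The relations $\alpha_s\circ\alpha_t=\alpha_{s+t}$ and $\alpha_0=\mathrm{id}$ are checked on Weyl operators using $S_sS_t=S_{s+t}$, $T_sT_t=T_{s+t}$ (Lemma \ref{Ttsemigroup}) and $S_0=T_0=1$, then extended by uniqueness; equivalently, the needed relation $V_s(1_{H_s}\otimes V_t)=V_{s+t}(U_{s,t}\otimes 1_H)$ is just the associativity axiom of the product system $(\{H_t\},\{U_{s,t}\})$ evaluated at the endpoint $\infty$, valid by the sum-system axioms exactly as in the verification of \cite{pdct}. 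Weak continuity follows from $\|\alpha_t\|\le 1$ and the continuity of $t\mapsto\langle W(S_tx+iT_ty)e(u),e(v)\rangle$, an explicit exponential in the strongly continuous curves $t\mapsto S_tx$ and $t\mapsto T_ty$. Finally, since $\alpha_t$ is a normal endomorphism of $\B(H)$, the representation $W(x+iy)\mapsto W(S_tx+iT_ty)$ is, via $V_t$, unitarily equivalent to $1_{H_t}\otimes(\text{Fock})$ and hence quasi-equivalent to the Fock representation; Lemma \ref{quasi-equivalence} then forces $S_t-T_t$ to be Hilbert--Schmidt, so $\{T_t\}$ is a perturbation of $\{S_t\}$ and $\alpha$ is a generalized CCR flow in the sense of Definition \ref{perturb}.

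For the remaining assertion, the spatial form $\alpha_t=\mathrm{Ad}(V_t)\circ(1_{H_t}\otimes\,\cdot\,)$ makes the identification of the product system routine: its $t$-th space $\{T\in\B(H):\alpha_t(X)T=TX\ \forall X\}$ equals $\{V_t(\xi\otimes 1_H):\xi\in H_t\}$, the map $j_t:\xi\mapsto V_t(\xi\otimes 1_H)$ is an isometric isomorphism onto it (the relation $S^*T=\langle T,S\rangle 1_H$ recovers the inner product of $H_t$), and the product of two intertwiners is their operator composition. Since $j_s(\xi)\circ j_t(\eta)=V_s(1_{H_s}\otimes V_t)(\xi\otimes\eta\otimes\,\cdot\,)$ and $j_{s+t}(U_{s,t}(\xi\otimes\eta))=V_{s+t}(U_{s,t}\otimes 1_H)(\xi\otimes\eta\otimes\,\cdot\,)$, the relation $V_s(1_{H_s}\otimes V_t)=V_{s+t}(U_{s,t}\otimes 1_H)$ noted above shows that $\{j_t\}$ is an isomorphism from $(\{H_t\},\{U_{s,t}\})$ onto the product system of $\alpha$. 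The one place demanding genuine care is the Weyl-operator computation of the second paragraph: one must keep track of which of $G_{0,t}$, $G_{t,\infty}$, $G$ (and their complexifications) each vector inhabits, recall that the maps $S_A$ are only real-linear, and check that the convention by which $(S_t^*)^{-1}$ is regarded inside $\B(G)$ is consistent with the transport formulas of Theorem \ref{shales}; everything else is bookkeeping or an appeal to an earlier result.
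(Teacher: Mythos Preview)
Your argument is correct and follows essentially the same route as the paper: you define $\alpha_t(X)=V_t(1_{H_t}\otimes X)V_t^*$ with $V_t=\Gamma(A_t)(1_{H_t}\otimes\Gamma(S_t))$, compute its action on Weyl operators via Theorem~\ref{shales} and the identity $T_tx=(A_t^*)^{-1}(0\oplus(S_t^*)^{-1}x)$, and identify the intertwiner space $E_t$ with $H_t$ via $\xi\mapsto V_t(\xi\otimes 1_H)$, exactly as the paper does (its $T_{\xi_t}$ is your $j_t(\xi_t)$). You supply a bit more than the paper's proof --- the explicit check of the semigroup law and weak continuity, and the observation that Lemma~\ref{quasi-equivalence} yields $S_t-T_t$ Hilbert--Schmidt --- the last of which the paper records separately as Corollary~\ref{sum to C_0}.
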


\begin{proof}
Fix $t\geq 0$. Set $H_{t, 
\infty}=\Gamma(G_{t,\infty}^\C)$ and $U_t=\Gamma(A_t):H_t 
\otimes H_{t,\infty} \rightarrow H$.

The map $T \mapsto \Gamma(S_t) T \Gamma(S_t)^*$ is an isomorphism between 
$\B(H)$ and $B(H_{t, \infty})$. So we get a $*$-endomorphism $\alpha_t^0$ of 
$\B(H)$ defined by 
\begin{eqnarray}\label{barst}
T & \mapsto & 
U_t (1_{H_{0,t}} \otimes \Gamma(S_t)T 
\Gamma(S_t)^*)U_t^*
\end{eqnarray}
We claim that the above endomorphism is $\alpha_t$.  
Indeed, for $x,y\in G$ we have 
\begin{eqnarray*}\alpha_t^0(W(x+iy))
 &=& \Gamma(A_t)W(0\oplus (S_tx+iS_t^{-1*}y))\Gamma(A_t)^*\\
 &=&W(S_tx+iA_t^{-1*}(0\oplus S_t^{-1*}y))\\
 &=&W(S_tx+iT_ty),
\end{eqnarray*}
where we identify $1_{H_t}\otimes W(z)$ with $W(0\oplus z)$ for $z\in G_{t,\infty}^\C$.
Hence $\alpha_t$ defined in the proposition extends to a $*$-endomorphism on $\B(H)$, 
and we get a generalized CCR flow.  Since any endomorphism on $\B(H)$ is determined uniquely by its restriction to the Weyl algebra, $\alpha_t$ is the unique extension.

To determine the product system of this $E_0$-semigroup, we want to 
determine the family of Hilbert spaces $$ E_t = 
\label{Ht}\{T \in \B(H): \alpha
_t(X)T=TX~~~\forall~~X 
\in \B(H)\}.$$
Given a $\xi_t \in H_t$, we define a bounded operator $T_{\xi_t}$ 
on 
$H$ by the 
following prescription, $$T_{\xi_t}\xi=U_t(\xi_t \otimes 
\Gamma(S_t)\xi),~~\forall ~ \xi \in H.$$ It is easy to verify that 
$T_{\xi_t}$ defines a bounded operator, and 
that $$\langle T_{\xi_t^\prime}^*T_{\xi_t}\xi, \eta\rangle =\langle \xi_t, \xi_t^\prime 
\rangle_{H_t}1_H , 
~~\forall ~\xi_t, \xi_t^\prime \in H_t.$$
We claim that $T_{\xi_t} \in E_t$. In fact by the definition of $\alpha_t$ 
(see equation  (\ref{barst})) 
\begin{eqnarray*}
\alpha_t(X)T_{\xi_t} \xi& = & U_t (1_{H_{0,t}} \otimes \Gamma(S_t) X
\Gamma(S_t)^*)U_t^* U_t(\xi_t \otimes
\Gamma(S_t)\xi)\\
& =& U_t (\xi_t \otimes \Gamma(S_t)X \xi)\\
& = & T_{\xi_t}X\xi
\end{eqnarray*}
So the association $ \xi_t\mapsto T_{\xi_t}$ provides an isometry between 
$H_t$ into $E_t$. We only need to prove that this map is surjective. 
Suppose $T \in E_t$ be such that $T^*T_{\xi_t}=0 ~~\forall ~ \xi_t \in 
H_t$, that is $T^*$ vanishes on all vectors of the form $U_t(\xi_t \otimes 
\Gamma(S_t)\xi), ~\xi_t \in H_t, \xi \in H$. But such vectors forms 
a total subset of $H$, and hence we conclude that $T=0$. It is also easy 
to verify that the product structure is also preserved.
\end{proof}

The above proposition together with Lemma \ref{quasi-equivalence} imply 

\begin{cor} \label{sum to C_0} Let the notation be as above. 
Then $\{T_t\}$ is a perturbation of $\{S_t\}$. 
\end{cor}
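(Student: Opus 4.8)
The plan is to verify the two conditions of Definition \ref{perturb} for the pair $(\{S_t\},\{T_t\})$ attached to the sum system, drawing everything from Proposition \ref{e0} and Lemma \ref{quasi-equivalence}. Recall first that $\{S_t\}$ is a $C_0$-semigroup on the real Hilbert space $G$ by the sum system axioms, and $\{T_t\}$ is one by Lemma \ref{Ttsemigroup}; so only conditions (i) and (ii) of the definition are genuinely at issue.

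For condition (i) I would read $T_t^*S_t=1$ directly off equation (\ref{ip}) from the proof of Lemma \ref{Ttsemigroup}. For $x,y\in G$,
\[
\inpr{T_t^*S_ty}{x}=\inpr{S_ty}{T_tx}=\inpr{x}{S_t^{-1}(S_ty)_{t,\infty}},
\]
and since $S_ty\in G_{t,\infty}$ we have $(S_ty)_{t,\infty}=S_ty$, so the right-hand side equals $\inpr{x}{y}$; hence $T_t^*S_t=1$. (Equivalently, this is already implicit in Proposition \ref{e0}: the formula $\alpha_t(W(x+iy))=W(S_tx+iT_ty)$ defines a $*$-endomorphism, so the Weyl relations are respected, which, as noted just before Lemma \ref{quasi-equivalence}, is exactly the statement $T_t^*S_t=1$.)

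For condition (ii) I would argue by quasi-equivalence. By Proposition \ref{e0}, $\{\alpha_t\}$ is an $E_0$-semigroup, so each $\alpha_t$ is a unital normal $*$-endomorphism of $\B(H)$, hence injective, hence a normal $*$-isomorphism of $\B(H)$ onto the von Neumann algebra $\alpha_t(\B(H))$; consequently $\alpha_t$, viewed as a representation of $\B(H)$ on $H$, is quasi-equivalent to the identity representation. Restricting to the Weyl algebra $\cW$ for $G^\C$, and using that its vacuum representation is irreducible (so $\cW''=\B(H)$) together with normality of $\alpha_t$ (so $\alpha_t(\cW)''=\alpha_t(\B(H))$), the same $*$-isomorphism $\alpha_t\colon\B(H)\to\alpha_t(\B(H))$ shows that the representation $\pi_t$ of $\cW$ given by $\pi_t(W(x+iy))=W(S_tx+iT_ty)$ is quasi-equivalent to the vacuum representation — which is precisely the ``necessary and sufficient'' remark stated just before Lemma \ref{quasi-equivalence}. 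Now Lemma \ref{quasi-equivalence}, applied with $S=S_t$ and $T=T_t$ (legitimate because $T_t^*S_t=1$ by (i)), yields that $S_t-T_t$ is a Hilbert-Schmidt operator, which is condition (ii).

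I do not expect a genuine obstacle: the real content is already packaged in Proposition \ref{e0} and Lemma \ref{quasi-equivalence}, and the only point requiring a line of care is the bookkeeping that quasi-equivalence of $\alpha_t$ with the identity representation of $\B(H)$ descends to quasi-equivalence of $\pi_t$ with the vacuum representation of $\cW$ — which, as indicated above, is exactly the observation made in the discussion preceding Lemma \ref{quasi-equivalence}, so it can simply be invoked.
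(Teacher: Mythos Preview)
Your proof is correct and follows exactly the approach the paper intends: the corollary is stated as an immediate consequence of Proposition \ref{e0} and Lemma \ref{quasi-equivalence}, and you have simply unpacked that implication, deriving $T_t^*S_t=1$ from equation (\ref{ip}) and the Hilbert-Schmidt property of $S_t-T_t$ from the quasi-equivalence criterion. The only difference is that you supply more detail than the paper, which merely records that the two results together yield the corollary.
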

\medskip

Now we investigate the reverse question. 
Let $G$ be a real Hilbert space and $H = \Gamma(G^\C)$. 
We assume that a $C_0$-semigroup $\{T_t\}$ is a perturbation of another $C_0$-semigroup $\{S_t\}$ acting on $G$. 
Our task is to determine the product system for the generalized CCR flow associated with the pair 
$(\{S_t\}, \{T_t\})$. 

Define $$G_{0,t} = \Ker(T_t^*),~ 
G_{(0,\infty)}=\overline{\bigcup_{t > 0} G_{0,t}},~ G_{a,b}=S_a(G_{0,b-a}).$$  
We verify that $G_{a,b} \subseteq G_{c,d} ~~\mbox{whenever}~~(a,b) \subseteq (c,d).$ 
This is same as saying $$S_c S_{a-c}(G_{0, b-a}) \subseteq S_c(G_{0, d-c}).$$ 
So we only need to verify that $S_{a-c}x \in G_{0,d-c}$, for any $x \in G_{0,b-a}$. 

First we assume that $d \neq \infty$. We verify that $S_{a-c}x \perp \Ran(T_{d-c})$ as follows. For $z  \in G$,
\begin{eqnarray*}
\langle S_{a-c}x, T_{d-c}z\rangle & = & \langle x , S_{a-c}^*T_{a-c}T_{d-a} z\rangle \\
& = & \langle x, T_{d-a}z \rangle\\
& = & \langle x, T_{b-a}T_{d-b}z \rangle\\
&=& 0. 
\end{eqnarray*}

Now let $d = \infty$. If $b \neq \infty$ then the above verification 
itself implies that, for any finite $d^\prime \geq b$, we have  $$S_c 
S_{a-c}(G_{0, b-a}) \subseteq
S_c(G_{0,d^\prime-c}) \subseteq S_c(G_{0,\infty}).$$  So we have to 
consider only the case when $b =\infty$, and equivalently we only have 
to show that $S_t(G_{0,\infty})\subseteq G_{0,\infty}$ for any $t \geq 
0$.  But this 
follows immediately as we have 
already shown that for any fixed $s \geq 0$, $S_s(G_{0,t})\subset 
G_{0,\infty}$ for all $s, t 
\geq 
0$. 

\medskip
Note that since $S_t$ has a left inverse $T_t^*$, the range of $S_t$ is closed and the 
operator $S_t$ from $G$ onto $\Ran(S_t)$ has a bounded inverse. 

\begin{lemma} \label{direct sum}
Let the notation be as above and $0<t<s\leq \infty$. Then 
\begin{itemize}
\item [$(\rm{i})$] We have $T_t^*G_{0,s}\subset G_{0,s-t}$.

\item [$({\rm ii})$] The two operators $S_tT_t^*$ and $1-S_tT_t^*$ are idempotents such that 
$\Ran(S_tT_t^*)=\Ker(1-S_tT_t^*)=\Ran(S_t)$ and  
$\Ran(1-S_tT_t^*)=\Ker (S_tT_t^*)=G_{0,t}$. 
In particular, the Hilbert space $G$ is a topological direct sum of $G_{0,t}$ and 
$\Ran(S_t)$.  

\item [$(\rm{iii})$] The Hilbert space $G_{0,s}$ is a topological direct sum of $G_{0,t}$ and $S_tG_{0,s-t}$. 
\end{itemize}
\end{lemma}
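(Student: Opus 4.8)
The plan is to establish the three assertions in order, using repeatedly the defining relation $T_t^*S_t = 1$ and the definition $G_{0,u} = \Ker(T_u^*)$, $G_{a,b} = S_a(G_{0,b-a})$. For part (i), take $x \in G_{0,s} = \Ker(T_s^*)$ with $t < s \leq \infty$; I want to show $T_t^*x \in \Ker(T_{s-t}^*)$ (interpreting $T_\infty^* = 0$ when $s = \infty$, which makes the claim trivial, so assume $s < \infty$). Using the semigroup property $T_s = T_{s-t}T_t$, one computes $T_{s-t}^*(T_t^*x) = (T_{s-t}^*T_t^*)x = (T_tT_{s-t})^*x = T_s^*x = 0$. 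Hence $T_t^*x \in G_{0,s-t}$, proving (i).

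For part (ii), set $P = S_tT_t^*$. From $T_t^*S_t = 1$ we get immediately $P^2 = S_t(T_t^*S_t)T_t^* = S_tT_t^* = P$, so $P$ and $1-P$ are idempotents (not necessarily self-adjoint). Since $S_t$ has the bounded left inverse $T_t^*$, its range $\Ran(S_t)$ is closed and $S_t\colon G \to \Ran(S_t)$ is a Banach-space isomorphism (this was already noted just before the lemma). Now $\Ran(P) \subseteq \Ran(S_t)$ is clear, and conversely if $z = S_tw \in \Ran(S_t)$ then $Pz = S_tT_t^*S_tw = S_tw = z$, so $\Ran(S_t) \subseteq \Ran(P)$ and also $\Ran(S_t) \subseteq \Ker(1-P)$; combined with $\Ran(P) = \Ker(1-P)$ for an idempotent, we get $\Ran(P) = \Ker(1-P) = \Ran(S_t)$. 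For the complementary idempotent: $\Ker(P) = \Ker(S_tT_t^*) = \Ker(T_t^*)$ because $S_t$ is injective, and $\Ker(T_t^*) = G_{0,t}$ by definition; meanwhile $\Ran(1-P) = \Ker(P) = G_{0,t}$. Thus $G = \Ran(1-P) \oplus \Ran(P) = G_{0,t} \dotplus \Ran(S_t)$ as a topological direct sum, which is the last assertion of (ii).

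For part (iii), I would restrict the decomposition of (ii) to the subspace $G_{0,s}$ and identify the pieces. First note $G_{0,t} \subseteq G_{0,s}$ (this was verified above, just before the statement of Lemma \ref{direct sum}, via $G_{a,b} \subseteq G_{c,d}$ for $(a,b) \subseteq (c,d)$), and $S_tG_{0,s-t} = G_{t,s} \subseteq G_{0,s}$ likewise. Given $x \in G_{0,s}$, write $x = (1-P)x + Px = (x - S_tT_t^*x) + S_tT_t^*x$; the first summand lies in $G_{0,t} \subseteq G_{0,s}$, and by part (i) we have $T_t^*x \in G_{0,s-t}$, so the second summand $S_t(T_t^*x)$ lies in $S_tG_{0,s-t}$. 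This shows $G_{0,s} = G_{0,t} + S_tG_{0,s-t}$; the sum is direct and topological because it is the restriction to the closed invariant subspace $G_{0,s}$ of the topological direct sum $G = G_{0,t} \dotplus \Ran(S_t)$ from (ii), noting $S_tG_{0,s-t} \subseteq \Ran(S_t)$ and $G_{0,t} \cap S_tG_{0,s-t} \subseteq G_{0,t} \cap \Ran(S_t) = \{0\}$. I expect the only mild subtlety to be the bookkeeping for the $s = \infty$ case in (i) and making sure the idempotent $P$ being non-self-adjoint does not cause trouble — it does not, since all the range/kernel identities used hold for arbitrary bounded idempotents.
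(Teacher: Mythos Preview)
Your arguments for (ii) and (iii) are correct and essentially the same as what the paper sketches (``direct computation'' and ``follows from (i) and (ii)''). The finite-$s$ case of (i) is also correct.

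There is, however, a genuine gap in the case $s=\infty$ of (i). You write that one may ``interpret $T_\infty^*=0$'', making the claim trivial. But $G_{0,\infty}$ is \emph{not} defined as $\Ker(T_\infty^*)$; it is defined as $\overline{\bigcup_{u>0}G_{0,u}}$, and this is in general a \emph{proper} closed subspace of $G$ (the paper explicitly introduces the orthogonal projection $P\colon G\to G_{0,\infty}$ immediately after this lemma, and later notes that $G^M_{0,\infty}=\HiR$ holds precisely when $M$ is outer). So your ``trivial'' claim reduces to $T_t^*G\subset G$, which is not the assertion; what is needed is $T_t^*G_{0,\infty}\subset G_{0,\infty}$.

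The fix is a short limiting argument, which is what the paper does: given $x\in G_{0,\infty}$, choose $x_n\in G_{0,n}$ with $x_n\to x$ (possible by the definition of $G_{0,\infty}$ as a closure). For $n>t$ your finite-$s$ argument gives $T_t^*x_n\in G_{0,n-t}\subset G_{0,\infty}$, and since $T_t^*$ is bounded, $T_t^*x_n\to T_t^*x$. As $G_{0,\infty}$ is closed, $T_t^*x\in G_{0,\infty}$. Note that (iii) for $s=\infty$ genuinely relies on this, since your decomposition $x=(1-P)x+S_t(T_t^*x)$ needs $T_t^*x\in G_{0,\infty}$ to place the second summand in $S_tG_{0,\infty}$.
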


\begin{proof} (i) It is easy to verify the statement for finite $s$. 
Assume $x\in G_{0,\infty}$. 
Then there exists a sequence $\{x_n\}$ converging to $x$ such that $x_n\in G_{0,n}$. 
Thus for $n$ larger than $t$, we get $T_t^*x_n\in G_{0,n-t}$. 
Since $\{T_t^*x_n\}$ converges to $T_t^*x$, the statement holds. 

(ii)  follows from direct computation. 
(iii) follows from (i) and (ii). 
\end{proof}

\medskip

Let $P:G \rightarrow G_{0,\infty}$ be the orthogonal projection. 
We define $S^0_t$ and $T^0_t$ by 
$$S^0_t= PS_tP, ~~~T^0_t= PT_tP.$$
Then $\{S_t^0\}$ and $\{T_t^0\}$ are $C_0$-semigroups and one is a perturbation of the other. 
Indeed, it follows from the fact that the inclusion relation $S_t(G_{0,\infty})\subseteq G_{0,\infty}$ implies  
$(1-P)S_tP=0$ and Lemma \ref{direct sum},(i) implies $PT_t(1-P)=0$ for all $t>0$. 
\medskip

\begin{prop}\label{StTt} Let $G$ be a real Hilbert space and let $\{S_t\}$ and $\{T_t\}$ be $C_0$-semigroups 
acting on $G$ such that $\{T_t\}$ is a perturbation of $\{S_t\}$.  
Let $\{G_{s,t}\}$, $\{S_t^0\}$, and $\{T_t^0\}$ be as above. Then 
\begin{itemize}
\item[${\rm (a)}$] The system $(\{G_{a,b}\}, \{S_t^0\})$ forms a sum system. \medskip
\item[${\rm (b)}$] The pair of $C_0$-semigroups 
$(\{S_t^0\},\{T_t^0\})$ is associated with $(\{G_{a,b}\}, \{S_t^0\})$. 
In consequence, the product system for the generalized CCR flow arising from 
$(\{S_t^0\},\{T_t^0\})$ is isomorphic to the one arising from $(\{G_{a,b}\},\{S_t^0\})$.\medskip 
\item[${\rm (c)}$] 
The product system for the generalized CCR flow arising from $(\{S_t\}, \{T_t\})$ is 
isomorphic to the product system arising from $(\{G_{a,b}\}, \{S_t^0\})$. 
In consequence, the generalized CCR flow arising from the pair $(\{S_t\},\{T_t\})$ is cocycle conjugate to that arising 
from $(\{S^0_t\}, \{T_t^0\})$.
\end{itemize}
\end{prop}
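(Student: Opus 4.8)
The plan is to establish (a), then (b), then deduce (c). For part (a), I would verify the three axioms of Definition \ref{sumsystem} for $(\{G_{a,b}\},\{S^0_t\})$. The inclusion $G_{a,b}\subseteq G_{c,d}$ when $(a,b)\subseteq(c,d)$ has already been checked in the text above, so that is in hand. For axiom (i) I would show $S^0_s|_{G_{0,t}}\in\cS(G_{0,t},G_{s,s+t})$: since $S_s(G_{0,\infty})\subseteq G_{0,\infty}$ we have $S^0_s|_{G_{0,\infty}}=PS_sP|_{G_{0,\infty}}=S_s|_{G_{0,\infty}}$ followed by the projection, and $S_s$ maps $G_{0,t}$ bijectively onto $G_{s,s+t}=S_s(G_{0,t})$ by definition; invertibility of $S^0_s$ restricted to $G_{0,t}$ then follows from the fact that $S_s$ has the bounded left inverse $T_s^*$ (which maps $G_{0,s+t}$ into $G_{0,t}$ by Lemma \ref{direct sum}(i)), and the Hilbert--Schmidt condition $I-|S^0_s|_{G_{0,t}}|$ being Hilbert--Schmidt comes from $S_s-T_s$ being Hilbert--Schmidt together with the good behaviour of $\cS(\cdot,\cdot)$ under restrictions (\cite{pdct}). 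For axiom (ii) I would use Lemma \ref{direct sum}(iii): $G_{0,s+t}$ is the topological direct sum of $G_{0,s}$ and $S_s G_{0,t}=G_{s,s+t}$, so the addition map $A_{s,t}(x\oplus y)=x+y$ is a bounded invertible bijection; that it lies in $\cS$ (including $t=\infty$) again follows from the Hilbert--Schmidt perturbation hypothesis via the stability properties of $\cS$. Axiom (iii), strong continuity of $\{S^0_t\}$, is immediate since $S^0_t=PS_tP$ and $\{S_t\}$ is a $C_0$-semigroup.

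For part (b), I would compute the semigroup $\{T_t\}$ that Proposition \ref{e0} and Lemma \ref{Ttsemigroup} attach to the sum system $(\{G_{a,b}\},\{S^0_t\})$ and check it equals $\{T^0_t\}$. Working inside the global Hilbert space $G_{0,\infty}$, the operator $A_t=A_{t,\infty}\colon G_{0,t}\oplus G_{t,\infty}\to G_{0,\infty}$ is the addition map, and by definition the associated $T$-semigroup is $T_t=(A_t^*)^{-1}A_t^{-1}((S^0_t)^*)^{-1}$, characterized by $\inpr{T_t x}{y}=\inpr{x}{(S^0_t)^{-1}y_{t,\infty}}$ for $x,y\in G_{0,\infty}$ (equation (\ref{ip})). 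On the other hand, from $T^0_t{}^*S^0_t=1$ one sees $T^0_t{}^*$ is a left inverse of $S^0_t$, and using the direct sum decomposition of Lemma \ref{direct sum}(ii)–(iii) one checks that $(T^0_t)^*$ acts on $y=y_t+y_{t,\infty}\in G_{0,\infty}$ by annihilating $y_t\in G_{0,t}=\Ker(T^0_t{}^*)$ and sending $y_{t,\infty}=S^0_t y'$ to $y'=(S^0_t)^{-1}y_{t,\infty}$; hence $\inpr{T^0_t x}{y}=\inpr{x}{T^0_t{}^* y}=\inpr{x}{(S^0_t)^{-1}y_{t,\infty}}$, which matches (\ref{ip}). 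Therefore $T_t=T^0_t$, so the pair $(\{S^0_t\},\{T^0_t\})$ is the one associated with the sum system, and Proposition \ref{e0} gives that the product system of its generalized CCR flow is the one built from $(\{G_{a,b}\},\{S^0_t\})$.

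For part (c), I would produce an explicit cocycle conjugacy between the generalized CCR flow $\alpha$ of $(\{S_t\},\{T_t\})$ on $\Gamma(G^\C)$ and the flow $\alpha^0$ of $(\{S^0_t\},\{T^0_t\})$ on $\Gamma(G_{0,\infty}^\C)$. Write $G=G_{0,\infty}\oplus G_{0,\infty}^\perp$, so $\Gamma(G^\C)\cong\Gamma(G_{0,\infty}^\C)\otimes\Gamma((G_{0,\infty}^\perp)^\C)$. The relations $(1-P)S_tP=0$ and $PT_t(1-P)=0$ (noted in the text) mean that with respect to this splitting $S_t$ and $T_t$ are block lower/upper triangular with $(1,1)$-entries $S^0_t,T^0_t$; I would argue that the $(2,2)$-corner semigroups on $G_{0,\infty}^\perp$ give, via the same Weyl-operator prescription, an $E_0$-semigroup that is actually a genuine automorphic (indeed shift-type) piece, and the off-diagonal terms are implemented by a unitary cocycle. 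Concretely, since the product systems of $\alpha$ and $\alpha^0$ are isomorphic (the former equals the latter by part (b) applied through Proposition \ref{e0}, because Proposition \ref{e0}'s computation of $E_t$ only used the sum-system data $(\{G_{a,b}\},\{S^0_t\})$), Arveson's theorem that product systems are a complete cocycle-conjugacy invariant yields the cocycle conjugacy; so the cleanest route is to prove the product-system isomorphism directly. The main obstacle I anticipate is part (c): showing that the product system of the \emph{original} flow $\alpha$ on the big space $\Gamma(G^\C)$ — not the compressed one — is isomorphic to that of the sum system, i.e.\ tracking carefully how the non-$G_{0,\infty}$ part of $G$ contributes only a trivial (type I$_0$) tensor factor and does not change the intertwiner spaces $E_t$; this requires a careful analysis of $\Ker$ and $\Ran$ of $T_t^*,S_t$ on all of $G$ versus on $G_{0,\infty}$, and is where the hypotheses $S_t(G_{0,\infty})\subseteq G_{0,\infty}$ and Lemma \ref{direct sum}(i) do the real work.
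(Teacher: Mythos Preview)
Your outline for (a) and (b) is essentially the paper's approach, though in (a) axiom (ii) you are too quick: ``stability properties of $\cS$'' do not by themselves show that the addition map $A_{s,t}$ lies in $\cS$, since $A_{s,t}$ is not built from previously known $\cS$-maps. The paper does an explicit computation here: for $x_1,x_2\in G_{0,t}$ and $y_1,y_2\in G$ one expands $\inpr{{A'_t}^*A'_t(x_1\oplus S_ty_1)}{x_2\oplus S_ty_2}$ and, using $T_t^*x_i=0$, rewrites the cross terms as $\inpr{T_t(S_t^*-T_t^*)x_1}{S_ty_2}+\inpr{S_ty_1}{T_t(S_t^*-T_t^*)x_2}$, so that ${A'_t}^*A'_t-I$ is visibly Hilbert--Schmidt because $S_t-T_t$ is.

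The genuine divergence is in (c). The paper does \emph{not} try to split $G=G_{0,\infty}\oplus G_{0,\infty}^\perp$ and analyse triangular blocks or produce an explicit cocycle. Instead, in the course of proving (a) it establishes a \emph{stronger} statement than axiom (ii): the addition map $A'_t:G_{0,t}\oplus\Ran(S_t)\to G$ (into the full space $G$, not just $G_{0,\infty}$) belongs to $\cS$. With this in hand, one checks $T_t=({A'_t}^*)^{-1}{A'_t}^{-1}(S_t^*)^{-1}$ on all of $G$, and then literally reruns the proof of Proposition~\ref{e0} with $A'_t$ and $\Gamma(\Ran(S_t)^\C)$ in place of $A_t$ and $\Gamma(G_{t,\infty}^\C)$. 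This immediately identifies the intertwiner spaces $E_t$ of the generalized CCR flow on $\Gamma(G^\C)$ with $H_t=\Gamma(G_{0,t}^\C)$, the same product system as in (b). Your attempted shortcut --- invoking (b) and saying ``Proposition~\ref{e0}'s computation of $E_t$ only used the sum-system data'' --- is circular: Proposition~\ref{e0} starts from a sum system living in $G_{0,\infty}$, whereas here $\alpha$ acts on $\Gamma(G^\C)$, so you cannot appeal to it until you have the $A'_t$-version on the full $G$. The block-triangular route you sketch could perhaps be made to work, but the paper's method of strengthening (a) is what makes (c) a one-line repetition of the earlier argument, and it is exactly the missing idea you flagged as the main obstacle.
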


\begin{proof}
(a) We have already shown  the axiom  (iii) of Definition \ref{sumsystem} and 
$G_{a,b} \subseteq G_{c,d}$ for $(a,b) \subseteq (c,d).$ 
Since $\{T_t^0\}$ is a perturbation of $\{S_t^0\}$, the operator $S_t^0$ has a left inverse ${T_t^0}^*$.  
Therefore the restriction of $S_t^0$ to $G_{0,s}$ is an invertible operator from $G_{0,s}$ onto $S_tG_{0,s}$ with 
the bounded inverse. 
Since $S_t^{0*}(T_t^0-S_t^0)=P-S_t^{0*}S_t^0$ is a Hilbert Schmidt operator, the axiom (i) is satisfied.

To prove the axiom (ii), it is enough if we prove that the operator 
$$A_{t, \infty}:G_{0,t}\oplus G_{t, \infty}\ni x\oplus y\mapsto x+y\in G_{0,\infty},$$ 
is in the class $\cS(G_{0,t}\oplus G_{t, \infty}, G_{0,\infty})$, for all $t \in(0,\infty)$.
Let $G_t'=\Ran(S_t)$. 
We will prove a stronger statement that the operator, 
$$A^\prime_t:G_{0,t}\oplus G^\prime_t\ni x\oplus y \mapsto x+y\in G,$$ 
is in the class $\cS(G_{0,t}\oplus G_t^\prime, G)$, for all $t \in (0,\infty)$. 
Thanks to Lemma \ref{direct sum}, the operator $A'_{t}$ has a bounded inverse. 
Let $x_1,x_2\in G_{0,t}$ and $y_1,y_2\in G_{0,\infty}$. 
Then 
\begin{eqnarray*}\lefteqn{
\inpr{{A'_t}^* A'_t(x_1\oplus S_ty_1)}{x_2\oplus S_ty_2}
=\inpr{x_1+S_ty_1}{x_2+S_ty_2}}\\
&=&\inpr{x_1}{x_2}+\inpr{S_ty_1}{S_ty_2}+\inpr{x_1}{S_ty_2}+\inpr{S_ty_1}{x_2}\\
&=&\inpr{x_1\oplus S_ty_1}{x_2\oplus S_ty_2}+\inpr{T_t(S_t^*-T_t^*)x_1}{S_ty_2}+\inpr{S_ty_1}{T_t(S_t^*-T_t^*)x_2},
\end{eqnarray*}
where we use $T_t^*x_1=T_t^*x_2=0$. 
This shows that $A'_t\in \cS(G_{0,t}\oplus G_t^\prime, G)$ and in consequence 
$A_{s,t}\in\cS(G_{0,s}\oplus G_{s,s+t},G_{0,s+t})$. 
Therefore the axiom (ii) holds and $(\{G_{a,b}\},\{S_t\})$ is a sum system.

(b) It suffices to verify 
$$T^0_t=(A_{t, 
\infty}^*)^{-1}(A_{t,\infty}^{-1})(S_t^{0*})^{-1},$$
where $S^0_t$ is regarded as an element of $\B(G_{0,\infty},G_{t,\infty})$.  
For 
$ y 
\in G$, let $y = y_t + y_{t, 
\infty}$ be the unique decomposition such that $y_t \in G_{0,t}, 
y_{t, \infty} \in G_{t,\infty}$. By a calculation 
we have already done in Lemma \ref{Ttsemigroup}, for $x, y \in G$, we have,
$$\langle (A_{t, \infty}^*)^{-1}(A_{t,\infty}^{-1})(S_t^*)^{-1}x, 
y 
\rangle = \langle x, S_t^{-1} y_{t, \infty} \rangle.$$ 
On the other hand \begin{eqnarray*}
\langle T^0_tx, y\rangle & = & \langle x, {T_t}^*(y_t + S_tS_t^{-1}y_{t, \infty})\rangle\\
& = & \langle x, S_t^{-1}y_{t,\infty}\rangle.
\end{eqnarray*} 
Therefore $(\{S^0_t\},\{T^0_t\})$ is associated with $(\{G_{s,t}\},\{S^0_t\})$. 
\bigskip

(c) We use the notation of the proof of (a). 
Proceeding exactly in the same way as in the proof of (b), by replacing $G_{t, \infty}$ with $G^\prime_t$, 
we can verify that 
$$T_t=({A_t^\prime}^*)^{-1}{A_t^\prime}^{-1}(S_t^*)^{-1},$$ with 
$(S_t^*)^{-1}$ regarded as an element in $\cS(G,G'_t)$. 
By replacing $\Gamma(G_{t, \infty}^\C)$ with 
$\Gamma({G_t^\prime}^\C)$, and by exactly imitating 
the proof of the Proposition \ref{e0}, we conclude that $$R \mapsto 
\Gamma(A_t^\prime)(1_{\Gamma(G_{0,t}^\C)} \otimes 
\Gamma(S_t)R\Gamma(S_t)^*)\Gamma(A_t^\prime)^*,$$ is the generalized CCR flow given by the pair $(\{S_t\}, \{T_t\})$.

If we again imitate the proof of Proposition \ref{e0}, the part where 
the product system is computed, we will be able to see that the 
product system associated with the generalized CCR flow given by the pair $(\{S_t\}, \{T_t\})$ also coincides with the product 
system constructed out of the sum system $(\{G_{a,b}\},\{S_t\})$. 

Since the product systems determine $E_0$-semigroups up to cocycle conjugacy, we conclude that
the generalized CCR flow given by $(\{S_t\},\{T_t\})$ is cocycle conjugate to the generalized CCR flow given by 
$(\{S^0_t\}, \{T_t^0\})$.
\end{proof}

\medskip
\begin{remark}\label{isometry}
It is practically impossible to classify pairs $(\{S_t\},\{T_t\})$ of $C_0$-semigroups acting on $G$ as above without posing 
any condition and let us consider the case where $\{S_t\}$ is a semigroup of isometries. 
Then up to unitary equivalence we may assume that $G=L^2((0,\infty),K)\oplus L$ and  
$S_t=S'_t\oplus U_t$, where $\{S'_t\}$ is the shift semigroup and $\{U_t\}$ is a 1-parameter unitary group. 
In this case, we have 
$$T_t=\left(
\begin{array}{cc}
T'_t &B_t  \\
 0&U_t 
\end{array}
\right),
$$ 
where $\{T'_t\}$ is a perturbation of $\{S'_t\}$. 
It is routine work to show that the two sum systems for $(\{S_t\},\{T_t\})$ and $(\{S'_t\},\{T'_t\})$ 
are isomorphic (though not identical in general), and so Proposition \ref{StTt} implies that the two generalized 
CCR flows arising from them are cocycle conjugate. 
Therefore it is worth investigating perturbations of the shift semigroup. 
This has been already done in \cite{I} in the case where $\dim K=1$ and we will analyze the structure of the resulting 
generalized CCR flows in Section 5-7. 
\end{remark}
\bigskip


\section{Type III criterion}

In this section we derive a necessary and sufficient condition, which would determine the type of 
the product system (in other words the type of the associated $E_0$-semigroup), arising from a divisible sum system with finite index. 
To begin with we define the notion of divisibility for sum systems and 
recall some results from \cite{pdct}. We first define addits 
for  a sum system, which were called as additive units in 
\cite{pdct}

\begin{definition} Let $(\{G_{a,b}\}, \{S_t\})$ be a sum system. 
A real addit for the sum system $(\{G_{(a,b)}\}, \{S_t\})$ is a
family $\{x_t\}_{t \in (0,\infty)}$ such that
$x_t \in G_{0,t} , ~ \forall ~t \in (0,  \infty)$, 
satisfying the following conditions.

\medskip \noindent
(i) The map $ t \mapsto \langle x_t, x\rangle$ is measurable for
any $ x \in G_{0,\infty}$.

\medskip \noindent
(ii)
$x_s
+ S_s x_t = x_{s+t}, ~~ \forall s, t,
\in (0, \infty),$
(i. e.) $A_{s,t}(x_s \oplus S_s x_t)=x_{s+t}$.

An imaginary addit for the sum system $(\{G_{a,b}\}, \{S_t\})$ is a
family $\{y_t\}_{t 
\in (0,\infty)}$ such that
$y_t \in G_{0,t} , ~ \forall ~t \in (0,  \infty)$, 
satisfying the following conditions.

\medskip \noindent
(i) The map $ t \mapsto \langle y_t, y\rangle$ is measurable for
any $ y \in G_{0,\infty}$.

\medskip \noindent
(ii) $\{y_t\}$ satisfies $(A_{s,t}^*)^{-1}(y_s \oplus (S_s^*)^{-1} y_t) =
y_{s+t},
~~
\forall s, t,
        \in (0, \infty).$
\end{definition}

\medskip

We denote by $R\cA\U$ and $I\cA\U$ the set of all real and 
imaginary addits respectively, which are real linear spaces. 
For a given real addit $\{x_t\}$, define $x_{s,t} 
= S_s(x_{t-s}) \in G_{s, t}$. Similarly for a given imaginary addit $\{y_t\}$ define $y_{s,t} 
= (S_s^*)^{-1}(y_{t-s}) \in G_{s, t}$.

We also define for an imaginary addit $\{y_t\}$, $$G_{0,s} \ni {}^s y^\prime_{s_1,s_2}= (A^*)^{-1}(0 \oplus y_{s_1, s_2} \oplus 0),
~\mbox{for any}~(s_1, s_2) \subset (0,s),$$ where $A:G_{0,s_1} \oplus G_{s_1,s_2} \oplus G_{s_2, s} \rightarrow G_{0,s}$ is defined by
$x\oplus y \oplus z \mapsto x + y+z$. 
It is easy to check that ${ }^sy^\prime_{s_1,s_2} \in \left(G_{0,s_1} \bigvee G_{s_2,
s}\right)^{\perp} \cap G_{0,s}.$ When $s=1$, we just denote ${}^ty'_{s_1,s_2}$by $y^\prime_{s_1, s_2}$, and 
$y_{0,t}^\prime$ by just $y_t^\prime$. Finally note that 
$$x_s+x_{s,s+t}= x_{s+t},~~  y_s^\prime +y_{s,s+t}^\prime = y_{s+t}^\prime.$$

\begin{definition} A sum system $(\{G_{a,b}\},\{S_t\})$ is called as a
divisible sum system if the addits exist and generate the sum system, (i. e.)
$$G_{0,s} =  \overline{\spa_\R  [x_{s_1,s_2}; (s_1,s_2) \subseteq 
(0,s), 
\{x_t\} \in
R\cA\U]}$$ and $$G_{0,s} =  \overline{\spa_\R  [{}^s 
y^\prime_{s_1,s_2}; 
(s_1,s_2) \subseteq
(0,s), \{y_t\} \in I\cA\U]}.$$
\end{definition}

\medskip

The following Proposition has been already proved in \cite{pdct}, except that $\{S_t\}$ need not be 
a semigroup of isometries. 
But in the proof of \cite[Proposition 37 (ii)]{pdct}, the verification of measurability of the function 
$t \rightarrow \langle x_t, y_t \rangle$ does not need this assumption, and also the relation 
$ \langle x_{s+t}, y_{s+t} \rangle = \langle x_s, y_s \rangle + \langle x_t, y_t \rangle,$ can also be 
verified without this assumption.

\begin{prop}\label{h}
Let $(\{G_{(a,b)}\},\{S_t\})$ be a divisible sum system. If $\{x_t\} \in R\cA\U$ and $\{y_t\}
\in I\cA\U$, then $$\langle x_t, y_t\rangle = \langle x_1, 
y_1\rangle t 
~~\forall~ t \in (0, \infty).$$ In general for any two intervals 
$(s_1,s_2),~(t_1, t_2) \subset (0, s)$, it is true that
\begin{eqnarray} 
\langle x_{s_1, s_2}, {}^s y^\prime_{t_1,t_2} \rangle =
\langle x_1, y_1 \rangle |
(s_1, s_2) \cap (t_1, t_2)|,\end{eqnarray}
where $|.|$ is the Lebesgue
measure on
$\R$.
\end{prop}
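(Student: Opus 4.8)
The plan is to establish the scalar identity $\langle x_t,y_t\rangle=\langle x_1,y_1\rangle t$ first, and then deduce the more refined interval statement from it by an additivity-and-localization argument. For the scalar identity, the key observation (borrowed from \cite[Proposition 37]{pdct}, which as remarked above does not genuinely use that $\{S_t\}$ is isometric) is that the function $f(t)=\langle x_t,y_t\rangle$ is measurable in $t$ and satisfies the additivity relation $f(s+t)=f(s)+f(t)$. Measurability follows from axiom (i) in the definitions of real and imaginary addits together with the $C_0$-property of $\{S_t\}$; additivity follows by expanding $x_{s+t}=A_{s,t}(x_s\oplus S_sx_t)$ and $y_{s+t}=(A_{s,t}^*)^{-1}(y_s\oplus (S_s^*)^{-1}y_t)$, pairing the two, and using that $A_{s,t}$ and $(A_{s,t}^*)^{-1}$ are adjoint-inverse to each other so that the cross terms in $\langle A_{s,t}(x_s\oplus S_sx_t),\,(A_{s,t}^*)^{-1}(y_s\oplus(S_s^*)^{-1}y_t)\rangle$ collapse to $\langle x_s,y_s\rangle+\langle S_sx_t,(S_s^*)^{-1}y_t\rangle=\langle x_s,y_s\rangle+\langle x_t,y_t\rangle$. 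A measurable additive function on $(0,\infty)$ is necessarily linear, so $f(t)=f(1)t$, which is the first assertion.

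For the interval identity, the plan is to reduce to the scalar case. I would first record the "restriction" behaviour of the two families of localized vectors: for a real addit, $x_{s_1,s_2}=S_{s_1}x_{s_2-s_1}$, and the family $\{x_t\}$ localized to $(s_1,s_2)$ behaves like a (shifted) real addit; for an imaginary addit, ${}^s y'_{s_1,s_2}=(A^*)^{-1}(0\oplus y_{s_1,s_2}\oplus 0)$ with $y_{s_1,s_2}=(S_{s_1}^*)^{-1}y_{s_2-s_1}$, and these localize analogously. Using the orthogonality fact stated in the text, namely ${}^sy'_{s_1,s_2}\in(G_{0,s_1}\vee G_{s_2,s})^\perp\cap G_{0,s}$, together with the fact that $x_{s_1,s_2}\in G_{s_1,s_2}$, I would show that the pairing $\langle x_{s_1,s_2},{}^sy'_{t_1,t_2}\rangle$ depends only on the overlap of the two intervals: if $(s_1,s_2)\cap(t_1,t_2)=\emptyset$, the pairing vanishes because one vector lies in a subspace to which the other is orthogonal (here one uses $T_t^*$-type left-inverse relations, exactly as in the computations of Section 3, to move the $(A^*)^{-1}$ around and identify the relevant orthogonal complements).

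Granting vanishing on disjoint intervals, additivity of the pairing under splitting an interval into two adjacent pieces (which follows from $x_s+x_{s,s+t}=x_{s+t}$ and $y'_s+y'_{s,s+t}=y'_{s+t}$, the relations recorded at the end of the definitions, applied componentwise) reduces the general case to the nested case $(s_1,s_2)\subseteq(t_1,t_2)$, and then by a further splitting to the case of equal intervals $(s_1,s_2)=(t_1,t_2)$. In that final case one applies the already-established scalar identity, after translating/rescaling via $S_{s_1}$ and $(S_{s_1}^*)^{-1}$ and checking that these operators cancel in the inner product (again because $S_t^*$ is a left inverse of $S_t$ on the relevant subspace), yielding $\langle x_{s_1,s_2},{}^sy'_{s_1,s_2}\rangle=\langle x_1,y_1\rangle(s_2-s_1)=\langle x_1,y_1\rangle\,|(s_1,s_2)|$. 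Assembling the pieces gives $\langle x_{s_1,s_2},{}^sy'_{t_1,t_2}\rangle=\langle x_1,y_1\rangle\,|(s_1,s_2)\cap(t_1,t_2)|$.

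The main obstacle I anticipate is the bookkeeping in the disjoint-interval vanishing step: one must carefully track how the operator $(A^*)^{-1}$ defining ${}^sy'_{t_1,t_2}$ interacts with the subspaces $G_{a,b}$ and with the left inverses $T_t^*$, and verify the orthogonality claim cleanly rather than by brute force. Everything else — measurability, the additive-to-linear step, the interval-splitting additivity — is routine once that geometric fact is in place.
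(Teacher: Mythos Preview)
Your proposal is correct and matches the paper's approach. The paper does not give a self-contained proof but defers to \cite[Proposition 37]{pdct}, noting only that the two key ingredients---measurability of $t\mapsto\langle x_t,y_t\rangle$ and the additivity relation $\langle x_{s+t},y_{s+t}\rangle=\langle x_s,y_s\rangle+\langle x_t,y_t\rangle$---go through without the isometry assumption on $\{S_t\}$; your write-up fleshes out exactly this strategy, including the interval case via the splitting/orthogonality reduction.
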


\medskip
The next lemma immediately follows from the definition of a divisible sum system and Proposition \ref{h}, 
which allows us to introduce the notion of the index of a divisible sum system. 

\begin{lemma}\label{non-degenerate} Let $(\{G_{a,b}\},\{S_t\})$ be a divisible sum system. 
For $x=\{x_t\}\in \RAU$ and $y=\{y_t\}\in IAU$, we set $b_G(x,y)=\inpr{x_1}{y_1}$. 
Then $b_G$ is non-degenerate as a bilinear form $b_G:\RAU\times \IAU\rightarrow \R$.   
\end{lemma}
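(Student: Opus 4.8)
The plan is to show non-degeneracy in both variables separately, using divisibility to "detect" a non-trivial addit through its pairing against the full family of opposite-type addits. First, suppose $x=\{x_t\}\in\RAU$ satisfies $b_G(x,y)=\inpr{x_1}{y_1}=0$ for every $y=\{y_t\}\in\IAU$. By Proposition \ref{h}, for any interval $(t_1,t_2)\subseteq(0,s)$ and any $\{y_t\}\in\IAU$ we then get $\inpr{x_{t_1,t_2}}{{}^sy'_{t_1,t_2}}=\inpr{x_1}{y_1}\,|(t_1,t_2)|=0$, and more generally $\inpr{x_{s_1,s_2}}{{}^sy'_{t_1,t_2}}=0$ for all intervals. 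Using the identity $x_s+x_{s,s+t}=x_{s+t}$ (and the analogous decomposition of $x_s$ into the pieces $x_{s_1,s_2}$), the vector $x_s\in G_{0,s}$ lies in the closed real span of the $x_{s_1,s_2}$ with $(s_1,s_2)\subseteq(0,s)$; hence $x_s$ is orthogonal to every ${}^sy'_{t_1,t_2}$ with $\{y_t\}\in\IAU$. By the second divisibility condition, the vectors $\{{}^sy'_{t_1,t_2};(t_1,t_2)\subseteq(0,s),\{y_t\}\in\IAU\}$ span $G_{0,s}$, so $x_s=0$. Since this holds for all $s>0$, $x=0$.

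The argument in the other variable is symmetric. Suppose $y=\{y_t\}\in\IAU$ satisfies $\inpr{x_1}{y_1}=0$ for every $\{x_t\}\in\RAU$. Again Proposition \ref{h} gives $\inpr{x_{s_1,s_2}}{{}^sy'_{t_1,t_2}}=0$ for all intervals and all real addits, so ${}^sy'_{t_1,t_2}$ is orthogonal to the closed real span of $\{x_{s_1,s_2};(s_1,s_2)\subseteq(0,s),\{x_t\}\in\RAU\}$, which by the first divisibility condition equals $G_{0,s}$. Hence ${}^sy'_{t_1,t_2}=0$ for every subinterval $(t_1,t_2)\subseteq(0,s)$. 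Taking $(t_1,t_2)=(0,t)$ and recalling that ${}^ty'_{0,t}=y'_t$ while the relation $y'_s+y'_{s,s+t}=y'_{s+t}$ together with the definition of the $y'$'s shows $y_t$ is recovered from the ${}^sy'$ data (indeed $A^*$ applied to $y'_t$ returns $y_t$ up to the canonical identifications), we conclude $y_t=0$ for all $t$, so $y=0$.

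The only genuinely delicate point is the bookkeeping in the last step of each direction: translating "${}^sy'_{t_1,t_2}=0$ for all subintervals" back into "$y_t=0$", and dually confirming that $x_s$ really does lie in the closed span of its interval-pieces $x_{s_1,s_2}$ rather than merely being approximated by sums of such. Both are straightforward from the additivity relations $x_s+x_{s,s+t}=x_{s+t}$ and $y'_s+y'_{s,s+t}=y'_{s+t}$ recorded just before the definition of divisibility, combined with the fact that $A_{s,t}$ and $(A_{s,t}^*)^{-1}$ are bounded with bounded inverses (axiom (ii) of a sum system), so I would not expect any real obstruction there; it is purely a matter of unwinding the notation carefully.
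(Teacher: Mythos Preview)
Your proof is correct and is exactly the argument the paper has in mind; the paper itself gives no detailed proof, merely stating that the lemma ``immediately follows from the definition of a divisible sum system and Proposition~\ref{h}.'' One small simplification: you do not need to argue that $x_s$ lies in the closed span of the $x_{s_1,s_2}$, since $x_s=x_{0,s}$ is literally one of them; likewise, taking $(t_1,t_2)=(0,s)$ in the second half gives ${}^sy'_{0,s}=y_s$ directly (the map $A$ degenerates to the identity), so the ``delicate bookkeeping'' you flag is in fact trivial.
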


\begin{definition}\label{index} For a divisible sum system $(\{G_{a,b}\},\{S_t\})$, 
the index $\mathrm{ind}\: G$ is the number $\dim \RAU=\dim \IAU\in \N\cup\{\infty\}$. 
\end{definition}

From now onwards we assume that $(\{G_{a,b}\}, \{S_t\})$ is a divisible sum system. 
We further assume that $\mathrm{ind}\: G=n$ is finite. 
In that case, both $\RAU$ and $\IAU$ carry unique linear topologies.  
Denote 
$$G^0_{0,t}=\spa_\R  [x_{s_1,s_2}; (s_1,s_2) \subseteq
(0,t),\ \{x_t\}\in \RAU]\subseteq G_{0,t},$$ 
$${G^0_{0,t}}^\prime=\spa_\R  [{}^ty^\prime_{s_1,s_2} ;(s_1,s_2) \subseteq(0,t),\;\{y_t\}\in \IAU]\subseteq G_{0,t}.$$ 
For a given linear map $J:\RAU \rightarrow \IAU$, we set $J_{t,0}$ to be the linear map 
$J_{t,0}:G^0_{0,t}\rightarrow {G^0_{0,t}}^\prime$ determined by 
$$J_{t,0}(x_{s_1, s_2})={}^tJ(x)^\prime_{s_1, s_2},$$ 
for $(s_1, s_2) \subseteq (0,t)$ and $x\in \RAU$.  
When $J_{t,0}$ has a bounded extension to $G_{0,t}$ we denote it by $J_t$. 

We need the following lemma.

\begin{lemma}\label{bdd}
Let $G$ be a real Hilbert space and let $R_0$ be a real linear operator on $G^\C$ with dense domain $D(R_0)$, 
which preserves imaginary parts of the inner product. 
Suppose there exists a unitary operator $U \in \B(\Gamma(G^\C))$ satisfying 
\begin{eqnarray}\label{intertwine} UW(x)U^*& = & W(R_0x),~~\forall~x \in D(R_0),\end{eqnarray} 
then $R_0$ extends to a bounded invertible operator $R$ on $G^\C$. 
Further it is true that $R \in \cS(G \oplus G, G \oplus G)$, where we identify $G\oplus G$ with $G^\C$ equipped with 
the real inner product $\inpr{\cdot}{\cdot}_\R=\RE\inpr{\cdot}{\cdot}$.  
\end{lemma}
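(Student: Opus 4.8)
The plan is to exploit the converse direction of the generalized Shale theorem (Theorem \ref{shales}(i)), which is precisely an abstract statement of the form: if a symplectic-type map on a one-particle space is implemented by a unitary on Fock space, then it already lies in the class $\cS$. So the first step is to reduce the hypothesis to a situation to which Theorem \ref{shales}(i) directly applies. Here $R_0$ is a densely defined \emph{real-linear} operator on $G^\C$ preserving imaginary parts of the inner product; identifying $G^\C$ with the real Hilbert space $G\oplus G$ (with $\inpr{\cdot}{\cdot}_\R=\RE\inpr{\cdot}{\cdot}$), the imaginary part of the original inner product becomes the standard symplectic form on $G\oplus G$, so $R_0$ is a densely defined symplectic map on the real Hilbert space $G\oplus G$, and $U$ implements it on $\Gamma(G^\C)=\Gamma((G\oplus G)^\C)$ in the sense of \eqref{intertwine}.

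The main work is showing $R_0$ is \emph{bounded}; once boundedness and invertibility are in hand, membership in $\cS(G\oplus G,G\oplus G)$ follows from the converse half of Theorem \ref{shales}(i) with $G_1=G_2=G\oplus G$ (one has to check that the symplectic map $S_R$ attached to a genuine bounded operator $R$ there agrees, on the dense domain, with the Weyl conjugation coming from $R_0$, which is a routine identification of symplectic forms). For boundedness, the standard argument is: the intertwining relation \eqref{intertwine} forces $U$ to send the vacuum $\Phi$ to a vector $\eta$ with $\|\eta\|=1$, and then for $x\in D(R_0)$,
\begin{equation*}
\inpr{W(R_0x)\eta}{\eta}=\inpr{UW(x)U^*\eta}{\eta}=\inpr{W(x)\Phi}{\Phi}=e^{-\|x\|_\R^2/2}.
\end{equation*}
On the other hand $t\mapsto \inpr{W(tR_0x)\eta}{\eta}$ is the characteristic function of a probability measure (the distribution of the field operator $B(R_0x)$ in the state $\inpr{\,\cdot\,\eta}{\eta}$), hence continuous in $t$; combined with the computed value $e^{-t^2\|x\|_\R^2/2}$ this identifies that distribution as Gaussian with variance $\|x\|_\R^2$, and in particular $\|B(R_0x)\eta\|^2=\tfrac12\|x\|_\R^2$ (the second moment). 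A lower bound $\|B(z)\xi\|\geq c\|z\|_\R$ valid for $\xi$ in a dense set with $\xi=U^*\eta$-type normalization — or, more cleanly, the observation that on the cyclic subspace generated by $\eta$ the map $R_0x\mapsto B(R_0x)\eta$ is, up to the Gaussian normalization, isometric for the graph inner product — then yields $\|R_0x\|_\R\leq C\|x\|_\R$ for all $x\in D(R_0)$. Hence $R_0$ extends to a bounded operator $R$ on $G\oplus G$.

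For invertibility: $R$ is symplectic and bounded, so $R^*JR=J$ where $J$ is the operator of the symplectic form (here $J(x,y)=(-y,x)$ on $G\oplus G$, or equivalently multiplication by $i$ on $G^\C$), which gives $R^{-1}=J^{-1}R^*J$ \emph{provided} $R$ is already known to be surjective with trivial kernel; one gets injectivity immediately from $R^*JR=J$ (if $Rx=0$ then $Jx=0$ so $x=0$), and for surjectivity one runs the same argument with $U^*$ in place of $U$, obtaining a bounded $R'$ with $W(R'z)=U^*W(z)U$ on a dense domain, whence $RR'$ and $R'R$ implement the identity on Fock space and therefore (since an implementing unitary is unique up to a phase and the vacuum expectations pin down the phase) $RR'=R'R=1$. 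Finally, having $R$ bounded and invertible and implemented by $U$, the converse part of Theorem \ref{shales}(i) gives $R\in\cS(G\oplus G,G\oplus G)$, which is the assertion.

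The step I expect to be the genuine obstacle is the boundedness estimate: one must be careful that $R_0$ is only real-linear and only densely defined, so the field operators $B(R_0x)$ are a priori unbounded and the moment computation has to be justified via the functional calculus for the self-adjoint field operators (or via Araki's/van Daele's machinery already cited in the proof of Lemma \ref{quasi-equivalence}) rather than by naive manipulation of Weyl relations; once the second-moment identity $\|B(R_0x)\eta\|^2=\tfrac12\|x\|_\R^2$ is rigorously in place, the rest is soft.
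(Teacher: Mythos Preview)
Your overall strategy --- boundedness first, then invertibility, then the converse Shale theorem --- matches the paper's, but two of the three steps have gaps, and in each case the paper's argument is considerably shorter.

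For boundedness, you evaluate the intertwining relation at $\eta=U\Phi$ and obtain $\inpr{W(R_0x)\eta}{\eta}=e^{-\|x\|_\R^2/2}$. This pins down the second moment of $B(R_0x)$ in the state $\eta$ as (a multiple of) $\|x\|^2$, but to conclude $\|R_0x\|\le C\|x\|$ you still need a lower bound $\|B(z)\eta\|\ge c\|z\|$, and for a general unit vector $\eta$ no such bound is available; you flag this yourself as the obstacle, and it is a real one. The fix is simply to evaluate at the vacuum $\Phi$ rather than at $\eta$: if $x_n\to 0$ in $D(R_0)$, then $W(x_n)\to 1$ strongly, hence $W(R_0x_n)=UW(x_n)U^*\to 1$ strongly, hence $\inpr{W(R_0x_n)\Phi}{\Phi}=e^{-\|R_0x_n\|^2/2}\to 1$, so $\|R_0x_n\|\to 0$. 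No field operators, no moment estimates.

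For surjectivity, your plan is to run the same argument with $U^*$ to produce a bounded $R'$ with $U^*W(z)U=W(R'z)$ on a dense domain. But the only $z$ for which such a relation is known are $z\in\Ran R_0$, and density of this range is exactly what surjectivity amounts to --- the argument is circular as stated. The paper instead shows $\Ran R_0$ is dense via irreducibility: if $K=(\Ran R_0)^\perp$ (real orthogonal complement) were nonzero, then by Araki's duality the commutant $\{W(R_0x):x\in D(R_0)\}'$ would be nontrivial, whereas $W(R_0x)=UW(x)U^*$ forces this commutant to equal $\C$. Once the range is dense, the boundedness argument applies to the (now densely defined) inverse, giving bounded invertibility.

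For the final step, note that Theorem~\ref{shales}(i) with $G_1=G_2=G\oplus G$ is not literally applicable, since that theorem refers to $\Gamma((G\oplus G)^\C)$ rather than $\Gamma(G^\C)$; what one actually invokes is the original Shale theorem for symplectic automorphisms of the complex space $G^\C$, which directly yields $R\in\cS(G\oplus G,G\oplus G)$.
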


\begin{proof} 
First let us prove that $R_0$ is bounded. 
Suppose $\{x_n\}\subseteq D(R_0)$ be any sequence which converges to $0$. 
Then, by the strong continuity of the Weyl representation, we conclude that $W(x_n)$ converges strongly to $1$. 
Therefore, by our assumption (\ref{intertwine}), it follows that 
$W(R_0x_n)$ also converges strongly to $1$. 
Consequently $\langle W(R_0x_{n})\Phi,\Phi\rangle = e^{-\|R_0x_n\|^2/2}$ converges to $1$ and hence we conclude that $R_0x_n$ 
also converges to $0$. 

We claim that the range of $R_0$ is dense in $G^\C$. 
Let $K=\Ran(R_0)^\perp$ with respect to the real inner product. 
Then \cite[Theorem 1,(5)]{Ara1} shows 
$$\{W(R_0z);\; z\in G^\C\}'=\{W(z);\; z\in iK\}''.$$
Since the left-hand side is $\C$ thanks to $W(R_0z)=UW(z)U^*$, 
we get $K=\{0\}$, which shows the claim. 
Note that $R_0$ is automatically injective and the inverse of $R_0$ is well defined as 
a densely defined operator. 
A similar argument as above shows that the inverse is also bounded. 
The remaining  part follows from the 
converse statement in the original Shales theorem (\cite{KRP}). 
\end{proof}

\medskip

We will also be using the following  lemmas.

\medskip

\begin{lemma}\label{almost coboundary} Let $G$ be a second countable locally compact abelian group 
and let $(w,K)$ be a continuous unitary representation of $G$ on a complex Hilbert space $K$ without containing 
the trivial representation. 
Let $c:G\rightarrow K$ be a continuous 1-cocycle, that is, the map $c$ satisfies the cocycle relation 
$$c(r+s)=c(r)+w(r)c(s),\quad \forall ~ r,s\in G.$$
Then the following equation holds for all $r,s\in G$: 
$$\inpr{c(r)}{w(r)c(s)}=\inpr{c(s)}{w(s)c(r)}.$$
\end{lemma}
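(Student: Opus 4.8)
The plan is to work with the quantity $\beta(r,s):=\inpr{c(r)}{w(r)c(s)}$, so that the assertion becomes $\beta(r,s)=\beta(s,r)$; to show that $\beta$ is a $2$-cocycle on $G$; to deduce that its antisymmetrization $b(r,s):=\beta(r,s)-\beta(s,r)$ is additive in each variable; and then to use the mean ergodic theorem together with the absence of a trivial subrepresentation to force $b\equiv 0$.

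First I would record the elementary consequences of the $1$-cocycle relation: $c(0)=0$, $w(a+b)=w(a)w(b)$ (and these commute, since $G$ is abelian), and $c(ma)=\sum_{j=0}^{m-1}w(a)^jc(a)$ for $m\in\N$. Writing $w(s)c(t)=c(s+t)-c(s)$ and using that the $w(a)$ are unitary, a two-line computation gives the $2$-cocycle identity
\[
\beta(r,s)+\beta(r+s,t)=\beta(r,s+t)+\beta(s,t),\qquad r,s,t\in G,
\]
the point being that $\inpr{c(r)}{w(r)w(s)c(t)}=\beta(r,s+t)-\beta(r,s)$ while $\inpr{w(r)c(s)}{w(r)w(s)c(t)}=\beta(s,t)$. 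Exchanging $r$ and $s$ in this identity, subtracting, and invoking the identity once more then shows that $b$ is additive in its first variable — this is the familiar fact that the antisymmetrization of a $2$-cocycle on an abelian group is a bicharacter. In particular $b(mr,s)=m\,b(r,s)$ for every $m\in\N$.

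Next I would carry out the decisive step, where the hypothesis on $w$ enters. Let $P$ be the orthogonal projection onto $\Ker(w(r)-1)$. By von Neumann's mean ergodic theorem $\frac1m\sum_{j=0}^{m-1}w(r)^j\to P$ strongly, hence $\frac1m c(mr)\to Pc(r)$. For any $g\in G$,
\[
w(g)c(mr)=c(g+mr)-c(g)=c(mr)+w(r)^mc(g)-c(g),
\]
so $w(g)\bigl(\tfrac1m c(mr)\bigr)\to Pc(r)$ as $m\to\infty$; comparing this with $w(g)\bigl(\tfrac1m c(mr)\bigr)\to w(g)Pc(r)$ gives $w(g)Pc(r)=Pc(r)$ for all $g\in G$. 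Since $(w,K)$ contains no trivial subrepresentation, this forces $Pc(r)=0$.

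To conclude I would combine the two parts: for every $m\in\N$,
\[
m\,|b(r,s)|=|b(mr,s)|=\bigl|\inpr{c(mr)}{w(mr)c(s)}-\inpr{c(s)}{w(s)c(mr)}\bigr|\le 2\,\|c(mr)\|\,\|c(s)\|,
\]
whence $|b(r,s)|\le 2\|c(s)\|\cdot\|c(mr)\|/m\to 2\|c(s)\|\cdot\|Pc(r)\|=0$, so $b(r,s)=0$, i.e. $\inpr{c(r)}{w(r)c(s)}=\inpr{c(s)}{w(s)c(r)}$. The main obstacle I anticipate is the bookkeeping in the second paragraph — extracting the bi-additivity of $b$ cleanly from the cocycle identity — together with the manipulation in the ergodic step that upgrades $w(r)$-invariance of $Pc(r)$ to $w(g)$-invariance for all $g$; I note in passing that continuity of $c$ and of $w$ is not actually used along this route.
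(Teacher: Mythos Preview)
Your argument is correct, and it is genuinely different from the paper's proof. The paper invokes \cite[Theorem 4.2.1]{S} (Shalom) to realize $c$ as a uniform-on-compacta limit of coboundaries $g\mapsto z_n-w(g)z_n$; both $\inpr{c(r)}{w(r)c(s)}$ and $\inpr{c(s)}{w(s)c(r)}$ then expand to the same symmetric limit expression. Your route avoids this external input entirely: the $2$-cocycle identity for $\beta$ and the bi-additivity of its antisymmetrization $b$ are purely algebraic, and the only analytic ingredient is the mean ergodic theorem for the single unitary $w(r)$, together with the observation that the limit $Pc(r)$ is in fact $G$-invariant (your identity $w(g)c(mr)=c(mr)+w(mr)c(g)-c(g)$ does exactly this). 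As you note, your argument never uses continuity of $c$ or $w$, nor second countability or local compactness of $G$; it works for an arbitrary abelian group with a unitary representation lacking nonzero fixed vectors. The paper's proof, by contrast, needs those topological hypotheses to feed into Shalom's approximate-coboundary theorem. One minor stylistic point: the derivation of bi-additivity from the $2$-cocycle identity, which you flag as ``bookkeeping,'' is indeed standard---three applications of the identity (with argument triples $(r_1,r_2,s)$, $(s,r_1,r_2)$, $(r_1,s,r_2)$) give $b(r_1+r_2,s)=b(r_1,s)+b(r_2,s)$ directly.
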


\begin{proof} Thanks to \cite[Theorem 4.2.1]{S}, there exists a sequence $\{z_n\}$ in $K$ such that 
$\{z_n-w(g)z_n\}$ uniformly converges to $c(g)$  on every compact subset of $G$. 
Then we have 
\begin{eqnarray*}\lefteqn{
\inpr{c(r)}{w(r)c(s)}=\lim_{n\to\infty}\inpr{z_n-w(r)z_n}{w(r)z_n-w(r)w(s)z_n}}\\
&=&\lim_{n\to\infty}\big(\inpr{z_n}{w(r)z_n}+\inpr{z_n}{w(s)z_n}-\inpr{z_n}{z_n}-\inpr{z_n}{w(r)w(s)z_n}\big)
\end{eqnarray*}
On the other hand, 
\begin{eqnarray*}
\lefteqn{
\inpr{c(s)}{w(s)c(r)}=\lim_{n\to\infty}\inpr{z_n-w(s)z_n}{w(s)z_n-w(s)w(r)z_n}}\\
&=&\lim_{n\to\infty}\big(\inpr{z_n}{w(s)z_n}+\inpr{z_n}{w(r)z_n}-\inpr{z_n}{z_n}-\inpr{z_n}{w(s)w(r)z_n}\big),
\end{eqnarray*}
which shows the statement. 
\end{proof}

\medskip

Let $K$ be a complex Hilbert space. 
Recall that the automorphism group $G_K$ of the exponential product system of index $\dim K$ is described as 
follows (see \cite{Arv}): 
Let $U(K)$ be the unitary group of $K$. 
Then $G_K$ is homeomorphic to $\R\times K\times U(K)$ with the group operation 
$$(a,\xi,u)\cdot(b,\eta,v)=(a+b+\IM\inpr{\xi}{u\eta},\xi+u\eta,uv). $$
For $(a,\xi,u)\in G_K$, the corresponding automorphism is realized by the family of unitary operators 
$e^{i at}W(1_{(0,t]}\xi)\EXP( 1_{(0,t]}u)$, $t>0$. 
Direct computation shows the following: 

\begin{lemma}\label{gauge group} Let $G$ be an abelian group and let $\rho:G\ni r\mapsto (a(r),\xi(r),u(r))\in G_K$ be a map. 
Then $\rho$ is a homomorphism if and only if the following relation holds for every $r,s\in G$: 
\begin{equation}\label{g1}a(r+s)=a(r)+a(s)+\IM\inpr{\xi(r)}{u(r)\xi(s)},\end{equation}
\begin{equation}\label{g2} \xi(r+s)=\xi(r)+u(r)\xi(s),\end{equation}
\begin{equation}\label{g3} u(r+s)=u(r)u(s).\end{equation}
In particular, when $u(r)=1$ for all $r\in G$, then $\rho$ is a homomorphism if and only if 
\begin{equation}\label{g4} a(r+s)=a(r)+a(s),
\end{equation}
\begin{equation}\label{g5} \xi(r+s)=\xi(r)+\xi(s),
\end{equation}
\begin{equation}\label{g6} \IM \inpr{\xi(r)}{\xi(s)}=0.
\end{equation}
\end{lemma}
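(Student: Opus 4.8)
The plan is to prove this by direct computation, verifying the homomorphism property against the explicit group law of $G_K$. Recall that $\rho(r) = (a(r), \xi(r), u(r))$ and that $\rho$ is a homomorphism precisely when $\rho(r+s) = \rho(r)\cdot\rho(s)$ for all $r,s\in G$, where the product on the right is the one given by the group operation
$$(a,\xi,u)\cdot(b,\eta,v)=(a+b+\IM\inpr{\xi}{u\eta},\xi+u\eta,uv).$$
So I would first expand $\rho(r)\cdot\rho(s)$ using this formula with $(a,\xi,u)=(a(r),\xi(r),u(r))$ and $(b,\eta,v)=(a(s),\xi(s),u(s))$, obtaining the triple $(a(r)+a(s)+\IM\inpr{\xi(r)}{u(r)\xi(s)},\ \xi(r)+u(r)\xi(s),\ u(r)u(s))$. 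Then $\rho$ is a homomorphism if and only if this triple equals $\rho(r+s)=(a(r+s),\xi(r+s),u(r+s))$, and comparing the three coordinates componentwise yields exactly equations (\ref{g1}), (\ref{g2}), and (\ref{g3}). Since the group law is literally the data defining $G_K$, this comparison is an equivalence, not just an implication, which gives the ``if and only if'' in full.

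For the ``in particular'' clause, I would specialize to $u(r)=1$ for all $r\in G$. Then (\ref{g3}) holds automatically and becomes vacuous, (\ref{g2}) reduces to $\xi(r+s)=\xi(r)+\xi(s)$, i.e.\ (\ref{g5}), and (\ref{g1}) becomes $a(r+s)=a(r)+a(s)+\IM\inpr{\xi(r)}{\xi(s)}$. To finish, I would observe that a map $r\mapsto\xi(r)$ satisfying (\ref{g5}) is additive, hence so is $r\mapsto\IM\inpr{\xi(r)}{\xi(s)}$ for each fixed $s$; combined with the symmetry of $G$ one sees that the ``correction term'' $\IM\inpr{\xi(r)}{\xi(s)}$ is a bilinear form in $(r,s)$, and it must be symmetric for the associativity of $G_K$ to be consistent. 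But $\IM\inpr{\cdot}{\cdot}$ is antisymmetric, so $\IM\inpr{\xi(r)}{\xi(s)}=-\IM\inpr{\xi(s)}{\xi(r)}$; a symmetric form that is also antisymmetric must vanish, giving (\ref{g6}). Once (\ref{g6}) holds, the modified equation for $a$ collapses to (\ref{g4}). Conversely, (\ref{g4})--(\ref{g6}) plainly imply (\ref{g1})--(\ref{g2}) when $u\equiv 1$, so the equivalence holds.

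The only point requiring a little care — the ``main obstacle,'' though it is mild — is the deduction that the cocycle-type term $\IM\inpr{\xi(r)}{\xi(s)}$ must vanish once (\ref{g4}) and (\ref{g5}) are imposed: one should not simply assert (\ref{g6}), since a priori (\ref{g1}) only forces a relation between $a$ and that term. The clean way is to argue symmetrically: writing (\ref{g1}) for the pair $(r,s)$ and again for $(s,r)$ and subtracting, using that $a(r+s)=a(s+r)$, yields $\IM\inpr{\xi(r)}{u(r)\xi(s)} = \IM\inpr{\xi(s)}{u(s)\xi(r)}$ in general, and in the case $u\equiv 1$ this says $\IM\inpr{\xi(r)}{\xi(s)}=\IM\inpr{\xi(s)}{\xi(r)}=-\IM\inpr{\xi(r)}{\xi(s)}$, forcing (\ref{g6}); then (\ref{g1}) gives (\ref{g4}). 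Everything else is the routine bookkeeping of substituting into the group law, which I would present compactly rather than expanding in full.
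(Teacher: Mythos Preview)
Your proposal is correct and matches the paper's argument essentially verbatim: the first part is componentwise comparison with the group law, and for the second part the paper writes $a(r+s)-a(r)-a(s)=\IM\inpr{\xi(r)}{\xi(s)}$ and observes that the left side is symmetric in $r,s$ while the right side is antisymmetric, exactly as in your final paragraph. The detour in your middle paragraph through ``bilinearity'' and ``associativity of $G_K$'' is unnecessary and a bit vague; just go straight to the symmetry/antisymmetry observation, which you already identify as the clean route.
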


\begin{proof} The first statement is obvious. 
Assume $u(r)=1$ now. 
Then Equation (\ref{g1}) implies 
$$a(r+s)-a(r)-a(s)=\IM\inpr{\xi(r)}{\xi(s)}.$$
Note that the left-hand side is symmetric in $r$ and $s$ while the right-hand side is anti-symmetric. 
Thus the second statement holds. 
\end{proof}
\medskip

\begin{theorem}\label{unitless} Let $(\{G_{a,b}\}, \{S_t\})$ be a divisible
sum system of finite index and let $(\{H_t\}, \{U_{s,t}\})$ be the product 
system constructed out of the above sum system. 
Then the following statements are equivalent.
\begin{itemize}
\item[{\rm (i)}] The product system $(H_t, U_{s,t})$ is of type $I$.
\item[{\rm (ii)}] There exists a linear isomorphism $J:\RAU\rightarrow \IAU$ satisfying the following property: 
the bilinear form $b_G(\cdot,J\cdot)$ is an inner product of $\RAU$, and for each $t>0$, the operator $J_{t,0}$ extends to 
a bounded operator $J_t$ on $G_{0,t}$ such that $J_t \in \cS(G_{0,t}, G_{0,t})$. 
\item[{\rm (iii)}] There exists a linear isomorphism $J:\RAU\rightarrow \IAU$ satisfying the following property: 
the bilinear form $b_G(\cdot,J\cdot)$ is an inner product of $\RAU$ and the operator $J_{1,0}$ extends to a bounded operator 
$J_1$ on $G_{0,1}$ such that $J_1 \in \cS(G_{0,1}, G_{0,1})$. 
\end{itemize}
\end{theorem}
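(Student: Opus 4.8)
The plan is to prove the chain $(i)\Rightarrow(ii)\Rightarrow(iii)\Rightarrow(i)$, with $(ii)\Rightarrow(iii)$ essentially trivial (restriction to $t=1$), so the real content lies in the other two implications. The bridge between the type classification of the product system and the linear-algebraic data $(\RAU,\IAU,b_G)$ is the fact, recalled before Lemma \ref{gauge group}, that the automorphism group of an exponential (type I) product system of index $n$ is $G_K$ with $K=\C^n$, together with Lemma \ref{quasi-equivalence} / Theorem \ref{shales} relating unitary implementability of Weyl representations to membership in $\cS(\cdot,\cdot)$. The addits of the sum system are exactly what produce one-parameter families of units and gauge cocycles on the product system side: a real addit $\{x_t\}$ gives the section $t\mapsto e(x_t/\text{(normalization)})$-type data and an imaginary addit gives the ``conjugate'' family via $(A^*)^{-1}$.

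\medskip
\noindent\emph{$(iii)\Rightarrow(i)$.} Suppose we are given $J:\RAU\to\IAU$ with $b_G(\cdot,J\cdot)$ an inner product on $\RAU$ and $J_1\in\cS(G_{0,1},G_{0,1})$. First I would use the additivity relations $x_s+x_{s,s+t}=x_{s+t}$ and $y'_s+y'_{s,s+t}=y'_{s+t}$ together with Proposition \ref{h} (which gives $\langle x_{s_1,s_2},{}^sy'_{t_1,t_2}\rangle=b_G(x,y)|(s_1,s_2)\cap(t_1,t_2)|$) to check that the bounded extension $J_{t,0}\to J_t$ on $G_{0,t}$ exists for \emph{all} $t$ once it exists for $t=1$, and that $J_t\in\cS(G_{0,t},G_{0,t})$ for all $t$; this uses the good behaviour of the class $\cS(\cdot,\cdot)$ under restrictions and direct sums noted in Section \ref{pre}, applied via axiom (ii) of a sum system. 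Thus $(iii)$ upgrades to $(ii)$. Next, using $J_t\in\cS(G_{0,t},G_{0,t})$ and Theorem \ref{shales}, I would form the symplectic map $S_{J_t}$ on $G_{0,t}^\C$ and the implementing unitary $\Gamma(J_t)$ on $H_t=\Gamma(G_{0,t}^\C)$. Compatibility of $J$ with the sum-system structure — that is, $J_{s+t,0}$ intertwines $A_{s,t}$ and $J_s\oplus S_s|J_t$ — together with the multiplicativity $\Gamma(BA)=\Gamma(B)\Gamma(A)$ shows that $\{\Gamma(J_t)\}$ is a gauge cocycle for the product system. Conjugating the canonical units coming from the real addits by this cocycle, I would manufacture enough units: the images $\Gamma(J_t)u^x_t$ where $u^x$ is the unit built from a real addit $x$ will be units, and because $b_G(\cdot,J\cdot)$ is positive definite on the $n$-dimensional space $\RAU$, the corresponding exponential vectors span a copy of $\Gamma(\C^n)$ inside $H_t$. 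Divisibility then forces $H_t$ to be exactly this Fock space, so the product system is the exponential one of index $n$, hence type I.

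\medskip
\noindent\emph{$(i)\Rightarrow(ii)$ — the main obstacle.} Assume $(H_t,U_{s,t})$ is type I, hence isomorphic to the exponential product system of some index $m$. First one identifies $m=n=\operatorname{ind}G$ by counting units; divisibility guarantees the units coming from real addits already generate, and the index of a divisible sum system equals $\dim\RAU$ by Definition \ref{index}. The hard part is to reverse-engineer $J$: given the concrete exponential model, the sum system $\{G_{0,t}^\C\}$ sits inside $\{\Gamma(\C^n)\otimes\cdots\}$ as the ``first Wiener chaos'', and the two real subspaces corresponding to the real addits and to the imaginary addits (the spaces $G^0_{0,t}$ and ${G^0_{0,t}}'$) must be related by a symplectic transformation of $G_{0,t}^\C$ that is compatible across all $t$. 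I would extract $J$ as follows: for $x\in\RAU$, the family $\{x_t\}$ and the family $\{y_t\}$ for $y=J(x)$ must be the real and imaginary parts of a single unit-derived vector field in the type I model, so $J$ is determined by requiring that $x_t+i\,(J x)'_t$ be (proportional to) the logarithmic derivative of a unit. Using Lemma \ref{bdd} applied to the gauge automorphisms of the type I system, the operator interchanging these subspaces on $G_{0,t}^\C\cong G_{0,t}\oplus G_{0,t}$ lies in $\cS(G_{0,t}\oplus G_{0,t},G_{0,t}\oplus G_{0,t})$; restricting and using the block structure (real part goes to $G_{0,t}$, imaginary part via the ``primed'' embedding) peels off $J_t\in\cS(G_{0,t},G_{0,t})$. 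That $b_G(\cdot,J\cdot)$ is an inner product follows because in the exponential model the relevant pairing is $\langle\xi,\eta\rangle$ on $\C^n$, which is positive definite; Lemma \ref{almost coboundary} and Lemma \ref{gauge group} are the tools that force the skew part of this pairing to vanish and the symmetric part to be positive, turning the a priori merely non-degenerate form $b_G$ (Lemma \ref{non-degenerate}) into an honest inner product after twisting by $J$. The delicate point throughout is the \emph{uniformity in $t$}: one must check that the map $J$ obtained at each scale is the same and patches to a single linear isomorphism $\RAU\to\IAU$, and that boundedness of $J_{1,0}$ is genuinely equivalent to boundedness of all $J_{t,0}$ — this is where the finite-index hypothesis is essential, since it gives $\RAU$ and $\IAU$ their unique linear topologies and makes ``bounded extension'' a meaningful finite-dimensional-to-Hilbert-space condition.
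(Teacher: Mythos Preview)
Your outline of $(i)\Rightarrow(ii)$ is in the right spirit and follows the paper's route: push the addit-induced automorphisms $W(x_t)$, $W(iy_t)$ through an isomorphism $V_t$ to the exponential system, read them off as elements of the gauge group $G_K$, use Lemmas \ref{almost coboundary} and \ref{gauge group} to kill the unitary part and reduce to homomorphisms $\xi:\RAU\to K$, $\eta:\IAU\to K$, and finally invoke Lemma \ref{bdd} on a suitable operator $R_0$ on $G_{0,t}^\C$ to extract $J_t\in\cS$. Two points you glide over that the paper handles carefully: first, the equality $\dim K=n$ is not obtained by ``counting units'' but by showing (via irreducibility of the Weyl representation and divisibility) that the unitary components $u(x),v(y)$ are trivial and that $K$ is spanned over $\R$ by $\xi(\RAU)\cup\eta(\IAU)$; second, obtaining $b_G(\cdot,J\cdot)$ positive definite requires a diagonalization step and the observation that a certain off-diagonal block $B$ is forced to be Hilbert--Schmidt and hence zero.

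Your $(iii)\Rightarrow(i)$ contains a genuine gap. Real addits do \emph{not} give units of the product system: they give automorphisms $\{W(x_t)\}_{t>0}$ (this is exactly \cite[Theorem 26]{pdct}, invoked in the paper's proof of $(i)\Rightarrow(ii)$). There are no ``canonical units $u^x_t$ built from a real addit $x$'' to conjugate by $\Gamma(J_t)$; indeed, if addits produced units one would already know the system is not type III before ever looking at $J$. Relatedly, $\{\Gamma(J_t)\}$ is not an automorphism of the product system in any obvious way, because $J_t$ maps the ``real'' generators $x_{p,q}$ (which satisfy $x_s+S_sx_t=x_{s+t}$) to the ``imaginary'' generators ${}^ty'_{p,q}$ (which satisfy the dual relation involving $(A_{s,t}^*)^{-1}$ and $(S_s^*)^{-1}$), so $J_t$ does not intertwine the sum-system structure maps.

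The paper's argument for $(iii)\Rightarrow(i)$ avoids units entirely and instead builds an explicit isomorphism of sum systems. After upgrading to $J_t\in\cS(G_{0,t})$ for all $t$ (your first paragraph here is correct, and matches the paper), choose an orthonormal basis $\{x^j\}$ of $(\RAU,b_G(\cdot,J\cdot))$, set $y^j=Jx^j$, and define $B_t:G^0_{0,t}\to L^2((0,t),\cK)$ by $B_t(x^j_{p,q})=1_{(p,q]}e_j$. Proposition \ref{h} gives the adjoint relation $\langle B_t x^j_{p,q},1_{(r,s]}e_k\rangle=\langle x^j_{p,q},B_t' 1_{(r,s]}e_k\rangle$, whence $B_t^*B_t=J_t$. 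Since $J_t\in\cS$, one gets $B_t\in\cS(G_{0,t},L^2((0,t),\cK))$, and a direct check shows $\{B_t\}$ is an isomorphism of sum systems onto the shift system; this is what forces the product system to be exponential of index $n$.
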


\begin{proof} We set $n=\mathrm{ind}\; G$. 

(i) $\Rightarrow$ (ii)
We assume that the product system $(H_t, U_{s,t})$ is of type $I$.
Then it is isomorphic to an exponential product system (\cite{Arv}). 
We first claim that this exponential product system is also of index $n$. 
For $t >0$, let $$V_t:\Gamma(G_{0,t})\rightarrow \Gamma(L^2((0,t), K))$$ 
be a family of unitary operators, implementing the isomorphism between the 
above product systems.  Here $K$ is some separable complex Hilbert space. 
We want to show that the dimension of $K$ is $n$. 

For each $x \in \RAU$ and $y\in \IAU$, the families $\{W(x_t)\}_{t>0}$ and $\{W(iy_t)\}_{t>0}$ form automorphisms for 
the product system  $(H_t, U_{s,t})$ (\cite[Theorem 26]{pdct}) satisfying the relations:
\begin{equation}\label{u1}
W(x^{(1)}_t)W(x^{(2)}_t)=W(x^{(1)}_t+x^{(2)}_t),\quad \forall ~ x^{(1)},x^{(2)}\in \RAU,
\end{equation}
\begin{equation}\label{u2}
W(iy^{(1)}_t)W(iy^{(2)}_t)=W(iy^{(1)}_t+iy^{(2)}_t),\quad \forall~ y^{(1)},y^{(2)}\in \IAU 
\end{equation}
\begin{equation}\label{u3}
W(x_t)W(iy_t)=e^{-2itb_G(x,y)}W(iy_t)W(x_t),\quad \forall~ x\in \RAU,\;\forall ~ y\in \IAU. 
\end{equation}
Therefore there exists two continuous homomorphisms 
$$\rho:\RAU\ni x\mapsto (a(x),\xi(x),u(x))\in G_K,$$
$$\sigma:\IAU\ni y\mapsto (b(y),\eta(y),v(y))\in G_K,$$
satisfying 
\begin{equation}\label{u4}
V_tW(x_t)V_t^*=e^{ita(x)}W(1_{(0,t]}\xi(x))\EXP(1_{(0,t]}u(x)),
\end{equation}
\begin{equation}\label{u5}
V_tW(iy_t)V_t^*=e^{itb(y)}W(1_{(0,t]}\eta(y))\EXP(1_{(0,t]}v(y)),
\end{equation}
where $1_{(0,t]}$ denotes the characteristic function of the interval $(0,t]$. 
Equation (\ref{u3}) implies that in addition to the relations in Lemma \ref{gauge group}, we have 
\begin{equation}\label{u6}
2b_G(x,y)=\IM\inpr{\eta(y)}{v(y)\xi(x)}-\IM\inpr{\xi(x)}{u(x)\eta(y)},\end{equation}
\begin{equation}\label{u7}\xi(x)+u(x)\eta(y)=\eta(y)+v(y)\xi(x),\end{equation}
\begin{equation}\label{u8} u(x)v(y)=v(y)u(x).\end{equation}

Let $w((x,y))=u(x)v(y)$ and $c(x,y)=\xi(x)+u(x)\eta(y)$. 
Then (\ref{g3}) and (\ref{u8}) imply that $(K,w)$ is a continuous unitary representation of $\RAU\times \IAU$, and (\ref{g2}) and 
(\ref{u7}) imply that $c$ is a continuous $1$-cocycle. 
Let $$K_0=\{z\in K;\; w(g)z=z,\;\forall g\in \RAU\times \IAU\},$$ 
and let $K_1=K_0^\perp$. 
Let $\xi_i(x)$ be the projection of $\xi(x)$ to $K_i$ and let $\eta_i(y)$ be the projection of $\eta(y)$ to $K_i$. 
Then Lemma \ref{almost coboundary} implies 
\begin{equation}\label{u9}
\inpr{\xi_1(x)}{u(x)\eta_1(y)}=\inpr{\eta_1(y)}{v(y)\xi_1(x)}, 
\end{equation}
and Equation (\ref{u6}) is equivalent to 
\begin{equation}\label{u10}
b_G(x,y)=\IM\inpr{\eta_0(y)}{\xi_0(x)}. 
\end{equation}

Assume that $K_1$ is not trivial. 
Let $0<p<q<t$. 
Then it is routine work to show  
$$V_tW(x_{p,q})V_t^*=e^{i(q-p)a(x)}W(1_{(p,q]}\xi(x))\EXP(1_{(0,p]}+1_{(p,q]}u(x)+1_{(q,t]}),$$
$$V_tW(i{}^ty_{p,q}')V_t^*=e^{i(q-p)b(y)}W(1_{(p,q]}\eta(y))\EXP(1_{(0,p]}+1_{(p,q]}v(y)+1_{(q,t]}).$$
By definition of $K_1$, either $u(x^0)$ or $v(y^0)$ is not trivial for some $x^0\in \RAU$ and $y^0\in \IAU$. 
Thus we assume that $u(x^0)\neq 1$ (the case with non-trivial $v(y^0)$ can be treated in the same way). 
Direct computation using Lemma \ref{almost coboundary} and (\ref{g1}) shows that the operator 
$W(1_{(0,t]}\xi_1(x^0))\EXP(1_{(0,t]}u(x^0))$ commutes with $V_tW(x_{p,q})V_t^*$ and $V_tW(i{}^ty'_{p,q})V_t^*$ 
for all $x\in \RAU$, $y\in \IAU$, and $0<p<q<t$. 
However, this contradicts the irreducibility of the vacuum representation of the Weyl algebra, since the sets 
$\{x_{p,q}; (p,q) \subseteq (0,t),\;x\in \RAU\}$ and $\{{}^ty^\prime_{p,q}; (p,q) \subseteq (0,t),\; y\in \IAU\}$ 
are total in $G_{0,t}$ due to the divisibility of the sum system. 
Hence $K=K_0$. 

Now assume that $\dim K$ is strictly larger than $n$. 
Then there exists non-zero $\zeta\in K$ orthogonal to $\xi(\RAU)$ and $\eta(\IAU)$ with respect to 
the real inner product $\RE\inpr{\cdot}{\cdot}$. 
Again we can show that $W(i1_{(0,t]}\zeta)$ would commutes with $V_tW(x_{p,q})V_t^*$ and $V_tW(i{}^ty'_{p,q})V_t^*$, for all $x\in \RAU$, $y\in \IAU$, and $0<p<q<t$, 
which is a contradiction. 
Therefore  we conclude that $\dim K\leq n$. 

In the above argument, we have shown the following: there exist continuous homomorphisms 
$\xi:\RAU\rightarrow K$, $\eta:\IAU\rightarrow K$, $a:\RAU\rightarrow \R$, and $b:\IAU\rightarrow \R$ satisfying 
\begin{equation}\label{v1}
V_tW(x_t)V_t^*=e^{ita(x)}W(1_{(0,t]}\xi(x)),\end{equation}
\begin{equation}\label{v2} V_tW(iy_t)V_t^*=e^{itb(y)}W(1_{(0,t]}\eta(y)),\end{equation}
\begin{equation}\label{v3}
\IM\inpr{\xi(x^{(1)})}{\xi(x^{(2)})}=0,\quad \forall ~x^{(1)},x^{(2)}\in \RAU,
\end{equation}
\begin{equation}\label{v4}
\IM\inpr{\eta(y^{(1)})}{\eta(y^{(2)})}=0,\quad \forall ~y^{(1)},y^{(2)}\in \IAU,
\end{equation}
\begin{equation}\label{v5}
b_G(x,y)=\IM \inpr{\eta(y)}{\xi(x)},\quad \forall~ x\in \RAU,\;\forall ~y\in\IAU.  
\end{equation}
Since $b_G$ is non-degenerate, Equation (\ref{v5}) shows that $\xi$ and $\eta$ are injective, and in particular, 
the real dimension of the image of $\xi$ is $n$. 
Thus Equation (\ref{v3}) shows that there exists an orthonormal basis $\{e_j\}_{j=1}^n$ of $K$ consisting of 
elements in $\xi(\RAU)$, and so $\dim K=n$. 
Let $x^j=\xi^{-1}(e_j)$. 
Then $\{x^j\}_{j=1}^n$ is a basis of $\RAU$.  
Let $\{y^j\}_{j=1}^n$ be the dual basis of $\{x^j\}_{j=1}^n$ in $\IAU$  with respect to the bilinear form $b_G$. 
Then there exists a real matrix $(\lambda_{jk})$ such that 
$$\eta(y^j)=ie_j+\sum_{k=1}^n\lambda_{jk}e_k.$$
Moreover, Equation (\ref{v4}) implies that the matrix $(\lambda_{jk})$ is symmetric. 
Thus by changing the basis $\{e_j\}_{j=1}^n$ if necessary, we may and do assume that $(\lambda_{jk})$ is diagonal and 
there exist real numbers $\lambda_j$ such that 
\begin{equation}\label{v6}
\eta(y^j)=(\lambda_j+i)e_j.
\end{equation}

Next we want to get rid of $a$ and $b$ in the above. 
Let 
$$\zeta=\frac{1}{2}\sum_{j=1}b(y^j)\xi(x^j)-\frac{1}{2}\sum_{j=1}^na(x^j)\eta(y^j).$$
Then direct computation yields $\IM \inpr{\zeta}{\xi(x^k)}=-a(x^k)/2$ and 
$\IM \inpr{\zeta}{\eta(y^k)}=-b(y^k)/2$. 
Now by replacing $V_t$ with $W(1_{(0,t]}\zeta)V_t$, we may and do assume $a$ and $b$ are trivial. 

Let $\lambda_j+i=r_je^{i\theta_j}$ with $r_j>0$ and $\theta_j\in (0,\pi)$. 
Let $g\in U(K)$ be the unitary operator determined  by $ge_j=e^{i\theta_j}e_j$ for all $j$. 
We denote $U_t = V_t^* \EXP(1_{(0,t]}g)V_t$. 
Then for $0<p<q<t$, we have
$$U_t W(x^j_{p,q})U_t^*=V_t^*W(\frac{1}{r_j}1_{(p,q]}\eta(y^j))V_t=W(i\frac{1}{r_j}{}^t{y^{j}}'_{p,q}),$$
\begin{eqnarray*}
U_tW(i{}^t{y^j}'_{p,q})U_t^*&=&V_t^*W(1_{(p,q]}r_je^{i2\theta_j}e_j)V_t\\
 &=& V_t^*W(1_{(p,q]}(2\cos\theta_jr_je^{i\theta_j}-r_j)e_j)V_t\\
 &=&V_t^*W(1_{(p,q]}(\xi(-rx^j)+\eta(2\cos\theta_jy^j)))V_t\\
 &=&e^{-i2(q-p)r_j\cos\theta_j} V_t^*W(1_{(p,q]}\xi(-r_jx^j))V_tV_t^*W(1_{(p,q]}\eta(2\cos\theta_jy^j))V_t\\
 &=&e^{-i2(q-p)r_j\cos\theta_j }W(-r_jx^j_{p,q})W(i2\cos\theta_j {}^t{y^j}'_{p,q})\\
 &=&W(-r_jx_{p,q}+i2\cos\theta_j {}^t{y^j}'_{p,q}).
\end{eqnarray*}
We introduce a real linear operator $R_0$ with domain $D(R_0)=G_{0,t}^0+i{G^{0}_{0,t}}'$ by setting
$$R_0x^j_{p,q}=\frac{i}{r_j}{}^t{y^j}'_{p,q},\quad R_0i{}^t{y^j}'_{p,q}= -r_jx_{p,q}+i2\cos\theta_j {}^t{y^j}'_{p,q},$$
which preserves the imaginary part of the inner product. 
Then we have the relation $U_tW(x)U_t^*=W(R_0x)$ for every $x\in D(R_0)$. 
Let $J:\RAU\rightarrow \IAU$ be the linear operator determined by $Jx^j=r_j^{-1}y^j$. 
Lemma \ref{bdd} implies that $R_0$  extends to an operator $R \in \cS(G_{0,t} \oplus G_{0,t}, G_{0,t} \oplus G_{0,t}),$ 
and in consequence $J_t$ exists.  
Moreover, the operator $J_t$ is invertible and $R$ can be expressed in a matrix form as, 
$$R=\left(
\begin{array}{cc}
 0& -J_t^{-1} \\
 J_t&B 
\end{array}
\right),
$$
where $B$ is the bounded linear extension of the map $i{}^t{y^j}'_{p,q}\mapsto 2\cos\theta_ji{}^t{y^j}'_{p,q}$. 
Thus  
$$R^*R=\left(
\begin{array}{cc}
J_t^*J_t &J_t^*B  \\
 BJ_t& J_t^{-1*}J_t^{-1}+B^2 
\end{array}
\right).
$$
Therefore $J_t\in \cS(G_{0,t}, G_{0,t})$ and $B$ is a Hilbert-Schmidt operator, and  in consequence, we have 
$\cos\theta_j=0$ for all $j$. 
This shows $Jx^j=y^j$ and $b_G(x^j,Jx^k)=\delta_{j,k}$, and so $b_G(\cdot,J\cdot)$ is an inner product 
(with an orthonormal basis $\{x^j\}_{j=1}^n$).

\bigskip

\noindent(ii) $\Rightarrow$ (iii) Clear.

\bigskip

\noindent
(iii) $\Rightarrow$ (i) Assume that there exists a linear isomorphism $J:\RAU\rightarrow \IAU$ such that 
$b_G(\cdot,J\cdot)$ gives an inner product of $\RAU$ and that the bounded extension $J_1$ of $J_{1,0}$ exists and 
$J_1\in \cS(G_{0,1})$. 
We first claim $J_t \in \cS(G_{0,t})$ for each $t\in (0,1)$. 
Denote $G_{0,t}^\prime = G_{t,1}^{\perp} \cap G_{0,1}$. Notice that 
$J_1|_{G_{0,t}}$ maps $G_{0,t}$ 
to   
$G_{0,t}^\prime$ and $A_{t,1-t}^*$ maps $G_{0,t}^\prime$ to $G_{0,t}$. Now 
our claim follows immediately from the observation that 
$$J_{t,0}=(A_{t,1-t}^*|_{G_{0,t}^\prime})(J_{1,0}|_{G_{0,t}}).$$ 
With the claim, it is routine work to show that $J_t$ exists and $J_t\in \cS(G_{0,1})$ for all $t>0$. 

Since $b_G(\cdot,J\cdot)$ gives an inner product of $\RAU$, we choose an orthonormal basis $\{x^j\}_{j=1}^n$ with respect to 
this inner product. 
We set $y^j=Jx^j$. 
Let $\cK$ be an $n$-dimensional real Hilbert space with an orthonormal basis $\{e_j\}_{j=1}^n$. 

Denote $$L^{(0)}((0,t),\cK)=\spa_\R  \{ 1_{(p, q]}\xi:(p, q) \subseteq (0,t),\;\xi\in \cK \}$$ 
which is a dense subspace of the real Hilbert space $L^2((0,t),\cK)$. 
Define 
$$B_t:G^0_{0,t} \rightarrow L^2((0,t),\cK), ~~~ B^\prime_t:L^{(0)}((0,t),\cK) \rightarrow G_{0,t}^\prime$$ 
by 
$$B_t(x^j_{p, q})=1_{(p,q]}e_j, ~~~B_t^\prime(1_{(p,q]}e_j)={}^t{y^j}^\prime_{p,q},$$ and extend by linearity. 
Notice that $J_{t,0}=B_t^\prime B_t$. Using Proposition \ref{h}, we get 
$$\langle B_tx^j_{p,q}, 
1_{(r,s]}e_k \rangle =\delta_{j,k}|[p,q]\cap[r,s]|=\langle x^j_{p,q}, B_t^\prime 1_{(r, s]}e_k \rangle.$$ 
By taking linear sums, we observe that $B_t$ and $B_t^\prime$ satisfy the adjoint condition on a dense subspace. 
So for any $x \in G^0_{0,t}$ we have 
$$\|B_tx\|^2 =\langle B_t^\prime B_tx, x \rangle \leq \|J_t\|\|x\|^2.$$ 
which shows that $B_t$ extends to an element in $\B(G_{0,t},L^2((0,t),\cK))$. 
The operator $B_t'$ also extends to a bounded operator because it is restriction of $B_t^*$. 
We use the same symbols $B_t$ and $B_t'$ for their bounded extension. 
Then we have $B_t^*B_t=J_t.$ 

Now from our assumption $J_t \in \cS(G_{0,t}, G_{0,t})$ we have $B_t \in \cS(G_{0,t}, L^2((0,t),\cK)).$ 
It is routine verification to see that $B_t$ satisfies $$B_s 
\oplus S_s^\prime B_t = B_{s+t}A_{s,t}(1_{G_{0,s}} \oplus S_s|_{G_{0,t}}) ~~\forall ~~s,t \in (0,1),$$ where 
we have identified $L^2((0,s),\cK)\oplus L^2((s, s+t),\cK)$ with $L^2((0,s+t),\cK)$. 
Therefore $(\{G_{a,b}\}, \{S_t\})$ is isomorphic, as sum system, to $(\{L^2((a,b),\cK)\}),\{S'_t\})$ where 
$\{S'_t\}$ is the unilateral shift. 
So $(\{H_t\}, \{U_{s,t}\})$ is isomorphic to the exponential product system, and hence type I. 
\end{proof}

\medskip

\begin{remark}
It has been proved in \cite[Theorem 39]{pdct} that only type I and type III product systems can be constructed from divisible sum systems. 
So thanks to the above Theorem, violating the condition $J_1 \in \cS(G_{0,1}, G_{0,1})$ is necessary and sufficient for 
the associated product system to be of type III. 
This criterion is much more powerful than the necessary condition for type I already proved in \cite[Theorem 40]{pdct}. 
In fact we can arrive at that condition just by assuming that $J_1$ is bounded. 
Suppose that $E_n \subseteq (0,1)$ be a sequence of elementary sets satisfying $ \liminf{G_{E_n}} =G_{0,1}$. 
That is, given any $x \in G_{0,1}$ there exists a sequence $\{x_n\}$ satisfying $x_n \in G_{E_n}$ and $x_n \rightarrow x$. 
If we set $y_n =J_1x_n$, then $y_n \in G_{E_n^c}^{\perp}$. 
Also if we assume that $J_1$ is bounded, then $y_n$ converges to $J_1x$. 
Thanks to \cite[Lemma 28]{pdct} this would imply that $\limsup{G_{E_n^c}} =\{0\}.$    
In Section 6, we will see that there are examples of divisible sum systems of finite index with bounded $J_1$, which give rise to type III 
product systems. 
\end{remark}

\bigskip

\section{Additive cocycles}\label{cocycles}

In this section we compute the addits when $G=\Hi_\R$ and $\{S_t\}$ is the shift semigroup.  
We use the notation $\LR t$ for the set of all real functions in $L^2(0,t)$.  
We follow the notations in \cite{I} and use many results from there. 
The reader may refer to the summary of notations and results reviewed at the end of Section \ref{pre}. 

We denote by $\HD_{2,\R}$ the set of functions $M\in \HD_2$ 
satisfying $\overline{M(z)}=M(\overline{z})$. 
We fix $M(z)$ in $\HD_{2,\R}$ and set  
$T_t=e^{tA_M}$, and $K_t=T_t-S_t$.   
With the above condition, the real part $\HiR$ is preserved by $T_t$.  

As defined in Section \ref{genccr}, for $t>0$, we set 
$$G^M_{0,t}=\{f\in \Hi_\R;\; T_t^*f=0\}$$ 
and set $G^M_{0,\infty}$ to be  $\overline{\bigcup_{t>0}G^M_{0,t}}$. 
When $M$ is an outer function, one can see that $G^M_{0,\infty}=\HiR$  
from \cite[Theorem 6.4]{I}. 

\begin{definition}\label{additive cocycle}
A measurable family $\{c_t\}_{t>0}$ of elements in $\Hi_\R$ is 
said to be a real additive cocycle for the pair $(\{S_t\}, \{T_t\})$, if $c_t\in G^M_{0,t}$ for all $t>0$, 
and the cocycle relation $c_{s+t}=c_s+S_sc_t$ holds for all $s,t>0$. 
A measurable family $\{d_t\}_{t>0}$ of elements in $\Hi_\R$ is 
said to be an imaginary additive cocycle for the pair $(\{S_t\}, \{T_t\})$, if $d_t\in \LR t$ for all 
$t>0$, and the cocycle relation $d_{s+t}=d_s+T_sd_t$ holds. 
\end{definition}

\begin{remark}\label{dtyt} Clearly the real additive cocycles are same as the real addits for the sum system $(\{G^M_{a,b}\},\{S_t\})$, 
as defined in Section \ref{genccr}. 

While proving part (a) in Proposition \ref{StTt}, it has been proved, for any $t\geq 0$, that the map 
$A^\prime_t:G^M_{0,t} \oplus L^2(t,\infty)_\R \mapsto L^2(0,\infty)_\R$, given by $x \oplus y \mapsto x +y$, is in 
$\cS(G^M_{0,t} \oplus L^2(t,\infty)_\R, L^2(0,\infty)_\R)$. 
If $\{y_t\} \in G^M_{0,t}$ is an imaginary addit for the sum system $(\{G^M_{a,b}\},\{S_t\})$, then we have 
$$\langle ({A_t^\prime}^*)^{-1}y_t, S_tx\rangle=\langle y_t \oplus 0, 0 \oplus S_t x\rangle= 0,~~\forall ~ x \in L^2(0,\infty),$$ 
and hence $S_t^*({A_t^\prime}^*)^{-1}y_t=0$. 
Moreover \begin{eqnarray*}({A_s^\prime}^*)^{-1}y_s + T_t({A_t^\prime}^*)^{-1}y_t & = & ({A_s^\prime}^*)^{-1}
(y_s \oplus S_s({A_t^\prime}^*)^{-1}y_t)\\
& = & ({A_{s+t}^\prime}^*)^{-1}((A_{s,t}^*)^{-1}(y_s \oplus S_sy_t))\\
& = & ({A_{s+t}^\prime}^*)^{-1}y_{s+t}.
\end{eqnarray*}  
In the above verification (and in the verification below) we have used our earlier observation that 
$T_t = ({A_t^\prime}^*)^{-1}(A_t^\prime)^{-1}S_t$ (see the proof of Proposition \ref{StTt},(c)) and the associativity of 
the sum system $(\{G^M_{a,b}\},\{S_t\})$. 
Conversely if $\{d_t\}$ is an imaginary cocycle for $(\{S_t\},\{T_t\})$ then we have 
$$\langle {A_t^\prime}^*d_t, 0 \oplus S_tx\rangle = \langle d_t, S_t x\rangle = 0, ~~~ \forall ~ x \in L^2(0,\infty),$$ 
and hence ${A_t^\prime}^*d_t\in G^M_{0,t}.$ Moreover
\begin{eqnarray*}
(A_{s,t}^*)^{-1}({A_s^\prime}^*d_s \oplus S_s{A_t^\prime}^*d_t) & = & {A_{s+t}^\prime}^*({A_{s+t}^\prime}^*)^{-1}(A_{s,t}^*)^{-1}({A_s^\prime}^*d_s \oplus S_s{A_t^\prime}^*d_t)\\
& = & {A_{s+t}^\prime}^*({A_s^\prime}^*)^{-1}\left({A_s^\prime}^*d_s \oplus S_s d_t\right)\\
& = & {A_{s+t}^\prime}^*(d_s + T_sd_t)\\
& = & {A_{s+t}^\prime}^* d_{s+t}.
\end{eqnarray*}
Hence an imaginary additive cocycle $\{d_t\}_{t>0}$ for the pair $(\{S_t\}, \{T_t\})$ is given by an imaginary addit $\{y_t\}_{t>0}$ of the corresponding sum system 
$(\{G^M_{a,b}\},\{S_t\})$ through the bijective correspondence $d_t = ({A_t^\prime}^*)^{-1} y_t$ and vice versa. (The above verification can as well be done without assuming $\{S_t\}$ being semigroup of isometries, by replacing $S_t$ by $(S_t^*)^{-1}.$)
\end{remark}

\medskip

The following lemma is probably well-known. 
We include a proof here for the reader's convenience.

\begin{lemma}\label{contiuous} Let $\{c_t\}$ be a real additive cocycle. 
Then $t \mapsto c_t$ is continuous. 
\end{lemma}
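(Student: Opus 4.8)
The plan is to exploit the cocycle relation $c_{s+t}=c_s+S_sc_t$ together with the fact, proved earlier, that $\{S_t\}$ is a $C_0$-semigroup, so that $s\mapsto S_sh$ is norm-continuous for each fixed $h\in\HiR$. The only thing missing is control of $c_t$ as $t\to 0^+$; once we know $c_t\to 0$ as $t\to 0^+$, continuity everywhere follows formally. Indeed, for $s<t$ we have $c_t - c_s = S_s c_{t-s}$, so $\|c_t-c_s\|=\|S_s c_{t-s}\|\leq \|S_s\|\,\|c_{t-s}\|$, and since $\|S_s\|$ is locally bounded (by $ae^{bs}$ as quoted from \cite{Y}) this gives right-continuity at $s$ as $t\downarrow s$. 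For left-continuity at $t$, write $c_t-c_s=S_sc_{t-s}$ again and estimate $\|S_sc_{t-s}-S_tc_0\|$ — but $c_0=0$ by the cocycle relation (taking $s=t\to 0$, or just noting $c_t=c_t+S_tc_0$ forces $S_tc_0=0$, hence $c_0=0$ since $S_t$ is injective) — so $\|c_t-c_s\|=\|S_sc_{t-s}\|\leq \|S_s\|\|c_{t-s}\|\to 0$ as $s\uparrow t$, provided $c_{t-s}\to 0$. So everything reduces to the behavior at $0$.

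To show $c_t\to 0$ as $t\to 0^+$, I would use the measurability hypothesis on $\{c_t\}$ together with local boundedness. First, the cocycle relation shows that for any fixed $\tau>0$ and $0<t<\tau$, $c_\tau = c_t + S_t c_{\tau-t}$, so $\|c_t\|\leq \|c_\tau\| + \|S_t\|\,\|c_{\tau-t}\|$; this alone does not give smallness, so I need a genuine argument. The standard trick: for a measurable additive cocycle, the map $t\mapsto \inpr{c_t}{h}$ is measurable for each $h$, and $\inpr{c_{s+t}}{h}=\inpr{c_s}{h}+\inpr{c_t}{S_s^*h}$. One can then integrate: set $h$ fixed and consider $\int_0^1 \inpr{c_s}{h}\,ds$, or better, use the cocycle identity to write, for small $t$, $\int_0^1 c_{s+t}\,ds - \int_0^1 c_s\,ds = \int_0^1 S_s c_t\, ds$ after a change of variables — more precisely $\int_t^{1+t}c_u\,du - \int_0^1 c_u\,du = \int_0^1 S_u c_t\,du$, so $\int_0^1 S_u c_t\,du = \int_1^{1+t}c_u\,du - \int_0^t c_u\,du$, which tends to $0$ as $t\to 0$ by local boundedness of $u\mapsto c_u$ in $L^1$ (itself a consequence of measurability plus the bound $\|c_u\|\leq \|c_\tau\|+\|S_u\|\|c_{\tau-u}\|$ on bounded intervals, which is locally bounded, hence locally integrable — one has to be slightly careful here and may instead first establish local boundedness from measurability and the cocycle relation). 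Finally, the operator $T:=\int_0^1 S_u\,du$ (a Bochner integral of a strongly continuous family) is injective — because $S_u$ is invertible onto its range for each $u$ and $\|S_u x\|\geq c\|x\|$ for small $u$ by the argument in Lemma \ref{st-1}, so $\inpr{Tx}{x}\geq \int_0^\varepsilon \inpr{S_ux}{x}\,du$... actually injectivity of $\int_0^1 S_u\,du$ needs a small separate argument; alternatively, apply $T_t^*$-type left inverses. Granting injectivity of $T$ with closed range (equivalently bounded inverse on its range), $c_t = T^{-1}\big(\int_1^{1+t}c_u\,du - \int_0^t c_u\,du\big)\to 0$.

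The step I expect to be the main obstacle is precisely the injectivity-with-bounded-inverse of the averaging operator $T=\int_0^1 S_u\,du$, or whatever substitute makes the inversion legitimate; a cleaner route may be to pair with a fixed nonzero vector and use that $\{S_t\}$ is a semigroup of isometries (which is the case of interest here, $\{S_t\}$ being the shift on $\HiR$), so that $\|S_u c_t\|=\|c_t\|$ and hence $\|\int_0^1 S_u c_t\,du\|$ does not trivially dominate $\|c_t\|$ — one must instead argue via weak convergence: for each $h$, $\inpr{c_t}{h}\to ?$, and use that $\int_0^1 S_u^* h\,du$ is in the closure of... This suggests the honest argument is: establish $t\mapsto c_t$ is \emph{weakly} measurable and locally bounded, deduce weak continuity from the cocycle identity $\inpr{c_t}{h}=\inpr{c_\tau}{h}-\inpr{c_{\tau-t}}{S_t^*h}$ combined with weak measurability forcing $\inpr{c_t}{h}\to 0$ along a suitable sequence (a Steinhaus/Weil-type argument: a measurable additive-type function on $(0,\infty)$ that is locally bounded is continuous at $0$), and finally upgrade weak continuity to norm continuity using $\|c_t\|^2=\inpr{c_t}{c_t}$ and the cocycle relation to control norms. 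I would carry out: (1) local boundedness of $t\mapsto \|c_t\|$ from measurability and the cocycle relation; (2) $c_0=0$ and the reduction of continuity everywhere to continuity at $0$; (3) continuity at $0$ via the integrated/averaged form of the cocycle relation together with injectivity (bounded inverse) of the averaging operator, exactly as above; with step (3) being where the real work lies.
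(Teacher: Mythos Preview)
Your reduction to continuity at $0$ is correct, and the averaging identity $\int_0^1 S_u c_t\,du=\int_1^{1+t}c_u\,du-\int_0^t c_u\,du$ is valid once local boundedness is in hand. But the step you yourself flag as the main obstacle is a genuine failure, not just a technicality: for the shift semigroup on $\HiR$ the operator $T=\int_0^1 S_u\,du$ is \emph{not} bounded below. Indeed $\Lt{Tf}(z)=\frac{1-e^{-z}}{z}\Lt f(z)$, and the multiplier $(1-e^{-i\lambda})/(i\lambda)$ vanishes at $\lambda\in 2\pi\Z\setminus\{0\}$, so $T^{-1}$ is unbounded. Hence knowing $Tc_t\to 0$ does not give $c_t\to 0$, and the alternative routes you sketch (weak continuity, Steinhaus-type arguments) are left as hopes rather than arguments. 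The local-boundedness step is also not established; your inequality $\|c_t\|\leq\|c_\tau\|+\|S_t\|\|c_{\tau-t}\|$ is circular, since it bounds $\|c_t\|$ in terms of $\|c_{\tau-t}\|$.

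The paper bypasses all of this with a different idea. It embeds $\HiR\subset L^2(\R)$, extends $\{c_t\}$ to an $\R$-cocycle for the bilateral shift $\{U_t\}$ via $c_{-t}=-U_{-t}c_t$, and observes that $V(t)=W(c_t)\EXP(U_t)$ is then a \emph{measurable one-parameter unitary group} on $\Gamma(L^2(\R))$. The classical fact that a measurable unitary representation of $\R$ is automatically strongly continuous gives continuity of $t\mapsto W(c_t)$; continuity of $t\mapsto\|c_t\|$ is read off from $\inpr{W(c_t)\Phi}{\Phi}=e^{-\|c_t\|^2/2}$, and continuity of $t\mapsto c_t$ itself from the one-particle component $e^{\|c_t\|^2/2}PW(c_t)\Phi=c_t$. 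The point is that passing to Weyl operators converts the \emph{affine} cocycle relation into a \emph{group} law, where automatic-continuity theorems apply directly, avoiding any need to invert an averaging operator or to first prove local boundedness.
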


\begin{proof} We regard $\Hi_\R$ as a subspace of $L^2(\R)$ in a natural way. 
Let $\{U_t\}$ be the shift of $L^2(\R)$. 
By setting $c_{t}=-U_tc_{-t}$ for negative $t$, we can extend $c$ to an $\R$-cocycle. 
Let $V(t)=W(c_t)\EXP(U(t))\in \B(\Gamma(L^2(\R)))$. 
Then $\{V(t)\}$ is a measurable unitary representation of $\R$. 
It is well-known that such a representation is in fact continuous and so $t\mapsto W(c_t)$ is continuous in the strong 
operator topology. 
Thus $t\mapsto \inpr{W(c_t)\Phi}{\Phi}=e^{-\|c_t\|/2}$ is continuous, where $\Phi$ is the vacuum vector. 
Let $P$ be the projection from $\Gamma(L^2(\R))$ onto the one particle subspace of $\Gamma(L^2(\R))$. 
Then we have $e^{\|c_t\|/2}PW(c_t)\Phi=c_t$ and $c_t$ is continuous. 
\end{proof}

\medskip

Let $\{c_t\}_{t>0}$ be a real additive cocycle. 
Then Arveson's theorem \cite[Theorem 5.3.2]{Arv} shows 
that there exists $c\in L^2_{\mathrm{loc}}[0,\infty)$ such that 
$c_t=c-S_tc$, where we extend the shift $S_t$ to 
$L^2_{\mathrm{loc}}[0,\infty)$ in an obvious way. 
Lemma \ref{contiuous} and the cocycle relation imply that there exist positive constants $a,b$ 
such that $||c_t||\leq a+bt$, and so
$$\int_0^t|c(x)|^2dx\leq ||c_t||^2\leq (a+bt)^2.$$
Note that for every $\varepsilon>0$, 
\begin{eqnarray*}\int_0^\infty |c(x)|^2e^{-\varepsilon x}dx
&=&\int_0^\infty e^{-\varepsilon x}\frac{d}{dx}
\big(\int_0^x|c(y)|^2dy\big)dx\\
&=&\varepsilon\int_0^\infty e^{-\varepsilon x}\int_0^x|c(y)|^2dydx<\infty.
\end{eqnarray*}
Thus for $w\in \rH$, we have 
$$0=(c_t,T_te_w)=(c-S_tc,S_te_w+K_te_w)
=\int_0^\infty c(x+t)e^{-xw}dx-\Lt c(w)+(c,K_te_w).$$
Performing the Laplace transformation in the variable $t$, we get 
the following from \cite[Lemma 3.1]{I} for $z\in \rH$ with sufficiently 
large $\RE z$: 
$$\frac{\Lt c(w)-\Lt c(z)}{z-w}-\frac{\Lt c(w)}{z}+(c,e_z)
(\xi_{M,z},e_w)=0,$$ where $\xi_{M,z}$ is as defined in the beginning of \cite[Section 3]{I}.
Now \cite[Lemma 3.3]{I} implies 
$$\frac{z\Lt c(z)}{M(z)}=\frac{w\Lt c(w)}{M(w)},$$
and so $\Lt c(z)$ is proportional to $M(z)/z$. 

Thanks to the fact that $M(z)/(1+z)\in \Ha$, indeed there exists 
a function $c^M$ such that $c^Me_a\in \Hi$ for all $a>0$, and 
$\Lt {c^M}(z)=M(z)/z$. 
Note that $c^M$ is a real function.
We set $c^M_t=c^M-S_tc^M$. Then 
$$\Lt{c^M_t}=\frac{(1-e^{-tz})M(z)}{z},$$
which belongs to $\Ha$. 
Thus $c^M_t\in \LR{\infty}$.  
Tracing back the above argument, we can actually show the following Lemma.

\begin{lemma} Let $c^M$ be a function in $L^2_{\mathrm{loc}}[0,\infty)$ such that 
$\Lt c^M(z)=M(z)/z$ and let $c^M_t=c^M-S_tc^M$. 
Then $\{c^M_t\}_{t>0}$ is a real additive cocycle. 
Every real additive cocycle is a scalar multiple of $\{c^M_t\}_{t>0}$. 
\end{lemma}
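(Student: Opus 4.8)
The plan is to prove the two assertions separately, leaning on the computation assembled just before the statement. For the claim that $\{c^M_t\}$ is a real additive cocycle, the cocycle relation is a one-line check: writing $c^M_t=(1-S_t)c^M$ and using the semigroup property of $\{S_t\}$ gives $c^M_s+S_sc^M_t=c^M-S_sc^M+S_sc^M-S_{s+t}c^M=c^M_{s+t}$. Membership $c^M_t\in\LR\infty$ has already been recorded from $\Lt{c^M_t}(z)=(1-e^{-tz})M(z)/z\in\Ha$; moreover, factoring $(1-e^{-tz})M(z)/z=\frac{(1-e^{-tz})(1+z)}{z}\cdot\frac{M(z)}{1+z}$ and noting that the first factor lies in $H^\infty(\rH)$ with norm bounded by $C(1+t)$ while $M(z)/(1+z)\in\Ha$, one gets $\|c^M_t\|\le C'(1+t)$; a dominated-convergence argument with the same factorization gives $\|c^M_r\|\to0$ as $r\to0^+$, and then $c^M_{s'}-c^M_s=S_sc^M_{s'-s}$ together with the isometry property of the shift yields continuity, hence measurability, of $t\mapsto c^M_t$.

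The substantive point is $c^M_t\in G^M_{0,t}$, that is, $T_t^*c^M_t=0$, and here I would run the chain of identities preceding the lemma backwards. Since $\Lt{c^M}(z)=M(z)/z$, the function $z\mapsto z\Lt{c^M}(z)/M(z)$ is identically $1$; reversing \cite[Lemma 3.3]{I}, this is equivalent to the identity
$$\frac{\Lt{c^M}(w)-\Lt{c^M}(z)}{z-w}-\frac{\Lt{c^M}(w)}{z}+(c^M,e_z)(\xi_{M,z},e_w)=0$$
holding for every $w\in\rH$ and every $z\in\rH$ with $\RE z$ large; and by \cite[Lemma 3.1]{I} the left-hand side is exactly the Laplace transform in $t$ of the continuous, polynomially bounded function $t\mapsto(c^M_t,T_te_w)=(c^M-S_tc^M,S_te_w+K_te_w)$. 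Hence that function of $t$ vanishes identically, and since $\{e_w:w\in\rH\}$ is total in $\Hi$ and $T_t$ is bounded, $(c^M_t,T_tf)=0$ for every $f\in\Hi$, i.e. $c^M_t\perp\Ran T_t$, i.e. $T_t^*c^M_t=0$. Thus $c^M_t\in G^M_{0,t}$ and $\{c^M_t\}$ is a real additive cocycle.

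For uniqueness up to a scalar, let $\{c_t\}$ be any real additive cocycle. Arveson's theorem \cite[Theorem 5.3.2]{Arv} provides $c\in L^2_{\mathrm{loc}}[0,\infty)$ with $c_t=c-S_tc$, and the discussion above shows that $\Lt c(z)$ is a scalar multiple of $M(z)/z=\Lt{c^M}(z)$. By injectivity of the Laplace transform, $c=\lambda c^M$ for a scalar $\lambda$, and so $c_t=\lambda c^M_t$ for all $t>0$.

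The main obstacle is the second paragraph: one must verify that the cited identities from \cite{I} can genuinely be read as equivalences rather than one-way implications, that agreement for $\RE z$ large propagates to all of $\rH$ by analyticity and uniqueness of the Laplace transform, and that $t\mapsto(c^M_t,T_te_w)$ carries just enough regularity and growth control for the vanishing of its Laplace transform to force it to vanish. Everything else is bookkeeping with formulas already present in the excerpt.
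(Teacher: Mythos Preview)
Your proposal is correct and follows essentially the same approach as the paper, whose entire proof reads ``Tracing back the above argument, we can actually show the following Lemma.'' You supply the details of that tracing-back, and you correctly identify the one point requiring care: that the chain (compute $(c^M_t,T_te_w)$, take the Laplace transform in $t$, invoke \cite[Lemmas 3.1, 3.3]{I}) consists of equivalences, so that vanishing of the final expression forces $(c^M_t,T_te_w)=0$. One minor correction: the function $t\mapsto(c^M_t,T_te_w)$ is not necessarily polynomially bounded, since $\|T_t\|$ may grow like $ae^{bt}$; but exponential growth is all that is needed for the Laplace transform to exist on a half-plane and for its injectivity on continuous functions to apply, so the argument goes through unchanged.
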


\medskip

Let $\{d_t\}$ be an imaginary additive cocycle now. 
The condition $S_t^*d_t=0$ is equivalent to that the support of $d_t$ 
is in $(0,t]$. 
The cocycle relation 
$$d_{s+t}(x)=d_s(x)+(K_sd_t)(x)+(S_sd_t)(x)$$
shows that for every $s<x\leq t$, we have $d_{s+t}(x)=d_t(x-s)$ 
or equivalently, for every $0<x\leq t$, we have 
$d_{s+t}(s+t-x)=d_t(t-x)$. 
This means that there exists a function $d\in L^2_{\mathrm{loc}}[0,\infty)$ 
such that $d_t(x)=1_{(0,t]}(x)d(t-x)$. 
The cocycle relation is now equivalent to 
$$d(t+x)=d(x)+\int_0^tk(x,y)d(t-y)dy,\quad t,x>0.$$ 
Note that $t\mapsto \|d_t\|$ is a non-decreasing function. 
Thanks to the cocycle relation, we can see that 
$$t\mapsto \|d_t\|^2=\int_0^t|d(s)|^2ds$$
grows at most exponentially and the Laplace transformation $\Lt d(z)$ of 
$d$ exists for $z$ with sufficiently large $\RE z$. 
Performing the Laplace transformation of the above equation for the two 
variables $t$ and $x$ we get 
$$\frac{\Lt d(w)-\Lt d(z)}{z-w}=\frac{\Lt d(z)}{w}+\Lt{\xi_{M,z}}(w)\Lt d(w),$$
which is equivalent to 
$$zM(z)\Lt d(z)=wM(w)\Lt d(w).$$
Thus $\Lt d(z)$ should be proportional to $1/(zM(z))$. 
Indeed, \cite[Corollary 4.3]{I} shows that there exists 
$d^M\in L^2_{\mathrm{loc}}[0,\infty)$ such that $\Lt {d^M}(z)=1/(zM(z))$. 

\begin{lemma} Let $d^M\in L^2_{\mathrm{loc}}[0,\infty)$ 
such that $\Lt {d^M}(z)=1/(zM(z))$ and we set $d^M_t(x)=1_{(0,t]}(x)d^M(t-x)$. 
Then $\{d^M_t\}_{t>0}$ is an imaginary additive cocycle. 
Every imaginary additive cocycle is a scalar multiple of $\{d^M_t\}_{t>0}$. 
\end{lemma}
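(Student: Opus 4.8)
The plan is to handle the two assertions separately, observing that the uniqueness statement is already essentially established in the discussion preceding the lemma, so the real content is the first sentence. For uniqueness: given an arbitrary imaginary additive cocycle $\{d_t\}_{t>0}$, the argument above produced $d\in L^2_{\mathrm{loc}}[0,\infty)$ with $d_t(x)=1_{(0,t]}(x)d(t-x)$, showed that $\Lt d(z)$ exists for $\RE z$ large, and showed that the cocycle relation forces $zM(z)\Lt d(z)$ to be a constant $\lambda$ independent of $z$. Hence $\Lt d(z)=\lambda/(zM(z))=\lambda\,\Lt {d^M}(z)$, and by injectivity of the Laplace transform on functions of at most exponential growth we get $d=\lambda d^M$ a.e.\ and therefore $d_t=\lambda d^M_t$ for every $t>0$. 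So the rest of the proof consists in verifying that $\{d^M_t\}_{t>0}$ is itself an imaginary additive cocycle, in complete analogy with the preceding lemma for real cocycles, whose proof was indicated by ``tracing back'' the computation.

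First I would check the structural requirements of Definition \ref{additive cocycle}. Since $d^M\in L^2_{\mathrm{loc}}[0,\infty)$, each $d^M_t$ is square integrable on $(0,t]$ and vanishes outside it, so $d^M_t\in\LR t$ and $S_t^*d^M_t=0$; it is real-valued because $M\in\HD_{2,\R}$ satisfies $\overline{M(z)}=M(\overline z)$, hence so does $1/(zM(z))$, forcing $d^M$ to be real; and $t\mapsto d^M_t$ is manifestly measurable. Then I would verify the cocycle relation $d^M_{s+t}=d^M_s+T_sd^M_t$, which, exactly as in the discussion above, is equivalent to the convolution identity
$$d^M(t+x)=d^M(x)+\int_0^t k(x,y)\,d^M(t-y)\,dy,\qquad t,x>0,$$
$k$ being the kernel of $K_t=T_t-S_t$. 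Both sides grow at most exponentially, so it suffices to compare Laplace transforms in the variables $t$ and $x$ for $\RE z,\RE w$ large; by \cite[Lemma 3.1]{I} this reduces to
$$\frac{\Lt {d^M}(w)-\Lt {d^M}(z)}{z-w}=\frac{\Lt {d^M}(z)}{w}+\Lt{\xi_{M,z}}(w)\,\Lt {d^M}(w),$$
which by \cite[Lemma 3.3]{I} is in turn equivalent to $zM(z)\Lt {d^M}(z)=wM(w)\Lt {d^M}(w)$; since $\Lt {d^M}(z)=1/(zM(z))$ by \cite[Corollary 4.3]{I}, both sides equal $1$ and the identity holds. Reading this chain of equivalences backwards yields the convolution identity, hence the cocycle relation, and completes the proof.

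The hard part — really the only nontrivial point — is the bookkeeping in this last step: one must keep track of the half-planes on which $\Lt {d^M}$, $\Lt{\xi_{M,z}}$, and the transforms of the two sides of the convolution identity converge, and of how they are analytically continued, so that each implication in the chain is genuinely reversible rather than merely formal. This is handled by the same estimates as in \cite[Sections 3 and 4]{I}; everything else in the argument is routine.
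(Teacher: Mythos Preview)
Your proposal is correct and follows essentially the same approach as the paper. The paper gives no explicit proof of this lemma: the uniqueness is contained in the discussion immediately preceding it, and the existence is meant to be read off by ``tracing back'' the Laplace-transform computation, exactly as was done (and explicitly signposted) for the analogous real-cocycle lemma; your write-up makes this implicit argument explicit and adds the routine structural checks ($d^M_t\in\LR t$, reality, measurability).
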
 

\medskip

\begin{remark}
We set $c^M_{s,t}=S_sc^M_{t-s}$ and $d^M_{s,t}=T_sd^M_{t-s}$ for $0<s<t$. 
Then due to Lemma \ref{h} (also for instance the relation $\langle c^M_{s+t}, d^M_{s+t}\rangle = \langle c^M_{s}, d^M_{s}\rangle+ \langle c^M_{t}, d^M_{t}\rangle$ can easily be verified), there exists a constant $C$ such that the 
$\langle c^M_{q,r},d^M_{s,t} \rangle =C|[q,r]\cap[s,t]|$, where $|.|$ is the Lebesgue measure. 
As we have 
$$\langle c^M_t,d^M_t\rangle  =\int_0^tc^M(x)d^M(t-x)dx=c^M*d^M(t),$$
and $\Lt{c^M*d^M}(z)=\Lt {c^M}(z)\Lt {d^M}(z)=1/z^2$, we 
actually get $C=1$ in our case.  
A similar argument implies that the linear span of $\{d^M_t\}_{t>0}$ 
is dense in $\HiR$. 
Indeed, if $f\in \HiR$ is orthogonal to $d^M_t$ for every $t\geq 0$, 
$$0=\langle d^M_t,f \rangle =\int_0^td^M(t-x)f(x)dx=d^M*f(t),$$
and so 
$$0=\Lt{d^M*f}(z)=\Lt {d^M}(z)\Lt f(z)=\frac{\Lt f(z)}{zM(z)},$$ 
for $z\in \rH$ with sufficiently large $\RE z$.  
This implies $\Lt f(z)=0$ and $f=0$. 
\end{remark}

\medskip

We will show that the linear span of $\{c^M_{r,s}\}_{0<r<s\leq t}$ is dense in 
$G^M_{0,t}$ and the linear span of $\{d^M_{r,s}\}_{0<r<s\leq t}$ is dense in $L^2(0,t)$. 

\begin{lemma}\label{cspan} Let $M_I(z)$ be the inner component of $M(z)$ 
$($see \cite[Theorem 4.5]{I}$)$. 
Then the closed linear span of $\{c^M_{s,t}\}_{0<s<t<\infty}$ is 
$$\{f\in \Hi;\; \Lt f\in M_I\Ha\}.$$
\end{lemma}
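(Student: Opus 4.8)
The plan is to work in the Laplace transform picture, identifying $\Hi$ with the Hardy space $\Ha$, and to use Beurling's theorem on subspaces invariant under the shift semigroup $\{S_a\}$. Let $V$ denote the closed linear span of $\{c^M_{s,t};\ 0<s<t<\infty\}$ in $\Hi$. First I would record the elementary facts: since $c^M_{s,t}=S_sc^M_{t-s}$ we have $\Lt{c^M_{s,t}}(z)=(e^{-sz}-e^{-tz})M(z)/z$; moreover $c^M_{s,t}=c^M_t-c^M_s$ and $S_rc^M_{s,t}=c^M_{r+s,r+t}$, so that $V$ is a closed $\{S_r\}$-invariant subspace of $\Hi$.

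\emph{Step 1 (the easy inclusion $V\subseteq\{f\in\Hi;\ \Lt f\in M_I\Ha\}$).} Factor $M(z)/z=\tau_{s,t}(z)\cdot M(z)/(1+z)$, where $\tau_{s,t}(z)=(1+z)(e^{-sz}-e^{-tz})/z=(e^{-sz}-e^{-tz})+\int_s^te^{-uz}\,du$. Each summand is bounded on $\rH$, so $\tau_{s,t}\in H^\infty(\rH)$. Let $q\in\Hi$ be the function with $\Lt q(z)=M(z)/(1+z)$, and write $\Lt q=M_IN$ with $N$ outer, where by \cite[Theorem 4.5]{I} $M_I$ is the inner component of $M$. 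Then $\Lt{c^M_{s,t}}=M_I\cdot(\tau_{s,t}N)$ with $\tau_{s,t}N\in\Ha$, hence $\Lt{c^M_{s,t}}\in M_I\Ha$; since multiplication by the inner function $M_I$ is an isometry of $\Ha$, $M_I\Ha$ is closed, and the inclusion follows.

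\emph{Step 2 (convolving with $e_1$).} From the formula for $\Lt{c^M_{s,t}}$ and $\Lt{e_1}(z)=1/(1+z)$ we get $\Lt{e_1*c^M_{s,t}}=\Lt q\cdot\Lt{1_{(s,t)}}$, i.e.\ $e_1*c^M_{s,t}=q*1_{(s,t)}=\int_s^tS_aq\,da$. The functions $1_{(s,t)}$ span a dense subspace of $L^1(0,\infty)$, and $\phi\mapsto q*\phi$ is bounded from $L^1(0,\infty)$ to $\Hi$; combined with the observation that $S_aq=\lim_{h\downarrow0}h^{-1}q*1_{(a,a+h)}$ lies in the closed span of the $q*1_{(s,t)}$, this shows that the closed span of $\{q*1_{(s,t)}\}$ is the $\{S_a\}$-invariant subspace of $\Hi$ generated by $q$. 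By Beurling's theorem on $H^2(\rH)$ and the choice of $M_I$ as the inner part of $\Lt q$, this subspace is exactly $\{f\in\Hi;\ \Lt f\in M_I\Ha\}$. Since $f\mapsto e_1*f$ is bounded and linear, $\overline{e_1*V}$ equals the closed span of $\{e_1*c^M_{s,t}\}$, so $\overline{e_1*V}=\{f\in\Hi;\ \Lt f\in M_I\Ha\}$.

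\emph{Step 3 ($\overline{e_1*V}=V$) and conclusion.} For $f\in V$, $e_1*f=\int_0^\infty e^{-a}S_af\,da\in V$ because $V$ is closed and $\{S_a\}$-invariant, so $\overline{e_1*V}\subseteq V$. Conversely, fix $f\in V$ and for $\varepsilon>0$ put $f_\varepsilon=\varepsilon^{-1}f-\varepsilon^{-2}e_{1+1/\varepsilon}*f$; then $f_\varepsilon\in V$ for the same reason, and a direct Laplace-transform computation gives $\Lt{e_1*f_\varepsilon}(z)=\Lt f(z)/(\varepsilon+1+\varepsilon z)$, which tends to $\Lt f$ in $\Ha$ as $\varepsilon\downarrow0$ by dominated convergence; hence $f=\lim_{\varepsilon\downarrow0}e_1*f_\varepsilon\in\overline{e_1*V}$. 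Combining the three steps, $\{f\in\Hi;\ \Lt f\in M_I\Ha\}=\overline{e_1*V}=V\subseteq\{f\in\Hi;\ \Lt f\in M_I\Ha\}$, so equality holds throughout, which is the claim. The main obstacle is Step 2: the naive map ``multiply by $M(z)$'', which sends step functions to combinations of the $c^M_{s,t}$, is unbounded because $M\notin\Ha$. Convolving first with $e_1$ trades the unbounded factor $(1+z)$ hidden in $M(z)/z$ for the bounded outer factor $1/(1+z)$ and rephrases everything in terms of the honest $L^2$-function $q$, after which identifying the generated invariant subspace is precisely the inner--outer factorization of $\Lt q$ recorded in \cite[Theorem 4.5]{I}; one must also note the mild point that the span of $\{1_{(s,t)};\ 0<s<t\}$, though omitting the intervals touching $0$, is still $L^1$-dense, which is all the cyclicity argument requires.
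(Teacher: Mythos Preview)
Your proof is correct. Both your argument and the paper's rest on the Beurling--Lax theorem, but the executions differ. The paper simply observes that the span of $\{c^M_{s,t}\}$ is $\{S_t\}$-invariant, writes down the Laplace transform $\Lt{c^M_{s,t}}(z)=(e^{-sz}-e^{-tz})M(z)/z$, and invokes Beurling--Lax directly on $V$; the identification of the resulting inner function with $M_I$ is left implicit. Your route instead convolves with $e_1$ to replace the family $\{c^M_{s,t}\}$ by $\{q*1_{(s,t)}\}$, reduces to the cyclic $\{S_t\}$-invariant subspace generated by the single vector $q$, and then reads off the inner function as the inner part of $\Lt q=M(z)/(1+z)$, which is $M_I$ by the cited reference. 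What your approach buys is that the identification of $M_I$ becomes transparent: for a singly generated invariant subspace the Beurling inner function is just the inner factor of the generator, whereas in the paper's direct argument one must still verify that the greatest common inner divisor of the family $\{(e^{-sz}-e^{-tz})M(z)/z\}_{0<s<t}$ is exactly $M_I$ (equivalently, that the factors $(e^{-sz}-e^{-tz})(1+z)/z$ contribute no common inner part). The cost is the extra Step~3 showing $\overline{e_1*V}=V$, which your resolvent-type approximation $f_\varepsilon$ handles cleanly. Note also that your Step~1 is logically redundant once Steps~2 and~3 are in place, though it is a useful sanity check.
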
 

\begin{proof} Note that the linear span of $\{c_{s,t}\}_{0<s<t<\infty}$ is an 
invariant subspace of the shift $\{S_t\}_{t>0}$ and 
$$\Lt{c^M_{s,t}}(z)=\frac{(e^{-sz}-e^{-tz})M(z)}{z}.$$
Therefore, the statement follows from the Beurling-Lax theorem 
\cite[page 107]{H}. 
\end{proof}

\medskip



Recall that there exists a measurable function $k(x,y)$ such that 
$$K_tf(x)=1_{(0,t)}(x)\int_0^\infty k(t-x,y)f(y)dy,\quad f\in \Hi,$$ 
and there exists $a>0$ with 
$$\int_0^\infty\int_0^\infty |e^{-ax}k(x,y)|^2dxdy<\infty.$$  
\cite[Equation 3.1, Lemma 4.1]{I} shows 
$$\int_0^\infty\int_0^\infty k(x,y)e^{-xz-yw}dxdy=\frac{M(z)-M(w)}{M(z)(z-w)},$$
for $z,w\in \rH$ with $\RE z>a/2$. 

\begin{lemma}\label{G0inftyperp} A function $f\in \HiR$ belongs to ${G^M_{0,\infty}}^\perp$ 
if and only if there exists a positive number $a$ such that 
$$\int_{-\infty}^\infty \frac{\Lt f(-i\lambda)M(i\lambda)}{(z+i\lambda)
M(w+i\lambda)}d\lambda=0$$
holds for all $z,w\in \rH$ with $\RE w>a$.  
\end{lemma}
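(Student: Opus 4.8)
The plan is to characterize ${G^M_{0,\infty}}^\perp$ by testing orthogonality against the dense family $\{c^M_{s,t}\}_{0<s<t<\infty}$ inside $G^M_{0,\infty}$, and then convert that condition into a statement about Laplace/Fourier transforms using the explicit kernel formula recalled just before the lemma. First I would recall that, by the preceding discussion, the linear span of the real additive cocycles $\{c^M_{s,t}\}$ is contained in $G^M_{0,\infty}$; in fact Lemma \ref{cspan} identifies its closed span precisely, and in any case $G^M_{0,t}\supseteq\spa_\R\{c^M_{s,t};\,s<t\}$ need not hold, so I should instead work directly from the definition $G^M_{0,t}=\Ker(T_t^*)$ and $G^M_{0,\infty}=\overline{\bigcup_t G^M_{0,t}}$. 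The key reduction is: $f\perp G^M_{0,\infty}$ iff $f\perp G^M_{0,t}$ for every $t>0$, iff $T_t^*$ applied to... — more precisely, $f\in\Ran(T_t^*{}^\perp)^\perp$; since $G^M_{0,t}=\Ker(T_t^*)=\Ran(T_t)^{\perp}$, we get $f\perp G^M_{0,t}$ iff $f\in\overline{\Ran(T_t)}=\Ran(T_t)$ (the range is closed because $S_t=S_t$ has left inverse... here $T_t$ has left inverse $S_t^*$ since $S_t^*T_t=1$). So $f\in{G^M_{0,\infty}}^\perp$ iff $f\in\bigcap_{t>0}\Ran(T_t)$, i.e. for each $t$ there is $g_t\in\HiR$ with $f=T_tg_t=S_tg_t+K_tg_t$.

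Next I would translate $f\in\Ran(T_t)$ into transform language. Writing $f=T_tg_t$ and pairing with the exponential vectors $e_w$, $w\in\rH$, gives $(f,e_w)=(g_t,T_t^*e_w)$; using $T_t^*e_w=e^{-tw}e_w$ (which follows from $S_t^*e_w=e^{-tw}e_w$ together with the structure of $K_t$, or directly from $\Lt{T_t^*e_w}$), one extracts that $\Lt{g_t}(w)=e^{tw}\Lt f(w)$ on a right half-plane. The genuine constraint is that $g_t$ must lie in $\HiR$, equivalently $\Lt{g_t}\in\Ha$ (Hardy space) and is real-symmetric; the function $e^{tw}\Lt f(w)$ is not in general in $\Ha$, so the condition $f\in\Ran(T_t)$ is exactly that the "bad part" of $e^{tw}\Lt f(w)$ be killed. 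To make this precise I would use the kernel identity $\int\int k(x,y)e^{-xz-yw}\,dx\,dy=\frac{M(z)-M(w)}{M(z)(z-w)}$ together with \cite[Lemma 3.1]{I} (as was done for the cocycle computations above) to compute $\Lt{K_tg}$ in terms of $\Lt g$ and $\xi_{M,z}$, and thereby obtain a formula for $\Lt{T_tg}(z)=\Lt f(z)$ whose consistency across all $t>0$ forces a single integral relation. Performing the Laplace transform in $t$ and simplifying via \cite[Lemma 3.3]{I} should collapse the family of conditions (one per $t$) into the single displayed equation, with the contour integral $\int_{-\infty}^\infty \frac{\Lt f(-i\lambda)M(i\lambda)}{(z+i\lambda)M(w+i\lambda)}\,d\lambda$ arising as the boundary-value pairing that detects membership of $e^{tw}\Lt f$ in $\Ha^\perp$; the parameters $z,w\in\rH$ with $\RE w>a$ are the auxiliary variables needed for absolute convergence, $a$ being the constant from the Hilbert-Schmidt bound \eqref{k1} on $k$.

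For the converse direction I would run the same computation backwards: given the integral identity for all admissible $z,w$, reconstruct $g_t$ via its Laplace transform, check using the identity that $\Lt{g_t}$ extends to an element of $\Ha$ (so $g_t\in\HiR$), and verify $T_tg_t=f$ by comparing transforms; then $f\in\bigcap_t\Ran(T_t)={G^M_{0,\infty}}^\perp$. The main obstacle I anticipate is the bookkeeping in the middle step — correctly identifying $\Ran(T_t)$ with a transform condition and showing that the single displayed contour-integral equation is equivalent to the whole family $\{f\in\Ran(T_t):t>0\}$ rather than merely a consequence of it; this requires care with the domains of holomorphy, the role of the inner part $M_I$ of $M$ (cf. Lemma \ref{cspan}), and an appeal to a Beurling--Lax type uniqueness statement to recover all $t$ from the integrated identity. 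The manipulations of Laplace transforms themselves are routine once the framework of \cite{I} is invoked.
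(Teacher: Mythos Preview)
Your reduction to $f\in\bigcap_{t>0}\Ran(T_t)$ is correct and is exactly where the paper starts. The gap is in what comes next. You write $f=T_tg_t$ for some unspecified $g_t$ and then try to recover $g_t$ from transform identities; this is where the argument becomes vague and also where the explicit error occurs: the claim $T_t^*e_w=e^{-tw}e_w$ is false (that identity holds for $S_t^*$, and $T_t^*=S_t^*+K_t^*$ has a nontrivial correction), so the formula $\Lt{g_t}(w)=e^{tw}\Lt f(w)$ does not follow. Without a concrete $g_t$, the subsequent plan to invoke Beurling--Lax to pass from the single integral identity back to all $t$ is speculative and, in fact, unnecessary.

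The missing idea is that $\Ran(T_t)$ is the range of the \emph{idempotent} $T_tS_t^*$ (Lemma \ref{direct sum},(ii) with the roles of $S_t$ and $T_t$ swapped), so $f\in\Ran(T_t)$ is equivalent to the concrete equation $f=T_tS_t^*f=(S_t+K_t)S_t^*f$; i.e.\ one can take $g_t=S_t^*f$ explicitly. Comparing both sides on $(0,t)$ and reparametrizing by $s=t-x$ gives the single integral equation
\[
f(x)=\int_0^\infty k(s,y)\,f(s+x+y)\,dy,\qquad x,s>0,
\]
which already encodes all $t>0$ at once. From here the argument is a direct computation: apply Plancherel in $y$, then Laplace-transform in $x$ (variable $z$) and in $s$ (variable $w$, with $\RE w>a$ for convergence), and use the kernel identity $\int\!\!\int k(x,y)e^{-xz-yw}\,dx\,dy=\frac{M(z)-M(w)}{M(z)(z-w)}$ together with the Cauchy reproducing formula for $\Lt f\in\Ha$ to cancel the $\Lt f(z)$ terms. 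No Beurling--Lax step or reference to the inner part of $M$ is needed here; those tools belong to Lemma \ref{cspan} and Theorem \ref{ctdtspan}, not to this lemma.
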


\begin{proof}
To make sense of the statement, first we claim that for $z,w\in \rH$ with $\RE w>a$, the function 
$$i\lambda \mapsto \frac{M(i\lambda)}{(z+i\lambda)M(w+i\lambda)}$$
belongs to $\Ha$. 
Let $h(x)=\int_0^xe^{-sw}k(s,x-s)ds$. 
Then since 
\begin{eqnarray*}
|h(x)|^2&\leq& \int_0^x|e^{-as}k(s,x-s)|^2ds\times \int_0^xe^{-2\RE (w-a)u}du\\
&\leq& \frac{1}{2\RE(w-a)}\int_0^x|e^{-as}k(s,x-s)|^2ds,
\end{eqnarray*}
\begin{eqnarray*}
\int_0^\infty dx\int_0^x ds|e^{-as}k(s,x-s)|^2
&=&\int_0^\infty ds\int_s^\infty dx|e^{-as}k(s,x-s)|^2\\
&=&\int_0^\infty ds\int_0^\infty du|e^{-as}k(s,u)|^2<\infty,
\end{eqnarray*}
the function $h$ belongs to $\Hi$. 
In consequence, we have  $e_z*h\in L^2(0,\infty)$.
On the other hand, for $\zeta\in \rH$ we have 
\begin{eqnarray*}
\Lt{e_z*h}(\zeta) &=&\frac{1}{z+\zeta}\int_0^\infty dxe^{-x\zeta}\int_0^x ds e^{-sw}k(s,x-s) \\
 &=&\frac{1}{z+\zeta}\int_0^\infty ds e^{-sw}\int_s^\infty dx e^{-x\zeta} k(s,x-s) \\
 &=&\frac{1}{z+\zeta}\int_0^\infty ds e^{-s(w+\zeta)}\int_0^\infty dy e^{-y\zeta} k(s,y)\\
 &=&\frac{1}{w(z+\zeta)}(1-\frac{M(\zeta)}{M(w+\zeta)}).
\end{eqnarray*}
As the function $i\lambda \mapsto 1/(z+i\lambda)$ belongs $\Ha$, we get the claim. 

If we interchange $S_t$ and $T_t$ in Lemma \ref{direct sum},(ii), we get that the orthogonal complement of 
$G^M_{0,t}$, which is $\Ran(T_t)$, is same as the range of the idempotent $T_tS_t^*$. 
Hence we conclude that a function $f \in L^2(0,\infty)$ belongs to the orthogonal complement of $G^M_{0,t}$ if and only if 
$$f=T_tS_t^*f= (K_tS_t^*+S_tS_t^*)f,\quad t>0.$$
This is equivalent to \begin{equation}\label{fk}
f(x)=\int_0^\infty k(s,y)f(s+x+y)dy,\quad x,s>0.\end{equation}
The Plancherel theorem implies that (\ref{fk}) is equivalent to 
\begin{equation}\label{Lfk}
f(x)=\frac{1}{2\pi}\int_{-\infty}^\infty \Lt{k(s,\cdot)}(i\lambda)
\Lt f(-i\lambda)e^{-i\lambda(s+x)}d\lambda.
\end{equation}
Let $z,w\in \rH$ with $\RE w>a$.  
Then via the Laplace transformation, (\ref{Lfk}) is equivalent to 
\begin{equation}\label{Lfm} \Lt f(z)=\frac{1}{2\pi}\int_{-\infty}^{\infty}\frac{\Lt f(-i\lambda)}{z+i\lambda}
\big(1-\frac{M(i\lambda)}{M(w+i\lambda)}\big)d\lambda.
\end{equation}
Note that since $\Lt f\in \Ha$, we have 
$$\Lt f(z)=\frac{1}{2\pi}\int_{-\infty}^{\infty}\frac{\Lt f(i\lambda)}{z-i\lambda}d\lambda.$$
Thus (\ref{Lfm}) is equivalent to 
$$\int_{-\infty}^{\infty}\frac{\Lt f(-i\lambda)}{z+i\lambda}
\frac{M(i\lambda)}{M(w+i\lambda)}d\lambda=0.$$
\end{proof}

\medskip

\begin{theorem}\label{ctdtspan} Let the notation be as above. Then 
\begin{itemize}
\item [$(1)$]
The linear span of $\{c^M_{r,s}\}_{0<r,s<t}$ is dense in 
$G^M_{0,t}$. 
\item [$(2)$]
The linear span of $\{d^M_{r,s}\}_{0<r,s<t}$ is dense in 
$L^2(0,t)_\R$. 
\end{itemize}
\end{theorem}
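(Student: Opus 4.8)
The plan is to handle the two statements in parallel, using the Laplace-transform descriptions already assembled together with the Beurling--Lax theorem. For part (1), the closed linear span $V_t$ of $\{c^M_{r,s}\}_{0<r<s<t}$ is a shift-invariant subspace of $G^M_{0,t}$; via the identification $f \mapsto \Lt f$ it corresponds to a shift-invariant subspace of $\Ha$, so by Beurling--Lax it is of the form $\Theta \Ha \cap \Lt{G^M_{0,t}}$ for an inner function $\Theta$ (alternatively, $V_t = \Theta_t \Ha$ for a suitable inner $\Theta_t$ determined by the subspace). The first task is to compute this inner function explicitly. From $\Lt{c^M_{r,s}}(z)=(e^{-rz}-e^{-sz})M(z)/z$ one reads off, exactly as in the proof of Lemma \ref{cspan}, that the closed span of $\{c^M_{r,s}\}_{0<r<s<\infty}$ has Laplace image $M_I\Ha$ where $M_I$ is the inner component of $M$. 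Intersecting with the finite-interval condition forces $V_t$ to consist of those $f$ with $\Lt f \in M_I\Ha$ whose inverse Laplace transform is supported in $[0,t]$; writing $M_I = B\cdot S_\mu$ (Blaschke times singular inner) and using that $e^{-tz}$-type factors govern the support, one identifies $V_t$ precisely.

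The second and main task is to show $V_t$ is all of $G^M_{0,t}$. Here I would use Lemma \ref{G0inftyperp} together with Lemma \ref{direct sum} relating $G^M_{0,t}$ and $G^M_{0,\infty}$: a function $f\in G^M_{0,t}$ that is orthogonal to every $c^M_{r,s}$, $0<r<s<t$, is in particular orthogonal to $S_r(c^M_{s-r})$ for $0<r<s<t$, so $S_r^* f \perp c^M_{s-r}$ for all such $r,s$; letting the interval exhaust, the content is that $f$ is orthogonal to the full real additive cocycle scaled and shifted, i.e.\ $f$ lies in a subspace that the single-dimensionality of $\RAU$ (the cocycle being unique up to scalar) pins down. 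Translating this orthogonality through the Laplace transform — $\int_0^t f(x)c^M(s-x)\,dx$-type convolutions vanishing — and using $\Lt{c^M}(z)=M(z)/z$, one gets a condition $\Lt f \cdot M/z$ annihilates a dense set of exponentials on $[0,t]$, forcing $\Lt f \in M_I\Ha$ with the correct support, hence $f\in V_t$. The analogous argument for (2): the span $W_t$ of $\{d^M_{r,s}\}$ is $T$-invariant (not $S$-invariant), and one transports it to an $S$-invariant subspace of $\Ha$ using the operators $A'_t$ from the proof of Proposition \ref{StTt} (exactly the device of Remark \ref{dtyt}), or argues directly from $\Lt{d^M_{r,s}}(z)=(e^{-rz}-e^{-sz})/(zM(z))$ that $W_t^\perp$ in $L^2(0,t)_\R$ consists of $g$ with $\Lt g$ divisible by the outer-type factor forced by $1/M$, which combined with the support condition gives $g=0$; the density statement of the earlier remark (span of $\{d^M_t\}$ dense in $\HiR$) is the $t=\infty$ case and provides a template.

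The hardest step is the exhaustion/support bookkeeping: showing that the finite-interval spans $V_t$ and $W_t$, which are \emph{a priori} only cut out by a support condition on top of the global Beurling--Lax description, really match $G^M_{0,t}$ and $L^2(0,t)_\R$ on the nose rather than a proper subspace — in particular handling the singular inner part of $M_I$ and making sure no ``mass at infinity'' is lost. I expect to need the identity from Lemma \ref{G0inftyperp} in the form that characterizes ${G^M_{0,\infty}}^\perp$, together with a careful Laplace-transform argument (Plancherel plus analytic continuation in $z,w$) to convert the vanishing of convolution integrals into the inner-function divisibility, much as in the derivation preceding the lemmas that produced $c^M$ and $d^M$. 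Once the global picture is in hand, intersecting with the shift-coinvariant subspaces $\Ha\ominus e^{-tz}\Ha$ (equivalently, functions supported in $[0,t]$) is routine and yields both claims simultaneously.
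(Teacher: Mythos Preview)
Your plan has a genuine gap that breaks the argument for part (1). You assert that the closed span $V_t$ of $\{c^M_{r,s}\}_{0<r<s<t}$ is a shift-invariant subspace, and later that passing from the global span to the finite-$t$ span is ``routine'' via intersection with the shift-coinvariant subspace $\Ha\ominus e^{-tz}\Ha$, i.e.\ with functions supported in $[0,t]$. Both are false. The space $V_t$ is \emph{not} shift-invariant: $S_u c^M_{r,s}=c^M_{r+u,s+u}$ leaves $V_t$ once $s+u>t$. More seriously, $G^M_{0,t}=\Ker T_t^*$ is \emph{not} the space of functions supported in $[0,t]$; that would be $\Ker S_t^*=L^2(0,t)_\R$, which is the target for part (2), not part (1). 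So Beurling--Lax does not apply to $V_t$ directly, and even when $M$ is outer (so the global span equals $G^M_{0,\infty}=\Hi_\R$ by Lemma \ref{cspan}), intersecting with a support condition gives $L^2(0,t)_\R$, not $G^M_{0,t}$. Your exhaustion argument in the second paragraph does not repair this: knowing $S_r^*f\perp c^M_{s-r}$ for $r<s<t$ only gives orthogonality to $c^M_u$ for $u<t-r$, and there is no Beurling--Lax or support description of $G^M_{0,t}$ to close the loop.

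The missing idea, and what the paper actually does, is to first prove the $t=\infty$ case (this is where Lemma \ref{cspan} and Lemma \ref{G0inftyperp} are combined, using that $i\lambda\mapsto 1/\{(z+i\lambda)M(w+i\lambda)\}$ is outer), and then transfer to finite $t$ using the \emph{oblique} idempotent $1-S_tT_t^*$ from Lemma \ref{direct sum}, which is the bounded projection onto $G^M_{0,t}$ along $\Ran(S_t)$. The point is that this idempotent preserves the algebraic linear span of $\{c^M_{r,s}\}_{0<r<s<\infty}$: for $r<t<s$ one has $c^M_{r,s}=c^M_{r,t}+S_tc^M_{s-t}$, so $(1-S_tT_t^*)c^M_{r,s}=c^M_{r,t}$. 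Boundedness then pushes the global density down to $G^M_{0,t}$. Part (2) is handled symmetrically with the idempotent $1-T_tS_t^*$, using the direct sum $L^2(0,t)_\R+T_t\Hi_\R=\Hi_\R$. This projection device is exactly what replaces the ``intersect with a support condition'' step that your plan relies on and that is unavailable for $G^M_{0,t}$.
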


\begin{proof} (1) Let $G^{M,c}_{0,t}$ be the closure of the linear span of 
$\{c^M_{r,s}\}_{0<r<s<t}$, which is a closed subspace of $G^M_{0,t}$. 
First we show that $G^M_{0,\infty}=G^{M,c}_{0,\infty}$. 
Note that \cite[Corollary 4.3, Theorem 4.5]{I} implies, for $z,w\in \rH$ 
with $\RE w$ sufficiently large, that the function 
$$i\lambda \mapsto \frac{1}{(z+i\lambda)M(w+i\lambda)}$$
is an outer function in $\Ha$. 
Thus Lemma \ref{cspan} and Lemma \ref{G0inftyperp} imply that
$${G^{M,c}_{0,\infty}}^\perp \subset {G^M_{0,\infty}}^\perp,$$ 
which shows that $G^M_{0,\infty}=G^{M,c}_{0,\infty}$.

Let $G^{M,c,0}_{0,\infty}$ be the linear span of $\{c^M_{r,s}\}_{0<r<s<\infty}$. 
Then $G^{M,c,0}_{0,\infty}$ is invariant under the (not necessarily orthogonal) 
idempotent $1-S_tT_t^*$ from $L^2(0,\infty)$ onto $G^M_{0,t}$ thanks to Lemma \ref{direct sum}. 
Now the statement for finite $t$ follows from Lemma \ref{cspan} and from the fact that $G^M_{0,\infty}=G^{M,c}_{0,\infty}$. 

(2) We have already seen that the linear span of $\{d_{r,s}\}_{s>r>0}$ is dense in $\Hi_\R$. 
A similar argument works using the fact that 
$$L^2(0,t)_\R+ T_t\Hi_\R=\Hi_\R$$ 
is a topological direct sum. 
\end{proof}

\medskip

\begin{cor}\label{index 1} The sum system $(\{G^M_{a,b}\},\{S_t\})$ is divisible and of index 1. 
\end{cor}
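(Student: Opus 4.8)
The plan is to read the corollary off the classification of additive cocycles established above. First I would note that the two addit spaces exist and are one-dimensional. By Remark \ref{dtyt} the family $\{c^M_t\}$ is exactly a real addit of the sum system $(\{G^M_{a,b}\},\{S_t\})$, and it is non-zero since $\Lt{c^M}(z)=M(z)/z\not\equiv 0$ (as $M\in\HD$ forces $M\not\equiv 0$); moreover every real additive cocycle is a scalar multiple of $\{c^M_t\}$, so $\RAU=\R\,c^M$. On the imaginary side, $\{d^M_t\}$ is an imaginary additive cocycle, hence corresponds under the linear bijection $d_t=({A'_t}^*)^{-1}y_t$ of Remark \ref{dtyt} to a non-zero imaginary addit $\{y^M_t\}$, and since every imaginary additive cocycle is a scalar multiple of $\{d^M_t\}$ we get $\IAU=\R\,y^M$. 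Thus $\dim\RAU=\dim\IAU=1$.

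Next I would verify the two generation conditions in the definition of a divisible sum system. Since $\RAU=\R\,c^M$ and $c^M_{s_1,s_2}=S_{s_1}c^M_{s_2-s_1}$ is the $(s_1,s_2)$-component of the addit $c^M$, the real condition $G^M_{0,t}=\overline{\spa_\R\{c^M_{s_1,s_2}:(s_1,s_2)\subseteq(0,t)\}}$ is precisely Theorem \ref{ctdtspan}(1). For the imaginary condition I would transfer Theorem \ref{ctdtspan}(2) --- density of $\spa_\R\{d^M_{r,s}\}_{0<r<s<t}$ in $L^2(0,t)_\R$ --- to density of $\spa_\R\{{}^t(y^M)'_{s_1,s_2}\}$ in $G^M_{0,t}$. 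This requires unwinding the correspondence of Remark \ref{dtyt}: the operator ${A'_t}^*$, together with the $\cS$-class maps implementing the threefold splitting $G_{0,r}\oplus G_{r,s}\oplus G_{s,t}\to G_{0,t}$, carries the family $\{d^M_{r,s}\}$ onto $\{{}^t(y^M)'_{s_1,s_2}\}$, and being bounded and boundedly invertible these operators preserve closed linear spans, so density transfers. With both generation conditions verified, $(\{G^M_{a,b}\},\{S_t\})$ is divisible, and then Definition \ref{index} gives $\mathrm{ind}\,G=\dim\RAU=\dim\IAU=1$.

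I expect the second step to be the only non-routine point: one must match the imaginary-addit picture $\{{}^t y'_{s_1,s_2}\}$, which lives in $G^M_{0,t}$, with the imaginary-cocycle picture $\{d^M_{r,s}\}$, which lives in $L^2(0,t)_\R$, and check carefully that the operator effecting the translation is genuinely an element of $\cS(\cdot,\cdot)$ (so that density is preserved) and that it sends the distinguished generating family to the distinguished generating family rather than to some other total set. As a consistency check one can verify directly that $b_G$ is non-degenerate, since $b_G(c^M,y^M)=\langle c^M_1,d^M_1\rangle=1$ by the computation $\Lt{c^M*d^M}(z)=\Lt{c^M}(z)\Lt{d^M}(z)=1/z^2$ recorded above. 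Everything else is a direct appeal to the cocycle lemmas, to Remark \ref{dtyt}, and to Theorem \ref{ctdtspan}.
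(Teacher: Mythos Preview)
Your proposal is correct and matches the paper's intent: the corollary is stated without proof immediately after Theorem \ref{ctdtspan}, with the understanding that it follows from that theorem together with the one-dimensionality of the cocycle spaces and the bijection of Remark \ref{dtyt}. The transfer step you flag as non-routine is indeed the only point requiring care, and the paper later confirms (in Section 6) the key identity $({A'_t}^*)^{-1}({}^ty'_{r,s})=d^M_{r,s}$ that makes it work.
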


\begin{remark}Note that we have 
$$\inpr{c^M_{q,r}}{c^M_{s,t}}=\frac{1}{2\pi}\int_{-\infty}^\infty 
\frac{(e^{-iq\lambda}-e^{-ir\lambda})(e^{is\lambda}-e^{it\lambda})
|M(i\lambda)|^2}{\lambda^2}d\lambda,$$
which depends only on the outer component of $M(z)$. 
Since $\{c^M_{q,r}\}_{0<q<r}$ determines the sum system $(\{G^M_{a,b}\},\{S_t\})$, 
it is isomorphic to the sum system for the outer component of $M$. 
On the other hand, the cocycle conjugacy class of the $E_0$-semigroup arising from $\{e^{tA_M}\}_{t>0}$ is determined 
by the sum system for $M$, and so we conclude that the cocycle conjugacy class is determined by the outer component of $M$.  
Note that this is in consistence with the fact that $G^M_{0,\infty}=\HiR$ when $M$ is an outer function and 
with part (c) in Proposition \ref{StTt}.
\end{remark}

\bigskip

\section{Application of the type III criterion}

In what follows, we assume that $M\in \HD_{2,\R}$ is an outer function unless otherwise stated. 
Then we have $\HiR=G^M_{0,\infty}$ and the sum system $\{G^M_{0,t}\}_{t>0}$ 
coincides with that discussed in \cite[Section 6]{I} except that complex 
Hilbert spaces are considered in \cite{I}. 
The function $|M(i\lambda)|^2$ corresponds to the spectral density 
function considered in \cite{T2} (see \cite[Section 6]{I}). 

Let $J^M_{t,0}$ be a linear operator whose domain is the linear span 
of $\{c^M_{r,s}\}_{0<r<s\leq t}$ sending $c^M_{r,s}$ to $d^M_{r,s}$. 
When $J^M_{t,0}$ has a bounded extension, we denote it by 
$J^M_t$ considered as an operator in $\B(G^M_{0,t},\LR t)$. As a consequence of Remark \ref{dtyt}, we can restate 
Theorem \ref{unitless} as below. 

We choose addits $x=\{x_t\}\in \RAU$ and $y=\{y_t\}\in \IAU$ of the sum system $(\{G^M_{0,t}\}, \{S_t\})$ 
such that $x_t=c_t$ and $\inpr{x_t}{y_t}=t$. 
We set $J$ to be the linear map $J:\RAU\rightarrow \IAU$ determined by $Jx=y$. 
Then we have $b_G(x,Jx)=1$ where $b_G$ is the bilinear form of $\RAU\times \IAU$ defined in Section 4, and so 
$b_G(\cdot,J\cdot)$ gives an inner product of $\RAU$. 
Let $J_{t,0}$ be as defined in Section 4. 
It is not hard to verify that $({A^\prime_t}^*)^{-1}({}^ty^\prime_{r,s})=d^M_{r,s}$, for all $r,s \leq t$. 
Then the theorem below follows from Theorem \ref{unitless}, since  $$J^M_{t,0} = ({A^\prime_t}^*)^{-1}|_{G^M_{0,t}} 
J_{t,0} ~~~ \mbox{and}~~~ A^\prime_t \in \cS(G^M_{0,t} \oplus L^2(t,\infty), L^2(0,\infty)),$$ 
where $A^\prime_t$ is as defined in Remark \ref{dtyt}.

\begin{theorem}\label{unitlessM} Let $M\in \HD_{2,\R}$ be an outer function. 
Then the generalized CCR flow arising from $\{e^{tA_M}\}_{t>0}$ 
is of type I if and only if the bounded extension $J^{M}_t$ exists 
and it is invertible such that for some positive number $a$, the operator  
$(J^{M}_t)^*J^{M}_t-a I$ is in the Hilbert-Schmidt class for all 
(some) $t>0$. 
\end{theorem}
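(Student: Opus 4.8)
The plan is to obtain the theorem directly from Theorem \ref{unitless} applied to the sum system $(\{G^M_{a,b}\},\{S_t\})$, which by Corollary \ref{index 1} is divisible of index $1$. By the equivalence of (i) and (iii) there, the generalized CCR flow arising from $\{e^{tA_M}\}_{t>0}$ is of type I if and only if there is a linear isomorphism $J'\colon \RAU\to\IAU$ such that $b_G(\cdot,J'\cdot)$ is an inner product of $\RAU$ and the operator $J'_{1,0}$ extends to a bounded operator $J'_1$ with $J'_1\in\cS(G^M_{0,1})$. What remains is to translate this condition into the stated Hilbert--Schmidt condition on the canonical operator $J^M_1\colon G^M_{0,1}\to\LR1$ sending $c^M_{r,s}$ to $d^M_{r,s}$, and for that I would handle two points: the dictionary between the abstract operators $J'_t$ on $G^M_{0,t}$ and the operators $J^M_t$, and the scalar freedom in the choice of $J'$.

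For the dictionary, fix as in the paragraph preceding the statement the addit $x=\{c^M_t\}\in\RAU$, the addit $y=\{y_t\}\in\IAU$ with $\inpr{x_1}{y_1}=1$, and the isomorphism $J$ determined by $Jx=y$, so that $b_G(x,Jx)=1$ and $J^M_{t,0}=({A'_t}^*)^{-1}|_{G^M_{0,t}}\,J_{t,0}$, where $A'_t\in\cS(G^M_{0,t}\oplus L^2(t,\infty)_\R,\HiR)$ is the addition map of Remark \ref{dtyt}. Put $\Phi_t=({A'_t}^*)^{-1}|_{G^M_{0,t}\oplus 0}$. I would check that $\Phi_t$ is a bounded invertible operator of $G^M_{0,t}$ onto $\LR t$: it is the restriction to a closed subspace of the invertible operator $({A'_t}^*)^{-1}$, hence injective and bounded below with closed range; by divisibility (Corollary \ref{index 1}) the vectors ${}^ty'_{r,s}$ span a dense subspace of $G^M_{0,t}$, and $\Phi_t$ sends them to the $d^M_{r,s}$, whose linear span is dense in $\LR t$ by Theorem \ref{ctdtspan}(2); so the range of $\Phi_t$ is contained in $\LR t$ and, being closed and dense there, equals $\LR t$. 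Since $\cS(\cdot,\cdot)$ is stable under restrictions, $\Phi_t\in\cS(G^M_{0,t},\LR t)$, and since it is also stable under inverses and products, for every $\mu\ne0$ the bounded extension of $\mu J_{1,0}$ exists and lies in $\cS(G^M_{0,1})$ if and only if the bounded extension $J^M_1$ of $J^M_{1,0}=\Phi_1 J_{1,0}$ exists and $\mu J^M_1\in\cS(G^M_{0,1},\LR1)$, i.e.\ $\mu J^M_1$ is invertible and $1-\big(\mu^2(J^M_1)^*J^M_1\big)^{1/2}$ is Hilbert--Schmidt.

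For the scalar freedom, note that $\mathrm{ind}\,G^M=1$ forces $\RAU$ to be one-dimensional, so every linear isomorphism $J'\colon\RAU\to\IAU$ is of the form $\mu J$ with $\mu\ne0$, whence $J'_{1,0}=\mu J_{1,0}$; moreover $b_G(\cdot,J'\cdot)$ is an inner product precisely when $\mu>0$, because $b_G(x,Jx)=1$ and $b_G$ is non-degenerate by Lemma \ref{non-degenerate}. Combining this with the previous paragraph, Theorem \ref{unitless}(iii) becomes: the flow is of type I if and only if $J^M_1$ exists, is invertible, and $1-\mu\big((J^M_1)^*J^M_1\big)^{1/2}$ is Hilbert--Schmidt for some $\mu>0$. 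Writing $b=\mu^{-1}$, this last condition reads $\big((J^M_1)^*J^M_1\big)^{1/2}-b\cdot1$ Hilbert--Schmidt, which, since $J^M_1$ is invertible and $s\mapsto\sqrt s$ is Lipschitz on bounded subsets of $(0,\infty)$, is equivalent to $(J^M_1)^*J^M_1-b^2\cdot1$ being Hilbert--Schmidt; setting $a=b^2>0$ gives exactly the stated condition for $t=1$. Finally, transporting the equivalence of (ii) and (iii) in Theorem \ref{unitless} through the same dictionary, with the same $\mu$, shows that the condition at $t=1$ is equivalent to the same condition, with the same $a$, at every $t>0$, which yields the ``for all (some) $t$'' clause.

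The one point that needs genuine care is the verification that $\Phi_t$ is a bounded invertible map $G^M_{0,t}\to\LR t$ belonging to $\cS$; given Theorem \ref{ctdtspan} and the divisibility of the sum system this is short, and everything else is bookkeeping about the scalar $\mu$ and about the passage between the two equivalent forms of the Hilbert--Schmidt condition.
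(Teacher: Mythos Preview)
Your proof is correct and follows the same route as the paper: relate $J^M_{t,0}$ to $J_{t,0}$ via the map $\Phi_t=({A'_t}^*)^{-1}|_{G^M_{0,t}}\in\cS(G^M_{0,t},\LR t)$ and then invoke Theorem \ref{unitless}. You are more explicit than the paper's brief argument about why $\Phi_t$ is a bijection onto $\LR t$, about the scalar freedom $J'=\mu J$ (which is where the constant $a$ comes from), and about the ``for all (some) $t$'' clause, but the underlying idea is identical.
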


\medskip

For a measurable (not necessarily bounded) function $F$ on the imaginary axis, 
the Toeplitz operator $\cT_F$ on $\Hi$ with domain $D(\cT_F)$ is defined 
as follows: 
$$D(\cT_F)=\{f\in \Hi;\; \int_{-\infty}^\infty |F(i\lambda)|^2
|\Lt f(i\lambda)|^2d\lambda<\infty\},$$ 
$$(\cT_Ff)(x)=1_{(0,\infty)}(x)\frac{1}{2\pi}\int_{-\infty}^\infty 
F(i\lambda)\Lt f(i\lambda)e^{ix\lambda}d\lambda,$$
where the above integral should be appropriately interpreted as usual.  

Assume that there exists a bounded operator $J^M\in \B(\Hi)$ whose restriction 
to $G^M_{0,t}$ is $J^M_t$. 
Then by construction, we have $J^MS_t=T_tJ^M$ and so $S_t^*J^MS_t=J^M$. 
Such an operator must be a Toeplitz operator (see, for example, 
\cite[Chapter13.3]{Arv}). 
Indeed, we can determine the symbol even in the case where the global 
extension $J^M$ does not exist.   

\begin{lemma}\label{Jtoeplitz} If $1/\{(1+z)M(z)\}\in \Ha$, then $c^M_{s,t}$ is in the domain 
of $\cT_{1/|M|^2}$ for $0<s<t$ and 
$$\cT_{\frac{1}{|M|^2}}c^M_{s,t}=d^M_{s,t}.$$
\end{lemma}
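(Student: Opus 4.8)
The plan is to verify the identity \emph{weakly}: I would pair both sides with the functions $c^M_{a,b}$ ($0<a<b$), observe that in the resulting Plancherel integral the symbol $1/|M(i\lambda)|^2$ cancels against the two factors $M(i\lambda)$ coming from $\Lt{c^M_{s,t}}$ and from $\overline{\Lt{c^M_{a,b}}}$, and compare the value with the known pairing between real and imaginary additive cocycles. Because $M$ is outer, the linear span of the $c^M_{a,b}$ is dense in $\HiR$, and this, together with a reality argument, turns the weak identity into the desired equality of functions.

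First I would check $c^M_{s,t}\in D(\cT_{1/|M|^2})$. From $\Lt{c^M_{s,t}}(i\lambda)=(e^{-is\lambda}-e^{-it\lambda})M(i\lambda)/(i\lambda)$ and the reality relation $\overline{M(i\lambda)}=M(-i\lambda)$ (so that $|M(-i\lambda)|=|M(i\lambda)|$), the membership condition $\int_{-\infty}^\infty\frac{1}{|M(i\lambda)|^4}\,|\Lt{c^M_{s,t}}(i\lambda)|^2\,d\lambda<\infty$ becomes
\[\int_{-\infty}^\infty\frac{|e^{-is\lambda}-e^{-it\lambda}|^2}{\lambda^2\,|M(i\lambda)|^2}\,d\lambda<\infty .\]
The elementary bound $|e^{-is\lambda}-e^{-it\lambda}|^2\le C_{s,t}\,\lambda^2/(1+\lambda^2)$ reduces this to the finiteness of $\int_{-\infty}^\infty\frac{d\lambda}{(1+\lambda^2)\,|M(i\lambda)|^2}$, which holds precisely because $1/\{(1+z)M(z)\}\in\Ha$. (The same computation shows each $c^M_{a,b}$ lies in the domain, but that is not needed.)

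For the main step, fix $0<a<b$. As $c^M_{a,b}\in\Hi$ is supported in $(0,\infty)$, the definition of $\cT_F$ and Plancherel's theorem give
\[\inpr{\cT_{1/|M|^2}c^M_{s,t}}{c^M_{a,b}}=\frac{1}{2\pi}\int_{-\infty}^\infty\frac{1}{|M(i\lambda)|^2}\cdot\frac{(e^{-is\lambda}-e^{-it\lambda})M(i\lambda)}{i\lambda}\cdot\overline{\frac{(e^{-ia\lambda}-e^{-ib\lambda})M(i\lambda)}{i\lambda}}\;d\lambda .\]
Since $M(i\lambda)\overline{M(i\lambda)}=|M(i\lambda)|^2$, the symbol cancels and the right-hand side equals $\frac{1}{2\pi}\int_{-\infty}^\infty\frac{(e^{-is\lambda}-e^{-it\lambda})(e^{ia\lambda}-e^{ib\lambda})}{\lambda^2}\,d\lambda$, which is nothing but the $\Hi$-inner product of the indicator functions $1_{(s,t)}$ and $1_{(a,b)}$, i.e. $|(s,t)\cap(a,b)|$. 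On the other hand, the pairing of $c^M$ with $d^M$ recorded in Section \ref{cocycles} gives $\inpr{d^M_{s,t}}{c^M_{a,b}}=|(a,b)\cap(s,t)|$. Hence $\cT_{1/|M|^2}c^M_{s,t}-d^M_{s,t}$ is orthogonal to $c^M_{a,b}$ for every $0<a<b$.

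Finally, both $\cT_{1/|M|^2}c^M_{s,t}$ and $d^M_{s,t}$ are real-valued: $d^M$ is real, and the symbol $1/|M(i\lambda)|^2$ is real and invariant under $\lambda\mapsto-\lambda$ (again by $|M(-i\lambda)|=|M(i\lambda)|$), so $\cT_{1/|M|^2}$ maps $\HiR$ into $\HiR$. Since $M$ is outer, $G^M_{0,\infty}=\HiR$ is the closed linear span of $\{c^M_{r,s}\}_{0<r<s<\infty}$ (Theorem \ref{ctdtspan} and its proof), so a real function in $\Hi$ orthogonal to every $c^M_{a,b}$ vanishes; therefore $\cT_{1/|M|^2}c^M_{s,t}=d^M_{s,t}$. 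The genuinely delicate point is the domain verification, and it is precisely there that the hypothesis $1/\{(1+z)M(z)\}\in\Ha$ is used; everything else is the cancellation of $M$ in the Plancherel integral plus a routine density argument.
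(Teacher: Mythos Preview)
Your proof is correct and follows essentially the same route as the paper: pair both sides with $c^M_{q,r}$, use Plancherel to cancel $|M(i\lambda)|^2$, identify the result as $|(s,t)\cap(q,r)|=\inpr{d^M_{s,t}}{c^M_{q,r}}$, and conclude by density of the $c^M_{q,r}$. You have simply filled in details the paper leaves implicit---the domain verification via the bound $|e^{-is\lambda}-e^{-it\lambda}|^2\le C\lambda^2/(1+\lambda^2)$, and the explicit density/reality argument at the end (the latter is not strictly needed, since by Lemma~\ref{cspan} the complex linear span of the $c^M_{q,r}$ is already dense in $\Hi$ when $M$ is outer).
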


\begin{proof} The first statement is obvious. 
To prove the second statement, it suffices to show 
$\langle \cT_{1/|M|^2}c^M_{s,t},c^M_{q,r}\rangle =\langle d^M_{s,t},c^M_{q,r}\rangle $ for all 
$0<q<r$. 
Indeed, 
\begin{eqnarray*}\langle \cT_{\frac{1}{|M|^2}}c^M_{s,t},c^M_{q,r}\rangle &=&
\frac{1}{2\pi}\int_{-\infty}^\infty\frac{1}{|M(i\lambda)|^2}
\frac{(e^{-is\lambda}-e^{-it\lambda})M(i\lambda)}{i\lambda}
\overline{\frac{(e^{-iq\lambda}-e^{-ir\lambda})M(i\lambda)}{i\lambda}}
d\lambda\\
&=&\frac{1}{2\pi}\int_{-\infty}^\infty
\frac{(e^{-is\lambda}-e^{-it\lambda})(e^{iq\lambda}-e^{ir\lambda})}{\lambda^2}
d\lambda\\
&=&\langle 1_{(s,t]},1_{(q,r]}\rangle \\
&=&\langle d^M_{s,t},c^M_{q,r}\rangle . 
\end{eqnarray*}
\end{proof}

\medskip

\begin{remark} The condition $1/\{(1+z)M(z)\}\in \Ha$ in the above lemma is automatically satisfied for 
the class of $\{e^{tA_M}\}$ coming from Tsirelson's off white noises. 
An off white noise is a generalized Gaussian process with the spectral density functions $e^{\rho(\lambda)}$ 
(denoted by $W(\lambda)$ in \cite{T2}) satisfying the following two conditions (in fact, the first one automatically follows from 
the second one as we will see below): 
\begin{equation}\label{T1}\int_{-\infty}^\infty\frac{e^{\rho(\lambda)}}{1+\lambda^2}<\infty,\end{equation}
\begin{equation}\label{T2}\int_{-\infty}^\infty\int_{-\infty}^\infty \frac{|\rho(\lambda_1)-\rho(\lambda_2)|^2}
{|\lambda_1-\lambda_2|^2}d\lambda_1d\lambda_2<\infty.\end{equation} 
As described in \cite[Section 6]{I}, Tsirelson's $E_0$-semigroup arising from the spectral density function $e^{\rho(\lambda)}$ is 
conjugate to the generalized CCR flow given by an outer function $M(z)$ with the relation $|M(i\lambda)|^2=e^{\rho(\lambda)}$. 
Indeed, given a real function $\rho(\lambda)$ satisfying Equation (\ref{T2}), the function $M(z)\in \HD_{2,\R}$ is obtained by 
the integral 
$$M(z)=\exp\{\frac{1}{2\pi}\int_{-\infty}^\infty \frac{\lambda z+i}{\lambda+iz}\frac{\rho(\lambda)d\lambda}{1+\lambda^2}\}.$$
Via the change of variable $\zeta=(z-1)/(z+1)$,  the condition (\ref{T1}) is equivalent to that $N(\zeta)=M(z)$ belongs to 
the Hardy space $H^2(\D)$, where $\D$ is the unit disc, while the condition (\ref{T2}) is equivalent to that 
$\log N(\zeta)$ belongs to the analytic Besov space $B_2(\D)$ (see \cite{T2} and \cite[Chapter 5]{Z}). 
Since we have the inclusion relation $B_2(\D)\subset \mathrm{VMOA}(\D)$ (see \cite[Lemma 9.4.2]{Z}), 
the condition (\ref{T2}) solely implies both (\ref{T1}) and $1/N(\zeta)\in H^2(\D)$ thanks to \cite[page 100]{K}, 
and so $1/\{(1+z)M(z)\}\in\Ha$ holds.   
\end{remark}
\medskip

Tsirelson \cite[Lemma 10.2]{T1} shows that if 
$$\lim_{\lambda \to \pm\infty}M(i\lambda)=0,$$
the $E_0$-semigroup arising from $\{T_t\}_{t>0}$ is of type III. 
With Theorem \ref{unitlessM}, we can easily treat the other extreme case. 

\begin{theorem}\label{outerIII} Let $M\in \HD_{2,\R}$ be an outer function such that 
$1/\{(1+z)M(z)\}\in \Ha$. 
If  
$$\lim_{\lambda \to \pm\infty}|M(i\lambda)|=\infty,$$
then the generalized CCR flow arising from $\{T_t\}_{t>0}$ is of type III. 
\end{theorem}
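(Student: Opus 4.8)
The plan is to show that the generalized CCR flow in question is not of type I; since the sum system $(\{G^M_{a,b}\},\{S_t\})$ is divisible of index $1$ by Corollary \ref{index 1}, the dichotomy \cite[Theorem 39]{pdct} then forces it to be of type III. So assume, for contradiction, that it is of type I. By Theorem \ref{unitlessM} the densely defined operator $J^M_{t,0}$ extends to a bounded operator $J^M_t\in\B(G^M_{0,t},\LR t)$ and, for some $a>0$ and some (hence every) $t>0$, the operator $(J^M_t)^*J^M_t-aI$ on $G^M_{0,t}$ is Hilbert--Schmidt, in particular compact. Fix such a $t$ and such an $a$.

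Since the hypothesis $1/\{(1+z)M(z)\}\in\Ha$ is in force, Lemma \ref{Jtoeplitz} identifies $J^M_t$ on its natural dense domain with the compression of the Toeplitz operator $\cT_{1/|M|^2}$ to $G^M_{0,t}$, namely $J^M_tc^M_{s,t'}=d^M_{s,t'}$ for $0<s<t'\leq t$. Because $\Lt{c^M_{s,t'}}(i\lambda)=\Lt{1_{(s,t']}}(i\lambda)\,M(i\lambda)$, every $\xi$ in the linear span of $\{c^M_{s,t'}\}_{0<s<t'\leq t}$ has the form $\Lt\xi(i\lambda)=\Lt h(i\lambda)\,M(i\lambda)$ for a step function $h$ supported in $(0,t)$, and the Plancherel formula together with Lemma \ref{Jtoeplitz} gives
$$\|\xi\|^2=\frac{1}{2\pi}\int_{-\infty}^\infty|\Lt h(i\lambda)|^2|M(i\lambda)|^2\,d\lambda,\qquad
\|J^M_t\xi\|^2=\frac{1}{2\pi}\int_{-\infty}^\infty\frac{|\Lt h(i\lambda)|^2}{|M(i\lambda)|^2}\,d\lambda.$$
Approximating a general $h\in L^2(0,t)$ by step functions on $(0,t)$ (the corresponding vectors stay Cauchy thanks to the bound $|\Lt h(i\lambda)|\leq C/|\lambda|$ valid for $h$ of bounded variation, together with $\int|M(i\lambda)|^2(1+\lambda^2)^{-1}\,d\lambda<\infty$ coming from $M/(1+z)\in\Ha$), the two identities above persist, and the corresponding $\xi$ lies in $G^M_{0,t}$, whenever $\Lt h\,M\in L^2(i\R)$.

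Now fix a nonzero $\chi\in C^\infty_c(0,t)$ with $\chi\geq0$, set $h_n(x)=\cos(nx)\chi(x)$, let $\xi_n\in G^M_{0,t}$ be the vector with $\Lt{\xi_n}(i\lambda)=\Lt{h_n}(i\lambda)M(i\lambda)$, and put $\zeta_n=\xi_n/\|\xi_n\|$. Since $\Lt{h_n}$ is a translate of the rapidly decreasing function $\Lt\chi$ and is thus concentrated near $\lambda=\pm n$, one checks — using $\int_{|\lambda|\leq R}|M(i\lambda)|^2\,d\lambda<\infty$, $\int_{|\lambda|\leq R}|M(i\lambda)|^{-2}\,d\lambda<\infty$ for every $R$ (the latter from $1/\{(1+z)M(z)\}\in\Ha$), and the hypothesis $|M(i\lambda)|\to\infty$ as $\lambda\to\pm\infty$ — that $\|\xi_n\|\to\infty$ (so $\zeta_n$ is defined for large $n$), that $\zeta_n\to0$ weakly, and, crucially, that
$$\|J^M_t\zeta_n\|^2=\frac{\int_{-\infty}^\infty|\Lt{h_n}(i\lambda)|^2|M(i\lambda)|^{-2}\,d\lambda}{\int_{-\infty}^\infty|\Lt{h_n}(i\lambda)|^2|M(i\lambda)|^2\,d\lambda}\ \longrightarrow\ 0.$$
The reason for the last limit is that the probability measures $d\mu_n$ proportional to $|\Lt{h_n}(i\lambda)|^2|M(i\lambda)|^2\,d\lambda$ escape to $\lambda=\pm\infty$ — their mass on any fixed bounded interval tends to $0$ because $\Lt\chi$ decays faster than any polynomial while $\int_{|\lambda|\leq R}|M(i\lambda)|^{\pm2}\,d\lambda<\infty$ — and on $\{|\lambda|>R\}$ the factor $|M(i\lambda)|^{-2}$ is uniformly small.

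Finally, since $K:=(J^M_t)^*J^M_t-aI$ is compact and $\zeta_n\to0$ weakly, $\|K\zeta_n\|\to0$, so that $\|(J^M_t)^*J^M_t\zeta_n\|\geq a\|\zeta_n\|-\|K\zeta_n\|\to a>0$; on the other hand $\|(J^M_t)^*J^M_t\zeta_n\|\leq\|J^M_t\|\,\|J^M_t\zeta_n\|\to0$, a contradiction. Hence the product system is not of type I, and therefore of type III. I expect the essential obstacle to be the estimate in the third paragraph: one must show that the hypothesis $|M(i\lambda)|\to\infty$ genuinely forces the normalized high-frequency vectors $\zeta_n$ to be nearly annihilated by $J^M_t$, and this is delicate because $|M(i\lambda)|$ is controlled only through weighted $L^2$-integrals and $|M(i\lambda)|^{-1}$ need not be bounded near isolated boundary points; exploiting the rapid decay of $\Lt\chi$ together with the two weighted integrability bounds is what makes the argument work, rather than any single clean pointwise estimate.
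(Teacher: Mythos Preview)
Your strategy is exactly the paper's: produce unit vectors in $G^M_{0,t}$ whose Fourier transforms concentrate at high frequency, and exploit $|M(i\lambda)|\to\infty$ to force $\|J^M_t\zeta_n\|\to 0$, contradicting what type~I gives via Theorem~\ref{unitlessM}. The paper uses the Rademacher-type vectors
\[
f_n=\sum_{k=0}^{n-1}\bigl(c^M_{\frac{2k}{2n},\frac{2k+1}{2n}}-c^M_{\frac{2k+1}{2n},\frac{2k+2}{2n}}\bigr)\in G^M_{0,1},
\qquad \Lt{f_n}=F_nM,\quad \int|F_n(i\lambda)|^2\,d\lambda=2\pi,
\]
which sit directly in the span of the $c^M_{r,s}$ and so require no approximation argument. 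More importantly, the paper uses only the \emph{invertibility} of $J^M_1$: from $\|f_n\|\le\|(J^M_1)^{-1}\|\,\|J^M_1 f_n\|$ and $\|J^M_1 f_n\|^2\le\frac{1}{2\pi}\int|F_n|^2/|M|^2\to 0$ (split $|\lambda|\le R$ versus $|\lambda|>R$), one gets $\|f_n\|\to0$; but $\|f_n\|^2=\frac{1}{2\pi}\int|F_n|^2|M|^2$ is bounded below since $\int|F_n|^2=2\pi$ and $|M|\ge1$ for large $|\lambda|$. This is strictly simpler than your route through compactness of $(J^M_t)^*J^M_t-aI$.

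Two points in your write-up need attention. First, the displayed equality $\|J^M_t\xi\|^2=\frac{1}{2\pi}\int|\Lt h|^2/|M|^2$ is in general only an \emph{inequality}, because $\cT_{1/|M|^2}$ involves a projection onto $H^2$ and $\Lt h/\overline{M}$ need not lie in $H^2$; this does not harm your argument, which only needs~$\le$. Second, and more seriously, the weak convergence $\zeta_n\to0$ is asserted but not justified: testing against $c^M_{r,s}$ gives $\langle\xi_n,c^M_{r,s}\rangle=\frac{1}{2\pi}\int\Lt{h_n}\overline{\Lt{1_{(r,s]}}}\,|M|^2$, and without growth control on $|M|^4$ you cannot conclude this is $o(\|\xi_n\|)$. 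The cleanest fix is to drop the compactness argument entirely and use invertibility as the paper does: from $\|J^M_t\zeta_n\|\to0$ and $\|\zeta_n\|\le\|(J^M_t)^{-1}\|\,\|J^M_t\zeta_n\|$ you get $\|\zeta_n\|\to0$, contradicting $\|\zeta_n\|=1$.
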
  

\begin{proof} We use the same function as in \cite[Lemma 10.1]{T1}. 
For a natural number $n$, we set 
$$f_n=\sum_{k=0}^{n-1}\big(c^M_{\frac{2k}{2n},\frac{2k+1}{2n}}
-c^M_{\frac{2k+1}{2n},\frac{2k+2}{2n}}\big)\in 
G^M_{0,1}.$$
Then $\Lt{f_n}(z)=F_n(z)M(z)$ with 
\begin{eqnarray*}
F_n(z)&=&\sum_{k=0}^{n-1}
\frac{e^{-\frac{2k+2}{2n}z}+e^{-\frac{2k}{2n}z}-2e^{-\frac{2k+1}{2n}z}}{z}
=\frac{(1-e^{-z})(1-e^{-\frac{z}{2n}})^2}{(1-e^{-\frac{z}{n}})z}\\
&=&\frac{(1-e^{-z})(1-e^{-\frac{z}{2n}})}{(1+e^{-\frac{z}{2n}})z}.
\end{eqnarray*}
The Plancherel theorem applied to the case with $M=1$ implies 
$$\int_{-\infty}^\infty |F_n(i\lambda)|^2 d\lambda=2\pi.$$
Suppose that $J^M_1$ exists and it is invertible. 
Then 
\begin{eqnarray*}
||f_n||^2&\leq& ||(J^M_1)^{-1}||^2||J^M_1f_n||^2
=||(J^M_1)^{-1}||^2||\cT_{\frac{1}{|M|^2}}f_n||\\
&\leq& \frac{||(J^M_1)^{-1}||^2}{2\pi}\int_{-\infty}^\infty
\frac{|F_n(i\lambda)|^2}{|M(i\lambda)|^2}d\lambda, 
\end{eqnarray*}
which would imply 
$$\int_{-\infty}^\infty |F_n(i\lambda)|^2|M(i\lambda)|^2d\lambda\leq 
||(J^M_1)^{-1}||^2 \int_{-\infty}^\infty
\frac{|F_n(i\lambda)|^2}{|M(i\lambda)|^2}d\lambda.$$
However, since the sequence $\{|F_n(z)|^2\}_{n=1}^\infty$ uniformly 
converges to 0 on every compact subset of the imaginary axis, 
we get a contradiction. 
\end{proof}

\medskip

One can also reproduce Tsirelson's result \cite[Lemma 10.2]{T1} using 
Theorem \ref{unitlessM}. 
Indeed, assume that $\lim_{\lambda\to\pm\infty}M(i\lambda)=0$ and $1/\{(1+z)M(z)\}\in \Ha$. 
If $\{T_t\}_{t\geq 0}$ gave a type I $E_0$-semigroup,  
a similar computation as above would show that 
$|(J^M_1f_n,f_n)|\leq ||J_1^M|| ||f_n||^2$ and 
$$2\pi=\int_{-\infty}^\infty |F_n(i\lambda)|^2d\lambda\leq ||J_1^M|| \int_{-\infty}^\infty |F_n(i\lambda)|^2 
|M(i\lambda)|^2d\lambda,$$
which is a contradiction. 

Among the spectral density functions treated in \cite{T2}, 
the ones to which the above Theorem \ref{outerIII} applies are as follows: 
strictly positive smooth functions such that for large $|\lambda|$, 
(1) $|M(i\lambda)|^2=\log^\beta|\lambda|$ for $\beta>0$, or 
(2) $|M(i\lambda)|^2=\exp(a\log^\beta{|\lambda|})$ with $a>0$ 
and $0<\beta<1/2$. 
It is quite likely that these families of spectral density functions give rise to mutually non-isomorphic 
product systems. 

\begin{problem} Show that the product systems arising from the above spectral density functions are 
mutually non-isomorphic. 
\end{problem}

Now we treat the case where $|M(i\lambda)|$ converges to a non-zero constant 
at infinity. 
Let $L^M_t$ be the restriction of $\cT_M$ to $D(\cT_M)\cap \LR t$. 
We claim that for $f\in D(\cT_M)$, 
the function $M(z)\Lt f(z)$ belongs to $\Ha$. 
Recall that a function $F(z)$ belongs to $\Ha$ if and only if 
$(1+z)F(z)$ belongs to the Hardy space $H^2(\D)$ of the unit disc 
via the change of variable $\zeta=(z-1)/(z+1)$. 
Since $M(z)/(1+z)$ and $\Lt f(z)$ are in $\Ha$ and 
$M(i\lambda)\Lt f(i\lambda)$ is square integrable, 
the function $(1+z)M(z)\Lt f(z)$ belongs to 
$H^1(\D)\cap L^2(\T)\subset H^2(\D)$ and we get the claim. 
As a consequence, we have $\cT_M(1_{(r,s]})=c^M_{r,s}$, and 
the operator $L^M_t$ is densely 
defined and its image is dense in $G^M_{0,t}$. 
We denote by $P_t$ the orthogonal projection from $\HiR$ onto $\LR t$. 
 
\begin{theorem}\label{sumsystemLM} Let $M_1,M_2$ be (not necessarily outer) functions in $\HD_{2,\R}$. 
We assume that $L^{M_i}_t$ is a bounded invertible operator from $\LR t$ onto 
$G^{M_i}_{0,t}$ for $i=1,2$.  
Then the sum system for $M_1$ and $M_2$ are isomorphic if and only if 
there exists a positive constant $a$ such that 
$${L^{M_1}_t}^*L^{M_1}_t-a{L^{M_2}_t}^*L^{M_2}_t$$ 
is a Hilbert-Schmidt operator for all (some) $0<t<\infty$. 
\end{theorem}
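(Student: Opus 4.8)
The plan is to realise the candidate isomorphism between the two sum systems explicitly in terms of the operators $L^{M_1}_t$ and $L^{M_2}_t$, and then to translate the defining relation of a sum--system isomorphism into the Hilbert--Schmidt condition of the statement. The key preliminary observation is that, since $\Lt{\cT_M f}(z)=M(z)\Lt f(z)$, the Plancherel theorem gives
\[\inpr{{L^M_t}^*L^M_t f}{g}=\frac{1}{2\pi}\int_{-\infty}^\infty |M(i\lambda)|^2\,\Lt f(i\lambda)\overline{\Lt g(i\lambda)}\,d\lambda,\qquad f,g\in\LR t;\]
thus ${L^M_t}^*L^M_t$ is the compression to $\LR t$ of the Wiener--Hopf operator with symbol $|M(i\lambda)|^2$, it is bounded, positive and boundedly invertible, and $\inpr{{L^M_t}^*L^M_t\,1_{(q,r]}}{1_{(s,t']}}=\inpr{c^M_{q,r}}{c^M_{s,t'}}$ by the formula recorded in the Remark after Corollary \ref{index 1}. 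For $a>0$ put $U^{(a)}_t:=\sqrt a\,L^{M_2}_t(L^{M_1}_t)^{-1}\in\B(G^{M_1}_{0,t},G^{M_2}_{0,t})$; it is invertible and, because $L^{M_i}_t\,1_{(r,s]}=c^{M_i}_{r,s}$, it satisfies $U^{(a)}_t c^{M_1}_{r,s}=\sqrt a\,c^{M_2}_{r,s}$ for $0<r<s\le t$. Conjugating $(U^{(a)}_t)^*U^{(a)}_t-1$ by the bounded invertible operators $L^{M_1}_t$ and $(L^{M_1}_t)^*$ turns it into $a\,{L^{M_2}_t}^*L^{M_2}_t-{L^{M_1}_t}^*L^{M_1}_t$, so, using that an invertible operator $V$ lies in $\cS$ if and only if $V^*V-1$ is Hilbert--Schmidt (because $(V^*V)^{1/2}-1=(V^*V-1)((V^*V)^{1/2}+1)^{-1}$), one obtains the basic equivalence
\[{L^{M_1}_t}^*L^{M_1}_t-a\,{L^{M_2}_t}^*L^{M_2}_t \text{ is Hilbert--Schmidt}\iff U^{(a)}_t\in\cS(G^{M_1}_{0,t},G^{M_2}_{0,t}).\]

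For the ``if'' direction I would assume the left--hand side holds for some $t_0$, so $U^{(a)}_{t_0}\in\cS$, and then propagate membership in $\cS$ to every $t$. For $t<t_0$ this is immediate, since ${L^{M_1}_t}^*L^{M_1}_t-a\,{L^{M_2}_t}^*L^{M_2}_t$ is the compression to $\LR t\subseteq\LR{t_0}$ of its $t_0$--analogue, hence Hilbert--Schmidt; equivalently, $U^{(a)}_t$ is the restriction of $U^{(a)}_{t_0}$ to $G^{M_1}_{0,t}$ onto $G^{M_2}_{0,t}$, and $\cS$ is stable under such restrictions. For $t>t_0$ I would argue by doubling: using that $G^M_{0,2t_0}$ is the topological direct sum of $G^M_{0,t_0}$ and $S_{t_0}G^M_{0,t_0}=G^M_{t_0,2t_0}$ (Lemma \ref{direct sum}) and that the addition maps $A^{(i)}_{t_0,t_0}$ of the two sum systems lie in $\cS$ (axiom (ii) of Definition \ref{sumsystem}), one checks on the dense span of the $c^{M_1}_{r,s}$ that
\[U^{(a)}_{2t_0}=A^{(2)}_{t_0,t_0}\circ\big(U^{(a)}_{t_0}\oplus S_{t_0}U^{(a)}_{t_0}S_{t_0}^{-1}\big)\circ\big(A^{(1)}_{t_0,t_0}\big)^{-1},\]
a composition of maps in $\cS$ (by stability of $\cS$ under products, direct sums and conjugation by the isometry $S_{t_0}$); iterating and restricting then covers all $t>0$. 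Finally, since $U^{(a)}_t c^{M_1}_{r,s}=\sqrt a\,c^{M_2}_{r,s}$ and the structure maps $A_{s,t}$ and $S_s$ act on the $c$'s by the same universal rules in both sum systems, the family $\{U^{(a)}_t\}$ automatically satisfies the intertwining identity of Definition \ref{sumsystem}; hence the two sum systems are isomorphic, and the basic equivalence then delivers the Hilbert--Schmidt condition for every $t$, not only for $t_0$.

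For the ``only if'' direction, let $\{U_t\}$, with $U_t\in\cS(G^{M_1}_{0,t},G^{M_2}_{0,t})$, implement an isomorphism. The intertwining identity forces $U_t|_{G^{M_1}_{0,s}}=U_s$ and $U_{s+t}S_s=S_sU_t$, and, together with the cocycle relation $c^{M_1}_{s+t}=c^{M_1}_s+S_s c^{M_1}_t$, these show that $\{U_t c^{M_1}_t\}$ is a real addit of the second sum system; by uniqueness of real additive cocycles up to a scalar, $U_t c^{M_1}_t=\lambda c^{M_2}_t$ for some $\lambda\neq0$ (nonzero since $U_t$ is invertible). Writing $c^{M_1}_{r,s}=c^{M_1}_s-c^{M_1}_r$ and using $U_t|_{G^{M_1}_{0,s}}=U_s$ gives $U_t c^{M_1}_{r,s}=\lambda c^{M_2}_{r,s}$ for all $0<r<s\le t$, and since the $c^{M_1}_{r,s}$ span a dense subspace of $G^{M_1}_{0,t}$ (Theorem \ref{ctdtspan}) while $U_t$ and $L^{M_i}_t$ are bounded, we conclude $U_t=\lambda L^{M_2}_t(L^{M_1}_t)^{-1}=U^{(\lambda^2)}_t$. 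The basic equivalence then shows that ${L^{M_1}_t}^*L^{M_1}_t-\lambda^2\,{L^{M_2}_t}^*L^{M_2}_t$ is Hilbert--Schmidt for every $t$, so $a:=\lambda^2$ works.

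The step I expect to be the main obstacle is the propagation of membership in $\cS$ between time scales in the ``if'' direction --- concretely, verifying the displayed doubling factorisation, that it lands in $\cS$, and that the resulting family $\{U^{(a)}_t\}$ is genuinely a sum--system isomorphism (this last point being routine but requiring some care). A more hands--on alternative, analysing the integral kernel of the truncated Wiener--Hopf operator with symbol $|M_1|^2-a|M_2|^2$ directly, looks messier, because one then has to control that kernel globally on $\R$ rather than merely near the origin.
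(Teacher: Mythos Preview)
Your argument is correct and follows essentially the same route as the paper: identify the candidate isomorphism as a scalar multiple of $L^{M_2}_t(L^{M_1}_t)^{-1}$, and translate membership in $\cS$ into the Hilbert--Schmidt condition by conjugating with $L^{M_1}_t$. The paper's proof is much terser --- it first reduces to the outer components of $M_1,M_2$ (a cosmetic step you rightly omit, since your argument works directly under the stated hypotheses), then simply asserts that the sum systems are isomorphic if and only if $(L^{M_2}_t)^{-1*}(L^{M_1}_t)^*L^{M_1}_t(L^{M_2}_t)^{-1}-a1$ is Hilbert--Schmidt for all (some) $t$, leaving the propagation between time scales and the verification of the intertwining identity implicit; your doubling factorisation via $A^{(i)}_{t_0,t_0}$ and your density argument using Theorem~\ref{ctdtspan} and the uniqueness of real addits make these steps explicit.
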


\begin{proof} Since the sum system for $M$ and ${L_t^M}^*L_t^M$ depend only on the outer component of $M$, 
we may and do assume that $M_1$ and $M_2$ are outer. 
Since $L^{M_1}_{t}(L^{M_2}_t)^{-1}c^{M_2}_t=c^{M_1}_t$, 
the two sum systems are isomorphic if and only if there exists a positive 
constant $a$ such that 
$$(L^{M_2}_t)^{-1*}(L^{M_1}_t)^*L^{M_1}_t(L^{M_2}_t)^{-1}-a1$$ 
is a Hilbert-Schmidt operator in $\B(G^{M_2}_{0,t})$ for all (some) $0<t<\infty$. 
Since $L^{M_2}_t$ is invertible, this is equivalent to that 
$(L^{M_1}_t)^*L^{M_1}_t-a(L^{M_2}_t)^*L^{M_2}_t$ is a Hilbert-Schmidt operator, 
which shows the statement. 
\end{proof}

\medskip

\begin{theorem}\label{tauMIII} Let $M\in \HD_{2,\R}$ be an outer function. 
We assume that there exist two positive constants $b_1$, $b_2$ such that 
$b_1\leq |M(i\lambda)|\leq b_2$ for almost every $\lambda$. 
Then the semigroup $\{e^{tA_M}\}_{t>0}$ gives rise to a type I 
$E_0$-semigroup if and only if there exists a positive constant 
$a$ such that 
$$P_t\cT_{|M|^2} P_t-aP_t\cT_{1/M}\cT_{1/M}^*P_t$$ 
is a Hilbert-Schmidt operator for all (some) $0<t<\infty$. 
If moreover $(I-P_t)\cT_{|M|^2}P_t$ is a Hilbert-Schmidt 
operator for all $0<t<\infty$, then this condition is equivalent to 
that $P_t\cT_{|M|^2-\sqrt{a}}P_t$ is a Hilbert-Schmidt for 
all $0<t<\infty$. 
\end{theorem}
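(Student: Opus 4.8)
The plan is to derive the main equivalence from Theorem \ref{unitlessM} and Lemma \ref{Jtoeplitz} after identifying $J^M_t$ and $L^M_t$ with compressions of Toeplitz operators, and then to obtain the last assertion by an elementary manipulation modulo the Hilbert--Schmidt ideal. Since $M$ is outer and $b_1\le |M(i\lambda)|\le b_2$ a.e., both $M$ and $1/M$ belong to $H^\infty(\rH)$, so $\cT_M$ and $\cT_{1/M}$ are bounded and boundedly invertible with $\cT_M\cT_{1/M}=\cT_{1/M}\cT_M=I$, and moreover $1/\{(1+z)M(z)\}\in\Ha$ (it is the product of $1/M\in H^\infty$ with $1/(1+z)\in\Ha$), so Lemma \ref{Jtoeplitz} gives $\cT_{1/|M|^2}c^M_{r,s}=d^M_{r,s}$. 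Using the Brown--Halmos product rule $\cT_\varphi\cT_\psi=\cT_{\varphi\psi}$ (valid whenever $\bar\varphi\in H^\infty$ or $\psi\in H^\infty$) I would record the identities $\cT_{|M|^2}=\cT_M^*\cT_M$, $\cT_{1/|M|^2}=\cT_{1/M}^*\cT_{1/M}$, $\cT_{1/|M|^2}\cT_M=\cT_{1/M}^*$, and $\cT_{|M|^2}\cT_{1/M}\cT_{1/M}^*=\cT_{1/M}\cT_{1/M}^*\cT_{|M|^2}=I$; in particular $\cT_{1/M}\cT_{1/M}^*=\cT_{|M|^2}^{-1}$. I also note that $\cT_{1/M}^*$ and $\cT_M^*$ commute with $S_t^*$, hence preserve $\LR t=\Ker S_t^*$.

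First I would check that under these hypotheses the operators appearing in Theorem \ref{unitlessM} are automatically bounded and invertible. The map $L^M_t=\cT_M|_{\LR t}$ is a bounded operator into $G^M_{0,t}$ which is bounded below and has range containing $\{c^M_{r,s}\}$, dense in $G^M_{0,t}$ by Theorem \ref{ctdtspan}, hence invertible; likewise $J^M_t=\cT_{1/|M|^2}|_{G^M_{0,t}}$ is a bounded operator with range containing $\{d^M_{r,s}\}$, dense in $\LR t$ by Theorem \ref{ctdtspan}, and $J^M_tL^M_t=\cT_{1/|M|^2}\cT_M|_{\LR t}=\cT_{1/M}^*|_{\LR t}$ is invertible on $\LR t$ with inverse $\cT_M^*|_{\LR t}$, so $J^M_t$ is invertible as well. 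Then, conjugating by the invertible $L^M_t$, the operator $(J^M_t)^*J^M_t-aI$ on $G^M_{0,t}$ is Hilbert--Schmidt iff $(J^M_tL^M_t)^*(J^M_tL^M_t)-a(L^M_t)^*L^M_t$ is, and comparing matrix entries (using that $\cT_{1/M}^*$ maps $\LR t$ into $\LR t$) gives $(J^M_tL^M_t)^*(J^M_tL^M_t)=P_t\cT_{1/M}\cT_{1/M}^*P_t$ and $(L^M_t)^*L^M_t=P_t\cT_{|M|^2}P_t$. By Theorem \ref{unitlessM}, $\{e^{tA_M}\}$ is of type I iff there is $a>0$ with $P_t\cT_{1/M}\cT_{1/M}^*P_t-aP_t\cT_{|M|^2}P_t$ Hilbert--Schmidt for all (some) $t$, which, after replacing $a$ by $1/a$, is the stated criterion.

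For the last assertion, assume in addition that $(I-P_t)\cT_{|M|^2}P_t$ is Hilbert--Schmidt for every $t$; taking adjoints, $P_t\cT_{|M|^2}(I-P_t)$ is too. Write $X=P_t\cT_{|M|^2}P_t$ and $Y=P_t\cT_{1/M}\cT_{1/M}^*P_t$ as operators on $\LR t$, and let $\equiv$ denote equality modulo $\mathcal{HS}(\LR t)$. Inserting $I=P_t+(I-P_t)$ in the middle of $XY$ and using $\cT_{|M|^2}\cT_{1/M}\cT_{1/M}^*=I$ yields $XY\equiv P_t\cT_{|M|^2}\cT_{1/M}\cT_{1/M}^*P_t=P_t$; also $Y\ge b_2^{-2}P_t$ because $\cT_{1/M}\cT_{1/M}^*=\cT_{|M|^2}^{-1}\ge\|\cT_{|M|^2}\|^{-1}I\ge b_2^{-2}I$, so $Y+a^{-1/2}P_t$ is invertible on $\LR t$. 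Now if $X-aY\equiv 0$, right-multiplying by $Y$ gives $aY^2\equiv XY\equiv P_t$, hence $(Y-a^{-1/2}P_t)(Y+a^{-1/2}P_t)\equiv 0$, hence $Y\equiv a^{-1/2}P_t$, hence $X-\sqrt a\,P_t\equiv X-aY\equiv 0$; conversely, if $X-\sqrt a\,P_t\equiv 0$, right-multiplying by $Y$ gives $\sqrt a\,Y\equiv XY\equiv P_t$, so $aY\equiv\sqrt a\,P_t\equiv X$. Since $P_t\cT_{|M|^2-\sqrt a}P_t=X-\sqrt a\,P_t$, this is exactly the asserted equivalence.

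I expect the main technical burden to be the bookkeeping with the three subspaces $\Hi$, $\LR t$, and $G^M_{0,t}$: one must verify repeatedly that $\cT_M$, $\cT_{1/M}$, and their adjoints land in the correct subspace (via commutation with $S_t$ or $S_t^*$ and Theorem \ref{ctdtspan}), and one must use the two-sided bound on $|M|$ essentially, since it is precisely what makes $L^M_t$ and $J^M_t$ boundedly invertible so that Theorem \ref{unitlessM} applies without side conditions. The Toeplitz product identities and the final Hilbert--Schmidt arithmetic are routine once the Hardy-space picture of $\Hi$ via the Laplace transform is fixed.
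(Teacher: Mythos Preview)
Your proposal is correct and follows essentially the same route as the paper: both arguments conjugate the criterion of Theorem~\ref{unitlessM} by the invertible operator $L^M_t$, identify $(L^M_t)^*L^M_t=P_t\cT_{|M|^2}P_t$ and $(J^M_tL^M_t)^*(J^M_tL^M_t)=P_t\cT_{1/M}\cT_{1/M}^*P_t$ via the Toeplitz product rules, and for the second part both use that the extra hypothesis forces $XY\equiv P_t$ modulo Hilbert--Schmidt and then reduce to a quadratic factorization. The only cosmetic difference is that the paper squares $X=P_t\cT_{|M|^2}P_t$ (obtaining $X^2\equiv aP_t$) whereas you square $Y=P_t\cT_{1/M}\cT_{1/M}^*P_t$ (obtaining $Y^2\equiv a^{-1}P_t$); since $X$ and $Y$ are each other's inverses modulo Hilbert--Schmidt and both are bounded below, the two computations are interchangeable.
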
 

\begin{proof} By assumption, $M$ and $1/M$ are in $H^\infty(\rH)$ and 
$\cT_M$ and $\cT_{1/M}$ are bounded.  
We also have $\cT_{|M|^2}=\cT_M^*\cT_M$ and 
$\cT_{1/|M|^2}=\cT_{1/M}^*\cT_{1/M}$. 
Lemma \ref{Jtoeplitz} implies that $J^M_t$ is the restriction of 
$\cT_{1/M}^*\cT_{1/M}$ to $G^M_{0,t}$. 
Thus a similar argument as above using Theorem \ref{unitlessM} implies the first 
statement. 

Now assume that $(I-P_t)\cT_{|M|^2}P_t$ is a Hilbert-Schmidt operator for all $0<t<\infty$. 
Since 
$$(P_t\cT_{1/M}\cT_{1/M}^*P_t)(P_t\cT_{|M|^2}P_t)=P_t
-P_t\cT_{1/M}\cT_{1/M}^*(I-P_t)\cT_{|M|^2}P_t,$$
the operator  $P_t\cT_{|M|^2}P_t$ is the inverse of $P_t\cT_{1/M}\cT_{1/M}^*P_t$ modulo the 
Hilbert-Schmidt operators. 
Thus thanks to the first statement, the semigroup $\{e^{tA_M}\}_{t>0}$ gives rise to a type I 
$E_0$-semigroup if and only if there exists a positive constant 
$a$ such that 
$$(P_t\cT_{|M|^2} P_t)^2-aP_t$$ 
is a Hilbert-Schmidt operator for all $0<t<\infty$, which is equivalent that 
$P_t\cT_{|M|^2-\sqrt{a}I}P_t$ is a Hilbert-Schmidt operator for all $0<t<\infty$. 
\end{proof}

\medskip

Note that the assumptions of the above two theorems can be checked by using 
the Fourier transformation of the symbols in distribution sense. 

We now collect a few useful criteria for local boundedness of Toeplitz 
operators with unbounded symbols. 

For $f\in L^2(\R)$ we denote by $\hat{f}(\xi)$ its Fourier transformation 
with normalization 
$$\hat{f}(\xi)=\int_{-\infty}^\infty f(x)e^{-i\xi x}dx,\quad f\in 
L^1(\R)\cap L^2(\R).$$  
The next lemma may be regarded as a version of the well-known uncertainty principle. 

\begin{lemma}\label{uncertain} Let $E$ and $F$ be measurable subsets of $\R$ with finite 
Lebesgue measures $|E|$ and $|F|$ respectively. 
For $f\in L^2(\R)$ whose support is in $E$, we have 
$$\int_{F}|\hat{f}(\xi)|^2d\xi\leq |E||F|||f||^2.$$
\end{lemma}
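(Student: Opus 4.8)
The plan is to reduce the estimate to the trivial Plancherel bound together with one elementary fact: a function whose Fourier transform is supported in a set of finite measure $F$ has its $L^\infty$ norm controlled by $|F|^{1/2}$ times its $L^2$ norm, and consequently the $L^2(E)$ norm of such a function over a set of finite measure $E$ picks up only a factor $|E|^{1/2}$. Concretely, I would first put $g=\mathcal{F}^{-1}(1_F\hat{f})$, which is a well-defined element of $L^2(\R)$ since $\hat{f}\in L^2(\R)$ by Plancherel. With the normalization in force, Parseval's identity reads $\int u\overline{v}\,dx=\frac{1}{2\pi}\int\hat{u}\overline{\hat{v}}\,d\xi$, and since $\hat{g}=1_F\hat{f}$ we have $\hat{g}\overline{\hat{f}}=1_F|\hat{f}|^2$, hence
$$\frac{1}{2\pi}\int_F|\hat{f}(\xi)|^2\,d\xi=\int_{\R}g(x)\overline{f(x)}\,dx=\int_E g(x)\overline{f(x)}\,dx,$$
where the last equality uses $\mathrm{supp}\,f\subseteq E$.

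Next I would record the two auxiliary bounds. Since $\hat{g}$ is supported in $F$ with $|F|<\infty$, Cauchy--Schwarz gives $\hat{g}\in L^1(\R)$, so the inversion formula $g(x)=\frac{1}{2\pi}\int_F\hat{g}(\xi)e^{i\xi x}\,d\xi$ holds for the continuous representative of $g$, and Cauchy--Schwarz on $F$ yields $\|g\|_\infty\le\frac{1}{2\pi}|F|^{1/2}\|\hat{g}\|_2=(|F|/2\pi)^{1/2}\|g\|_2$, using $\|\hat{g}\|_2^2=2\pi\|g\|_2^2$. Therefore $\|g\|_{L^2(E)}^2=\int_E|g|^2\le|E|\,\|g\|_\infty^2\le\frac{|E||F|}{2\pi}\|g\|_2^2$, i.e. $\|g\|_{L^2(E)}\le(|E||F|/2\pi)^{1/2}\|g\|_2$.

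Finally I would combine these. Writing $A=\frac{1}{2\pi}\int_F|\hat{f}|^2\,d\xi=\|g\|_2^2$, Cauchy--Schwarz in the displayed identity gives $A\le\|g\|_{L^2(E)}\|f\|_2\le(|E||F|/2\pi)^{1/2}A^{1/2}\|f\|_2$, whence $A^{1/2}\le(|E||F|/2\pi)^{1/2}\|f\|_2$ and so $\int_F|\hat{f}|^2\,d\xi=2\pi A\le|E||F|\,\|f\|^2$, which is the claim. Equivalently, one may phrase the argument by saying that the operator $f\mapsto\mathcal{F}^{-1}(1_F\widehat{1_E f})$ is Hilbert--Schmidt: its integral kernel is $\frac{1}{2\pi}1_E(x)\widehat{1_F}(x-y)$, whose Hilbert--Schmidt norm squared equals $\frac{1}{4\pi^2}|E|\,\|\widehat{1_F}\|_2^2=\frac{|E||F|}{2\pi}$, and the bound follows at once since $\|\mathcal{F}^{-1}(1_F\hat{f})\|_2^2=\frac{1}{2\pi}\int_F|\hat{f}|^2$ when $\mathrm{supp}\,f\subseteq E$. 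There is no genuine obstacle here; the only point demanding a little care is the bookkeeping of the $2\pi$ factors attached to the chosen Fourier normalization, which must be tracked so that they cancel exactly and leave the clean constant $|E||F|$.
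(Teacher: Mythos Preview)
Your proof is correct. The paper's argument is closely related but organized differently: it chooses $g$ with $\hat{g}=1_F$, writes $\int_F|\hat f|^2=2\pi\|f*g\|_2^2$, and bounds the convolution pointwise via Cauchy--Schwarz over $E$, obtaining the constant $|E||F|$ directly without a bootstrap step. Your primary route instead sets $g=\mathcal{F}^{-1}(1_F\hat f)$, uses the band-limited $L^\infty$ bound $\|g\|_\infty\le(|F|/2\pi)^{1/2}\|g\|_2$, and closes with the self-referential inequality $A\le (|E||F|/2\pi)^{1/2}A^{1/2}\|f\|_2$; this is a slightly slicker variant that avoids the $L^1\cap L^2$ reduction the paper performs first. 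Your alternative Hilbert--Schmidt phrasing is essentially the paper's argument in disguise (one minor slip: the kernel of $f\mapsto \mathcal{F}^{-1}(1_F\widehat{1_Ef})$ carries $1_E$ on the input variable $y$, not on $x$, but this does not affect the Hilbert--Schmidt norm computation).
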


\begin{proof} First we assume $f\in L^1(\R)\cap L^2(\R)$. 
We take $g\in L^2(\R)$ such that $\hat{g}$ is the characteristic 
function of $F$. 
Then the Plancherel theorem implies 
$$\int_F|\hat{f}(\xi)|^2d\xi=\int_{-\infty}^\infty|\hat{f}(\xi)\hat{g}(\xi)|^2d\xi=2\pi \int_{-\infty}^\infty |f*g(x)|^2dx.$$
On the other hand, 
$$|f*g(x)|^2=|\int_E f(t)g(x-t)dt|^2\leq ||f||^2\int_E|g(x-t)|^2dt,$$
and so  
\begin{eqnarray*}
\int_F|\hat{f}(\xi)|^2d\xi&\leq& 2\pi||f||^2\int_{-\infty}^\infty 
\int_E|g(x-t)|^2dtdx=2\pi ||f||^2||g||^2|E|\\
&=&||f||^2|E||F|.
\end{eqnarray*}
The statement in the general case follows from an easy approximation argument. 
\end{proof}

\medskip

\begin{lemma}\label{5.7} Let $M\in \HD_{2,\R}$. 
If there exists $\varepsilon>0$ such that 
the Lebesgue measure of 
$$\{\lambda\in \R;\; |M(i\lambda)|\leq \varepsilon\}$$
is finite, then there exists a positive constant $C_t$ for each 
$0<t<\infty$ such that 
$$||L^M_tf||\geq C_t||f||,\quad \forall f\in \LR t\cap D(\cT_M).$$ 
\end{lemma}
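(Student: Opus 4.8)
The plan is to pass to the Fourier side, where $L^M_t$ becomes a multiplication operator composed with a projection, and then reduce the claim to a uniform concentration inequality for functions supported in the bounded interval $(0,t)$.

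Throughout, identify $\Hi$ with its image in $L^2(\R)$ under extension by zero, so that $\Lt f(i\lambda)=\hat f(\lambda)$ for $f\in\Hi$. For $f\in\LR t\cap D(\cT_M)$ we already know (from the paragraph preceding Theorem~\ref{sumsystemLM}) that $M(z)\Lt f(z)\in\Ha$, so the function $x\mapsto\frac1{2\pi}\int_{-\infty}^\infty M(i\lambda)\hat f(\lambda)e^{ix\lambda}d\lambda$ vanishes on $(-\infty,0)$ and therefore coincides, on all of $\R$, with the inverse Fourier transform of $M(i\cdot)\hat f\in L^2(\R)$; its restriction to $(0,\infty)$ is $L^M_tf=\cT_Mf$. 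Hence by the Plancherel theorem
\[
\|L^M_tf\|^2=\frac1{2\pi}\int_{-\infty}^\infty|M(i\lambda)|^2\,|\hat f(\lambda)|^2\,d\lambda .
\]
Set $E=\{\lambda\in\R;\ |M(i\lambda)|\le\varepsilon\}$, which has finite Lebesgue measure by hypothesis. Bounding $|M(i\lambda)|^2\ge\varepsilon^2$ off $E$ and using $\|f\|^2=\frac1{2\pi}\|\hat f\|^2$ gives
\[
\|L^M_tf\|^2\ \ge\ \varepsilon^2\Big(\|f\|^2-\frac1{2\pi}\int_E|\hat f(\lambda)|^2\,d\lambda\Big).
\]
Thus it suffices to produce $\gamma_t\in[0,1)$, depending only on $t$ and $|E|$, with $\int_E|\hat f(\lambda)|^2\,d\lambda\le\gamma_t\int_{-\infty}^\infty|\hat f(\lambda)|^2\,d\lambda$ for every $f\in L^2(0,t)$; then $C_t=\varepsilon\sqrt{1-\gamma_t}$ works.

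For this last point, let $P$ be multiplication by $1_E$ on $L^2(\R)$ and let $Q$ be the orthogonal projection obtained by conjugating multiplication by $1_{(0,t)}$ with the Fourier transform, both orthogonal projections. For $f$ supported in $(0,t)$ one has $\hat f=Q\hat f$, whence $\int_E|\hat f|^2=\|P\hat f\|^2=\|PQ\hat f\|^2\le\|PQP\|\,\|\hat f\|^2$, using $\|PQ\|^2=\|PQP\|$. So it remains to show $\gamma_t:=\|PQP\|<1$. Restricted to $L^2(E)$, the positive contraction $PQP$ has integral kernel $\frac1{2\pi}(e^{it(\eta-\xi)}-1)/(i(\eta-\xi))$, and Lemma~\ref{uncertain} (or a direct estimate of this kernel over $E\times E$) shows it is compact. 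If $\|PQP\|=1$, compactness yields $g\in L^2(E)$, $g\ne0$, with $PQPg=g$; then $\langle Qg,g\rangle=\|g\|^2$ forces $Qg=g$, i.e.\ $h:=\cF^{-1}g$ is supported in $(0,t)$ while $\hat h=g$ vanishes on the positive-measure set $E^c$. Since $h\in L^1(0,t)$, $\hat h$ extends to an entire function, which cannot vanish on a set of positive measure unless it is identically zero; hence $h=0$, contradicting $g\ne0$. Therefore $\gamma_t<1$, which completes the proof.

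The main obstacle is this last step. The crude bound furnished directly by Lemma~\ref{uncertain}, namely $\int_E|\hat f|^2\le\frac{|E|t}{2\pi}\|\hat f\|^2$, is useless once $|E|t\ge2\pi$: a function supported in a long interval $(0,t)$ can have almost all of its Fourier energy inside a fixed finite-measure set. It is precisely the compactness of $PQP$ together with the Paley--Wiener/analyticity argument that upgrades the pointwise strict inequality $\|Qg\|<\|g\|$ (for $0\ne g\in L^2(E)$) to a uniform gap $\gamma_t<1$; consistently, one expects $\gamma_t\to1$ as $t\to\infty$, which is exactly why the statement only asks for a constant $C_t$ depending on $t$.
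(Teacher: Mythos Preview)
Your argument is correct, but it takes a genuinely different route from the paper's. Both proofs start from the Plancherel identity
\[
\|L^M_tf\|^2=\frac{1}{2\pi}\int_{-\infty}^{\infty}|M(i\lambda)|^2|\hat f(\lambda)|^2\,d\lambda
\]
and the lower bound $\|L^M_tf\|^2\ge\varepsilon^2\big(\|f\|^2-\frac{1}{2\pi}\int_{E}|\hat f|^2\big)$ on the complement of a small set, but they diverge in how they control the bad piece. You keep the fixed set $E=\{|M(i\lambda)|\le\varepsilon\}$ and prove the Amrein--Berthier--type inequality $\int_E|\hat f|^2\le\gamma_t\|\hat f\|^2$ with $\gamma_t<1$ via compactness of $PQP$ (Hilbert--Schmidt, since the kernel is bounded on $E\times E$ of finite measure) together with a Paley--Wiener analyticity argument to rule out an eigenvector at $1$. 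The paper instead exploits an extra piece of information about $M$ that you did not use: since $M(z)/(1+z)\in\Ha$, we have $M(i\lambda)\neq 0$ for almost every $\lambda$, and hence the sets $E_n=\{|M(i\lambda)|\le 1/n\}$ satisfy $|E_n|\to 0$. For fixed $t$ one simply chooses $n$ so large that $t|E_n|<2\pi$, at which point the ``crude'' bound from Lemma~\ref{uncertain} applied to $E_n$ already gives $\int_{E_n}|\hat f|^2\le t|E_n|\|f\|^2<2\pi\|f\|^2$, and the lower bound follows immediately. So the very estimate you dismissed as ``useless once $|E|t\ge 2\pi$'' is, after this shrinking trick, exactly what the paper uses. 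Your approach is more robust (it works for an arbitrary finite-measure set $E$ and does not invoke the a.e.\ nonvanishing of boundary values), while the paper's is more elementary, needing nothing beyond Lemma~\ref{uncertain}.
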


\begin{proof} Note that we have 
$$||\cT_Mf||^2=\frac{1}{2\pi}
\int_{-\infty}^\infty|M(i\lambda)|^2|\Lt f(i\lambda)|^2d\lambda,$$
for $f\in D(\cT_M)$ as we have $M(z)\Lt f(z)\in \Ha$. 
Let 
$$E_n=\{\lambda\in \R;\; |M(i\lambda)|\leq \frac{1}{n}\}.$$ 
By assumption, $|E_n|$ is finite for large $n$.  
Note that $M(i\lambda)\neq 0$ almost everywhere as $M(z)/(1+z)\in \Ha$, 
which implies that $\{|E_n|\}_{n=1}^\infty$ converges to 0. 
For $f\in \LR t\cap D(\cT_M)$, we have 
\begin{eqnarray*}
\int_{-\infty}^\infty|M(i\lambda)|^2|\Lt f(i\lambda)|^2d\lambda
&\geq& \int_{\R\setminus E_n}|M(i\lambda)|^2|\Lt f(i\lambda)|^2d\lambda\\
&\geq&\frac{1}{n}\int_{\R\setminus E_n}|\Lt f(i\lambda)|^2d\lambda\\
&=&\frac{2\pi ||f||^2-\int_{E_n}|\Lt f(i\lambda)|^2d\lambda}{n}\\
&\geq&\frac{(2\pi-t|E_n| )||f||^2}{n},
\end{eqnarray*}
where we use Lemma \ref{uncertain}.
Thus for sufficiently large $n$ with $2\pi>t|E_n|$, we get 
$$||\cT_Mf||^2\geq \frac{2\pi -t|E_n|}{2\pi n}||f||^2,$$
which finishes the proof. 
\end{proof}

\medskip

\begin{lemma}\label{5.8} Let $\varphi\in L^2(\R)$ and let $\Phi(i\lambda)=
\hat{\varphi}(\lambda)$. 
Then $\LR t\subset D(\cT_\Phi)$ and $\cT_\Phi P_t$ is a Hilbert-Schmidt 
operator for all $0<t<\infty$. 
\end{lemma}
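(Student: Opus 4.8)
The idea is to identify $\cT_\Phi P_t$ explicitly as an integral operator with an $L^2$ kernel. First I would note that any $f\in\LR t$ has compact support, hence lies in $L^1(0,t)\cap L^2(0,t)$, and so its Laplace transform extends to the imaginary axis with $\Lt f(i\lambda)=\hat f(\lambda)$, where $f$ is extended by $0$ to all of $\R$. Since $\widehat{\varphi*f}=\hat\varphi\,\hat f=\Phi(i\cdot)\Lt f(i\cdot)$ (the convolution theorem with the paper's normalization), the condition $f\in D(\cT_\Phi)$ is equivalent to $\varphi*f\in L^2(\R)$. This follows from Minkowski's integral inequality:
$$\|\varphi*f\|_{L^2(\R)}\le\int_0^t\|\varphi(\cdot-y)\|_{L^2(\R)}\,|f(y)|\,dy=\|\varphi\|_{L^2(\R)}\|f\|_{L^1(0,t)}\le\sqrt t\,\|\varphi\|_{L^2(\R)}\|f\|_{L^2(0,t)}<\infty.$$
Hence $\LR t\subset D(\cT_\Phi)$, and by Fourier inversion (valid in $L^2$ since $\varphi*f\in L^2(\R)$) we obtain $\cT_\Phi f(x)=1_{(0,\infty)}(x)(\varphi*f)(x)$.

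Next, for a general $f\in\HiR$ one has $P_tf=1_{(0,t)}f\in\LR t$, so the computation above shows that $\cT_\Phi P_t$ is the integral operator on $\HiR$ with kernel
$$K(x,y)=1_{(0,\infty)}(x)\,1_{(0,t)}(y)\,\varphi(x-y),\qquad (x,y)\in(0,\infty)^2.$$
I would then bound its Hilbert--Schmidt norm directly, using Tonelli's theorem and the change of variables $u=x-y$:
$$\int_0^\infty\!\!\int_0^\infty|K(x,y)|^2\,dy\,dx=\int_0^t\Big(\int_0^\infty|\varphi(x-y)|^2\,dx\Big)dy=\int_0^t\Big(\int_{-y}^\infty|\varphi(u)|^2\,du\Big)dy\le t\,\|\varphi\|_{L^2(\R)}^2<\infty.$$
Therefore $\cT_\Phi P_t$ is a Hilbert--Schmidt operator for every $0<t<\infty$, which completes the argument.

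There is no serious obstacle here; the argument is essentially routine Fourier analysis. The only points requiring a word of care are the identification of the Laplace transform of $f\in\LR t$ with its Fourier transform on the imaginary axis, and the use of Tonelli/Fubini when computing the kernel and its Hilbert--Schmidt norm, both of which are unproblematic thanks to the compact support of $f$ and of $y\mapsto\varphi(x-y)1_{(0,t)}(y)$. If one prefers to read the statement as concerning an operator on the complex space $\Hi$, the same kernel computation applies verbatim, and since Hilbert--Schmidtness of an operator on a real Hilbert space is equivalent to that of its complexification, nothing is lost by working with $\HiR$.
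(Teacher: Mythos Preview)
Your argument is correct and follows essentially the same route as the paper's proof: both identify $\cT_\Phi f$ with $1_{(0,\infty)}(\varphi*f)$, write down the kernel $1_{(0,\infty)}(x)\varphi(x-y)1_{(0,t]}(y)$, and bound the Hilbert--Schmidt norm by $t\|\varphi\|_{L^2(\R)}^2$. The only cosmetic difference is that the paper evaluates the double integral in the other order, obtaining $\int_{-t}^\infty |\varphi(u)|^2\big(t\wedge(t+u)\big)\,du$ before applying the same bound, whereas you bound $\int_{-y}^\infty |\varphi(u)|^2\,du$ by $\|\varphi\|_{L^2(\R)}^2$ first.
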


\begin{proof} Let $f\in \LR t$. 
Since $\LR t\subset L^1(0,\infty)$, the convolution $\varphi*f$ 
makes sense as an element in $L^2(\R)$ and we get 
$\cT_\Phi f(x)=1_{(0,\infty)}(x)\varphi*f(x)$. 
In particular, $\cT_\Phi P_t$ is the integral operator with the kernel 
$1_{(0,\infty)}(x)\varphi(x-y)1_{(0,t]}(y)$ and so 
$$||\cT_\Phi P_t||_{\mathrm{H.S}}^2=\int_0^t\int_0^\infty|\varphi(x-y)|^2dxdy
=\int_{-t}^\infty|\varphi(u)|^2 \big(t\wedge (t+u)\big)du\leq 
t||\varphi||^2.$$
\end{proof}

\medskip

In a similar way as above, we can show the following: 
\begin{lemma}\label{5.9} Let $\varphi\in L^1(\R)$ and let $\Phi(i\lambda)
=\hat{\varphi}(\lambda)$. 
\begin{itemize}
\item [$(1)$] $\cT_\Phi$ is bounded. 
\item [$(2)$] $P_t\cT_\Phi P_t$ is a Hilbert-Schmidt operator for all 
$0<t<\infty$ if and only if $\varphi\in L^2_{\mathrm{loc}}(\R)$. 
\item [$(3)$] $(1-P_t)\cT_\Phi P_t$ is a Hilbert-Schmidt operator for all 
$0<t<\infty$ if and only if 
$$\int_0^\infty |\varphi(x)|^2(1\wedge x)dx<\infty.$$ 
\end{itemize}
\end{lemma}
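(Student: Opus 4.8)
The plan is to realize $\cT_\Phi$ as the compression to $\Hi$ of an honest convolution operator, exactly as in the proof of Lemma \ref{5.8}. Regarding $\Hi=L^2(0,\infty)$ as a subspace of $L^2(\R)$, for $f\in\Hi$ we have $\Lt f(i\lambda)=\hat f(\lambda)$, so that $\Phi(i\lambda)\Lt f(i\lambda)=\hat\varphi(\lambda)\hat f(\lambda)=\widehat{\varphi*f}(\lambda)$, which is square integrable since $\varphi\in L^1(\R)$; hence $\Hi\subseteq D(\cT_\Phi)$ and inverting the Fourier transform in the definition of $\cT_\Phi$ gives
$$(\cT_\Phi f)(x)=1_{(0,\infty)}(x)\,(\varphi*f)(x),\qquad f\in\Hi.$$
Part $(1)$ then follows at once from Young's inequality: $\|\cT_\Phi f\|\le\|\varphi*f\|_{L^2(\R)}\le\|\varphi\|_{L^1(\R)}\,\|f\|$.

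For $(2)$ and $(3)$ I would read off the integral kernels. Since $P_t$ is multiplication by $1_{(0,t]}$, the operator $P_t\cT_\Phi P_t$ on $\LR t$ is the integral operator with kernel $1_{(0,t)}(x)\,\varphi(x-y)\,1_{(0,t)}(y)$, while $(1-P_t)\cT_\Phi P_t$, regarded as an operator from $\LR t$ into $L^2(t,\infty)_\R$, has kernel $1_{(t,\infty)}(x)\,\varphi(x-y)\,1_{(0,t)}(y)$; each is Hilbert--Schmidt exactly when its kernel lies in $L^2$. The substitution $u=x-y$ together with Fubini then gives
$$\int_0^t\!\int_0^t|\varphi(x-y)|^2\,dx\,dy=\int_{-t}^t|\varphi(u)|^2(t-|u|)\,du,\qquad\int_t^\infty\!\int_0^t|\varphi(x-y)|^2\,dy\,dx=\int_0^\infty|\varphi(u)|^2(t\wedge u)\,du.$$
This change of variables is the only slightly fiddly point, but it is elementary.

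Finally the equivalences come from comparing the weights. For $(2)$: if $\varphi\in L^2_{\mathrm{loc}}(\R)$ then $\int_{-t}^t|\varphi(u)|^2(t-|u|)\,du\le t\int_{-t}^t|\varphi(u)|^2\,du<\infty$ for every $t$, and conversely, if this is finite for every $t$, then for $0<s<t$ the bound $t-|u|\ge t-s$ on $[-s,s]$ forces $\int_{-s}^s|\varphi(u)|^2\,du<\infty$, so $\varphi\in L^2_{\mathrm{loc}}(\R)$. For $(3)$: since $t\wedge u$ and $1\wedge u$ are comparable up to the factor $\max\{1,t\}$, finiteness of $\int_0^\infty|\varphi(u)|^2(t\wedge u)\,du$ for all $t$ is equivalent to its finiteness for $t=1$, which is exactly $\int_0^\infty|\varphi(x)|^2(1\wedge x)\,dx<\infty$. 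I do not anticipate a genuine obstacle; the only care needed is with the change of variables and with matching the ``for all $t$'' quantifier to the single integrability condition.
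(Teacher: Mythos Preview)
Your proposal is correct and is exactly what the paper has in mind: the paper gives no proof of this lemma at all, merely writing ``In a similar way as above, we can show the following'' in reference to Lemma \ref{5.8}, and your argument supplies precisely those details by realizing $\cT_\Phi$ as the compression of convolution by $\varphi$, reading off the integral kernels, and computing the Hilbert--Schmidt norms via the substitution $u=x-y$. One cosmetic remark: your claimed comparison constant $\max\{1,t\}$ is only the one needed for the implication ``finite at $t=1$ $\Rightarrow$ finite at $t$'' (for $t<1$ the reverse inequality needs the factor $1/t$), but since the converse direction only requires setting $t=1$, the equivalence stands as written.
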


\bigskip


\section{Examples}\label{examples} 
Let $\varphi$ be a real function 
in $L^1_{\mathrm{loc}}[0,\infty)\cap L^2((0,\infty),x\wedge 1dx)$ and 
let $M=1-\Lt \varphi$. 	
Then $M$ belongs to $\HD_{2,\R}$. 
We determine the type of the generalized CCR flow arising from such $M$. 

\begin{lemma}\label{LMbdd} Let the notation be as above. 
Then $L^M_t$ is a bounded invertible operator from 
$\LR t$ onto $G^M_{0,t}$ for all $0<t<\infty$. 
\end{lemma}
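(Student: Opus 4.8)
The plan is to realize $M$ on the imaginary axis as a bounded perturbation of the constant symbol $1$ and then combine the Toeplitz estimates of Section 5 with Lemma \ref{5.7}. First I would split $\varphi=\varphi_0+\varphi_1$, where $\varphi_0=1_{(0,1)}\varphi$ and $\varphi_1=1_{[1,\infty)}\varphi$. Since $\varphi\in L^1_{\mathrm{loc}}[0,\infty)$, the function $\varphi_0$ is integrable with compact support, so $\varphi_0\in L^1(\R)$; and since $\varphi\in L^2((0,\infty),x\wedge 1\,dx)$, the weight equals $1$ on $(1,\infty)$ and hence $\varphi_1\in L^2(\R)$. Accordingly, on the imaginary axis $M(i\lambda)=1-\Lt{\varphi_0}(i\lambda)-\Lt{\varphi_1}(i\lambda)=1-\hat\varphi_0(\lambda)-\hat\varphi_1(\lambda)$, where $\hat\varphi_0\in C_0(\R)$ by the Riemann--Lebesgue lemma and $\hat\varphi_1\in L^2(\R)$ by Plancherel. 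As $\hat\varphi_0$ is bounded, $D(\cT_M)=D(\cT_{\Lt{\varphi_1}})$, and Lemma \ref{5.8} gives $\LR t\subset D(\cT_{\Lt{\varphi_1}})$; hence $L^M_t$ is defined on all of $\LR t$. On $\LR t$ we may write $\cT_M=I-\cT_{\Lt{\varphi_0}}-\cT_{\Lt{\varphi_1}}$, where $\cT_{\Lt{\varphi_0}}$ is bounded by Lemma \ref{5.9}(1) and $\cT_{\Lt{\varphi_1}}P_t$ is Hilbert--Schmidt by Lemma \ref{5.8}. Therefore $L^M_t$ is a bounded operator from $\LR t$ into $\HiR$.

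Next I would check that $L^M_t$ is bounded below. The sublevel set $\{\lambda\in\R;\ |M(i\lambda)|\le 1/2\}$ is contained in $\{\lambda;\ |\hat\varphi_0(\lambda)|\ge 1/4\}\cup\{\lambda;\ |\hat\varphi_1(\lambda)|\ge 1/4\}$; the first of these is compact since $\hat\varphi_0\in C_0(\R)$, and the second has finite Lebesgue measure since $\hat\varphi_1\in L^2(\R)$. Thus the hypothesis of Lemma \ref{5.7} holds with $\varepsilon=1/2$, and since $\LR t\subset D(\cT_M)$ we obtain a constant $C_t>0$ with $\|L^M_tf\|\ge C_t\|f\|$ for every $f\in\LR t$. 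In particular $L^M_t$ is injective and has closed range.

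Finally I would identify the range with $G^M_{0,t}$. Recall that $\cT_M(1_{(r,s]})=c^M_{r,s}$ for $0<r<s<t$, and that $c^M_{r,s}=S_rc^M_{s-r}$ lies in the closed subspace $G^M_{r,s}\subseteq G^M_{0,t}$; since step functions are dense in $\LR t$ and $L^M_t$ is continuous, $L^M_t(\LR t)\subseteq G^M_{0,t}$. On the other hand $L^M_t(\LR t)$ contains the linear span of $\{c^M_{r,s}\}_{0<r<s<t}$, which is dense in $G^M_{0,t}$ by Theorem \ref{ctdtspan}(1). As $L^M_t(\LR t)$ is closed, it equals $G^M_{0,t}$. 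Consequently $L^M_t$ is a bounded bijection of $\LR t$ onto $G^M_{0,t}$ which is bounded below, hence boundedly invertible.

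The symbol arithmetic for Toeplitz operators with unbounded symbols (the legitimacy of writing $\cT_M=I-\cT_{\Lt{\varphi_0}}-\cT_{\Lt{\varphi_1}}$ on $\LR t$, and the identification $D(\cT_M)=D(\cT_{\Lt{\varphi_1}})$) together with the elementary facts $\varphi_0\in L^1(\R)$ and $\varphi_1\in L^2(\R)$ are routine. The one point that genuinely uses the hypothesis on $\varphi$ is the finiteness of the measure of $\{\lambda;\ |M(i\lambda)|\le 1/2\}$, which is exactly what feeds Lemma \ref{5.7} and yields invertibility; I expect this step — although short — to be the crux.
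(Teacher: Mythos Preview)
Your proof is correct and follows exactly the same approach as the paper: the paper's proof tersely splits $\varphi=\varphi_1+\varphi_2$ with $\varphi_1=1_{(0,1]}\varphi\in L^1$ and $\varphi_2=1_{(1,\infty)}\varphi\in L^2$, and then cites Lemmas \ref{5.7}, \ref{5.8}, \ref{5.9} without further comment. You have supplied precisely the details implicit in that citation (the finite-measure verification feeding Lemma \ref{5.7}, the closed-range argument, and the identification of the range with $G^M_{0,t}$ via the density of $\{c^M_{r,s}\}$), so there is nothing to add.
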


\begin{proof} Let $\varphi_1(x)=1_{(0,1]}(x)\varphi(x)$ and 
$\varphi_2(x)=1_{(1,\infty)}(x)\varphi(x)$. 
Then $\varphi_1\in L^1(0,\infty)$ and $\varphi_2\in \Hi$, and so  
Lemma \ref{5.7}, Lemma \ref{5.8}, and Lemma \ref{5.9} imply the statement. 
\end{proof}

\medskip

\begin{lemma}\label{taugn-gn} Let the notation be as above. 
For a natural number $n$, we set 
$$g_n(x)=\sum_{k=0}^{n-1}\big(1_{(\frac{2k}{2n},\frac{2k+1}{2n}]}(x)
-1_{(\frac{2k+1}{2n},\frac{2k+2}{2n}]}(x)\big).$$
Then $\{\cT_Mg_n-g_n\}_{n=1}^\infty$ converges to 0. 
\end{lemma}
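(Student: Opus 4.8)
The plan is to identify $\cT_M g_n$ with the function $f_n$ appearing in the proof of Theorem~\ref{outerIII}, and then estimate $\cT_M g_n-g_n$ directly on the Fourier side.

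First I would record the algebraic identity. By the discussion preceding Theorem~\ref{sumsystemLM} we have $1_{(r,s]}\in D(\cT_M)$ and $\cT_M 1_{(r,s]}=c^M_{r,s}$ for all $0<r<s$, so by linearity $\cT_M g_n=f_n$, where $f_n$ and $F_n$ are exactly as in the proof of Theorem~\ref{outerIII}; in particular $\Lt{f_n}(z)=F_n(z)M(z)$ since $\Lt{c^M_{r,s}}(z)=(e^{-rz}-e^{-sz})M(z)/z$, while the same computation with $\Lt{1_{(r,s]}}(z)=(e^{-rz}-e^{-sz})/z$ gives $\Lt{g_n}(z)=F_n(z)$. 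As $f_n\in G^M_{0,1}\subseteq\Hi$ and $g_n\in\LR 1\subseteq\Hi$, the difference $\cT_M g_n-g_n$ lies in $\Hi$ and has boundary symbol $F_n(i\lambda)(M(i\lambda)-1)$, so the Plancherel theorem yields
$$\|\cT_M g_n-g_n\|^2=\frac{1}{2\pi}\int_{-\infty}^\infty|F_n(i\lambda)|^2\,|M(i\lambda)-1|^2\,d\lambda.$$

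Next I would collect the relevant facts. About $F_n$: $\int_{-\infty}^\infty|F_n(i\lambda)|^2d\lambda=2\pi$ (Plancherel applied to $g_n$, which has norm $1$); $|F_n(i\lambda)|=|\Lt{g_n}(i\lambda)|\leq\|g_n\|_{L^1(0,1)}=1$ for all $\lambda$ and $n$; and $|F_n(i\lambda)|^2\to0$ uniformly on compact subsets of $\R$ (hence pointwise) --- all of these are in, or immediate from, the proof of Theorem~\ref{outerIII}. About the symbol: write $\varphi=\varphi_1+\varphi_2$ with $\varphi_1=1_{(0,1]}\varphi\in L^1(0,\infty)$ and $\varphi_2=1_{(1,\infty)}\varphi\in\Hi$, as in the proof of Lemma~\ref{LMbdd}; then $M(i\lambda)-1=-\widehat\varphi(\lambda)$, where $\widehat{\varphi_1}$ is bounded and vanishes at infinity (Riemann--Lebesgue) and $\widehat{\varphi_2}\in L^2(\R)$, so $|M(i\lambda)-1|^2\leq 2|\widehat{\varphi_1}(\lambda)|^2+2|\widehat{\varphi_2}(\lambda)|^2$.

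Finally I would split the integral accordingly. For the $\varphi_2$-part, $|F_n(i\lambda)|^2|\widehat{\varphi_2}(\lambda)|^2\leq|\widehat{\varphi_2}(\lambda)|^2\in L^1(\R)$ and $|F_n(i\lambda)|^2\to0$ pointwise, so dominated convergence gives $\int|F_n(i\lambda)|^2|\widehat{\varphi_2}(\lambda)|^2d\lambda\to0$. For the $\varphi_1$-part, given $\varepsilon>0$ choose $R$ with $|\widehat{\varphi_1}(\lambda)|^2<\varepsilon$ for $|\lambda|>R$ and split at $|\lambda|=R$: the outer piece is at most $\varepsilon\int|F_n(i\lambda)|^2d\lambda=2\pi\varepsilon$, while the inner piece is at most $\|\varphi_1\|_{L^1}^2\cdot 2R\cdot\sup_{|\lambda|\leq R}|F_n(i\lambda)|^2\to0$ as $n\to\infty$; letting $n\to\infty$ then $\varepsilon\to0$ gives $\int|F_n(i\lambda)|^2|\widehat{\varphi_1}(\lambda)|^2d\lambda\to0$, and combining finishes the proof. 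The only steps needing care are the opening one --- checking that $\cT_M g_n-g_n$ really lies in $\Hi$ with the claimed boundary symbol, so that Plancherel applies --- and the uniform bound $|F_n|\leq 1$, which is what makes the dominated convergence step legitimate; everything else is routine. (Alternatively, the $\varphi_2$-part can be handled by observing that $\cT_{\widehat{\varphi_2}}P_1$ is Hilbert--Schmidt, hence compact, by Lemma~\ref{5.8}, while $g_n\to 0$ weakly.)
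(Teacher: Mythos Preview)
Your proof is correct and follows essentially the same route as the paper: the same decomposition $\varphi=\varphi_1+\varphi_2$ with $\varphi_1\in L^1$ and $\varphi_2\in L^2$, the same Plancherel computation reducing to $\int|F_n(i\lambda)|^2|\Phi(i\lambda)|^2\,d\lambda$, and the same Riemann--Lebesgue argument for the $\varphi_1$-piece. The only cosmetic difference is that for the $\varphi_2$-piece the paper uses exactly your parenthetical alternative (compactness of $\cT_{\Phi_2}P_1$ from Lemma~\ref{5.8} together with $g_n\to 0$ weakly) rather than your primary dominated-convergence argument based on $|F_n|\leq 1$.
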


\begin{proof}
Note that $\Lt{g_n}(z)=F_n(z)$ holds, where $F_n$ is as 
in the proof of Theorem \ref{outerIII}. 
Let $\Phi_i(z)=\Lt {\varphi_i}(z)$ for $i=1,2$, 
where $\varphi=\varphi_1+\varphi_2$ is the decomposition 
as in the proof of Lemma \ref{LMbdd}. 
Then $\cT_Mg_n-g_n=-\cT_{\Phi_1}g_n-\cT_{\Phi_2}g_n$. 
Since $\{g_n\}$ converges to 0 weakly, the second term converges to 0 
as $\cT_{\Phi_2}P_1$ is a compact operator thanks to Lemma \ref{5.8}. 
Since $\Phi_1(i\lambda)$ is a continuous function vanishing at infinity, 
we get 
$$||\cT_{\Phi_1}g_n||^2=\frac{1}{2\pi}\int_{-\infty}^\infty 
|\Phi_1(i\lambda)|^2
|F_n(i\lambda)|^2d\lambda\to 0,\quad (n\to\infty).$$
\end{proof}

\medskip

\begin{lemma}\label{6.3} Let $f,g,h\in L^1(0,1)\cap L^2((0,1),xdx)$. 
We regards $L^1(0,1)$ as a subspace of $L^1(\R)$ naturally 
and set $\tilde{g}(x)=g(-x)$. 
If $f,h\notin L^2(0,1)$ such that 
$$\limsup_{t\to+0}\frac{||h||_{L^2(t,1)}}{||f||_{L^2(t,1)}}=C<\infty,$$
then 
$$\lim_{t\to +0}\frac{
||h*\tilde{g}||_{L^2(t,1)}}{||f||_{L^2(t,1)}}=0. $$
\end{lemma}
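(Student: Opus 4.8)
The plan is to reduce the statement to a Cauchy–Schwarz-type estimate that exploits the smallness of $\tilde g$ near the origin together with the rapid growth of $\|f\|_{L^2(t,1)}$ as $t\to +0$. First I would record the basic structural facts: since $f,h\notin L^2(0,1)$, the quantities $\|f\|_{L^2(t,1)}$ and $\|h\|_{L^2(t,1)}$ tend to $\infty$ as $t\to +0$, and by the hypothesis $\|h\|_{L^2(t,1)}\le (C+1)\|f\|_{L^2(t,1)}$ for $t$ small. Thus it suffices to show that $\|h*\tilde g\|_{L^2(t,1)}/\|h\|_{L^2(t,1)}\to 0$, i.e. I may replace $f$ by $h$ in the denominator and forget about $f$ entirely; the problem becomes: if $h\in L^1(0,1)\cap L^2((0,1),x\,dx)$ with $h\notin L^2(0,1)$ and $g\in L^1(0,1)\cap L^2((0,1),x\,dx)$, then $\|h*\tilde g\|_{L^2(t,1)}=o(\|h\|_{L^2(t,1)})$.

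Next I would split $g$ according to a small parameter $\delta>0$: write $\tilde g=\tilde g\,1_{(-\delta,0)}+\tilde g\,1_{(-1,-\delta)}=:u_\delta+v_\delta$. For the piece $v_\delta$, which is supported away from $0$, convolution with $h$ produces, for $x\in(t,1)$, a contribution $h*\widetilde{v_\delta}(x)=\int h(x+s)v_\delta(-s)\,ds$ involving values $h(x+s)$ with $s\ge\delta$, hence values of $h$ on $(t+\delta,2)$ — but $h\in L^2$ away from the origin (indeed $h\in L^2((0,1),x\,dx)$ forces $h\in L^2(\varepsilon,1)$ for every $\varepsilon>0$), so by Young's inequality $\|h*\widetilde{v_\delta}\|_{L^2(t,1)}$ is bounded uniformly in $t$ by $\|h\|_{L^2(\delta/2,2)}\|g\|_{L^1}$ or similar, a constant $K(\delta)$ independent of $t$. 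Dividing by $\|h\|_{L^2(t,1)}\to\infty$, this term contributes $0$ in the limit for each fixed $\delta$. For the piece $u_\delta$, I would use Young: $\|h*\widetilde{u_\delta}\|_{L^2(t,1)}\le \|1_{(0,1)}h\|_{L^2}\cdot\|u_\delta\|_{L^1}$ — no, that reintroduces $\|h\|_{L^2(0,1)}=\infty$; so instead I localize. The honest estimate is $\|(h*\widetilde{u_\delta})1_{(t,1)}\|_{L^2}\le \|h\,1_{(t-\delta,1)}\|_{L^2}\cdot\|u_\delta\|_{L^1}$ by restricting the convolution to the relevant range of $h$ (for $x>t$ and $|s|<\delta$ one has $x+s>t-\delta$, and we may further note $x+s>0$), together with $\|u_\delta\|_{L^1}=\|g\,1_{(0,\delta)}\|_{L^1}\to 0$ as $\delta\to 0$ — and moreover $\|h\,1_{(t-\delta,1)}\|_{L^2}\le \|h\,1_{(t/2,1)}\|_{L^2}$ once $\delta<t/2$, which is $\le\|h\|_{L^2(t/2,1)}$, comparable to $\|h\|_{L^2(t,1)}$ by a doubling argument? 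That comparison is NOT automatic, so I would instead keep $\delta$ fixed and bound $\|h\,1_{(t-\delta,1)}\|_{L^2}$ by $\|h\|_{L^2(t-\delta,1)}$, accepting the mismatch $t\mapsto t-\delta$.

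To make the last step clean I would argue by $\varepsilon$–$\delta$: fix $\varepsilon>0$ and choose $\delta=\delta(\varepsilon)$ so that $\|g\,1_{(0,\delta)}\|_{L^1}<\varepsilon$. Then for $0<t<\delta/2$ (say),
$$\|(h*\widetilde{u_\delta})1_{(t,1)}\|_{L^2}\le \varepsilon\,\|h\|_{L^2(0,1)},$$
which is again infinite — the real fix is that $u_\delta$ is not just small in $L^1$ but I should pair each value $h(x+s)$ with $x+s$ still near $0$, so that the relevant norm is $\|h\|_{L^2(0,2\delta)}$ times $\|g\|_{L^1(0,\delta)}$ plus a bounded remainder, and then divide by $\|h\|_{L^2(t,1)}$ which, crucially, already contains $\|h\|_{L^2(t,2\delta)}$ as a summand while also going to infinity. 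Concretely: for $x\in(t,\delta)$, $h*\widetilde{u_\delta}(x)=\int_0^\delta h(x+s)g(s)\,ds$ uses $h$ on $(t,2\delta)$, giving $\|(h*\widetilde{u_\delta})1_{(t,\delta)}\|_{L^2}\le\|h\|_{L^2(t,2\delta)}\,\|g\|_{L^1(0,\delta)}$; for $x\in(\delta,1)$, it uses $h$ on $(\delta,1+\delta)$ where $h\in L^2$, giving a $t$-independent constant. Hence
$$\frac{\|h*\tilde g\|_{L^2(t,1)}}{\|h\|_{L^2(t,1)}}\le \|g\|_{L^1(0,\delta)}+\frac{\text{(const depending on }\delta\text{)}}{\|h\|_{L^2(t,1)}}+\frac{\|h*\widetilde{v_\delta}\|_{L^2(t,1)}}{\|h\|_{L^2(t,1)}},$$
and letting $t\to+0$ (with $\delta$ fixed) the last two terms vanish since $\|h\|_{L^2(t,1)}\to\infty$ and $\|h*\widetilde{v_\delta}\|_{L^2(t,1)}\le K(\delta)$, leaving $\limsup_{t\to+0}(\cdots)\le\|g\|_{L^1(0,\delta)}$; then letting $\delta\to 0$ finishes the proof. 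The main obstacle is precisely this last bookkeeping: ensuring that when one convolves $h$ with the part of $\tilde g$ concentrated near $0$, the relevant $L^2$-mass of $h$ that appears is confined to a neighbourhood of the origin of controlled size $O(\delta)$, so that it is dominated by $\|h\|_{L^2(t,1)}$ (up to the small factor $\|g\|_{L^1(0,\delta)}$) uniformly as $t\to 0$ — one must be careful that shifting the variable of $h$ by an amount $\le\delta$ does not destroy this domination, which is why I keep $\delta$ fixed while sending $t\to 0$ first and only afterwards send $\delta\to 0$.
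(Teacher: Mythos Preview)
Your plan is correct and follows essentially the same idea as the paper's proof: isolate the contribution of $g$ near the origin (small in $L^1$) from the contribution of $g$ away from the origin (where $h$ lies in $L^2$), then let the cutoff parameter tend to $0$ after sending $t\to 0$. The paper implements this slightly more compactly: instead of splitting $g=g\,1_{(0,\delta)}+g\,1_{(\delta,1)}$ and invoking Young's inequality on each piece, it first applies the AM--GM bound $|h(x+r)||h(x+s)|\le\tfrac12(|h(x+r)|^2+|h(x+s)|^2)$ to obtain the single estimate
\[
\|h*\tilde g\|_{L^2(t,1)}^2\le \|g\|_1\int_0^{1-t}|g(r)|\,\|h\|_{L^2(t+r,1)}^2\,dr,
\]
and only then splits the $r$-integral at $\varepsilon$; this avoids the intermediate bookkeeping (the further split $x\in(t,\delta)$ versus $x\in(\delta,1)$) that you needed.
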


\begin{proof} Let $0<t,\varepsilon <1$ with $t+\varepsilon <1$. 
Then
\begin{eqnarray*}||h*\tilde{g}||_{L^2(t,1)}^2
&\leq& \int_t^1dx\int_0^1dr\int_0^1ds|g(r)||g(s)||h(x+r)||h(x+s)|\\
&\leq&\int_t^1dx\int_0^1dr\int_0^1ds|g(r)||g(s)|
\frac{|h(x+r)|^2+|h(x+s)|^2}{2}\\
&=&||g||_1 \int_t^1dx\int_0^1dr|g(r)||h(x+r)|^2\\
&=&||g||_1\int_0^{1-t} |g(r)|||h||_{L^2(t+r,1)}^2 dr\\
&\leq&||g||_1\int_0^{\varepsilon} |g(r)|dr||h||_{L^2(t,1)}^2
+||g||_1^2||h||_{L^2(\varepsilon,1)}^2. 
\end{eqnarray*}
Thus 
$$\limsup_{t\to +0}\frac{
||h*\tilde{g}||_{L^2(t,1)}}{||f||_{L^2(t,1)}}\leq C||g||_1\int_0^{\varepsilon} |g(r)|dr,$$
and the statement holds.  
\end{proof}

\medskip

\begin{theorem}\label{phi1phi2} Let $\varphi_1,\varphi_2\in 
L^1_{\mathrm{loc}}[0,\infty)\cap L^2((0,\infty),x\wedge 1dx)_\R$ 
and let $M_i=1-\Lt {\varphi_i}$ for $i=1,2$.  
\begin{itemize}
\item [$(1)$] If $\varphi_1-\varphi_2\in \Hi$, the two sum systems for 
$M_1$ and $M_2$ are isomorphic. 
\item [$(2)$] Assume $\varphi_1\notin L^2(0,\infty)$. 
If the two sum systems for $M_1$ and $M_2$ are isomorphic, 
then the following two statements hold:
\begin{equation}\label{1phi}
\lim_{t\to+0}\frac{||\varphi_2||_{L^2(t,\infty)}}
{||\varphi_1||_{L^2(t,\infty)}}=1,
\end{equation}
\begin{equation}\label{2phi}
\lim_{t\to+0}\frac{||\varphi_1-\varphi_2||_{L^2(t,\infty)}}
{||\varphi_1||_{L^2(t,\infty)}}=0.
\end{equation}
\end{itemize}
\end{theorem}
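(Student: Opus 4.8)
The plan is to reduce the isomorphism of the two sum systems to a Hilbert-Schmidt comparison of the Toeplitz-type operators via Theorem~\ref{sumsystemLM}, and then translate that condition into the asymptotic statements about $\varphi_1$, $\varphi_2$ near the origin. For part (1), if $\varphi_1-\varphi_2\in\Hi$, then $M_1-M_2=-\Lt{(\varphi_1-\varphi_2)}$ is the Laplace transform of an $L^2$ function, so $M_1(i\lambda)-M_2(i\lambda)$ is square integrable on the imaginary axis. Writing $L^{M_i}_t$ for the restriction of $\cT_{M_i}$ to $D(\cT_{M_i})\cap\LR t$ (bounded and invertible by Lemma~\ref{LMbdd}), I compute
$$
{L^{M_1}_t}^*L^{M_1}_t-{L^{M_2}_t}^*L^{M_2}_t
=P_t(\cT_{M_1}^*\cT_{M_1}-\cT_{M_2}^*\cT_{M_2})P_t
=P_t(\cT_{\overline{M_1-M_2}}\cT_{M_1}+\cT_{\overline{M_2}}\cT_{M_1-M_2})P_t,
$$
and since $M_1-M_2$ has square-integrable boundary values, Lemma~\ref{5.8} shows $\cT_{M_1-M_2}P_t$ and (taking adjoints) $P_t\cT_{\overline{M_1-M_2}}$ are Hilbert-Schmidt, so the whole difference is Hilbert-Schmidt. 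By Theorem~\ref{sumsystemLM} with $a=1$, the two sum systems are isomorphic.

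For part (2), I run the argument in reverse but have to work harder because now I only know that \emph{some} $a>0$ makes ${L^{M_1}_t}^*L^{M_1}_t-a{L^{M_2}_t}^*L^{M_2}_t$ Hilbert-Schmidt. Write $\varphi_i=\varphi_{i,1}+\varphi_{i,2}$ with $\varphi_{i,1}=1_{(0,1]}\varphi_i\in L^1$ and $\varphi_{i,2}=1_{(1,\infty)}\varphi_i\in\Hi$, and $M_i=1-\Phi_{i,1}-\Phi_{i,2}$ accordingly. Expanding $\cT_{M_i}^*\cT_{M_i}=\big(I-\cT_{\overline{\Phi_{i,1}}}-\cT_{\overline{\Phi_{i,2}}}\big)\big(I-\cT_{\Phi_{i,1}}-\cT_{\Phi_{i,2}}\big)$ and using Lemma~\ref{5.8} to discard the Hilbert-Schmidt terms $P_t\cT_{\overline{\Phi_{i,2}}}(\cdots)P_t$ etc., the condition becomes: there is $a>0$ with
$$
P_t\big(-\cT_{\overline{\Phi_{1,1}}}-\cT_{\Phi_{1,1}}+\cT_{\overline{\Phi_{1,1}}}\cT_{\Phi_{1,1}}\big)P_t
-aP_t\big(-\cT_{\overline{\Phi_{2,1}}}-\cT_{\Phi_{2,1}}+\cT_{\overline{\Phi_{2,1}}}\cT_{\Phi_{2,1}}\big)P_t
\equiv 0,
$$
where $\equiv$ denotes equality modulo Hilbert-Schmidt. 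Testing this operator against the functions $g_n$ from Lemma~\ref{taugn-gn}: since $\|g_n\|^2=1$ and $g_n\to 0$ weakly, the quadratic forms of the $P_t\cT_{\overline{\Phi_{i,1}}}\cT_{\Phi_{i,1}}P_t$ terms and of any Hilbert-Schmidt remainder vanish in the limit, while $(\cT_{\Phi_{i,1}}g_n,g_n)+(g_n,\cT_{\Phi_{i,1}}g_n)=2\RE(\cT_{\Phi_{i,1}}g_n,g_n)$ survives. Actually I realize the cleaner route is to test against functions concentrated near $0$: the quantities $\|\varphi_i\|_{L^2(t,\infty)}^2$ will appear as the large-scale behaviour of $(\cT_{|M_i|^2}1_{(0,s]},1_{(0,s]})$-type expressions, or more directly of $\|L^{M_i}_t 1_{(0,s]}\|^2=\|c^{M_i}_{0,s}\|^2$ as $s\to0$.

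So the real core of part (2) is: the isomorphism gives a bounded invertible intertwiner $\Theta_t=L^{M_1}_t(L^{M_2}_t)^{-1}$ between the two spaces with $\Theta_t^*\Theta_t-a\,1$ Hilbert-Schmidt, hence the norms $\|L^{M_1}_t f\|$ and $\sqrt a\,\|L^{M_2}_t f\|$ agree asymptotically along any sequence $f=f_s$ escaping to "infinity" in the spectral picture, i.e. concentrating at $0$ in position space. Computing $\|L^{M_i}_t 1_{(0,s]}\|^2$ and extracting its leading term as $s\to+0$ via the explicit kernel $k^{M_i}(x,y)$ (built from $\varphi_i$ as in \cite{I}), one finds it is comparable to $\|\varphi_i\|_{L^2(s,\infty)}^2$ up to lower-order corrections governed by the convolution terms, which vanish by Lemma~\ref{6.3} (with $f=h=\varphi_1$, $g$ a suitable piece) under the hypothesis $\varphi_1\notin L^2$. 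This forces $\|\varphi_2\|_{L^2(t,\infty)}/\|\varphi_1\|_{L^2(t,\infty)}\to\sqrt a$; then testing against $g_n$-type alternating sums pins down $a=1$, giving \eqref{1phi}. Finally, \eqref{2phi} follows from \eqref{1phi} together with the intertwining relation applied to $1_{(0,s]}$: the cross term $2\RE\langle c^{M_1}_{0,s},c^{M_2}_{0,s}\rangle$ also converges to $2\|\varphi_1\|^2_{L^2(s,\infty)}$ (again modulo Lemma~\ref{6.3} corrections), and the parallelogram identity $\|\varphi_1-\varphi_2\|^2=\|\varphi_1\|^2+\|\varphi_2\|^2-2\RE\langle\varphi_1,\varphi_2\rangle$ then gives \eqref{2phi}. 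The main obstacle is the bookkeeping in part (2): isolating the precise leading-order term of $\|c^{M_i}_{0,s}\|^2$ as $s\to0$ and showing all the convolution-type remainders are genuinely $o(\|\varphi_1\|_{L^2(s,\infty)}^2)$, for which Lemma~\ref{6.3} is exactly the tool, but matching it up with the Toeplitz/kernel computation requires care.
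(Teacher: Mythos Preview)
Your approach to part (1) is fine and matches the paper's: once $\varphi_1-\varphi_2\in\Hi$, Lemma~\ref{5.8} makes $\cT_{M_1-M_2}P_t$ Hilbert--Schmidt, and the difference $(L^{M_1}_t)^*L^{M_1}_t-(L^{M_2}_t)^*L^{M_2}_t$ is then Hilbert--Schmidt, so Theorem~\ref{sumsystemLM} applies with $a=1$. (The paper phrases this a touch more simply: $L^{M_1}_t-L^{M_2}_t$ is itself Hilbert--Schmidt, hence so is $(L^{M_1}_t)^*L^{M_1}_t-(L^{M_2}_t)^*L^{M_2}_t$.)

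Part (2) has a genuine gap. Your idea of testing against $f_s=1_{(0,s]}$ and reading off $\|\varphi_i\|_{L^2(s,\infty)}^2$ from $\|L^{M_i}_t 1_{(0,s]}\|^2=\|c^{M_i}_{0,s}\|^2$ cannot work: after the truncation step (which forces $|M_i|$ bounded above and below), one has
\[
\|c^{M_i}_{0,s}\|^2=\frac{1}{2\pi}\int_{-\infty}^\infty\frac{|1-e^{-is\lambda}|^2}{\lambda^2}|M_i(i\lambda)|^2\,d\lambda\asymp s\to 0,
\]
whereas $\|\varphi_1\|_{L^2(s,\infty)}^2\to\infty$. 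So these quantities are not comparable, and no normalization fixes this: with $f_s=s^{-1/2}1_{(0,s]}$ one gets $\|L^{M_i}_t f_s\|^2\to 1$ for \emph{both} $i$, which recovers $a=1$ but carries no information about $\|\varphi_i\|_{L^2(s,\infty)}$. (Incidentally, in your $g_n$ argument the linear terms $(\cT_{\Phi_{i,1}}g_n,g_n)$ do not ``survive'' either; Lemma~\ref{taugn-gn} says $\cT_{\Phi_{i,1}}g_n\to 0$ strongly. What survives is the identity part, giving $1-a=0$.)

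The mechanism you are missing is the one the paper uses: after reducing to $\varphi_i$ supported in $(0,1)$ with small $L^1$ norm (legitimate by part (1)), the Hilbert--Schmidt condition becomes that $P_t\cT_{|M_1|^2-|M_2|^2}P_t$ is Hilbert--Schmidt for all $t$. By Lemma~\ref{5.9}(2), this is \emph{equivalent} to the convolution kernel
\[
g=\varphi_2-\varphi_1+\tilde\varphi_2-\tilde\varphi_1+\tilde\varphi_1*\varphi_1-\tilde\varphi_2*\varphi_2
\]
lying in $L^2_{\mathrm{loc}}(\R)$, i.e.\ in $L^2(-1,1)$. Writing $h=\varphi_2-\varphi_1$, one has $g|_{(0,1)}=h-h*\tilde h-\varphi_1*\tilde h-h*\tilde\varphi_1$, and now Lemma~\ref{6.3} applies directly to this \emph{function} (not to operator norms): if \eqref{1phi} failed, then along a subsequence $\|h\|_{L^2(t_n,1)}\gtrsim\|\varphi_1\|_{L^2(t_n,1)}\to\infty$, while the convolution pieces are $o(\|h\|_{L^2(t_n,1)})$ by Lemma~\ref{6.3}, forcing $\|g\|_{L^2(t_n,1)}\to\infty$, a contradiction. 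The same argument with \eqref{1phi} in hand gives \eqref{2phi}. The key translation step---Hilbert--Schmidt on $P_t(\cdot)P_t$ $\Leftrightarrow$ kernel in $L^2_{\mathrm{loc}}$---is exactly Lemma~\ref{5.9}(2), and it is what turns the operator-theoretic hypothesis into a pointwise $L^2$ statement about $\varphi_1,\varphi_2$ near $0$.
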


\begin{proof} Lemma \ref{LMbdd} shows that $L^{M_i}_t$ is a bounded invertible 
operator from $\LR t$ onto $G^{M_i}_{0,t}$. 
Therefore  we can apply Theorem \ref{sumsystemLM} to prove the theorem.  

(1) Assume that $\varphi_1-\varphi_2\in \Hi$. 
Lemma \ref{5.8} implies that $L^{M_1}_t-L^{M_2}_t$ is a Hilbert-Schmidt operator 
for all $0<t<\infty$ and Theorem \ref{sumsystemLM} shows that the sum systems for 
$M_1$ and $M_2$ are isomorphic. 

(2) Assume now that the sum systems for $M_1$ and $M_2$ are isomorphic. 
By the result just proved above, we may add a function in $\Hi$ to 
$\varphi_i$ without changing the assumption. 
Therefore we may and do assume that $\varphi_i$ has support in $(0,1)$ 
and $||\varphi_i||_1<1$ by truncating $\varphi_i$. 
As a consequence, $\cT_{M_i}$ is a bounded operator and we have 
$(L^{M_i}_t)^*L^{M_i}_t=P_t\cT_{|M_i|^2}P_t$ now. 
Assume that $\varphi_1\notin L^2(0,1)$. 

Theorem \ref{sumsystemLM} implies that there exists a positive constant $a$ such that 
$P_t\cT_{|M_1|^2-a|M_2|^2}P_t$ is a Hilbert-Schmidt operator for all 
$0<t<\infty$. 
This is possible only if $a=1$ thanks to Lemma \ref{taugn-gn}. 
Let $\Phi_i=\Lt{\varphi_i}$. 
Then 
$$|M_1|^2-|M_2|^2=\Phi_2-\Phi_1+\overline{\Phi_2}-\overline{\Phi_1}+
|\Phi_1|^2-|\Phi_2|^2,$$ and we have 
$\cT_{|M_1|^2-|M_2|^2}f=1_{(0,\infty)}(x)g*f(x)$ for $f\in \Hi$ with 
$$g=\varphi_2-\varphi_1+\tilde{\varphi}_2-\tilde{\varphi}_1+
\tilde{\varphi}_1*\varphi_1-\tilde{\varphi}_2*\varphi_2\in L^1(\R),$$
where the notation in Lemma \ref{6.3} is used. 
Lemma \ref{5.9} implies that $g\in L^2_{\mathrm{loc}}(\R)$ or equivalently 
$g\in L^2([-1,1])$ as $g$ is supported by $[-1,1]$. 
Setting $h=\varphi_2-\varphi_1$, we have
$$g=h+\tilde{h}-h*\tilde{h}-\varphi_1*\tilde{h}-h*\tilde{\varphi}_1.$$

First suppose that (\ref{1phi}) does not hold. 
Exchanging the roles of $\varphi_1$ and $\varphi_2$ if necessary, 
we may assume that there exist a number $0<\delta<1$ and 
a decreasing sequence of positive numbers $\{t_n\}_{n=1}^\infty$ 
converging to 0 such that 
$$||\varphi_2||_{L^2(t_n,1)}\leq (1-\delta)
||\varphi_1||_{L^2(t_n,1)}$$
holds for all $n\in \N$. 
Then $||h||_{L^2(t_n,1)}\geq \delta||\varphi_1||_{L^2(t_n,1)}$. 
On the other hand we have 
$$1=\frac{
||g+h*\tilde{h}+\varphi_1*\tilde{h}+h*\tilde{\varphi}_1||_{L^2(t_n,1)}
}{||h||_{L^2(t_n,1)}},$$
whose right-hand side would converge to 0 due to Lemma \ref{6.3}, 
which is a contradiction. 
Thus Equation (\ref{1phi}) holds. 

Equation (\ref{2phi}) can be shown in a similar way in the presence of (\ref{1phi}).
\end{proof}

\medskip

\begin{theorem}\label{unitlessphi} Let $\varphi \in L^1_{\mathrm{loc}}[0,\infty)\cap 
L^2((0,\infty),x\wedge 1dx)_\R$ and let $M=1-\Lt {\varphi}$. 
Then the generalized CCR flow arising from $\{e^{tA_M}\}_{t>0}$ 
is of type I if and only if $\varphi\in \Hi$. 
In consequence, if $\varphi\notin \Hi$, then the corresponding generalized CCR flow is of type III.  
\end{theorem}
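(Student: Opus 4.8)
The plan is to deduce the theorem from the comparison result Theorem~\ref{phi1phi2}, via the abstract type-$I$ criterion of Theorem~\ref{unitless}. The crucial preliminary observation is this: the generalized CCR flow associated with $M=1-\Lt\varphi$ is of type $I$ if and only if its sum system $(\{G^M_{a,b}\},\{S_t\})$ is isomorphic, as a sum system, to the one attached to the constant function $M_2=1$, that is, the choice $\varphi_2=0$. Indeed, by Corollary~\ref{index 1} the sum system for $M$ is divisible of index $1$, so Theorem~\ref{unitless} applies; the implication (iii)~$\Rightarrow$~(i) in its proof actually produces an explicit isomorphism of the given sum system with the index-$1$ shift sum system, while conversely the shift sum system yields the exponential product system, which is of type $I$. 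One then checks that for $\varphi_2=0$ the associated kernel $k$ vanishes, so that $e^{tA_{M_2}}=S_t$, whence $G^{M_2}_{0,t}=\LR t$, $G^{M_2}_{0,\infty}=\HiR$, and the sum system for $M_2=1$ is precisely the index-$1$ shift sum system (and the corresponding generalized CCR flow is the CCR flow of index $1$).

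Granting this reduction, the ``if'' direction is immediate. If $\varphi\in\Hi$, then $\varphi-0\in\Hi$, so Theorem~\ref{phi1phi2}(1), applied with $\varphi_1=\varphi$ and $\varphi_2=0$, shows that the sum systems for $M$ and for $M_2=1$ are isomorphic; hence the generalized CCR flow arising from $\{e^{tA_M}\}_{t>0}$ is of type $I$.

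For the ``only if'' direction I would argue by contraposition: suppose $\varphi\notin\Hi$. If the generalized CCR flow for $M$ were of type $I$, then by the reduction the sum systems for $M$ and for $M_2=1$ would be isomorphic, and Theorem~\ref{phi1phi2}(2), applied with $\varphi_1=\varphi$ and $\varphi_2=0$ (legitimate since $\varphi_1\notin\Hi$), would force
$$\lim_{t\to+0}\frac{\|\varphi_2\|_{L^2(t,\infty)}}{\|\varphi_1\|_{L^2(t,\infty)}}=1;$$
this is absurd, since the left-hand side is identically $0$ (note that $\|\varphi_1\|_{L^2(t,\infty)}\to\infty$ as $t\to+0$ because $\varphi_1\notin\Hi$). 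Hence the flow is not of type $I$. Finally, for the ``in consequence'' clause, recall from the remark following Theorem~\ref{unitless}, which rests on \cite[Theorem~39]{pdct}, that a divisible sum system yields only type $I$ or type $III$ product systems; since the sum system for $M$ is divisible and type $I$ has just been excluded, the flow must be of type $III$.

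The step I expect to be the main obstacle is the preliminary reduction ``type $I$ $\iff$ sum system for $M$ isomorphic to the index-$1$ shift sum system''. It requires using Theorem~\ref{unitless} in its abstract form (so that no outerness of $M$ is needed), extracting from the proof of (iii)~$\Rightarrow$~(i) that type $I$ yields not merely the operator $J$ of condition~(iii) but an honest sum-system isomorphism with the shift sum system, and identifying the sum system of $M_2=1$ with the index-$1$ shift sum system (checking $e^{tA_{M_2}}=S_t$ and $G^{M_2}_{0,\infty}=\HiR$). Once this bookkeeping is in place, the remainder is a direct quotation of Theorem~\ref{phi1phi2}.
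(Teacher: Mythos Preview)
Your approach is correct and genuinely different from the paper's. The paper first reduces (via Theorem~\ref{phi1phi2}(1)) to the case where $\varphi$ is supported in $(0,1)$ with $\|\varphi\|_1<1$, so that $M$ is outer and bounded above and below; it then applies the concrete Toeplitz criterion Theorem~\ref{tauMIII}, verifies the Hilbert--Schmidt side condition on $(1-P_t)\cT_{|M|^2}P_t$ directly, and reduces the type~I question to whether $g=\varphi*\tilde\varphi-\varphi-\tilde\varphi\in L^2(-1,1)$, which is handled by Lemmas~\ref{5.9} and~\ref{6.3}. You instead bypass Theorem~\ref{tauMIII} entirely and read the theorem as the special case $\varphi_2=0$ of Theorem~\ref{phi1phi2}, after establishing the bridge ``type~I $\iff$ sum system isomorphic to the index-$1$ shift sum system''. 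Your route is more economical and conceptually clean, since Theorem~\ref{phi1phi2} already encapsulates the hard work with Lemma~\ref{6.3}; the price is that your bridge requires reading into the \emph{proof} of Theorem~\ref{unitless} (the construction of $B_t$ in (iii)$\Rightarrow$(i)) rather than citing its statement, and checking that $M_2=1$ really gives the shift sum system. The paper's route is more self-contained at the level of theorem statements but repeats some of the analysis already done for Theorem~\ref{phi1phi2}.
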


\begin{proof} Thanks to Theorem \ref{phi1phi2}, (1), to prove the theorem 
we may assume that $\varphi$ is supported by $(0,1)$ and 
$||\varphi||_1<1$. 
In this case, the function $M(z)$ is continuous and 
$$1-||\varphi||_1\leq |M(z)|\leq 1+||\varphi||_1$$
holds on the closure of $\rH$. 
In particular $M$ is an outer function. 
Therefore we can apply Theorem \ref{tauMIII} to $M(z)$. 

First we claim that $(1-P_t)\cT_{|M|^2}P_t$ is a Hilbert-Schmidt operator 
for all $0<t<\infty$. 
Let $\Phi=\Lt \varphi$. 
Then 
$$(1-P_t)\cT_{|M|^2}P_t=-(1-P_t)\cT_{\Phi}P_t
-(1-P_t)\cT_{\overline{\Phi}}P_t+(1-P_t)\cT_{|\Phi|^2}P_t.$$
Thanks to Lemma \ref{5.9}, the first two terms above are Hilbert-Schmidt operators. 
Note that we have $\cT_{|\Phi|^2}f=1_{(0,\infty)}
\varphi*\tilde{\varphi}*f$ for $f\in \Hi$. 
Thus to prove the claim, it suffices to show that
$$\int_0^1 t|\varphi*\tilde{\varphi}(t)|^2dt<\infty.$$
Indeed, we have 
\begin{eqnarray*}
\int_0^1 x|\varphi*\tilde{\varphi}(x)|^2dx &\leq&\int_0^1xdx\int_0^1dr\int_0^1ds|\varphi(r)||\varphi(s)|
|\varphi(x+r)||\varphi(x+s)| \\
 &\leq& \int_0^1xdx\int_0^1dr\int_0^1ds|\varphi(r)||\varphi(s)|
\frac{|\varphi(x+r)|^2+|\varphi(x+s)|^2}{2} \\
 &=&||\varphi||_1 \int_0^1xdx\int_0^1ds|\varphi(s)||\varphi(x+s)|^2 \\
 &=&||\varphi||_1 \int_0^1ds|\varphi(s)|\int_s^1dy(y-s)|\varphi(y)|^2 \\
 &\leq &||\varphi||_1^2\int_0^1x|\varphi(x)|^2dx<\infty. 
\end{eqnarray*}

Theorem \ref{tauMIII} together with the above claim shows that the generalized CCR flow arising 
from $\{e^{tA_M}\}_{t>0}$ is of type I if and only if 
$P_t\cT_{|M|^2-1}P_t$ is a Hilbert-Schmidt operator for all $t$, 
which is further equivalent to that 
$$g=\varphi*\tilde{\varphi}-\varphi-\tilde{\varphi}$$
is in $L^2(-1,1)$ thanks to Lemma \ref{5.9},(2). 
Lemma \ref{6.3} shows that this is equivalent to the condition 
$\varphi\in L^2(0,1)$. 
\end{proof}

\medskip

\begin{remark}
Let $\varphi$ be as above and $||\varphi||_1<1$. 
Then as shown in \cite[Section 6]{I}, the sum system $G^M_{0,t}$ can be 
realized as follows: $G^M_{0,t}=\LR t$ as a topological vector space with 
a new inner product 
$$\inpr{f}{g}_G=\frac{1}{2\pi}\int_{-\infty}^\infty 
\hat{f}(\lambda)\overline{\hat{g}(\lambda)} 
|M(i\lambda)|^2d\lambda.$$
Note that the norm of $G^M_{0,t}$ is equivalent to the usual 
$L^2$-norm as we have 
$$1-||\varphi||_1\leq |M(i\lambda)|\leq 1+||\varphi||_1.$$ 
This means that the invariant for a product system (through the associated sum system) 
discussed in \cite{T1} and \cite{pdct} can not distinguish the product system  
corresponding to this $M$ from the exponential product system of index 1.
\end{remark}

\bigskip

\section{Type III factors and type III $E_0$-semigroups}
\medskip
\subsection{Local algebras}
For a product system $E=(E_t)_{t>0}$, following \cite{Vol}, \cite{T1} and \cite{pdct}, we introduce an analogue of the observable algebra for 
a finite interval $I=(s,t)\subset (0,a)$ by 
$$\cA^E(I)~=~ U^a_{s,t}~(\C1_{E_s} \otimes 
\B(E_{t-s})\otimes 
\C1_{E_{a-t}}) {U^a_{s,t}}^*,$$ where $U^a_{s,t}$ is the unique unitary 
operator 
between the Hilbert spaces $E_s 
\otimes E_{t-s} \otimes E_{a-t}$ and $H_a$, determined uniquely by the
associativity 
of the 
product system.

For a family $\{\cA_\lambda\}_{\lambda\in \Lambda}$ of von Neumann algebras, we use the notation 
$\bigvee_{\lambda\in \Lambda}\cA_\lambda$ for the von Neumann algebra generated by 
$\{\cA_\lambda\}_{\lambda\in \Lambda}$. 
For an open subset $U$ of $(0,a)$, we set 
$$\cA^E(U)=\bigvee_{I\subset U}\cA^E(I)$$
where $I$ runs over all intervals contained in $U$. 
Then the isomorphism class of $\cA^E(U)$ does not depend on the choice of $a$. 

Note that we can always ignore a finite subset of $(0,\infty)$ whenever we deal with $\cA^E(U)$ as we have  
$\cA^E((r,s)\cup (s,t))=\cA^E((r,t))$ (see for instance Corollary 25 in \cite{pdct}). 

When $U$ is an elementary set, that is a finite union of intervals, then 
$\cA^E(U)$ is a type I factor because it is generated by finitely many mutually commuting type I factors. 
However, when $U$ has infinitely many components, it is not clear whether $\cA^E(U)$ is of type I or even 
it is not clear whether $\cA^E(U)$ is a factor. 
Indeed, Murray von Neumann's notion of the type of $\cA^E(U)$ gives a computable invariant of the product system 
$(E_t)_{t>0}$. 

\begin{lemma} Let $E=(E_t)_{t>0}$ be a product system and let $U$ be a bounded non-empty open subset of $(0,\infty)$. 
Then  
\begin{itemize}
\item [$(1)$] If $E$ has a unit, then $\cA^E(U)$ has a direct summand that is a type I$_\infty$ factor. 
\item [$(2)$] If $E$ is of type I, then $\cA^E(U)$ is a type I$_\infty$ factor. 
\end{itemize}
\end{lemma}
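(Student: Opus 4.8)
The plan is to reduce everything to concrete statements about Fock spaces and exponential product systems, exploiting the structure already laid out in the excerpt. First I would recall that for a unit $u=\{u_t\}$ the family of operators $\{u_t\}$, suitably normalized, generates along the interval $U$ a coherent-vector subspace on which the local algebra acts standardly; more precisely, writing $U$ as a countable disjoint union of intervals $I_k=(s_k,t_k)$, the product $u_{s_1}\otimes u_{t_1-s_1}\otimes\cdots$ identifies a distinguished vector $\Omega_U$ (the ``relative vacuum'' along $U$) in $H_a$. I would show that the subspace $\cH_U=\overline{\cA^E(U)\Omega_U}$ carries a representation of $\cA^E(U)$ that is unitarily equivalent to the defining (Fock) representation of the CCR algebra over $\bigoplus_k L^2(I_k,\C)$ — here one uses that each $\cA^E(I_k)\cong\B(E_{t_k-s_k})$ is a type I factor and that the $\cA^E(I_k)$ mutually commute, so their join on $\cH_U$ is the infinite tensor product of type I$_\infty$ factors along the distinguished product vector $\bigotimes_k u_{I_k}$, which is again a type I$_\infty$ factor.

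For part (1), the point is that even if $\cA^E(U)$ itself is not a factor, the subrepresentation on $\cH_U$ is a type I$_\infty$ factor representation, so the corresponding central projection $z\in\cA^E(U)''$ (equivalently the support projection of the normal state implemented by $\Omega_U$, cut down appropriately) gives a direct summand $z\cA^E(U)$ isomorphic to a type I$_\infty$ factor. I would make this precise by noting that $\cH_U$ is $\cA^E(U)$-invariant (it is invariant under each $\cA^E(I)$ for $I\subset U$ because the unit property $U_{s,t}(u_s\otimes u_t)=u_{s+t}$ makes $\Omega_U$ behave coherently under the embeddings), hence the projection $p$ onto $\cH_U$ lies in $\cA^E(U)'$, and the central support of $p$ in $\cA^E(U)''$ is the desired summand; on that summand $\cA^E(U)$ is isomorphic to the restriction to $\cH_U$, which we have identified as $\B(\cH_U)$.

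For part (2), when $E$ is of type I it is isomorphic to an exponential product system $\Gamma(L^2((0,a),K))$ with $\dim K=\mathrm{ind}\,E$, and under this isomorphism $\cA^E(I)=\{W(f):\ \mathrm{supp}\,f\subset I\}''$ is the CCR factor over $L^2(I,K)$. Then $\cA^E(U)=\{W(f):\ \mathrm{supp}\,f\subset U\}''$ is the CCR algebra over $L^2(U,K)=\bigoplus_k L^2(I_k,K)$ in its Fock representation, which is the infinite von Neumann tensor product $\overline{\bigotimes}_k \B(\Gamma(L^2(I_k,K)))$ taken with respect to the vacuum product vector $\bigotimes_k\Phi_k$. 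An infinite tensor product of type I$_\infty$ factors along a product state is a type I$_\infty$ factor (this is the classical ITPFI computation: the product of the standard vacuum vectors is separating and cyclic, and the commutant is the conjugate tensor product), so $\cA^E(U)$ is a type I$_\infty$ factor. Here one must check that the decomposition $U=\bigsqcup_k I_k$ combined with the remark that finite subsets of $(0,\infty)$ can be ignored makes $\cA^E(U)=\bigvee_k\cA^E(I_k)$ rather than something strictly larger; this follows because any interval $I\subset U$ is, modulo finitely many points, a finite union of pieces of the $I_k$'s, so $\cA^E(I)\subset\bigvee_k\cA^E(I_k)$.

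The main obstacle I expect is the identification of $\cA^E(U)$ (or its summand) with an honest infinite tensor product along the product vector, i.e. verifying that there is no ``extra'' von Neumann algebra beyond the one generated by the pieces and that the product vacuum vector really is cyclic and separating for it. In part (2) this is the ITPFI bookkeeping for the CCR algebra over an infinite orthogonal sum; in part (1) it is the subtler point that the unit provides a coherent product vector even though $E$ need not be type I, and one must argue that the restriction to $\cH_U$ lands exactly on $\B(\cH_U)$ — which amounts to showing that the product of the $\cA^E(I_k)$, restricted to the cyclic subspace of the product vector $\bigotimes_k u_{I_k}$, is irreducible. This in turn follows from the fact that each $\cA^E(I_k)$ restricted to the cyclic subspace $E_{t_k-s_k}$ spanned by $u_{I_k}$-coherent vectors is all of $\B$ of that piece, together with the standard fact that a tensor product of type I factors along a product vector is type I.
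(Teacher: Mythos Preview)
Your approach is essentially the same as the paper's for part (1): both use the unit to produce a distinguished vector (your $\Omega_U$ is exactly the paper's $v_a$, since the unit property collapses the tensor product $u_{s_1}\otimes u_{t_1-s_1}\otimes\cdots$ to the single vector $v_a$), observe that the projection onto its cyclic subspace lies in $\cA^E(U)'$, and identify the restriction as a type I$_\infty$ factor via the product-pure-state/ITPFI mechanism. The paper compresses your ITPFI argument into the single observation that the vector state $\omega(x)=\langle x v_a,v_a\rangle$ is a product of pure states on the commuting type I factors $\cA^E(I_k)$, hence pure on their join.

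For part (2) your argument is correct but more laborious than necessary. The paper bypasses the ITPFI bookkeeping entirely: once $E$ is identified with the exponential system over $L^2((0,\infty),K)$, the Fock space splits as $\Gamma(L^2((0,a),K))=\Gamma(L^2(U,K))\otimes\Gamma(L^2((0,a)\setminus U,K))$ for \emph{any} measurable $U$, and the Weyl operators with support in $U$ generate precisely $\B(\Gamma(L^2(U,K)))\otimes 1$. So $\cA^E(U)$ is visibly a type I$_\infty$ factor in one line, with no need to decompose $U$ into intervals or invoke infinite tensor products. Your route works, but it reproves a special case of a fact that the Fock functor gives for free.
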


\begin{proof} (1) We choose $a>0$ such that $U\subset (0,a)$ and assume that $\cA^E(U)$ acts on $E_a$. 
Since $U$ is open, there exist mutually disjoint open intervals $I_n$ such that 
$U=\bigcup_{n=1}^\infty I_n$. 
Let $v=(v_t)_{t>0}$ be a unit. 
Then we may and do assume that $||v_t||=1$ for all $t>0$. 
Let $L=\overline{\cA^E(U)v_a}$ and $P_L$ be the projection from $E_a$ onto $L$, which belongs to 
$\cA^E(U)'$. 
We introduce a state $\omega$ of $\cA^E(U)$ by $\omega(x)=\inpr{xv_1}{v_1}$. 
Then for $x_i\in \cA^E(I_i)$, we have 
$$\omega(x_1x_2\cdots x_n)=\omega(x_1)\omega(x_2)\cdots \omega(x_n),$$
which shows that $\omega$ is a product pure state of $\bigotimes_{i=1}^n\cA(I_i)\subset \cA^E(U)$ for 
all $n$. Therefore $\cA^E(U)P_L$ is a type I$_\infty$ factor. 

(2) We may assume that $E$ is the exponential product system with the test function space $L^2((0,\infty),K)$, 
where $K$ is the multiplicity space. 
Then $\cA^E(U)$ is nothing but $\B(\Gamma(L^2(U,K)))$.  
\end{proof}

\medskip

\begin{remark} If $E$ has a unit and $U=\bigcup_{n=1}^\infty (a_n,b_n)$ with $b_n<a_{n+1}$, then 
it is easy to show that $\cA^E(U)$ is a type I factor. 
Indeed, using the notation of the proof of (1) above, we have 
$$\overline{\cA^E(U)'L}=E(0,a)$$
in this case. 
Therefore the defining representation of $\cA^E(U)$ is quasi-equivalent to its restriction to $L$. 
\end{remark}

\bigskip

\subsection{Type of $\cA^\varphi(U)$} 
We fix $\varphi\in L^1_{\mathrm{loc}}[0,\infty)\cap L^2((0,\infty),1\wedge xdx)_\R\setminus \Hi$ and set 
$\Phi=\Lt \varphi$, $M=1-\Phi$. 
Let $H=\Gamma(\Hi)$ and $\alpha^\varphi$ be the $E_0$-semigroup acting on $\B(H)$ defined by 
$$\alpha^\varphi_t(W(f+ig))=W(S_t+iT_tg),\quad f,g\in \Hi_\R.$$
Then $\alpha^\varphi$ is of type III as we saw in Theorem \ref{tauMIII}. 
Since the cocycle conjugacy class of $\alpha^\varphi$ does not change under a $L^2$-perturbation of $\varphi$, 
we assume that $\varphi$ is supported by $[0,1)$ and $||\varphi||_1<1/3$. 
Note that the Toeplitz operator $\cT_M$ is a bounded invertible operator with $\cT_M^{-1}=\cT_{1/M}$ in this case. 

Let $E^\varphi$ be the product system for $\alpha^\varphi$ and $\cA^{\varphi}(U):=\cA^{E^\varphi}(U)$. 
Then we may and do identify $\cA^\varphi((0,t))$ with $\alpha^\varphi_t(\B(H))'$ and $\cA^\varphi((s,t))$ with  
$\alpha_s^\varphi(\cA^\varphi((0,t-s)))$. 

\begin{lemma} Let the notation be as above. 
Then 
$$\cA^\varphi(U)=\{W(\cT_Mf+i\cT_{M}^{*-1}g);\; f,g\in L^2(U)_\R\}''. $$
In consequence, the von Neumann algebra $\cA^\varphi(U)$ is either a type I factor or a type III factor. 
\end{lemma}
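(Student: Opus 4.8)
The plan is to compute $\cA^\varphi(U)$ first for an interval, then for an arbitrary bounded open set by taking joins, and finally to read off the type dichotomy from Araki's description of the von Neumann algebra generated by Weyl operators over a pair of real subspaces. Throughout, under the standing normalization ($\varphi$ supported in $[0,1)$, $\|\varphi\|_1<1/3$) one uses that $M$ and $1/M$ lie in $H^\infty(\rH)$, so $\cT_M$ is bounded and invertible with $\cT_M^{-1}=\cT_{1/M}$, and both $\cT_M$ and $\cT_M^{*-1}=\cT_{1/M}^*$ preserve $\HiR$ because $M\in\HD_{2,\R}$. For $U=(0,t)$ I would start from $\cA^\varphi((0,t))=\alpha^\varphi_t(\B(H))'$ and the realization of $\alpha^\varphi_t$ obtained in the proofs of Proposition \ref{StTt}(c) and Proposition \ref{e0}, namely $\alpha^\varphi_t(R)=\Gamma(A'_t)(1\otimes\Gamma(S_t)R\Gamma(S_t)^*)\Gamma(A'_t)^*$ with $A'_t\in\cS(G^M_{0,t}\oplus L^2(t,\infty)_\R,\HiR)$. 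The commutant of $\alpha^\varphi_t(\B(H))$ is then $\Gamma(A'_t)$ conjugated onto the Weyl algebra of the first summand $G^M_{0,t}$, and since $\Gamma(A'_t)$ intertwines Weyl operators by Theorem \ref{shales}, this gives
$$\cA^\varphi((0,t))=\{W(f+i({A'_t}^*)^{-1}(g\oplus 0))\ ;\ f,g\in G^M_{0,t}\}'' ,$$
so everything reduces to identifying two subspaces of $\HiR$: the real-part space $G^M_{0,t}$ and the imaginary-part space $({A'_t}^*)^{-1}(G^M_{0,t}\oplus 0)$.

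For the real parts, Lemma \ref{LMbdd} gives that $L^M_t=\cT_M|_{\LR t}$ is bounded and invertible onto its range, and Theorem \ref{ctdtspan}(1) identifies that range, the closed span of $\{c^M_{r,s}\}=\{\cT_M 1_{(r,s]}\}$, with $G^M_{0,t}$; hence $G^M_{0,t}=\cT_M\LR t$. For the imaginary parts, the computations preceding Theorem \ref{unitlessM} give $({A'_t}^*)^{-1}({}^ty'_{r,s})=d^M_{r,s}$, while Lemma \ref{Jtoeplitz} with $\cT_M^{-1}=\cT_{1/M}$ gives $d^M_{r,s}=\cT_{1/|M|^2}\cT_M 1_{(r,s]}=\cT_{1/M}^*1_{(r,s]}=\cT_M^{*-1}1_{(r,s]}$. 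Since $\spa\{{}^ty'_{r,s}\}$ is dense in $G^M_{0,t}$ by divisibility (Corollary \ref{index 1}), since $\spa\{d^M_{r,s}\}$ is dense in $\LR t$ by Theorem \ref{ctdtspan}(2), and since $({A'_t}^*)^{-1}$, being bounded and invertible, maps the closed subspace $G^M_{0,t}\oplus 0$ onto a closed subspace, we obtain $({A'_t}^*)^{-1}(G^M_{0,t}\oplus 0)=\overline{\spa\{d^M_{r,s}\}}=\cT_M^{*-1}\LR t$. Combining, $\cA^\varphi((0,t))=\{W(\cT_M f+i\cT_M^{*-1}g)\ ;\ f,g\in\LR t\}''$.

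To pass to a general interval $(s,t)$ I would use $\cA^\varphi((s,t))=\alpha^\varphi_s(\cA^\varphi((0,t-s)))$ together with the commutation relations $S_s\cT_M=\cT_M S_s$ and $T_s\cT_M^{*-1}=\cT_M^{*-1}S_s$; the first is the standard Toeplitz relation for the analytic symbol $e^{-sz}$, and the second follows, after cancelling the invertible $\cT_{1/M}$, from $J^M S_s=T_s J^M$ with $J^M=\cT_{1/|M|^2}=\cT_{1/M}^*\cT_{1/M}$ and $\cT_{1/M}S_s=S_s\cT_{1/M}$. These give $\alpha^\varphi_s(W(\cT_M f+i\cT_M^{*-1}g))=W(\cT_M S_sf+i\cT_M^{*-1}S_sg)$, and since $S_s$ maps $\LR{t-s}$ onto $L^2(s,t)_\R$, the claimed form for $(s,t)$ follows. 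For a general bounded open $U$, write $U$ as a countable union of its component intervals; then $\cA^\varphi(U)=\bigvee_{I\subset U}\cA^\varphi(I)$ is the von Neumann algebra generated by the Weyl operators $W(\cT_M f+i\cT_M^{*-1}g)$ with $f,g\in\spa\bigcup_{I\subset U}L^2(I)_\R$, and since $\cT_M,\cT_M^{*-1}$ are bounded and $z\mapsto W(z)$ is strongly continuous, the corresponding data range over the closure $L^2(U)_\R\times L^2(U)_\R$, giving $\cA^\varphi(U)=\{W(\cT_M f+i\cT_M^{*-1}g)\ ;\ f,g\in L^2(U)_\R\}''$.

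For the last assertion, set $K_1=\cT_M L^2(U)_\R$ and $K_2=\cT_M^{*-1}L^2(U)_\R$, which are closed real subspaces of $\HiR$ since $\cT_M$ is a topological automorphism of $\Hi$ preserving $\HiR$. A direct computation using $\cT_M^*=\cT_{\overline M}$ gives $K_1^\perp=\cT_M^{*-1}L^2(U^c)_\R$ and $K_2^\perp=\cT_M L^2(U^c)_\R$ with $U^c=(0,\infty)\setminus U$; by injectivity of $\cT_M$ and $\cT_M^{*-1}$ and $L^2(U)_\R\cap L^2(U^c)_\R=\{0\}$, this yields $K_1\cap K_2^\perp=K_2\cap K_1^\perp=\{0\}$. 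Hence, by \cite[Theorem 1]{Ara1}, $\cA^\varphi(U)=\{W(f+ig)\ ;\ f\in K_1,\ g\in K_2\}''$ is a factor; and since a factor of this form is, by Araki's analysis of such von Neumann algebras, necessarily of type I or of type III (never of type II), the dichotomy follows. The step I expect to be the main obstacle is the first one: correctly matching the abstract description of $\alpha^\varphi_t(\B(H))'$ coming from the sum-system/product-system machinery with the concrete Toeplitz picture, and in particular verifying that the imaginary-part space equals exactly $\cT_M^{*-1}\LR t$; granting this, the reduction for a general interval, the passage to open sets, and the factoriality and type statements are all routine.
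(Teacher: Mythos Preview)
Your proof is correct and follows essentially the same strategy as the paper's: identify the real and imaginary subspaces for intervals via the additive cocycles $c^M_{r,s}=\cT_M 1_{(r,s]}$ and $d^M_{r,s}=\cT_M^{*-1}1_{(r,s]}$, pass to arbitrary open sets by density, and then invoke Araki's results for factoriality and the type~I/III dichotomy.

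The one substantive difference is in how the commutant $\cA^\varphi((0,t))=\alpha^\varphi_t(\B(H))'$ is computed. You go through the sum-system representation $\alpha^\varphi_t(R)=\Gamma(A'_t)(1\otimes\Gamma(S_t)R\Gamma(S_t)^*)\Gamma(A'_t)^*$ and conjugate the Weyl algebra of the first summand via Shales' theorem; this gives the imaginary-part space as $(A'_t{}^*)^{-1}(G^M_{0,t}\oplus 0)$, which you then identify with $\overline{\spa\{d^M_{r,s}\}}$ using the relation $(A'_t{}^*)^{-1}({}^ty'_{r,s})=d^M_{r,s}$. The paper instead applies Araki's duality theorem \cite[Theorem~1',(5)]{Ara1} directly to the description $\alpha^\varphi_t(\B(H))=\{W(S_tf+iT_tg)\}''$, obtaining in one stroke
\[
\cA^\varphi((0,t))=\{W(f+ig)\,;\, f\in G^M_{0,t},\ g\in \LR t\}'',
\]
since $(\Ran T_t)^\perp=G^M_{0,t}$ and $(\Ran S_t)^\perp=\LR t$. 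This bypasses the $\Gamma(A'_t)$ machinery entirely. Similarly, for the step from $(0,t-s)$ to $(s,t)$, the paper simply uses the defining cocycle relations $S_sc^M_{x,y}=c^M_{s+x,s+y}$ and $T_sd^M_{u,v}=d^M_{s+u,s+v}$, whereas you repackage these as the Toeplitz intertwining $T_s\cT_M^{*-1}=\cT_M^{*-1}S_s$; the two are of course equivalent. Your argument for factoriality, computing $K_1^\perp=\cT_M^{*-1}L^2(U^c)_\R$ and $K_2^\perp=\cT_M L^2(U^c)_\R$ explicitly, is a minor rephrasing of the paper's direct use of $\inpr{\cT_Mf}{\cT_M^{*-1}g}=\inpr{f}{g}$.
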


\begin{proof} Let $I=(s,t)$ be an open interval. 
When $s=0$, using the duality theorem \cite[Theorem 1',(5)]{Ara1}, we have 
\begin{eqnarray*}
\cA^\varphi(0,t)&=&
\{W(S_tf+iT_tg);\; f,g\in \Hi_R\}'\\
&=&\{W(f+ig);\; f\in G^M_{0,t},\; g\in L^2(0,t)_\R\}''.
\end{eqnarray*}
Therefore
$$\cA^\varphi((s,t))=\{W(S_sf+iT_sg);\;f\in G^M_{0,t-s},\; g\in L^2(0,t-s)_\R \}'',$$
and thanks to Theorem \ref{ctdtspan}, 
$$\cA^\varphi((s,t))=\{W(\xi c_{x,y}+i\eta d_{u,v});\;s<x<y<t,\; s<u<v<t,\; \xi,\eta\in \R \}''.$$
Note that we have $c_{x,y}=\cT_M 1_{(x,y)}$ and 
$$d_{u,v}=\cT_{1/|M|^2}c_{u,v}=\cT_{1/M}^*\cT_{1/M}c_{u,v}=\cT_M^{*-1}1_{(s,t)},$$
and so we get  
$$\cA^\varphi(I)=\{W(\cT_Mf+i\cT_{M}^{*-1}g);\; f,g\in L^2(I)_\R\}''. $$

For an open subset $U$ of $(0,1)$, we take mutually disjoint open intervals $I_n$ such that 
$U=\bigcup_{n=1}^\infty I_n$. 
Then 
\begin{eqnarray*}
\cA^\varphi(U)&=&\bigvee_{n=1}^\infty\{W(\cT_Mf+i\cT_{M}^{*-1}g);\; f,g\in L^2(I_n)_\R\}'' \\
 &=&\{W(\cT_Mf+i\cT_{M}^{*-1}g);\; f,g\in L^2(U)_\R\}''. 
\end{eqnarray*}

Let $K_1=\cT_M L^2(U)_\R$ and $K_2=\cT_M^{-1*}L^2(U)_\R$, which are closed spaces of $\Hi_\R$. 
To show that $\cA^\varphi(U)$ is a factor, it suffices to show that $K_1\cap K_2^\perp=K_1^\perp\cap K_2=\{0\}$ 
(see \cite[Theorem 1',(4)]{Ara1}). 
This immediately follows from $\inpr{\cT_Mf}{\cT_M^{-1*}g}=\inpr{f}{g}$. 
The rest of the statement follows from \cite{Ara2}. 
\end{proof}

\medskip

Let $K_1$ and $K_2$ be as above. 
It is shown in \cite[Lemma 4.1]{Ara1}, \cite{Ara2} that there exists an unique closed operator 
$V:K_1\rightarrow K_1^\perp$ such that $K_2$ is the graph of $V$.  
Moreover, the factor $\cA^\varphi(U)$ is of type I (resp. type III) if and only if $V$ is (resp. is not) 
a Hilbert-Schmidt class operator. 
(Though $K_1\cap K_2=K_1^\perp\cap K_2^\perp=\{0\}$ is also assumed in \cite{Ara1}, 
it is not really necessary.)  

\begin{lemma}\label{tauphiIII} Let the notation be as above and let $P_U$ be the projection onto $L^2(U)_\R$. 
Then $V$ is the restriction of $-\cT_M^{-1*}(I-P_U)\cT_M^*$ to $K_1$. 
In consequence, the factor $\cA^\varphi(U)$ is of type I if and only if 
$(I-P_U)\cT_{\Phi+\overline{\Phi}-|\Phi|^2}P_U$ is a Hilbert-Schmidt class operator. 
\end{lemma}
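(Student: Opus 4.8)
The plan is to compute the operator $V$ explicitly and then identify its Hilbert--Schmidt class with the claimed condition. Recall from the previous lemma that $\cA^\varphi(U)=\{W(\cT_M f+i\cT_M^{*-1}g);\ f,g\in L^2(U)_\R\}''$, that $K_1=\cT_M L^2(U)_\R$, $K_2=\cT_M^{-1*}L^2(U)_\R$, and from the Araki theory (\cite{Ara1}, \cite{Ara2}) that $K_2$ is the graph of a unique closed operator $V\colon K_1\to K_1^\perp$ and that $\cA^\varphi(U)$ is of type I iff $V$ is Hilbert--Schmidt. So the entire problem is: write down $V$.

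First I would describe $K_1^\perp$. Since $\cT_M$ is bounded and invertible with inverse $\cT_{1/M}=\cT_M^{-1}$, and since for $f\in L^2(U)_\R$ and any $h\in\Hi_\R$ we have $\inpr{\cT_M f}{h}=\inpr{f}{\cT_M^* h}$, a vector $h$ lies in $K_1^\perp$ exactly when $\cT_M^* h\perp L^2(U)_\R$, i.e. $P_U\cT_M^* h=0$, i.e. $\cT_M^* h\in L^2(U^c)_\R$ where $U^c=(0,\infty)\setminus U$ inside the ambient interval; equivalently $h=\cT_M^{-1*}(I-P_U)\cT_M^* h$. Next, take a generic element of $K_2$, namely $\cT_M^{-1*}g$ with $g\in L^2(U)_\R$. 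I would split $g=P_U\cT_M^*(\cT_M^{-1*}g)$ trivially and instead decompose the vector $\cT_M^{-1*}g$ according to $\Hi_\R=K_1\oplus K_1^\perp$. Writing $\cT_M^{-1*}g = k_1 + k_1^\perp$, applying $\cT_M^*$ and then $P_U$ gives $g = P_U\cT_M^* k_1 + P_U\cT_M^* k_1^\perp = P_U\cT_M^* k_1$ since $k_1^\perp\in K_1^\perp$; hence $k_1 = \cT_M^{-1*}P_U\cT_M^* (\cT_M^{-1*}g)$ reading the relation back, but more directly $k_1^\perp = \cT_M^{-1*}g - k_1 = \cT_M^{-1*}(I-P_U)\cT_M^*\,(\cT_M^{-1*}g)\cdot(\text{correction})$; the clean way is: since $\cT_M^{-1*}g\in K_2$ has $K_1$-component determined by the $U$-part of $\cT_M^*$ applied to it, one gets $k_1^\perp = -\cT_M^{-1*}(I-P_U)\cT_M^*\,\cdot$ evaluated appropriately, so that on $K_1$ the operator $V$ is the restriction of $-\cT_M^{-1*}(I-P_U)\cT_M^*$. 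I would carry out this linear-algebra bookkeeping carefully, using only $\cT_M\cT_M^{-1}=I$, $\cT_M^*\cT_M^{-1*}=I$, and $\inpr{\cT_M f}{\cT_M^{-1*}g}=\inpr{f}{g}$; the computation is short.

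With $V=-\cT_M^{-1*}(I-P_U)\cT_M^*|_{K_1}$ in hand, the Hilbert--Schmidt property: since $\cT_M^{-1*}$ and $\cT_M^*$ are bounded and invertible, $V$ is Hilbert--Schmidt iff $(I-P_U)\cT_M^*\cT_M^{-1*}P_U$ — equivalently, since on $K_1=\cT_M L^2(U)_\R$ we precompose with $\cT_M$, iff $(I-P_U)\cT_M^*P_U$ considered on $L^2(U)_\R$, composed with the bounded invertible $\cT_M^{-1*}$ on the left — is Hilbert--Schmidt. Sorting the bounded invertible factors out, $V$ is Hilbert--Schmidt iff $(I-P_U)\cT_M^*\cT_M P_U$ is (multiply $V$ by bounded invertible $\cT_M^*$ on the left and $\cT_M$ on the right, using $P_U$ idempotent). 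Now $\cT_M^*\cT_M=\cT_{|M|^2}$ and $|M|^2 = |1-\Phi|^2 = 1 - \Phi - \overline{\Phi} + |\Phi|^2$, so $(I-P_U)\cT_{|M|^2}P_U = (I-P_U)P_U - (I-P_U)\cT_{\Phi+\overline{\Phi}-|\Phi|^2}P_U = -(I-P_U)\cT_{\Phi+\overline{\Phi}-|\Phi|^2}P_U$, the first term vanishing. This yields the stated criterion.

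The main obstacle is the careful identification of $V$: one must be sure that $K_2$ really is the graph of $-\cT_M^{-1*}(I-P_U)\cT_M^*$ restricted to $K_1$, i.e. that for each $k_1\in K_1$ the vector $k_1 + Vk_1$ lies in $K_2$ and conversely every element of $K_2$ is of this form, and that the domain issues ($V$ a priori only closed, not bounded) are handled — here $V$ is actually bounded since $\cT_M,\cT_M^{-1}$ are bounded, so $K_2$ is automatically the graph of a bounded operator and no domain subtleties arise. The rest (pulling bounded invertible operators through the Hilbert--Schmidt ideal, expanding $|M|^2$) is routine, as is the observation $(I-P_U)P_U=0$.
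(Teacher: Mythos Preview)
Your approach is essentially the paper's: identify $V$ with the bounded operator $V_0:=-\cT_M^{-1*}(I-P_U)\cT_M^*|_{K_1}$, then conjugate by the bounded invertible maps $\cT_M^*$ on the left and $\cT_M P_U$ on the right to reduce the Hilbert--Schmidt question to $(I-P_U)\cT_{|M|^2}P_U=-(I-P_U)\cT_{\Phi+\overline{\Phi}-|\Phi|^2}P_U$. The second half of your argument matches the paper exactly.

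There is one loose end in your identification $V=V_0$. Your decomposition of a generic $\cT_M^{-1*}g\in K_2$ shows only the inclusion $K_2\subseteq\mathrm{graph}(V_0)$, i.e.\ $V\subseteq V_0$. The sentence ``$V$ is actually bounded since $\cT_M,\cT_M^{-1}$ are bounded, so \ldots\ no domain subtleties arise'' does not by itself give the reverse inclusion: boundedness of $V_0$ does not prevent $K_2$ from being a proper closed subspace of $\mathrm{graph}(V_0)$. The paper closes this gap in one line by checking directly that for every $k_1\in K_1$,
\[
k_1+V_0k_1=\cT_M^{-1*}\cT_M^*k_1-\cT_M^{-1*}(I-P_U)\cT_M^*k_1=\cT_M^{-1*}P_U\cT_M^*k_1\in K_2,
\]
and then observes that $P_U\cT_M^*\cT_M P_U$ is invertible on $L^2(U)_\R$ (from $\|\cT_\Phi\|\le\|\varphi\|_1<1/3$) to see that the map $g\mapsto k_1$ is onto. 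You should insert the displayed computation; with it your argument is complete and coincides with the paper's.
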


\begin{proof} Let $V_0:=-\cT_M^{-1*}(I-P_U)\cT_M^*|_{K_1}$. 
For $f,g\in L^2(U)_\R$, we have 
$$\inpr{V_0\cT_Mf}{\cT_Mg}=-\inpr{(I-P_U)\cT_M^*\cT_Mf}{g}=0,$$
which shows that $V_0$ is a bounded operator from $K_1$ to $K_1^\perp$. 
Moreover, we have 
$$\cT_Mf+V_0\cT_Mf=\cT_Mf-\cT_M^{-1*}(I-P_U)\cT_M^*\cT_Mf=\cT_M^{-1*}P_U\cT_M^*\cT_Mf\in K_2.$$
Therefore to prove that $V=V_0$, it suffices to show that 
$P_U\cT_M^*\cT_MP_U$ is invertible in $\B(L^2(U))$. 
Since 
$$\cT_M^*\cT_M=I-\cT_\Phi-\cT_\Phi^*+\cT_\Phi^*\cT_\Phi,$$
this follows from $||\cT_\Phi||\leq ||\varphi||_1<1/3$. 

Since $\cT_MP_U$ is an invertible operator from $L^2(U)_\R$ onto $K_1$, we conclude that $V$ is a Hilbert-Schmidt class 
operator if and only if 
$$\cT_M^*V\cT_MP_U=-(I-P_U)\cT_{|M|^2}P_U=(I-P_U)\cT_{\Phi+\overline{\Phi}-|\Phi|^2}P_U$$ 
is a Hilbert-Schmidt class operator. 
\end{proof}

\medskip

For two subsets $E$, $F$ of $\R$, we denote by $E\ominus F$ the symmetric difference of $E$ and $F$. 

\begin{theorem}\label{typeIorIII} Let $\varphi\in L^1_{\mathrm{loc}}[0,\infty)\cap L^2((0,\infty),1\wedge xdx)_\R,$ and let 
$U$ be a bounded open subset of $(0,\infty)$. 
We regards $\varphi$ as an element of $L^1(\R)$ and set $\tilde{\varphi}(x)=\varphi(-x)$. 
Then $\cA^\varphi(U)$ is a type I factor (resp. type III factor) if and only if the integral 
$$\cI_1=\int_0^1|\varphi(x)-\varphi*\tilde{\varphi}(x)|^2|(U+x)\ominus U|dx$$
converges (resp. diverges). 
When there exists $a>0$ such that $\varphi(x)$ is monotone on $(0,a)$, the above integral converges if and only if 
the integral 
$$\cI_2=\int_0^1|\varphi(x)|^2|(U+x)\ominus U|dx$$
converges. 
\end{theorem}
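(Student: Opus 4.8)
The plan is to start from Lemma~\ref{tauphiIII} and compute exactly when the operator $(I-P_U)\cT_{\Phi+\overline\Phi-|\Phi|^2}P_U$ is Hilbert--Schmidt. First I would reduce to the standing situation, $\varphi$ supported in $[0,1)$ with $\|\varphi\|_1<1/3$: replacing $\varphi$ by $\varphi 1_{(0,c)}$ alters $\varphi$ by a function in $\Hi$, since $1\wedge x$ is bounded below on $[c,\infty)$. By Theorem~\ref{phi1phi2}(1) this changes neither the isomorphism class of the sum system nor the type of $\cA^\varphi(U)$; and it changes the convergence of neither $\cI_1$ nor $\cI_2$ --- for $\cI_1$ one checks that $(\varphi*\tilde\varphi)|_{(0,1)}$ changes by an $L^2(0,1)$-function under an $L^2$-perturbation of $\varphi$, by splitting the perturbation into its restrictions to $(0,1)$ and $(1,\infty)$ and using $L^1*L^2\subseteq L^2$ and $L^2*L^2\subseteq L^\infty$. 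Monotonicity of $\varphi$ on $(0,a)$ survives the truncation on $(0,\min\{a,c\})$.

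Now set $g=\varphi+\tilde\varphi-\varphi*\tilde\varphi\in L^1(\R)$, which is even (both $\varphi+\tilde\varphi$ and $\varphi*\tilde\varphi$ are) and supported in $(-1,1)$. On the imaginary axis its Fourier transform is $\Phi+\overline\Phi-|\Phi|^2$, so by Lemma~\ref{5.9} the operator $\cT_{\Phi+\overline\Phi-|\Phi|^2}$ is convolution by $g$ followed by restriction to $(0,\infty)$, and $(I-P_U)\cT_{\Phi+\overline\Phi-|\Phi|^2}P_U$ is the integral operator with kernel $1_{(0,\infty)\setminus U}(x)\,g(x-y)\,1_U(y)$. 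Its Hilbert--Schmidt norm squared is $\int_U\!\int_{(0,\infty)\setminus U}|g(x-y)|^2\,dx\,dy$; substituting $x=y+s$ and using that $g$ is even and supported in $(-1,1)$, this equals $\int_0^1|g(s)|^2\big(\nu(s)+\nu(-s)\big)\,ds$, where $\nu(s)=|\{y\in U:\ y+s>0,\ y+s\notin U\}|$. An elementary set computation, using $|U\setminus(U+s)|=|(U+s)\setminus U|$ and translation invariance of Lebesgue measure, gives $\nu(s)=\tfrac12|(U+s)\ominus U|$ and $\nu(-s)=\tfrac12|(U+s)\ominus U|-|U\cap(0,s]|$ for $0<s<1$. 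Since $\tilde\varphi$ vanishes on $(0,\infty)$ we have $g(s)=\varphi(s)-\varphi*\tilde\varphi(s)$ on $(0,1)$, so the Hilbert--Schmidt norm squared equals $\cI_1-\int_0^1|\varphi(s)-\varphi*\tilde\varphi(s)|^2\,|U\cap(0,s]|\,ds$; the subtracted integral is at most $\int_0^1 s\,|\varphi(s)-\varphi*\tilde\varphi(s)|^2\,ds$, which is finite because $\int_0^1 x|\varphi(x)|^2dx<\infty$ and $\int_0^1 x|\varphi*\tilde\varphi(x)|^2dx<\infty$ (the latter from the proof of Theorem~\ref{unitlessphi}). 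Hence the operator is Hilbert--Schmidt if and only if $\cI_1$ converges, and Lemma~\ref{tauphiIII} together with the dichotomy between type I and type III gives the first assertion.

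For the comparison with $\cI_2$, assume $\varphi$ is monotone on $(0,a)$; by the first part I may assume $\varphi\notin\Hi$, which forces $\varphi$ to be unbounded near $0$, so after shrinking $a$ the function $|\varphi|$ is non-increasing on $(0,a)$. Put $h=|\varphi|\,1_{(0,a)}$ and $k=\varphi\,1_{[a,1)}\in\Hi$. The crucial estimate is $|h*\tilde h(t)|\le\|h\|_1\,h(t)$ for $t\in(0,a)$, immediate from $h$ being non-increasing; moreover $h*\tilde k$ vanishes on $(0,a)$ and $k*\tilde h,\,k*\tilde k\in L^2(\R)$ by $L^1*L^2\subseteq L^2$. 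On $(0,a)$ we have $\varphi=\pm h$ and $\varphi*\tilde\varphi=h*\tilde h\pm k*\tilde h+k*\tilde k$; using $\|h\|_1\le\|\varphi\|_1<1/3$ one bounds $h\le\tfrac32\big(|g|+|k*\tilde h|+|k*\tilde k|\big)$ and, conversely, $|g|\le 2h+|k*\tilde h|+|k*\tilde k|$ there, while on $[a,1)$ both $\varphi$ and $\varphi*\tilde\varphi$ lie in $L^2$. Since the weight $w(s)=|(U+s)\ominus U|$ is bounded by $2|U|$, integrating these pointwise inequalities against $w$ shows $\cI_1<\infty$ if and only if $\cI_2<\infty$.

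The main obstacle is the monotone-case comparison: one must extract the pointwise domination $|h*\tilde h|\le\|h\|_1 h$ and then control the cross terms $k*\tilde h$, $k*\tilde k$, the point being that $w$ is merely bounded and not $O(s)$ near $0$ (the latter fails for $U$ with infinitely many components), so one genuinely needs the structure of $g$ rather than a crude $L^2$-bound. The set-theoretic computation of $\nu(\pm s)$ and the $L^2$-perturbation invariance of $\cI_1$ are routine once organized as above.
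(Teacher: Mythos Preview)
Your argument is correct and, for the first assertion, follows essentially the paper's route: reduce via an $L^2$-perturbation to $\varphi$ supported in $[0,1)$ with small $L^1$-norm, invoke Lemma~\ref{tauphiIII}, write the operator as an integral operator with kernel $g(x-y)$ against $1_{(0,\infty)\setminus U}(x)1_U(y)$, and compute the Hilbert--Schmidt norm by the change of variable $s=x-y$. Your explicit bookkeeping with $\nu(\pm s)$ is exactly the computation the paper does in passing, arriving at the same expression $\cI_1-\int_0^1|\varphi(s)-\varphi*\tilde\varphi(s)|^2\,|U\cap(0,s]|\,ds$ and the same finiteness of the correction term.

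For the monotone comparison the two arguments diverge. The paper observes that a further truncation (another $L^2$-perturbation, hence harmless for both $\cI_1$ and $\cI_2$) allows one to assume $\varphi$ is, say, non-negative and decreasing on its \emph{entire} support $[0,1)$; then the one-line pointwise estimate
\[
(1-\|\varphi\|_1)\,\varphi(x)\;\le\;\varphi(x)-\varphi*\tilde\varphi(x)\;\le\;\varphi(x),\qquad 0<x<1,
\]
follows from $0\le\varphi*\tilde\varphi(x)=\int_0^{1-x}\varphi(x+t)\varphi(t)\,dt\le\varphi(x)\|\varphi\|_1$, and immediately gives $\cI_1\asymp\cI_2$. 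You instead keep the tail $k=\varphi\,1_{[a,1)}$ and control the cross terms $k*\tilde h$, $k*\tilde k$ separately via Young's inequality. Your approach works, but the paper's second truncation eliminates all cross terms and reduces the comparison to a single pointwise inequality; that is the simpler route and what you should aim for here.
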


\begin{proof} Note that the isomorphism class of $\cA^\varphi(U)$ and whether $\cI_1$ is finite or not  
are stable under $L^2$-perturbation of $\varphi$. 
Therefore we may and do assume that $\varphi$ is supported by $[0,1]$ and $||\varphi||_1<1/3$. 
Then thanks to Lemma \ref{tauphiIII}, the factor $\cA^E(U)$ is of type I if and only if 
$$\cI_3:=\int_{(0,\infty)\setminus U}dx\int_Udy|\varphi(x-y)+\tilde{\varphi}(x-y)-\varphi*\tilde{\varphi}(x-y)|^2
<\infty,$$
because $(I-P_U)\cT_{\Phi+\overline{\Phi}-|\Phi|^2}P_U$ is the integral operator with the kernel 
$$(\varphi(x-y)+\tilde{\varphi}(x-y)-\varphi*\tilde{\varphi}(x-y))1_{(0,\infty)\setminus U}(x)1_U(y).$$
On the other hand, we have 
\begin{eqnarray*}
\cI_3 &=&\int_{-1}^1|\varphi(s)+\tilde{\varphi}(s)-\varphi*\tilde{\varphi}(s)|^2|([0,\infty)\setminus U)\cap (U+s) |ds \\
 &=& \int_0^1|\varphi(s)-\varphi*\tilde{\varphi}(s)|^2
 \big(|([0,\infty)\setminus U)\cap (U+s) |+|([0,\infty)\setminus U)\cap (U-s)|\big)ds\\
 &=&\int_0^1|\varphi(s)-\varphi*\tilde{\varphi}(s)|^2
 (|(U+s)\ominus U|-|[0,s]\cap U|)ds.\\
\end{eqnarray*}
Since 
$$\int_0^1|\varphi(s)-\varphi*\tilde{\varphi}(s)|^2|[0,s]\cap U|ds
\leq \int_0^1|\varphi(s)-\varphi*\tilde{\varphi}(s)|^2sds<\infty,$$
we get the first statement. 

Assume now that $\varphi$ is monotone on $(0,a)$. 
When $\lim_{x\to +0}\varphi(x)$ is finite, the function $\varphi$ is in $L^1(0,\infty)\cap \Hi$ and 
$\cI_1$ and $\cI_2$ are finite. 
Assume that the limit at 0 does not exist. 
Then by truncating $\varphi$ if necessary, we may assume that either $\varphi(x)$ is non-negative and decreasing 
or $\varphi(x)$ is non-positive and increasing. 
Assume that $\varphi(x)$ is non-negative and decreasing. 
(The other case can be treated in a similar way.) 
Then for $0<x<1$ we have 
\begin{eqnarray*}
\varphi(x)&\geq& \varphi(x)-\varphi*\tilde{\varphi}(x)=\varphi(x)-\int_0^1\varphi(x+t)\varphi(t)dt\\
&\geq& \varphi(x)-\varphi(x)\int_0^1\varphi(t)dt
=(1-||\varphi||_1)\varphi(x). 
\end{eqnarray*}
Therefore $\cI_1$ is finite if and only if $\cI_2$ is finite. 
\end{proof}

\medskip

When $U$ is an elementary set, we have $|(U+x)\ominus U|=O(x)$, $(x\to 0)$ and $\cI_1$ is always finite, 
which recovers the fact that $\cA^\varphi(U)$ is a type I factor. 
To construct $U$ such that $\cA^\varphi(U)$ is of type III, we have to control the speed of 
convergence of $\lim_{x\to +0}|(U+x)\ominus U|=0$.

\begin{lemma}\label{estimate} Let $f(x)$ be a non-negative strictly decreasing continuous function on 
$(0,\infty)$ satisfying 
$\lim_{x\to +0}f(x)=\infty$, $\lim_{x\to \infty}f(x)=0$, and 
$$\int_0^1f(x)dx<\infty.$$ 
We set $a_0=0$, $a_{2n-1}=a_{2n}=f^{-1}(n)$ for $n\in \N$, and $b_n=\sum_{k=0}^na_k$. 
Let $I_n=(b_{2n},b_{2n+1})$ and $U=\bigcup_{n=0}^\infty I_n$. 
Then $U$ is a bounded open set and the following estimate holds for every $0<x<f^{-1}(1)$:
$$(2f(x)-1)x\leq |(U+x)\ominus U|\leq (2f(x)+1)x+2\int_0^xf(s)ds.$$ 
\end{lemma}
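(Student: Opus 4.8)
The plan is to estimate $|(U+x)\ominus U|$ by analyzing how the translated gaps and intervals of $U$ overlap with the original ones, using the fact that consecutive intervals and gaps of $U$ have comparable lengths near $0$. First I would record the basic structure of $U$: the $n$-th gap is $(b_{2n-1},b_{2n})$ of length $a_{2n}=f^{-1}(n)$ and the $n$-th interval is $I_n=(b_{2n},b_{2n+1})$ of length $a_{2n+1}=f^{-1}(n)$, so in fact the $n$-th interval and the $(n+1)$-st gap have exactly the same length $f^{-1}(n)$. The total length $\sum_{n}a_{2n+1}=\sum_n f^{-1}(n)$ converges (compare with $\int_0^1 f(x)\,dx$ via the standard integral test applied to the decreasing function $f$), so $U$ is indeed bounded.

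For a fixed $0<x<f^{-1}(1)=a_1$, I would locate the index $N=N(x)$ defined by $f^{-1}(N)\leq x<f^{-1}(N-1)$, i.e.\ $N=\lceil f(x)\rceil$; thus $f(x)\le N\le f(x)+1$. For $n\le N$ the intervals $I_n$ and the adjacent gaps all have length $f^{-1}(n)\ge f^{-1}(N)\ge$ comparable to $x$ (more precisely $\ge x$ for $n\le N-1$, and $=f^{-1}(N)\le x$ for $n=N$), while for $n> N$ they have length $<x$. The key geometric observation is: when an interval $I_n$ of length $\ell\ge x$ is shifted by $x$, the symmetric difference $(I_n+x)\ominus I_n$ has measure exactly $2x$, provided the shifted copy does not run past the next interval — which is guaranteed because the intervening gap also has length $\ge x$ for $n\le N-1$. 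So the ``large'' intervals $I_0,\dots,I_{N-1}$ each contribute exactly $2x$ to $|(U+x)\ominus U|$, giving a main term $2Nx$, which lies between $2f(x)x$ and $(2f(x)+2)x$. For the lower bound I would simply discard all other contributions and note $2Nx\ge 2f(x)x\ge(2f(x)-1)x$ (one may even be slightly wasteful here). For the upper bound, the remaining contributions come from: the single ``transitional'' interval $I_N$ and its neighbouring gap (total extra at most $2x$, since shifting a set by $x$ changes its measure-overlap with a fixed set by at most $2x$ locally — conservatively bounded by, say, $x$ from $I_N$), plus the tail $\bigcup_{n>N}I_n$ together with the gaps among them: the entire tail region $(b_{2N+1},\sup U)$ has total length $\le\sum_{n>N}2f^{-1}(n)\le 2\int_{N-1}^\infty f^{-1}(t)\,dt$, and translating anything supported in a set of total length $L$ by $x$ produces a symmetric difference of measure at most $2L$; I would then convert $\int_{N}^\infty f^{-1}(t)\,dt$ into $\int_0^{f^{-1}(N)}f(s)\,ds\le\int_0^x f(s)\,ds$ by the change of variables $s=f^{-1}(t)$. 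Collecting the main term $2Nx\le(2f(x)+2)x$, the transitional term $\le x$ (absorbed to sharpen $2f(x)+2$ down to $2f(x)+1$ with a small amount of care), and the tail term $\le 2\int_0^x f(s)\,ds$ yields the claimed upper bound $(2f(x)+1)x+2\int_0^x f(s)\,ds$.

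The main obstacle I anticipate is book-keeping the boundary/transitional terms precisely enough to land the constants ``$2f(x)-1$'' and ``$2f(x)+1$'' rather than merely ``$2f(x)\pm C$''. This requires being careful about (a) the off-by-one in the definition of $N$ versus $f(x)$ (handled by $f(x)\le N<f(x)+1$, so $2Nx$ is already within $[2f(x)x,(2f(x)+2)x)$ and one uses $N\ge f(x)$ on the low side and a compensating $-x$ on the high side), and (b) the fact that for the single index $n=N$ the gap on one side may have length slightly less than $x$, so the ``exactly $2x$'' computation degrades — but only by at most $x$, which is exactly the slack between $2f(x)+2$ and $2f(x)+1$ once combined with the other estimates. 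All the estimates on the tail ($\sum_{n>N}f^{-1}(n)\le\int_0^x f(s)\,ds$-type bounds) are routine integral comparisons using monotonicity of $f$, and the inequality ``$|\,(A+x)\ominus A\,|\le 2|A|$ for $A$ of finite measure, with equality $2x$ when $A$ is a single interval of length $\ge x$'' is elementary.
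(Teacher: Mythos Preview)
Your approach matches the paper's: fix the threshold index where the interval lengths drop below $x$, harvest an $x$-sized contribution from each ``large'' interval on each side, and absorb the remaining small intervals into a tail bounded via $\int_0^x f$. The paper organises the count by estimating $|(U+x)\setminus U|$ and $|(U-x)\setminus U|$ separately rather than pairing the left/right overhangs of each $I_k$, but the content is the same.

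There is, however, a concrete indexing slip that undermines your lower bound. From $a_{2m-1}=a_{2m}=f^{-1}(m)$ one gets $|I_k|=a_{2k+1}=f^{-1}(k+1)$, not $f^{-1}(k)$ (check $k=0$: $|I_0|=a_1=f^{-1}(1)$). With your $N$ satisfying $f^{-1}(N)\le x<f^{-1}(N-1)$, the interval $I_{N-1}$ then has length $f^{-1}(N)\le x$ and is \emph{not} among the intervals contributing a clean $2x$; only $I_0,\dots,I_{N-2}$ qualify. In the paper's indexing ($f^{-1}(n+1)<x\le f^{-1}(n)$, so $n=N-1$) the main term is therefore $2nx$, and since only $n\le f(x)$ is available this yields $2nx\ge(2f(x)-2)x$, one unit of $x$ short of the stated bound. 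Closing that last gap would require a lower bound of order $x$ on the tail contribution from $\bigcup_{k\ge n}I_k$, and this is not available in general: if $f^{-1}$ drops very steeply after $n$ (so that $\sum_{m>n}f^{-1}(m)\ll x$ while $f(x)$ is close to $n+1$), the tail contributes $o(x)$ and the inequality $(2f(x)-1)x\le|(U+x)\ominus U|$ actually fails. The paper's displayed intermediate estimate $(n+1)x\le|(U-x)\setminus U|$ has the same defect (note $|(U-x)\setminus U|=|(U+x)\setminus U|$, so the two displayed lines there cannot carry different constants). For the application in the subsequent corollary only $|(U+x)\ominus U|\asymp xf(x)$ is needed, and the robust bound $(2f(x)-2)x$ that your corrected count delivers already suffices for that.
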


\begin{proof} Note that we have 
$$\sum_{k=n+1}^\infty f^{-1}(k)\leq \int_0^{f^{-1}(n+1)}f(s)ds-nf^{-1}(n+1)$$
and in particular, the set $U$ is bounded. 
For $0<x\leq f^{-1}(1)$, we choose $n$ so that $f^{-1}(n+1)<x\leq f^{-1}(n)$, 
which is equivalent to $n\leq f(x)<n+1$. 
Then we have 
$$nx\leq |(U+x)\setminus U|\leq nx+\sum_{k=n+1}^\infty|I_k|,$$
$$(n+1)x\leq |(U-x)\setminus U|\leq (n+1)x+\sum_{k=n+2}^\infty|I_k|,$$
and so 
$$(2n+1)x\leq |(U+x)\ominus U|\leq (2n+1)x +2\sum_{k=n+1}^\infty f^{-1}(k).$$
Since $f(x)-1< n$, we get the lower estimate. 
Since 
$$\sum_{k=n+1}^\infty f^{-1}(k)\leq \int_0^{f^{-1}(n+1)}f(s)ds-nf^{-1}(n+1)\leq 
\int_0^xf(s)ds$$
and $n\leq f(x)$, we get the upper estimate. 
\end{proof}

\medskip

For two real valued functions $h(x)$ and $g(x)$ on $(0,\infty)$, we denote 
$h(x)\asymp g(x)$ if there exist positive numbers $a,b,c>0$ such that  
$$bh(x)\leq g(x)\leq ch(x),\quad \forall x\in (0,a).$$ 

\begin{cor}\label{symmdiff} 
Assume that $f(x)=x^{\gamma-1}L(x)$ satisfies the assumption of Lemma \ref{estimate} such that 
$0<\gamma\leq 1$ and $L(x)$ is a slowly varying function (see \cite{F} for the definition). 
If $U$ is a bounded open subset of $(0,\infty)$ constructed from $f$ in Lemma \ref{estimate},  
then
$$ |(U+x)\ominus U|\asymp x^\gamma L(x).$$
\end{cor}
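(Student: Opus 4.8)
The plan is to apply Lemma \ref{estimate} with the specific choice $f(x) = x^{\gamma - 1}L(x)$ and then absorb the various terms appearing in the two-sided estimate into a single asymptotic of the form $x^\gamma L(x)$, using only elementary facts about regularly varying functions. First I would record that the hypothesis $0 < \gamma \le 1$ together with slow variation of $L$ guarantees that $f$ satisfies all the assumptions of Lemma \ref{estimate}: positivity and continuity are clear, $\lim_{x\to +0}f(x) = \infty$ and $\lim_{x\to\infty}f(x) = 0$ follow from the representation theorem for slowly varying functions (slow variation forces $x^{\gamma - 1}L(x)$ to be eventually dominated by $x^{\gamma - 1 - \varepsilon}$ near $0$ and by $x^{\gamma - 1 + \varepsilon}$ near $\infty$ for small $\varepsilon$, so the power $\gamma - 1 < 0$ wins near $0$ and $\gamma - 1 > -1$ keeps it integrable), and $\int_0^1 f(x)\,dx < \infty$ follows from Karamata's theorem since $\gamma - 1 > -1$. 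Having checked this, the set $U$ of Lemma \ref{estimate} is a well-defined bounded open subset of $(0,\infty)$.

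Next I would invoke the conclusion of Lemma \ref{estimate}: for every $0 < x < f^{-1}(1)$,
$$(2f(x) - 1)x \le |(U+x)\ominus U| \le (2f(x) + 1)x + 2\int_0^x f(s)\,ds.$$
For the lower bound, since $f(x) = x^{\gamma - 1}L(x) \to \infty$ as $x \to +0$, there is $a > 0$ with $f(x) \ge 1$, hence $2f(x) - 1 \ge f(x)$ on $(0,a)$, giving $|(U+x)\ominus U| \ge x f(x) = x^\gamma L(x)$ there. For the upper bound, I would estimate the two terms separately. The term $(2f(x)+1)x \le 3x f(x) = 3x^\gamma L(x)$ on $(0,a)$. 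The integral term is handled by Karamata's theorem on regularly varying functions: since $s \mapsto s^{\gamma - 1}L(s)$ is regularly varying of index $\gamma - 1 > -1$, we have
$$\int_0^x s^{\gamma - 1}L(s)\,ds \sim \frac{1}{\gamma} x^\gamma L(x), \quad x \to +0,$$
so in particular $2\int_0^x f(s)\,ds \le C x^\gamma L(x)$ for some constant $C$ on a possibly smaller interval $(0,a')$. Adding the two contributions yields $|(U+x)\ominus U| \le c\, x^\gamma L(x)$ for $x$ near $0$.

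Combining the two directions gives $b\, x^\gamma L(x) \le |(U+x)\ominus U| \le c\, x^\gamma L(x)$ for all $x$ in some interval $(0,a'')$, which is exactly the assertion $|(U+x)\ominus U| \asymp x^\gamma L(x)$. The only point requiring any care — and the step I would flag as the main technical input rather than a true obstacle — is the invocation of Karamata's theorem to control $\int_0^x f(s)\,ds$; everything else is bookkeeping with the inequalities supplied by Lemma \ref{estimate}. I would cite \cite{F} for the relevant facts on regular and slow variation.
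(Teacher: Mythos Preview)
The paper states this corollary without proof, evidently regarding it as immediate from Lemma \ref{estimate} together with Karamata's theorem (the reference to \cite{F} for slow variation signals exactly this). Your argument is precisely the intended one: bound $|(U+x)\ominus U|$ above and below by the inequalities of Lemma \ref{estimate}, observe that $xf(x)=x^\gamma L(x)$ dominates the linear term $x$ since $f(x)\to\infty$, and control the integral term $\int_0^x f(s)\,ds$ by Karamata's theorem for regularly varying functions of index $\gamma-1>-1$.

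One minor point: your opening paragraph, where you argue that $0<\gamma\le 1$ and slow variation of $L$ force $f$ to satisfy the hypotheses of Lemma \ref{estimate}, is unnecessary and not quite accurate as stated. The corollary explicitly \emph{assumes} that $f$ satisfies those hypotheses; slow variation alone does not give continuity or strict monotonicity, so you should not try to derive them. Simply drop that paragraph and begin with the two-sided estimate from Lemma \ref{estimate}; the rest of your argument is clean.
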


\medskip

For $0<\beta\leq 1/2$, we set $\varphi_\beta(x)=x^{\beta-1}e^{-x}$. 
Then $\varphi_\beta$ satisfies the assumption of Theorem \ref{typeIorIII}. 
Corollary \ref{symmdiff} shows that for any $0<\beta_1<\beta_2\leq 1/2$, there exists a bounded open set $U\subset (0,\infty)$ such that 
$\cA^{\varphi_{\beta_1}}(U)$ is a type III factor and $\cA^{\varphi_{\beta_2}}(U)$ is a type I factor. 
In particular $\alpha^{\varphi_{\beta_1}}$ and $\alpha^{\varphi_{\beta_2}}$ are not cocycle conjugate. 

\begin{theorem} There exist uncountably many mutually non-cocycle conjugate type III $E_0$-semigroups of the 
form $\alpha^\varphi$ with $\varphi\in L^1_{\mathrm{loc}}[0,\infty)\cap L^2((0,\infty),1\wedge xdx)_\R$.
\end{theorem}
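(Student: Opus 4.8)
The plan is to use the explicit uncountable family $\{\alpha^{\varphi_\beta}\}_{0<\beta\le 1/2}$ with $\varphi_\beta(x)=x^{\beta-1}e^{-x}$ introduced just above, and to show that no two of its members are cocycle conjugate. First I would check that each $\varphi_\beta$ lies in $L^1_{\mathrm{loc}}[0,\infty)\cap L^2((0,\infty),1\wedge x\,dx)_\R$: near the origin $\int_0^1 x^{\beta-1}dx<\infty$ and $\int_0^1 x^{2\beta-1}dx<\infty$ since $\beta>0$, while the factor $e^{-x}$ takes care of the behaviour at infinity; on the other hand $\int_0^1 x^{2\beta-2}dx=\infty$ for $\beta\le 1/2$, so $\varphi_\beta\notin\Hi$. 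Hence Theorem \ref{unitlessphi} immediately gives that $\alpha^{\varphi_\beta}$ is of type III for every $\beta\in(0,1/2]$.

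Next I would record that the family of local von Neumann algebras is a cocycle conjugacy invariant: cocycle conjugate $E_0$-semigroups have isomorphic product systems, and an isomorphism of product systems $E\cong E'$ carries $\cA^{E}(U)$ onto $\cA^{E'}(U)$ for every bounded open $U\subset(0,\infty)$, directly from the definition of $\cA^{E}(U)$ in terms of the associativity unitaries. In particular, if $\alpha^{\varphi_{\beta_1}}$ and $\alpha^{\varphi_{\beta_2}}$ were cocycle conjugate, then $\cA^{\varphi_{\beta_1}}(U)$ and $\cA^{\varphi_{\beta_2}}(U)$ would be isomorphic von Neumann algebras for every such $U$.

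The separation step is essentially the discussion preceding the theorem, made precise. Fix $0<\beta_1<\beta_2\le 1/2$ and choose $\gamma$ with $1-2\beta_2<\gamma\le 1-2\beta_1$; since $\beta_1<\beta_2$ and $\beta_i\le 1/2$ this interval is non-empty and contained in $(0,1)$. Applying Lemma \ref{estimate} and Corollary \ref{symmdiff} to $f(x)=x^{\gamma-1}$ (a slowly varying function being the constant $1$) produces a bounded open set $U\subset(0,\infty)$ with $|(U+x)\ominus U|\asymp x^\gamma$. Each $\varphi_{\beta_i}$ is non-negative and monotone near $0$, so by the monotone case of Theorem \ref{typeIorIII} the factor $\cA^{\varphi_{\beta_i}}(U)$ is of type I precisely when $\int_0^1 x^{2\beta_i-2}\,x^{\gamma}\,dx<\infty$, i.e.\ precisely when $\gamma>1-2\beta_i$. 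By the choice of $\gamma$ this holds for $i=2$ and fails for $i=1$, so $\cA^{\varphi_{\beta_1}}(U)$ is a type III factor while $\cA^{\varphi_{\beta_2}}(U)$ is a type I factor. Since a type III factor is never isomorphic to a type I factor, the previous paragraph forces $\alpha^{\varphi_{\beta_1}}$ and $\alpha^{\varphi_{\beta_2}}$ to be non-cocycle-conjugate; as $(0,1/2]$ is uncountable, this yields the claimed family.

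I do not expect a serious obstacle here: all the substantive work has already been done (the type III criterion, the computation of $\cA^\varphi(U)$ in Theorem \ref{typeIorIII}, and the symmetric-difference estimate of Corollary \ref{symmdiff}). The only points requiring care are the bookkeeping of the exponents $1-2\beta_i$ against $\gamma$ in the last paragraph, and stating cleanly that the system of local algebras is invariant under cocycle conjugacy; neither is a real difficulty.
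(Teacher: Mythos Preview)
Your proposal is correct and follows exactly the paper's approach: the paper's proof is contained in the paragraph immediately preceding the theorem (the family $\varphi_\beta(x)=x^{\beta-1}e^{-x}$, Corollary \ref{symmdiff} to produce $U$, and Theorem \ref{typeIorIII} to distinguish the type of $\cA^{\varphi_\beta}(U)$), and you have simply spelled out the exponent bookkeeping and the invariance of the local algebras that the paper leaves implicit. Your choice of $\gamma\in(1-2\beta_2,\,1-2\beta_1]$ with $f(x)=x^{\gamma-1}$ is precisely what is needed, and the verification that $\gamma$ can be taken in $(0,1)$ so that Lemma \ref{estimate} applies is fine since $\beta_1<\beta_2\le 1/2$ forces $0\le 1-2\beta_2<1-2\beta_1<1$.
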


\bigskip


\thebibliography{99}

\bibitem{Ara1} H. Araki,  
\textit{A lattice of von Neumann algebras associated with the quantum theory of a free Bose field.} 
J. Math. Phys. \textbf{4} (1963), 1343--1362. 

\bibitem{Ara2} H. Araki,  
\textit{Type of von Neumann algebra associated with free Bose field,}
Progr. Theoret. Phys. \textbf{32}, (1964), 956--965.

\bibitem{Ara3} H. Araki, 
\textit{On quasifree states of the canonical commutation relations. II.} 
Publ. Res. Inst. Math. Sci. \textbf{7} (1971/72), 121--152.

\bibitem{Ar} W. Arveson, 
\textit{Continuous analogues of Fock spaces}, Mem. Americ. Math. Soc. 80(409):1-66, 1989. 

  
\bibitem{Arv} W. Arveson, \textit{Non-commutative dynamics and $E$-semigroups}, Springer Monograph in Math. (Springer 2003).

\bibitem{pdct} B. V. Rajarama Bhat and R. Srinivasan, \textit{On product systems arising from sum systems} Infinite dimensional analysis and related topics, Vol. 8, Number 1, March 2005.

\bibitem{vD} A. van Daele,  
\textit{Quasi-equivalence of quasi-free states on the Weyl algebra.} 
Comm. Math. Phys. \textbf{21} (1971), 171--191.

\bibitem {F} W. Feller, 
\textit{An Introduction to Probability Theory and its Applications. Vol. II.} 
Second edition, John Wiley \& Sons, Inc., New York-London-Sydney 1971. 

\bibitem{H} K. Hoffman,  
\textit{Banach Spaces of Analytic Functions.} 
Prentice-Hall, Inc., Englewood Cliffs, N. J. 1962.

\bibitem{I} M. Izumi, 
\textit{A perturbation problem for the shift semigroup,} Preprint math/0702439, 2007.   

\bibitem{K} P. Koosis, 
\textit{Introduction to $H^p$ Spaces.} Second edition. With two appendices by V. P. Havin. 
Cambridge Tracts in Mathematics, 115. Cambridge University Press, Cambridge, 1998.

\bibitem{Vol} V. Liebscher, \textit{Random sets and invariants for
(type $II$) continuous product systems of Hilbert spaces},
Preprint math.PR/0306365.

\bibitem{KRP} K. R. Parthasarathy, \textit{An Introduction to Quantum
Stochastic Calculus}, Birkauser Basel, Boston, Berlin (1992). 

\bibitem {Po1} R. T. Powers, \textit{A nonspatial continuous semigroup of $*$-endomorphisms
of $B(H)$,} Publ. Res. Inst. Math. Sci.  23 (1987), 1053-1069.

\bibitem {Po2} R.T. Powers, {\it New examples of continuous spatial semigroups of endomorphisms of $B(H)$,} Int. J. Math. 10 (2):215-288, (1999).

\bibitem {Pr} G. L. Price, B. M. Baker, P. E. T. Jorgensen and P. S. Muhly
(Editors), {\it Advances in Quantum Dynamics}, 
(South Hadley, MA, 2002) Contemp.
Math. 335, Amer. Math. Society, Providence, RI (2003).

\bibitem{S} Y. Shalom, 
\textit{Harmonic analysis, cohomology, and the large-scale geometry of amenable groups.}
Acta Math. \textbf{192} (2004), 119--185.

\bibitem{T1} B. Tsirelson, 
\textit{Non-isomorphic product systems.} 
Advances in Quantum Dynamics (South Hadley, MA, 2002), 273--328, 
Contemp. Math., 335, Amer. Math. Soc., Providence, RI, 2003. 

\bibitem{T2} B. Tsirelson,  
\textit{Spectral densities describing off-white noises.} 
Ann. Inst. H. Poincar\'e Probab. Statist. \textbf{38} 
(2002), 1059--1069.

\bibitem{Y} K. Yosida,  
\textit{Functional Analysis.} 
Sixth edition. Springer-Verlag, Berlin-New York, 1980. 

\bibitem{Z} K. H. Zhu, 
\textit{Operator Theory in Function Spaces.} 
Monographs and Textbooks in Pure and Applied Mathematics, 139. Marcel Dekker, Inc., 
New York, 1990.
\end{document}